\documentclass[11pt,letterpaper,oneside]{amsart}

\pdfsuppresswarningpagegroup=1

\usepackage[utf8]{inputenc}
\usepackage{geometry}

\usepackage{amsfonts,amsmath,amssymb,amsthm,amsxtra,amsbsy,mathtools,mathrsfs,dsfont,lmodern,microtype,longtable,verbatim}
\usepackage[dvipsnames]{xcolor}
\usepackage[colorlinks=true, allcolors=blue]{hyperref}

\DeclarePairedDelimiter{\abs}{\lvert}{\rvert}
\DeclarePairedDelimiter{\norm}{\lVert}{\rVert}
\DeclarePairedDelimiter{\bra}{(}{)}
\DeclarePairedDelimiter{\pra}{[}{]}
\DeclarePairedDelimiter{\set}{\{}{\}}
\DeclarePairedDelimiter{\skp}{\langle}{\rangle}
\DeclareMathAlphabet{\mathup}{OT1}{\familydefault}{m}{n}
\newcommand{\dx}[1]{\mathop{}\!\mathup{d} #1}
\newcommand{\dd}{\dx} %
\newcommand{\pderiv}[3][]{\frac{\mathop{}\!\mathup{d}^{#1} #2}{\mathop{}\!\mathup{d} #3^{#1}}}

\newcommand{\qtq}[1]{\quad\text{#1}\quad}

\newtheorem{theorem}{Theorem}[section]
\newtheorem{ftheorem}[theorem]{Formal Theorem}
\newtheorem{proposition}[theorem]{Proposition}

\newtheorem{corollary}[theorem]{Corollary}

\newtheorem{lemma}[theorem]{Lemma}
\newtheorem{remark}[theorem]{Remark}
\newtheorem{assumption}[theorem]{Assumption}

\theoremstyle{definition}
\newtheorem{definition}[theorem]{Definition}
\newtheorem{example}[theorem]{Example}

\numberwithin{equation}{section}

\newcommand{\ip}[2]{\langle #1,#2\rangle}

\DeclareMathOperator{\xpt}{\mathbb{E}}

\DeclareMathOperator{\tr}{tr}
\DeclareMathOperator{\rank}{rank}
\DeclareMathOperator{\diag}{diag}
\DeclareMathOperator{\law}{law}

\DeclareMathOperator{\C}{C}
\DeclareMathOperator{\Cov}{Cov}
\DeclareMathOperator{\cov}{cov}

\DeclareMathOperator{\im}{Im}
\DeclareMathOperator{\Id}{Id}
\DeclareMathOperator{\I}{I}
\DeclareMathOperator{\II}{II}
\DeclareMathOperator{\mean}{M}

\DeclareMathOperator{\var}{var}

\DeclareMathOperator{\AC}{AC}
\DeclareMathOperator{\CE}{CE}
\DeclareMathOperator{\MV}{MV}
\DeclareMathOperator{\MC}{MC}

\newcommand{\cA}{\mathcal A}
\newcommand{\cE}{\mathcal E}

\newcommand{\cF}{\mathcal F}
\newcommand{\cU}{\mathcal U}

\newcommand{\cH}{\mathcal H}
\newcommand{\cI}{\mathcal I}
\newcommand{\covI}{{\mathcal I}_{\cov}}
\newcommand{\cP}{\mathcal P}
\newcommand{\cW}{\mathcal W}
\newcommand{\cWv}{{\mathcal W}^{{\var}}}
\newcommand{\NN}{\mathcal{N}}

\newcommand{\MOP}{\mathcal{D}}
\newcommand{\vMOP}{\mathcal{D}^{{\var}}}

\newcommand{\eps}{\varepsilon}

\newcommand{\R}{\mathbb{R}}
\renewcommand{\S}{\mathbb{S}}
\newcommand{\SO}{\mathup{SO}}
\newcommand{\Orth}{\mathup{O}}
\newcommand{\HS}{\mathup{HS}}
\newcommand{\GL}{\mathup{GL}}
\newcommand{\Span}{\mathup{span}}

\newcommand{\sym}{\mathsf{sym}}

\newcommand{\subMfd}{\mathscr{M}}

\newcommand{\normal}{\mathsf{N}}

\newcommand{\tran}{{\mkern-1.0mu\mathsf{T}}}

\makeatletter
\let\save@mathaccent\mathaccent
\newcommand*\if@single[3]{%
	\setbox0\hbox{${\mathaccent"0362{#1}}^H$}%
	\setbox2\hbox{${\mathaccent"0362{\kern0pt#1}}^H$}%
	\ifdim\ht0=\ht2 #3\else #2\fi
}
\newcommand*\rel@kern[1]{\kern#1\dimexpr\macc@kerna}
\newcommand*\widebar[1]{\@ifnextchar^{{\wide@bar{#1}{0}}}{\wide@bar{#1}{1}}}
\newcommand*\wide@bar[2]{\if@single{#1}{\wide@bar@{#1}{#2}{1}}{\wide@bar@{#1}{#2}{2}}}
\newcommand*\wide@bar@[3]{%
	\begingroup
	\def\mathaccent##1##2{%
		\let\mathaccent\save@mathaccent
		\if#32 \let\macc@nucleus\first@char \fi
		\setbox\z@\hbox{$\macc@style{\macc@nucleus}_{}$}%
		\setbox\tw@\hbox{$\macc@style{\macc@nucleus}{}_{}$}%
		\dimen@\wd\tw@
		\advance\dimen@-\wd\z@
		\divide\dimen@ 3
		\@tempdima\wd\tw@
		\advance\@tempdima-\scriptspace
		\divide\@tempdima 10
		\advance\dimen@-\@tempdima
		\ifdim\dimen@>\z@ \dimen@0pt\fi
		\rel@kern{0.6}\kern-\dimen@
		\if#31
		\overline{\rel@kern{-0.6}\kern\dimen@\macc@nucleus\rel@kern{0.4}\kern\dimen@}%
		\advance\dimen@0.4\dimexpr\macc@kerna
		\let\final@kern#2%
		\ifdim\dimen@<\z@ \let\final@kern1\fi
		\if\final@kern1 \kern-\dimen@\fi
		\else
		\overline{\rel@kern{-0.6}\kern\dimen@#1}%
		\fi
	}%
	\macc@depth\@ne
	\let\math@bgroup\@empty \let\math@egroup\macc@set@skewchar
	\mathsurround\z@ \frozen@everymath{\mathgroup\macc@group\relax}%
	\macc@set@skewchar\relax
	\let\mathaccentV\macc@nested@a
	\if#31
	\macc@nested@a\relax111{#1}%
	\else
	\def\gobble@till@marker##1\endmarker{}%
	\futurelet\first@char\gobble@till@marker#1\endmarker
	\ifcat\noexpand\first@char A\else
	\def\first@char{}%
	\fi
	\macc@nested@a\relax111{\first@char}%
	\fi
	\endgroup
}
\makeatother

\title[Covariance-modulated optimal transport and gradient flows]{Covariance-modulated optimal transport\\ and gradient flows}
\author[M. Burger]{Martin Burger}
\address{
 Helmholtz Imaging, Deutsches Elektronen-Synchrotron DESY, Notkestr. 85, 22607
Hamburg, Germany. \and Fachbereich Mathematik, Universit\"at Hamburg, Bundesstrasse
55, Hamburg, 20146, Germany \and}
\email{martin.burger@desy.de}

\author[M. Erbar]{Matthias Erbar}
\address{Fakultät für Mathematik, Universität Bielefeld}
\email{erbar@math.uni-bielefeld.de}

\author[F. Hoffmann]{Franca Hoffmann}
\address{Department of Computing and Mathematical Sciences, Caltech}
\email{franca.hoffmann@caltech.edu}

\author[D. Matthes]{Daniel Matthes}
\address{Zentrum Mathematik, Technische Universität München}
\email{matthes@ma.tum.de}

\author[A. Schlichting]{Andr\'e Schlichting}
\address{Institute for Applied Analysis, Ulm University}
\email{andre.schlichting@uni-ulm.de}

\date{\today}

\keywords{entropy method, Fokker-Planck equations, geodesics, gradient flows, optimal transport}

\subjclass[2020]{35Q84, 37A50, 49Q22, 53C17, 53C21, 58E10, 58J60, 62M20}

\allowdisplaybreaks

\begin{document}

\begin{abstract}
	We study a variant of the dynamical optimal transport problem in which the energy to be minimised is modulated by the covariance matrix of the distribution. 
	Such transport metrics arise naturally in mean-field limits of certain ensemble Kalman methods for solving inverse problems.
	We show that the transport problem splits into two coupled minimization problems:
    one for the evolution of mean and covariance of the interpolating curve and one for its shape.
	The latter consists in minimising the usual Wasserstein length under the constraint of maintaining fixed mean and covariance along the interpolation. 
	We analyse the geometry induced by this modulated transport distance on the space of probabilities as well as the dynamics of the associated gradient flows. 
	Those show better convergence properties in comparison to the classical Wasserstein metric in terms of exponential convergence rates independent of the Gaussian target. 
	On the level of the gradient flows a similar splitting into the evolution of moments and shapes of the distribution can be observed.
\end{abstract}

\maketitle

\tableofcontents

\subsubsection*{Acknowledgments}

The authors thank Christoph Böhm, Masha Gordina and Karen Habermann for explanations on sub-Riemannian geometry and related topics and thank François-Xavier Vialard to provide remarks and references about Riemannian metrics on groups. The authors gratefully acknowledge the comments of the anonymous referees.

\subsubsection*{Funding}

MB acknowledges support from DESY (Hamburg, Germany),
a member of the Helmholtz Association HGF. MB acknowledges partial financial support by European Union’s Horizon 2020 research
and innovation programme under the Marie Sklodowska-Curie grant agreement No. 777826
(NoMADS) and the German Science Foundation (DFG) through CRC TR 154 ”Mathematical
Modelling, Simulation and Optimization Using the Example of Gas Networks”, Subproject~C06.

ME acknowledges funding by the Deutsche Forschungsgemeinschaft (DFG, German Research Foundation) – SFB 1283/2 2021 – 317210226.

FH is supported by start-up funds at the California Institute of Technology.
FH was also supported by the Deutsche Forschungsgemeinschaft (DFG, German Research Foundation) via project 390685813 - GZ 2047/1 - HCM.

DM's research is supported by the DFG Collaborative Research Center TRR 109, ``Discretization in Geometry and Dynamics.''

AS's research is supported by the Deutsche Forschungsgemeinschaft (DFG, German Research Foundation) under Germany's Excellence Strategy EXC 2044 --390685587, Mathematics M\"unster: Dynamics--Geometry--Structure.  

\subsubsection*{Data availability}

The datasets generated during the current study are available from the corresponding author on reasonable request.

\subsubsection*{Conflict of interest}

The authors have no competing interests to declare that are relevant to the content of this article.

\newpage

\section{Introduction}

In this work, we are concerned with the following dynamical optimal transport problem: for two probability measures $\mu_0,\mu_1$ with a finite second moment, we consider their \emph{covariance-modulated transport distance} $\cW(\mu_0,\mu_1)$ given by \begin{equation}\label{eq:OT-cov-intro}
	\cW(\mu_0,\mu_1)^2 := \inf \left\{
	\int_0^1\int \frac12 |V_t|^2_{\C(\mu_t)}\dd \mu_t\dd t~:~\partial_t\mu_t+\nabla\cdot(\mu_t V_t)=0\right\}\;.
\end{equation}
Here the infimum is over curves of measures interpolating $\mu_0,\mu_1$ subject to the continuity equation and $|V|^2_{\C(\mu)}:=\langle V,\C(\mu)^{-1} V\rangle$ with $\C(\mu)$ denoting the covariance matrix of $\mu$. 

The problem \eqref{eq:OT-cov-intro} bears close resemblance with the dynamic formulation of the classical Wasserstein distance $W_2$ due to Benamou-Brenier~\cite{BenamouBrenier2000}. The new feature here is that the instantaneous cost of moving mass depends in a non-local way on the current distribution through its covariance matrix, i.e. $|V|^2$ is replaced here by the inner product $|V|^2_{\C(\mu)}$.

Whilst with the classical Wasserstein distance $W_2$ the optimal way to transport mass is along shortest paths, the same is not necessarily true for $\cW$. Instead, it can be more economic to invest energy in spreading out the distribution in order to take advantage of the smaller cost of moving when the covariance is larger. The competition between these two effects makes the analysis of the covariance-modulated transport problem both challenging and interesting.

Our first key observation is that the problem \eqref{eq:OT-cov-intro} can be equivalently written as the sum of two coupled minimization problems: one for the evolution of mean and covariance; and one constrained transport problem where mean and covariance matrix are fixed to zero and identity, respectively.
Both minimization problems are coupled through an overall optimization over orthogonal transformations of the marginals (Sections~\ref{sec:intro:split} and~\ref{sec:splitting}).
This splitting can be interpreted as a decomposition of the distance into its Gaussian part, measuring the deviation of the mean and covariance only; and its non-Gaussian part, measuring the difference in shapes after normalization.
The necessity to carry out an outer optimization over orthogonal transformations is related to the non-uniqueness of such normalizations: the class of affine transformations that normalize a given probability distribution to one with zero mean and unit covariance matrix bears the degree of freedom of an orthogonal transformation about the center of mass. 
Equivalently, the square root of a symmetric positive definite matrix is only determined up to an orthogonal transformation.

With this splitting at hand, we study the individual optimization problems for moments and shape in detail. We show the existence of and characterize optimizers for the moment problem revealing an interesting geometry on the space of mean vectors and (roots of) covariance matrices. We further show the existence of optimal curves, i.e.~geodesics, for the covariance-constrained transport problem for sufficiently close or symmetric marginals. A challenging feature here is that the covariance-constraint is critical in the sense that the energy to be minimized is of the same order as the constraint. Through the splitting, this also implies the existence of geodesics for the covariance-modulated problem.

The covariance-constrained optimal transport problem can be seen as a generalization of the variance-constrained optimal transport problem studied by Carlen and Gangbo in ~\cite{CarlenGangbo2003}. We also revisit this problem and recover the variance-constrained optimal transport distance in a splitting result for the analogous \emph{variance-modulated} optimal transport, where one replaces $|V_t|^2_{\C(\mu_t)}$ by $|V_t|^2/\var(\mu_t)$ in \eqref{eq:OT-cov-intro}. As shown in \cite{CarlenGangbo2003}, geodesics for the variance-constrained transport problem are obtained by a simple rescaling (both in time and space) of the usual Wasserstein geodesics (see Section~\ref{sec:var-intro}). This is in stark contrast to the geodesics for the covariance-constrained problem, which feature more complicated interaction between the trajectories and in general cannot be obtained from Wasserstein geodesics in this way, as we show both on analytic and numeric examples (see Section~\ref{sec:intro:geodesics} and Section~\ref{sec:existence}).

\medskip

In the second major part of this work we analyze gradient flows in the covariance-modulated transport geometry on the space of probability measures. In particular, we focus on the non-linear Fokker-Planck equation  
\begin{equation}\label{eq:CovFP}
	\partial_t\mu = \nabla\cdot\left(\C(\mu_t)\left(\nabla \mu_t + \mu_t\nabla H\right)\right)\;,
\end{equation}
which is the gradient flow of the relative entropy $\int \log(\dd\mu/\dd\pi)\dd\mu$ (relative to the equilibrium distribution $\pi(x)=e^{-H(x)}/\int e^{-H(y)}\dd y$) with respect to the distance $\cW$ as initially observed in \cite{GHLS}. Such PDEs arise naturally in the mean-field limit of particle systems preconditioned by their empirical covariance matrix as proposed in \cite{GHLS} for sampling the distribution $\pi$ in the context of Bayesian inverse problems. 

It is observed \cite{leimkuhler-matthews-weare} that pre-conditioning can be used as a tool to accelerate convergence to equilibrium. One of the motivations for our work is to give a theoretical underpinning for this observation by analyzing the longtime behavior of the non-linear Fokker Planck equation \eqref{eq:CovFP} arising in the mean field limit mainly in the case of Gaussian target measures $\pi$.

In the spirit of the splitting into shape and moments for the distance, we obtain a decomposition of the gradient flow evolution~\eqref{eq:CovFP} via a carefully chosen normalization map into (i) a simple Ornstein-Uhlenbeck dynamic for the shape, and (ii) a closed ODE for the first two moments. Based on this representation, we obtain exponential convergence towards the Gaussian target $\pi$ measured in relative entropy, Fisher information, and Wasserstein distance, respectively at a uniform rate independent of the characteristics of $\pi$ (see Section~\ref{sec:GFintro}).
Moreover, our preceding analysis of the covariance-modulated transport distance allows us to exhibit the underlying geometric reason for the uniform trend to equilibrium rooted. Namely, we show that the covariance-constraint has a striking effect on the behavior of free energy functionals along optimal curves. In particular, the Boltzmann-Shannon entropy becomes \emph{$1$-convex} along the geodesics of the covariance-constrained optimal transport distance (Section~\ref{sss:intro:convexity}). In the spirit of the seminal work~\cite{Otto-PME}, this dictates the uniform exponential convergence.
Furthermore, we establish an evolution variational inequality for the gradient flow in the constrained geometry implying an intrinsic stability result for the shape dynamic.

\subsubsection*{Connection to the literature}

Generalizing the flux in the dynamical formulation of the Wasserstein distance to expressions with more general mobilities and nonlinear dependence on the probability distribution is an interesting question in its own right that has been studied in a variety of settings \cite{Dolbeault-Nazaret-Savare,CLSS2010,MR2672546,MR3640545,MR4015181,MR4028472,MR4032229,MR4385602,MR4410035}. To the best of our knowledge, these previous works have considered local scalar mobilities. For systems of PDEs, corresponding mobilities have been defined for example in \cite{zinsl2015transport}. The only appearance of a matrix-valued mobility function for a scalar density has so far been mentioned in \cite{Lisini2009} (\cite{reich2013,reich2015probabilistic} for constant matrices). 
An optimal control perspective on modulation of the volatility matrix is recently provided in~\cite{tschiderer2023diffusion}.
A non-local formulation for a metric on probability measures appears in Stein-Variational Gradient Descent \cite{LuLuNolen2019,duncan2019geometry,NuskenRenger2023} and recently for the aggregation equation~\cite{EspositoGvalaniSchlichtingSchmidtchen2024}. In contrast to the above, the problem \eqref{eq:OT-cov-intro} studied here is concerned with a matrix-valued non-local mobility function, resulting in an anisotropic reweighting of the inner product. And indeed, the properties of the covariance-modulated optimal transport problem differ in some ways significantly from the scalar analog for the variance as described above (also see Section~\ref{sec:var-intro}).

In~\cite{LambertChewiBachBonnabelRigollet2022}, variational inference via Gaussian (mixture) approximations is connected to gradient flows resulting in an effective ODE evolution of the moments. This metric on the space of Gaussians induced from the Wasserstein space $(\cP_2(\R^d), W_2)$ is the Bures-Wasserstein metric providing a very related metric on the space of covariance matrices~\cite{Bures1969,malago2015information,Malag2018,Masarotto2018,Bhatia2019}. 
However, the metric obtained here for the mean and covariance is a different well-studied distance on the space of symmetric positive matrices emerging from a Riemannian metric $g_C(A, B)=\tr\bra*{A C^{-1} B C^{-1}}$, see~\cite{Skovgaard1984,OharaSudaAmari1996,Moakher2005,Bhatia2006,bhatia2009positive}, which appears as part of the action functional in the moment optimization problem obtained as a result of splitting problem~\eqref{eq:OT-cov-intro} into shape and moment parts as described above. 
In addition, the freedom of the choice of square-roots for symmetric positive definite matrices $\S_{\succ 0}^d$ upto an element of $\Orth(d)$ introduces by the polar decomposition of the group of matrices with positive determinant $\GL_+(d) \simeq \S_{\succ 0}^d \times \Orth(d)$, also a Riemannian metric on $\GL_+(d)$. The choice of the square-root is encoded by an additional symmetry constraint, which to the best of our knowledge, gives rise to a new and intriguing sub-Riemannian structure (see Section~\ref{sec:optimal-moments}).
It was brought to our attention, that this metric is related to the \emph{affine-invariant} metric in recent publications, see the review~\cite{ThanwerdasPennec2019} and~\cite{AmariMatsuda2024}.
For completeness, we also mention the Mahanalobis distance~\cite{mahalanobis1936generalized} used in statics as a whitening transformation for the normalization of random data. See also~\cite{KessyLewinStrimmer2018}, where the freedom of the choice of square-roots for symmetric positive definite matrices is discussed in this context.
We expect that all the above described problems have many more links to explore.

The work~\cite{CarrilloVaes} proves uniform exponential convergence in Wasserstein distance for the evolution~\eqref{eq:CovFP} towards Gaussian targets, with multiplicative constants depending on the covariance of the initial and target measure.
In our work, we can improve this result thanks to leveraging the intrinsic covariance-modulated geometry of the equation. Namely, we obtain estimates with the optimal exponential rate and considerably improved pre-exponential factors depending on a joint relative condition number of the covariances of initial and target measure.
For completeness, we note, that for Gaussian targets it is possible to also find non-reversible pre-conditioners improving the convergence rate, see~\cite{LelievreNierPavliotis2013,GuillinMonmarche2016,ArnoldSignorello2022}.

As already mentioned, the work~\cite{CarlenGangbo2003} studies second-moment constraints for Fokker-Planck equations as models for kinetic equations, which is generalized to porous media type equations in~\cite{TudorascuWunsch2011}. Similar constraints are studied  in~\cite{CagliotiPulvirentiRousset2009} for the 2d Navier-Stokes equation.
Dynamic constraints for the mean and the resulting gradient flows for the Boltzmann entropy are studied in~\cite{EberleNiethammerSchlichting2017}.

The idea of constraining moments to improve certain behavior for solutions of partial differential equations and their corresponding functional inequalities has been also noticed and employed in \cite{CarrilloDiFrancescoToscani} for the porous medium equation. 
Similarly in the context of Newtonian gravitation~\cite{Loeper2006} and general relativity~\cite{McCann2020}, the authors observe the interaction of geodesics through gravitational forces.
Here, we show that working on the constrained manifold improves the convexity properties of certain energy functionals, giving rise to improved convergence rates for the solutions of the corresponding PDEs. These results suggest that there is probably more to be understood about the general structure related to projecting PDEs on moment-constrained sub-manifolds.

\subsubsection*{Notation}

\begin{small}
	\begin{longtable}{lll}
		$e^i$ & $i$th standard unit basis vector in $\R^d$ & \\
		$x^{\otimes 2}$ & tensor square of $x\in \R^d: x\otimes x$ & \\
		$\cP(\R^d), \cP_2(\R^d)$ & probability measures on $\R^d$, with finite second moment & \\
		$\mean(\mu)$ & mean of $\mu\in\cP(\R^d)$: $\mean(\mu) = \int x \dx{\mu}$ & Equ.~\eqref{e:def:C} \\
		$\C(\mu)$ & covariance of $\mu\in\cP(\R^d)$: $\C(\mu)=\int \bigl(x-\mean(\mu)\bigr)^{\otimes 2}\dd\mu(x)$ & Equ.~\eqref{e:def:C} \\
		$x^\tran, A^\tran, A^{-\tran}$ & transpose of $x\in \R^d$, $A\in \R^{d\times d}$, and $A^{-1}$ & \\
		$A^\dagger$ & pseudo-inverse of $A\in \R^{d\times d}$ & Lem.~\ref{lem:apriori-cov}\\
		$\var(\mu)$ & variance of $\mu$: $\var \mu= \tr \C(\mu) = \int \abs{x-\mean(\mu)}^2 \dx\mu(x)$ & Equ.~\eqref{e:def:var}\\
		$\S^d, \S_{\succ 0}^d, \S_{\succcurlyeq0}^d$ & symmetric, symmetric positive, symmetric non-negative matrices & \\
		$C^{\frac{1}{2}}$ & symmetric square root of $C\in \S^d_{\succcurlyeq 0}$ \\
		$\GL_+(d)$ & invertible matrices with pos.~determinant $A\in \R^{d\times d}$: $\det A> 0$ & \\
		$\Orth(d), \SO(d)$ & orthogonal, special orthogonal matrices in $\R^{d\times d}$ & Sec.~\ref{sec:optimal-moments}  \\
		$A \succcurlyeq B$, $A \preccurlyeq B$ &  the matrix $A-B$ is positive semidefinite, negative semidefinite & \\
		$A \succ B$, $A \prec B$ &  the matrix $A-B$ is positive definite, negative definite & \\
		$[A,B]$ & commutator of $A,B\in\R^{d\times d}$: $AB-BA$ & \\
		$\abs{\xi}_C^2$ &  for $C\succ 0$, $\xi\in\R^d$: $\skp{\xi, C^{-1}\xi}$; for $C\in \S_{\succcurlyeq0}^d$ induced pseudo-norm & Equ.~\eqref{e:def:Cnorm} \\
		$\norm{A}_\HS$ & Hilbert-Schmidt or Frobenius norm of $A\in \R^{d\times d}$: $\abs[\big]{\sum_{i,j} A_{ij}^2}^{1/2}$ & Equ.~\eqref{eq:def:HS} \\
		$\lambda_{\max}(C)$ & largest eigenvalue of $C\in\S_{\succcurlyeq0}^d$, likewise $\lambda_{\min}(C)$ & \\
		$\norm{A}_2$ & spectral norm of $A\in \R^{d\times d}$: $\abs[\big]{\lambda_{\max}(AA^\tran)}^{1/2}$ & \\
		$\cP_{2,+}(\R^d)$ &  $\mu \in \cP_2(\R^d)$ such that $\C(\mu) \succ 0$ & Equ.~\eqref{eq:def:P+}\\
		$\cP_{0,\Id}(\R^d)$ & normalized probability measures: $\mean(\mu)=0$ and $\C(\mu)=\Id$ \\
		$T_{m,A}$ & normalization $T_{m,A}(x)= A^{-1}(x-m)$ for $m\in \R^d$, $A\in \GL_+(d)$ & Def.~\ref{def:normalization} \\
		$\widebar\mu\in \cP_{0,\Id}(\R^d)$ & normalization of $\mu\in \cP_{2,+}(\R^d)$ w.r.t.~sym. square root $\C(\mu)^{1/2}$ & Def.~\ref{def:normalization} \\
		$\normal_{m,C}$ & normal distribution with mean $m\in\R^d$ and covariance $C\in \S_{\succcurlyeq0}^d$ & Equ.~\eqref{eq:defnormal} \\
		$\AC([0,1],X)$ & absolutely continuous curves from $[0,1]$ into $X$ & \\
		$\CE(\mu_0,\mu_1)$  & pair $(\mu,V)$ solving the continuity equation with marginals $\mu_0,\mu_1$ & Equ.~\eqref{eq:CE-cov} \\
		$\cW(\mu_0,\mu_1)$ & covariance-modulated optimal transport distance & Equ.~\eqref{eq:OT-cov-main}\\
		$\CE_{m,C}(\mu_0,\mu_1)$ & pair $(\mu,V)\in \CE(\mu_0,\mu_1)$ with $(\mean(\mu_t),\C(\mu_t))=(m_t,C_t)$ %
		given  & Def.~\ref{def:constrainedOT} \\
		$\cW_{0,\Id}(\mu_0,\mu_1)$ & covariance-constrained optimal transport distance & Def.~\ref{def:constrainedOT} \\
		$\MC(\mu_0,\mu_1)$ & $(m,C) \in \AC\left([0,1],\R^d\times \S_{\succcurlyeq0}^d\right)$: $(m_i,C_i)=(\mean(\mu_i),\C(\mu_i))$, $i=0,1$ & Equ.~\eqref{e:def:MC} \\
		$\MC_R(\mu_0,\mu_1)$ & $(m,C) \in \MC(\mu_0,\mu_1):$ $C_1^{-1/2}A_1=R$ fixed for $A_t$ solving \eqref{eq:Adot} & Equ.~\eqref{e:def:MCR} \\
		$\MOP_R(\mu_0,\mu_1)$ & rotation-constrained moment optimization problem & Equ.~\eqref{eq:mean-cov-min:R} \\
		$\MOP(\mu_0,\mu_1)$ & moment optimization problem: $\inf_{R\in \SO(d)} \MOP_R(\mu_0,\mu_1)$ & Equ.~\eqref{eq:mean-cov-min}  \\
		$W_2(\mu_0,\mu_1)$ & Wasserstein distance with respect Euclidean norm $\abs{\cdot}^2$ & Equ.~\eqref{def:W2:BB} \\
		$W_{2,C}(\mu_0,\mu_1)$ & Wasserstein distance with respect weighted norm $\abs{\cdot}_C^2$ & Equ.~\eqref{eq:def:W2:C}
	\end{longtable}
\end{small}

\subsection{Results on covariance-modulated optimal transport}\label{sec:ResultsCovOT}

\subsubsection{Definition and first properties}

The goal of this work is to study a dynamic optimal transport distance on the space of probability densities on $\R^d$ with finite second moments $\cP_2(\R^d)$, for which the kinetic energy to be minimized depends on the local covariance of the  distribution. 
For this, we denote for a measure $\mu\in\cP_2(\R^d)$ its mean and covariance matrix by
\begin{equation}\label{e:def:C}
  \mean(\mu)=\int x\dd\mu(x)\;,
  \qquad\text{and}\qquad  
  \C(\mu)=\int \big(x-\mean(\mu)\big) \otimes\big(x-\mean(\mu)\big)\,\dd\mu(x)\;.
\end{equation}
In this way, we obtain the usual (scalar) variance as trace of the covariance matrix
\begin{equation}\label{e:def:var}
	\var(\mu) = \tr \C(\mu)= \int \abs*{x-\mean(\mu)}^2 \dx{\mu(x)} .
\end{equation}
We denote with $\CE(\mu_0,\mu_1)$ the set of pairs $(\mu,V)$, where $(\mu_t)_{t\in[0,1]}$ is a weakly continuous curve of probability measures in $\cP_2(\R^d)$ connecting $\mu_0$ and $\mu_1$ and $(V_t)_{t\in[0,1]}$ is a Borel family of vector fields such that the continuity equation
\begin{equation}
  \label{eq:CE-cov}
  \partial_t\mu_t+\nabla\cdot(\mu_t V_t)=0 
\end{equation}
holds in the distributional sense.
For $\xi\in \R^d$ and $C\in \S_{\succcurlyeq0}^d$, where $\S_{\succcurlyeq0}^d$ denotes the set of symmetric positive semi-definite matrices, we set
\begin{equation}\label{e:def:Cnorm}
    |\xi|^2_{C}:=\begin{cases}
        \ip{\xi}{C^{-1}\xi}\;,& \xi\in \im C\;,\\
        \infty\;, &\text{else}\;,
    \end{cases}
\end{equation}
where for $x,y\in \R^d:$ $\skp{x,y}$ is the standard Euclidean scalar product on $\R^d$. Also note that given $C\in \S_{\succcurlyeq0}^d$, we have the orthogonal decomposition $\R^d=\ker C\oplus \im C$. Therefore, the inverse $C^{-1}$ is well-defined on $\im C$. 

For symmetric matrices $X$ and $Y$, the notation $X \succcurlyeq Y$ (resp. $X \preccurlyeq Y$) means that $X - Y$ is positive semidefinite (resp. negative semidefinite), and similarly, $X \succ Y$ (resp. $X \prec Y$) means that
$X - Y$ is positive definite (resp. negative definite).

The first main object of study is the following modified optimal transport problem.
\begin{definition}[Covariance-modulated Optimal Transport]
\label{def:OT-cov-main}
Given  $\mu_0,\mu_1\in \cP_2(\R^d)$, set
\begin{equation}\label{eq:OT-cov-main}
\cW(\mu_0,\mu_1)^2 := \inf \left\{
\int_0^1\int \frac12 |V_t|^2_{\C(\mu_t)}\dd \mu_t\dd t~:~(\mu,V)\in\CE(\mu_0,\mu_1)\right\}\;.
\end{equation}
\end{definition}
An important first question is whether the covariance $\C(\mu_t)$ could become degenerate (singular) along curves in $\CE(\mu_0,\mu_1)$. Lemma~\ref{lem:apriori-cov} shows that if $C_0=\C(\mu_0)\succ 0$, then the same holds uniformly for any $t\in[0,1]$ provided the action \eqref{eq:def:action} of the curve is finite. 
Further, investigating cases where some directions of the initial or finite covariance are degenerate, the result shows that the evolution along curves of finite action always remains within subspaces where both $\C(\mu_0)$ and $\C(\mu_1)$ are non-degenerate. We formulate this condition for later reference in the following assumption.
\begin{assumption}\label{ass:Wfinite}
	The measures $\mu_0,\mu_1 \in\mathcal{P}_2(\R^d)$ are such that
	\[
	\im \C(\mu_0)=\im \C(\mu_1) \qquad \text{ and } \qquad  \mean(\mu_0)- \mean(\mu_1) \in \im \C(\mu_0) \,.
	\]
\end{assumption}
This assumption guarantees that $\cW(\mu_0,\mu_1)<\infty$, see Theorem~\ref{thm:main-cov}.
To keep the presentation brief, we sometimes work under the more practical assumption, that the measures $\mu_0,\mu_1 \in\mathcal{P}_2(\R^d)$ satisfy $\rank(\C(\mu_0))=\rank(\C(\mu_1))=d$, which is equivalent to $ \C(\mu_i) \succ 0$ for $i=0,1$. 
In other words, we sometimes choose to only deal with covariance matrices that are non-degenerate as otherwise, we can always reduce the problem to a potentially lower-dimensional subspace where non-degeneracy holds, as long as we are in the case $\cW(\mu_0,\mu_1)<\infty$.
	
By the direct method of calculus of variations, it is then easy to conclude, as in the classical case of the Wasserstein distance, that $\mathcal{W}$ actually defines a metric on 
\begin{equation}\label{eq:def:P+}
  \cP_{2,+}(\R^d):= \set*{\mu \in \cP_2(\R^d) : \C(\mu) \succ 0 }\,. 
\end{equation}
Alternatively, the same argument holds for measures that are non-degenerate on affine subspaces of $\cP_2(\R^d)$, thanks to Assumption~\ref{ass:Wfinite} (see Theorem~\ref{thm:W:metric} for the exact statement). 

Despite this complete characterization of the connected components of $\cP_2(\R^d)$ with respect to~$\mathcal{W}$, the existence of geodesics is a very challenging problem, since tightness of second moments is a priori not clear.
This becomes clearer and is tackled after first proving a decomposition of the covariance-modulated optimal transport problem. 

\subsubsection{Splitting in shape and moments up to rotation}\label{sec:intro:split}
To explain, how  the problem~\eqref{eq:OT-cov-main} splits into two minimization for mean and covariance, and one for a constrained transport problem, some preparations are needed.
For brevity, we introduce the notion of \emph{left square root} of a symmetric positive definite matrix $C\in\S_{\succ0}^d$: this is any (possibly non-symmetric) $A\in\R^{d\times d}$ with the property
\begin{align}
	\label{eq:Achoice}
	AA^\tran = C.
\end{align}
There is a high degree of non-uniqueness in the choice of $A$: multiplying \eqref{eq:Achoice} from left and right by the inverse of the (unique) symmetric positive definite square root $C^{\frac12}$, one sees that $C^{-\frac12}A\in\Orth(d)$, and conversely, for any $Q\in\Orth(d)$, the matrix $C^{\frac12}Q$ is a left square root of $C$. 
\begin{definition}[Normalization]\label{def:normalization}
	Given $\mu\in\cP_{2,+}(\R^d)$ with mean $m=\mean(\mu)\in\R^d$ and positive definite covariance matrix $\C(\mu)$, 
	for any left square root $A$ of $\C(\mu)$,
	define $T_{m,A}:\R^d\to\R^d$ by
	\begin{align}\label{eq:def:normalization}
		T_{m,A}(x)=A^{-1}(x-m) \qquad\text{and consequently}\qquad T_{m,A}^{-1}(x)=A x + m\;.
	\end{align}
	Then $(T_{m,A})_\#\mu$ is called a \emph{normalization of $\mu$}. The normalization with respect to the symmetric square root $A=\C(\mu)^{\frac{1}{2}}$ is denoted with $\widebar \mu$.
\end{definition}
The term \emph{normalization} reflects the fact that any such $\tilde \mu:=(T_{m,A})_\#\mu$ satisfies 
\[ \mean(\tilde\mu)=0\qquad\text{and}\qquad \C(\tilde\mu)=\Id. \]
Between two normalized measures, we introduce the \emph{constrained optimization problem}.
\begin{definition}[Covariance-constrained Optimal Transport]\label{def:constrainedOT}
	Given $\mu_0,\mu_1\in \cP_{0,\Id}(\R^d)$, that is $\mean(\mu_i)=0$ and $\C(\mu_i)=\Id$ for $i=0,1$, set
	\begin{equation}\label{eq:OT-cov-constraint-main}
		\cW_{0,\Id}(\mu_0,\mu_1)^2 := \inf\set*{\int_0^1\int \frac12|V_t|^2\dd \mu_t\dd t~:~(\mu,V)\in \CE_{0,\Id}(\mu_0,\mu_1)}\;,  
	\end{equation}
	where $\CE_{0,\Id}(\mu_0,\mu_1)$ is the set of pairs $(\mu,V)\in\CE(\mu_0,\mu_1)$ such that
	$\mean(\mu_t)=0$ and $\C(\mu_t)=\Id$ for all $t\in[0,1]$.
\end{definition}
It remains to specify the optimization problem for mean and covariance. For given $\mu_0,\mu_1\in \cP_{2,+}(\R^d)$, the respective minimization is carried out over a suitable subset of
\begin{equation}\label{e:def:MC}
	\MC(\mu_0,\mu_1) = \set*{(m,C) \in \AC([0,1],\R^d \times \S_{\succcurlyeq0}^d): m_i=\mean(\mu_i), C_i=\C(\mu_i), i=0,1 }.
\end{equation}
To single out that subset, auxiliary quantities are needed: take a curve $(m,C)\in\MC(\mu_0,\mu_1)$, and introduce a left square root $A_t$ for each $C_t$ via the solution to the initial value problem
\begin{equation}\label{eq:Adot}
	\dot A_t = \frac12 \dot C_tA_t^{-\tran} \qquad\text{with}\qquad A_0=C_0^{\frac{1}{2}} \;.
\end{equation}
For each given curve $C_t$ and choice of initial value $A_0$, the solution $A_t$ to \eqref{eq:Adot} is unique.
It is readily checked that $\dx{}/\dx{t}(A_tA_t^\tran )= \dot C_t$, so $A_t$ is indeed a left square root of $C_t$. This special choice of the left square root has been made to ensure symmetry of $A_t^{-1} \dot A_t$, which is crucial for the proof of the splitting theorem below. 
For given $C\in \AC([0,1],\S_{\succcurlyeq0}^d)$ and the curve $(A_t)_{t\in [0,1]}$ solving \eqref{eq:Adot}, we have an induced \emph{curve of co-rotations} defined by
\begin{align}
    \label{eq:def:RC}
    R:\AC([0,1],\S_{\succcurlyeq0}^d) \to \AC([0,1],\SO(d)) \qquad\text{with}\qquad 
    R[C]_t := C_t^{-\frac12}A_t.
\end{align}
Indeed, from $R[C]_0=\Id$, we deduce that $t\mapsto R[C]_t$ is absolutely continuous and $R[C]_t\in\SO(d)$ (see Remark~\ref{rem:rotation} for details). Further comments on the role of the rotation matrix $R[C]_t$ are postponed to Remark~\ref{rem:choiceR} (choice of left square root), Remark~\ref{rem:normalizations} (choice of normalization), Remark~\ref{rem:rotation} (evolution of rotation) and Remark~\ref{rem:Gaussian} (Gaussian targets).

With these preliminary definitions, we can formulate the \emph{moment optimization problem}.
\begin{definition}[Moment Optimization Problem]\label{def:MomentOT}
	For a fixed rotation $R\in \SO(d)$, set
	\begin{equation}\label{e:def:MCR}
		\MC_R(\mu_0,\mu_1)
		:=
		\set*{ (m,C)\in \MC(\mu_0,\mu_1): R[C]_1 = R \text{ in~\eqref{eq:def:RC}}} .
	\end{equation}
	The \emph{rotation-constrained moment optimization problem} is given by
	\begin{equation}
		\label{eq:mean-cov-min:R}
		\MOP_R(\mu_0,\mu_1)^2 = \inf\Big\{I(m,C)~:~(m,C)\in\MC_R(\mu_0,\mu_1)\Big\}\;,
	\end{equation}
	and the \emph{unconstrained moment optimization problem} is 
	\begin{equation}
		\label{eq:mean-cov-min}
		\MOP(\mu_0,\mu_1)^2 = \inf\Big\{I(m,C)~:~(m,C)\in\MC(\mu_0,\mu_1)\Big\} = \inf_{R\in \SO(d)} \MOP_R(\mu_0,\mu_1)^2 \;,
	\end{equation}
	where in both cases
	\begin{equation}\label{eq:def:MOP:action}
		I(m,C):=\int_0^1\frac12\skp*{\dot m_t,C_t^{-1}\dot m_t} +\frac18\tr\bra*{\dot C_tC^{-1}_t\dot C_tC^{-1}_t} \dd t\;.
	\end{equation}
\end{definition}

\begin{remark}\label{rem:equivalent:MOPactions}
The quantity to be minimized in \eqref{eq:mean-cov-min} can be equivalently rewritten in terms of $(A_t)_{t\in [0,1]}$ solving \eqref{eq:Adot} or in terms of the symmetric square root $\varSigma_t =C_t^{\frac{1}{2}}$ as
\begin{equation}\label{eq:equivalent:MOPactions}
\frac14\tr\big(\dot C_t C_t^{-1}\dot C_t  C_t^{-1}\big)= \tr\bigl(\dot A_t^\tran A_t^{-\tran}A_t^{-1}\dot A_t \bigr) = \Vert A_t^{-1}\dot A_t\Vert_\HS^2 =  \frac{1}{4}\norm*{\dot\varSigma_t\varSigma_t^{-1}+\varSigma_t^{-1}\dot\varSigma_t}_\HS^2 \;,
\end{equation}
where the last identity follows from~\eqref{eq:AinvdotA:varSigma} in Remark~\ref{rem:rotation} and $\norm{A}$ for any $A\in \R^{d\times d}$ denotes the \emph{Hilbert-Schmidt norm} defined by 
\begin{equation}\label{eq:def:HS}
	\norm{A}_{\HS}:= \tr\bigl(A^\tran\! A \bigr)^{1/2} = \abs[\Big]{\sum\nolimits_{i,j} A_{ij}^2}^{1/2} \;.
\end{equation}
Hence, the (rotation-constrained) moment optimization problem in Definition~\ref{def:MomentOT} can be equivalently expressed in terms of curves of square root matrices or the symmetric square root of $(C_t)_{t\in [0,1]}$ (see Section~\ref{sec:optimal-moments}).
\end{remark}

Since, $\MOP(\mu_0,\mu_1)$ only depends on the means and covariances of $\mu_0,\mu_1$, we will also use by slight abuse of notation $\MOP\bra[\big]{(m_0,C_0),(m_1,C_1)}$ and similarly, for $\MOP_R$, $\MC$, and $\MC_R$.

Our first key result is the following equivalent description of the covariance-modulated optimal transport problem. Recall that $\widebar\mu\in\cP_{0,\Id}(\R^d)$ denotes the normalization of $\mu\in\cP_{2,+}(\R^d)$ with respect to the symmetric and positive square root $\C(\mu)^{\frac12}$.
\begin{theorem}[Splitting the distance up to rotations]\label{thm:main-cov}
  Every $\mu_0,\mu_1\in \mathcal P_{2,+}(\R^d)$ satisfies $\cW(\mu_0,\mu_1)<\infty$ and
  \begin{align}\label{eq:rewrite-cov}
	\cW(\mu_0,\mu_1)^2 = \inf_{R\in\SO(d)} \set*{ \cW_{0,\Id}(R_\#\widebar\mu_0 , \widebar\mu_1)^2 + \MOP_R(\mu_0,\mu_1)^2 }.
  \end{align}
 \end{theorem}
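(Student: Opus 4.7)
The plan is to prove the two inequalities in \eqref{eq:rewrite-cov} by setting up a correspondence between admissible pairs $(\mu,V)\in\CE(\mu_0,\mu_1)$ and triples consisting of a moment curve $(m,C)$, a rotation $R\in\SO(d)$, and a covariance-constrained curve $(\nu,W)$. Given $(\mu_t,V_t)\in\CE(\mu_0,\mu_1)$ with finite action, Lemma~\ref{lem:apriori-cov} ensures $C_t:=\C(\mu_t)\succ 0$ for all $t$, so I may let $(A_t)$ solve~\eqref{eq:Adot} with $A_0=C_0^{1/2}$, put $R:=R[C]_1\in\SO(d)$, and define $T_t(x)=A_t^{-1}(x-m_t)$, $\nu_t:=(T_t)_\#\mu_t$. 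By construction $\mean(\nu_t)=0$ and $\C(\nu_t)=\Id$, so $(\nu_t)$ lives in the constrained class, and a short computation with the continuity equation identifies its velocity field as $W_t(y)=A_t^{-1}\bigl(V_t(A_ty+m_t)-\dot m_t-\dot A_ty\bigr)$.

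The heart of the argument is the decomposition of the action. Since $|V|^2_{C_t}=|A_t^{-1}V|^2$, the change of variables $y=T_t(x)$ gives
\[
\int\tfrac12|V_t|^2_{C_t}\dd\mu_t=\int\tfrac12\bigl|W_t+A_t^{-1}\dot A_t\,y+A_t^{-1}\dot m_t\bigr|^2\dd\nu_t.
\]
Expanding the square, the pure term is $\int\tfrac12|W_t|^2\dd\nu_t$, and the deterministic term produces $\tfrac12\langle\dot m_t,C_t^{-1}\dot m_t\rangle+\tfrac12\Vert A_t^{-1}\dot A_t\Vert_\HS^2$ using $\mean(\nu_t)=0$ and $\C(\nu_t)=\Id$; by Remark~\ref{rem:equivalent:MOPactions} this is exactly the integrand of $I(m,C)$. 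The main obstacle—and the point of the specific flow~\eqref{eq:Adot}—is to kill the two cross terms. The mean cross term vanishes because $\int W_t\dd\nu_t=\tfrac{d}{dt}\mean(\nu_t)=0$. The covariance cross term equals $\tr\!\bigl(A_t^{-1}\dot A_t\cdot K_t\bigr)$ with $K_t:=\int y W_t^\tran\dd\nu_t$; this matrix is antisymmetric because $K_t+K_t^\tran=\tfrac{d}{dt}\C(\nu_t)=0$, whereas $A_t^{-1}\dot A_t=\tfrac12 A_t^{-1}\dot C_tA_t^{-\tran}$ is symmetric precisely by the choice of~\eqref{eq:Adot}, so the trace vanishes. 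This yields $\int_0^1\!\int\tfrac12|V_t|^2_{C_t}\dd\mu_t\dd t=\int_0^1\!\int\tfrac12|W_t|^2\dd\nu_t\dd t+I(m,C)$.

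To read off the endpoints, note that $A_0=C_0^{1/2}$ forces $\nu_0=\widebar\mu_0$, while $A_1=C_1^{1/2}R$ gives $T_{m_1,A_1}=R^\tran T_{m_1,C_1^{1/2}}$, hence $\nu_1=R^\tran_\#\widebar\mu_1$. By the rotation invariance of $\cW_{0,\Id}$ (rotations preserve both the normalization constraint and the isotropic action), $\cW_{0,\Id}(\widebar\mu_0,R^\tran_\#\widebar\mu_1)=\cW_{0,\Id}(R_\#\widebar\mu_0,\widebar\mu_1)$, so the action of $(\mu,V)$ is bounded below by $\cW_{0,\Id}(R_\#\widebar\mu_0,\widebar\mu_1)^2+\MOP_R(\mu_0,\mu_1)^2$; optimizing over admissible $(\mu,V)$ gives ``$\ge$'' in \eqref{eq:rewrite-cov}. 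For the reverse inequality I invert the construction: for fixed $R\in\SO(d)$ and any $\eps$-minimizers $(m,C)\in\MC_R(\mu_0,\mu_1)$ and $(\nu,W)\in\CE_{0,\Id}(\widebar\mu_0,R^\tran_\#\widebar\mu_1)$, solve~\eqref{eq:Adot} for $A_t$ and set $\mu_t:=(T_{m_t,A_t}^{-1})_\#\nu_t$ with $V_t(x):=A_tW_t(T_{m_t,A_t}(x))+\dot A_tT_{m_t,A_t}(x)+\dot m_t$; a direct check confirms $(\mu,V)\in\CE(\mu_0,\mu_1)$ with $\mean(\mu_t)=m_t$ and $\C(\mu_t)=C_t$, and reversing the previous computation shows its action equals $\cW_{0,\Id}(R_\#\widebar\mu_0,\widebar\mu_1)^2+\MOP_R(\mu_0,\mu_1)^2+O(\eps)$. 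Sending $\eps\to 0$ and infimizing over $R$ proves ``$\le$'' and simultaneously yields $\cW(\mu_0,\mu_1)<\infty$, once one knows (from the earlier analysis of the two subproblems) that each piece on the right is finite for some admissible choice.
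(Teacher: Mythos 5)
Your proposal is correct and takes essentially the same route as the paper: it reproduces the content of Proposition~\ref{prop:split:curves} (normalize along the adapted square root solving \eqref{eq:Adot}, split the action exactly into the moment integrand plus the constrained shape action, with the symmetry of $A_t^{-1}\dot A_t$ responsible for killing the cross terms), identifies the terminal rotation $R[C]_1$ and uses rotation invariance of $\cW_{0,\Id}$, and then concludes via the two-step infimum \eqref{eq:doubleinf-cov}. The only cosmetic differences are that you cancel the cross terms pointwise in $t$ (antisymmetry of $\int y\,W_t^\tran\dd\nu_t$ against the symmetric $A_t^{-1}\dot A_t$) where the paper integrates a quadratic test function in time, you argue both inequalities directly with $\eps$-minimizers instead of invoking the characterization of optimal (gradient) velocity fields, and you deduce finiteness of $\cW$ from finiteness of the two subproblems rather than citing Theorem~\ref{thm:W:metric}.
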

It follows from Remark~\ref{rem:normalizations} that $\cW_{0,\Id}(R_\# \widebar\mu_0 , \widebar\mu_1)=\cW_{0,\Id}(\widebar\mu_0 , R^\tran_\#\widebar\mu_1)$ and therefore the expression on the right-hand side above is indeed symmetric in $\mu_0,\mu_1$. 
\begin{remark}[Relation of optimizers]\label{rem:rel-opt}
 Consider $\mu_t$ an optimizer for $\cW(\mu_0,\mu_1)$. It follows directly from the splitting result in Theorem~\ref{thm:main-cov} that  $(m_t,C_t)=(\mean(\mu_t),\C(\mu_t))$ is an optimizer for $\MOP_R(\mu_0,\mu_1)$ with $R$ given by $R[C]_1$ in \eqref{eq:def:RC} via the curve $C_t$. This $R$ is precisely the optimizer for the outer minimization problem on the right-hand side of \eqref{eq:rewrite-cov}. And defining $\hat\mu_t=\bra*{T_{m_t,A_t}}_\# \mu_t$ with $A_t$ solving \eqref{eq:Adot}, then $\hat\mu_t$ is the optimizer for the constrained optimization problem $\cW_{0,\Id}(\hat\mu_0 , \hat\mu_1)=\cW_{0,\Id}(R_\#\widebar\mu_0 , \widebar\mu_1)$.
\end{remark}
A splitting independent of the rotation can be obtained if one of the marginals has a \emph{spherical normalizations}, that is $\mu\in \cP_{2}(\R^d)$ satisfies 
\begin{equation}\label{eq:def:spherical-normalization}
	 R_\# \widebar\mu = \widebar\mu \qquad \text{ for all } R\in \SO(d) \,.
\end{equation}
\begin{corollary}[Splitting of normalized symmetric marginals]\label{cor:split-Wcov}
	Let $\mu_0,\mu_1\in \cP_{2,+}(\R^d)$ and assume one marginal has a spherical normalizations in the sense of~\eqref{eq:def:spherical-normalization}, then 
    \begin{align}\label{eq:Cov-Split:symmetric}
		\cW(\mu_0,\mu_1)^2 =  \cW_{0,\Id}(\widebar\mu_0 , \widebar\mu_1)^2 + \MOP(\mu_0,\mu_1)^2 .
	\end{align}
	In particular, the splitting holds if any of the two measures is a Gaussian.
\end{corollary}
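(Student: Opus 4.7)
The plan is to invoke Theorem~\ref{thm:main-cov} and show that, under the sphericity hypothesis, the constrained transport term $\cW_{0,\Id}(R_\#\widebar\mu_0, \widebar\mu_1)^2$ appearing on the right-hand side of~\eqref{eq:rewrite-cov} is independent of the rotation $R \in \SO(d)$. Once this is established, the outer infimum in~\eqref{eq:rewrite-cov} acts only on the moment term $\MOP_R(\mu_0,\mu_1)^2$ and reproduces $\MOP(\mu_0,\mu_1)^2$ by its very definition~\eqref{eq:mean-cov-min}, yielding the claimed identity~\eqref{eq:Cov-Split:symmetric}.

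I would treat the two possible locations of the spherical marginal separately. If $\widebar\mu_0$ satisfies~\eqref{eq:def:spherical-normalization}, then $R_\#\widebar\mu_0 = \widebar\mu_0$ for every $R \in \SO(d)$ by definition, so the factorization is immediate. If instead it is $\widebar\mu_1$ that is spherical, I would first apply the symmetry identity
\begin{equation*}
	\cW_{0,\Id}(R_\#\widebar\mu_0, \widebar\mu_1) = \cW_{0,\Id}(\widebar\mu_0, R^\tran_\#\widebar\mu_1)
\end{equation*}
already noted after Theorem~\ref{thm:main-cov}, and then use sphericity of $\widebar\mu_1$ applied to $R^\tran \in \SO(d)$ to collapse the right-hand side to the $R$-independent value $\cW_{0,\Id}(\widebar\mu_0, \widebar\mu_1)$. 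In either case the constant factors out of the infimum in~\eqref{eq:rewrite-cov}.

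The Gaussian statement would follow from checking that a Gaussian always has a spherical normalization: for $\mu = \normal_{m,C}$ with $C \succ 0$, the pushforward $\widebar\mu = (T_{m, C^{1/2}})_\# \mu$ equals the standard normal $\normal_{0,\Id}$, whose Lebesgue density depends only on $|x|$ and is therefore invariant under the full action of $\SO(d)$ on $\R^d$.

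I do not anticipate any substantive obstacle: once Theorem~\ref{thm:main-cov} is granted, the corollary reduces to a direct decoupling of the outer minimization. The only ingredient worth flagging is the symmetry identity used in the second case, but this is already stated in the excerpt as a consequence of the normalization discussion (Remark~\ref{rem:normalizations}) and needs no further justification here.
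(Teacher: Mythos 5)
Your proposal is correct and follows essentially the same route as the paper: invoke Theorem~\ref{thm:main-cov}, use the symmetry identity from Remark~\ref{rem:normalizations} together with sphericity to make the constrained term $R$-independent, and then the outer infimum collapses onto $\MOP_R$, giving $\MOP$ by definition~\eqref{eq:mean-cov-min}. Your verification that a Gaussian normalizes to the rotation-invariant $\normal_{0,\Id}$ likewise matches the paper's Remark~\ref{rem:Gaussian}.
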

A consequence of the splitting Theorem~\ref{thm:main-cov} is a two-sided comparison of the covariance-modulated, covariance-constrained and classical Wasserstein distances for measures with the same mean and covariance.
\begin{proposition}[Comparison for same mean and covariance]\label{prop:comp:W-W2}
	Let $\mu_0,\mu_1\in \cP_{2,+}(\R^d)$ with $\mean(\mu_0)=\mean(\mu_1)$ and $\C(\mu_0)=C=\C(\mu_1)$, then
	\begin{equation}\label{eq:comp:W-W2}
		\frac{W_2(\mu_0,\mu_1)^2}{2\lambda_{\max}(C)}
		\leq \inf_{R\in\SO(d)}  \cW_{0,\Id}(R_\#\widebar\mu_0 , \widebar\mu_1)^2 
		\leq \cW(\mu_0,\mu_1)^2
		\leq 
		\frac{W_2(\mu_0,\mu_1)^2}{\lambda_{\min}(C)}  .
	\end{equation}
	In particular, for $\C(\mu_0)=\Id=\C(\mu_1)$, all distances only differ by a factor of at most $\sqrt{2}$. In this case, any $\mu_0,\mu_1\in \cP_{0,\Id}(\R^d)$ also satisfy
	\begin{equation}
		\label{eq:W_2comp}
		\frac12 W_2(\mu_0,\mu_1)^2\leq \cW_{0,\Id}(\mu_0,\mu_1)^2\leq \frac12 W_2(\mu_0,\mu_1)^2 + o\big(W_2(\mu_0,\mu_1)^2\big) \;.
	\end{equation}
\end{proposition}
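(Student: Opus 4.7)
My plan treats the three nested inequalities separately, starting with the easiest. The middle inequality $\inf_R \cW_{0,\Id}(R_\#\widebar\mu_0,\widebar\mu_1)^2 \leq \cW(\mu_0,\mu_1)^2$ is immediate from the splitting identity of Theorem~\ref{thm:main-cov}: one simply drops the non-negative summand $\MOP_R(\mu_0,\mu_1)^2$ inside the infimum in \eqref{eq:rewrite-cov}.

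For the right-most inequality I use the classical $W_2$-displacement interpolation $(\mu_t,V_t)\in\CE(\mu_0,\mu_1)$ as a competitor for $\cW(\mu_0,\mu_1)$. The pointwise bound $\frac12 |V_t|^2_{\C(\mu_t)} \leq \frac12 |V_t|^2/\lambda_{\min}(\C(\mu_t))$ reduces the task to controlling $\lambda_{\min}(\C(\mu_t))$ from below along the geodesic. Using the interpolation formula
$\C(\mu_t) = [(1-t)^2+t^2]\,C + 2t(1-t)\,\Sigma_s$,
where $\Sigma_s$ is the symmetric part of the cross-moment $\int(x-m)(T(x)-m)^\tran \dd\mu_0$ constructed from the Brenier map $T$ between $\mu_0$ and $\mu_1$, the equality $\C(\mu_0)=\C(\mu_1)=C$ together with the cyclical monotonicity of $T$ should yield the positivity $\Sigma_s\succcurlyeq 0$. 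This gives $\C(\mu_t) \succcurlyeq [(1-t)^2+t^2]\,C \succcurlyeq \frac12 C$, and integrating over $t\in[0,1]$ produces the factor $1/\lambda_{\min}(C)$. Establishing $\Sigma_s\succcurlyeq 0$ in the presence of the matched-covariance constraint is the main technical obstacle here.

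For the left-most inequality I fix $R\in\SO(d)$ and a near-minimiser $(\hat\mu_t,\hat V_t)\in\CE_{0,\Id}(R_\#\widebar\mu_0,\widebar\mu_1)$, and push it forward by the affine map $\Phi_R(y)=C^{1/2}R^\tran y+m$, whose linear part $C^{1/2}R^\tran$ is a left square root of $C$. Because $\hat\mu_t$ has constant covariance $\Id$, the pushed curve $\mu_t:=(\Phi_R)_\#\hat\mu_t$ has $\C(\mu_t)\equiv C$ along the whole interpolation, and its velocity $V_t(x) = C^{1/2}R^\tran\hat V_t(\Phi_R^{-1}(x))$ satisfies the pointwise bound $|V_t|^2\leq\lambda_{\max}(C)\,|V_t|^2_{\C(\mu_t)}$. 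Integrating this inequality yields $W_2(\mu_0,(\Phi_R)_\#\widebar\mu_1)^2 \leq 2\lambda_{\max}(C)\,\cW_{0,\Id}(R_\#\widebar\mu_0,\widebar\mu_1)^2$, and taking the infimum over $R$ converts the left-hand side to $W_2(\mu_0,\mu_1)^2$ provided the minimising rotation $R$ is chosen so that $(\Phi_R)_\#\widebar\mu_1$ realises $\mu_1$; this exploits the $\SO(d)$-degree of freedom in the polar decomposition $\GL_+(d)\simeq\S_{\succ 0}^d\times\SO(d)$ and is the step that requires the most care.

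The specialisation $C=\Id$ collapses $\lambda_{\min}(C)=\lambda_{\max}(C)=1$ and yields the $\sqrt 2$-factor assertion. The refined upper bound $\cW_{0,\Id}(\mu_0,\mu_1)^2 \leq \frac12 W_2^2 + o(W_2^2)$ on $\cP_{0,\Id}$ pairs the elementary lower bound $\frac12 W_2^2 \leq \cW_{0,\Id}^2$ (from restricting the admissible curves to those with $\C\equiv\Id$) with a matching upper bound obtained by applying a time-dependent affine correction $\Psi_t(x) = \C(\mu_t^W)^{-1/2}\bigl(x-\mean(\mu_t^W)\bigr)$ to the Wasserstein geodesic $\mu_t^W$: this correction restores mean $0$ and covariance $\Id$ at every time while equalling the identity at $t=0,1$, and its extra action is of second order in the covariance drift $\C(\mu_t^W)-\Id$, which is itself $O(W_2^2)$, producing the $o(W_2^2)$ remainder.
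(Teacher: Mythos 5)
Your middle inequality (drop $\MOP_R\ge 0$ in the splitting of Theorem~\ref{thm:main-cov}) and your right-most inequality are fine and follow the paper's route: the positivity of the symmetrised cross-moment that you single out as the ``main technical obstacle'' is exactly $\Cov(\gamma)\succcurlyeq 0$ in Proposition~\ref{prop:cov_along_W2}, proved there from monotonicity of the Brenier map for arbitrary marginals (the matched-covariance hypothesis plays no role in it), and this is precisely how the upper bound in Theorem~\ref{thm:W:metric} is obtained. Likewise your argument for \eqref{eq:W_2comp} is the paper's: the lower bound by restriction of admissible curves, and the upper bound by normalising the $W_2$-geodesic (only the covariance needs correcting, since the mean stays $0$), with the cross term controlled by a Young inequality with parameter of order $W_2$ so that the correction, which is quadratic in $\C(\mu_t^W)-\Id=O(W_2^2)$, contributes only $o(W_2^2)$.

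The genuine gap is the left-most inequality in \eqref{eq:comp:W-W2}. Your map $\Phi_R(y)=C^{1/2}R^\tran y+m$ does send $R_\#\widebar\mu_0$ to $\mu_0$, but it sends $\widebar\mu_1$ to the law of $C^{1/2}R^\tran C^{-1/2}(X_1-m)+m$, $X_1\sim\mu_1$, which equals $\mu_1$ only if $R=\Id$ or if $C^{1/2}R^\tran C^{-1/2}$ happens to preserve the recentred $\mu_1$ --- a rigid symmetry condition on $\mu_1$, not something the optimisation over $R$ can manufacture. So for a general $R$ your pushed-forward curve connects $\mu_0$ to the wrong second marginal, and the inequality you would get, $W_2(\mu_0,(\Phi_R)_\#\widebar\mu_1)^2\le 2\lambda_{\max}(C)\,\cW_{0,\Id}(R_\#\widebar\mu_0,\widebar\mu_1)^2$, does not pass to $\inf_R$ on the right with $W_2(\mu_0,\mu_1)^2$ on the left. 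Worse, an $R$-by-$R$ bound of the form $W_2(\mu_0,\mu_1)^2\le 2\lambda_{\max}(C)\,\cW_{0,\Id}(R_\#\widebar\mu_0,\widebar\mu_1)^2$ cannot hold in general: if $\widebar\mu_1=R_\#\widebar\mu_0$ for some nontrivial rotation $R$ (e.g.\ normalised marginals that are rotations of each other), the right-hand side vanishes for that $R$ while $W_2(\mu_0,\mu_1)>0$, so the infimum over $R$ cannot be reached by your per-$R$ comparison. What your construction does prove, with $R=\Id$, is $W_2(\mu_0,\mu_1)^2\le 2\lambda_{\max}(C)\,\cW_{0,\Id}(\widebar\mu_0,\widebar\mu_1)^2$.

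You should also be aware that the paper's own proof of this first inequality is subject to the same caveat: it writes $\mu_0$ and $\mu_1$ as push-forwards of $R_\#\widebar\mu_0$ and $\widebar\mu_1$ under one and the same affine map $x\mapsto C^{1/2}Rx+m$ before applying the Lipschitz estimate of \cite[Lemma 3.1]{CarrilloVaes}, and these identities hold simultaneously only for special $R$ (in particular $R=\Id$). So the difficulty you flagged is real and is not resolved by ``choosing the minimising rotation''; closing it requires either restricting the statement to the $R=\Id$ term (as in the remark following the proposition) or an argument that couples the rotation in $\cW_{0,\Id}(R_\#\widebar\mu_0,\widebar\mu_1)$ with a compensating transformation of $\mu_1$, which neither your proposal nor the quoted proof supplies.
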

\begin{remark}[Rotation dependency]
	In the setting of Proposition~\ref{prop:comp:W-W2}, we also obtain the comparison
	\[
	\cW(\mu_0,\mu_1)^2
	\leq
	\cW_{0,\Id}(\widebar\mu_0,\widebar\mu_1)^2
	\]
	Indeed, this estimate follows by choosing $R=\Id$ in the splitting formula and observing that $(m_t,C_t)\equiv \bra*{\mean(\mu_0),C}$ for $t\in[0,1]$ is an admissible curve for $\MOP_{\Id}(\mu_0,\mu_1)$ in this case, and so $\MOP_{\Id}(\mu_0,\mu_1)=0$.
	We leave the study of the exact dependency of $\cW_{0,\Id}(R_\#\cdot,\cdot)$ and $\MOP_R(\cdot,\cdot)$ on the rotation $R\in \SO(d)$ for later works.
\end{remark}

\subsubsection{Existence of geodesics}\label{sec:intro:geodesics}

The existence of geodesics for the covariance-modulated optimal transport problem turns out to be a non-trivial problem.
The main difficulty in comparison to classical optimal transport and its recent variants is a lack of joint convexity and hence lower semicontinuity of the mapping 
\[
(\mu, J) := (\mu, \mu V) \mapsto  \frac{1}{2}\int \abs*{V}_{\C(\mu)}^2 \dx{\mu} . 
\]
In particular, it is not straightforwardly possible to pass to the limit in a minimizing sequence $(\mu^n, V^n)$ for the problem~\eqref{eq:OT-cov-intro}. Indeed, one can prove easily using classical arguments from optimal transport that even if $\mu^n\rightharpoonup \mu$ and $\mu^n V^n\rightharpoonup \mu V$, it might still happen that $\C(\mu^n) \not\to \C(\mu)$. The question about the convergence of the covariance matrix is critical in the sense that the action functional for the classical optimal transport as well as for covariance-modulated optimal transport only provides boundedness of second moments, but in general, does not imply tightness of the second moment.
We are able to prove tightness by a contradiction argument provided that the distance of the marginals is small enough.
\begin{theorem}[Existence of modulated and shape geodesics I]\label{thm:existence-smalldist}
\hfill  
\begin{itemize}
\item[(1)] Any $\mu_0,\mu_1\in \cP_{0,\Id}(\R^d)$ with $\cW_{0,\Id}(\mu_0,\mu_1)^2<\frac18$ are connected by a $\cW_{0,\Id}$-geodesic.
\item[(2)] Any $\mu_0,\mu_1\in \mathcal P_{2,+}(\R^d)$ with 
$\cW(\mu_0,\mu_1)^2<\frac18 + \MOP(\mu_0,\mu_1)$
are connected by a $\cW$-geodesic.
\end{itemize}
\end{theorem}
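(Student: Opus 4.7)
My proof of both parts follows the direct method. I extract from a minimizing sequence a candidate curve, pass to the limit in the continuity equation, and pass to the limit in the action by weak lower semicontinuity of the Benamou-Brenier functional. The delicate point is that weak convergence $\mu^n_t\rightharpoonup\mu_t$ together with $\C(\mu^n_t)=\Id$ only yields $\C(\mu_t)\preccurlyeq\Id$ by Fatou, since a thin escaping tail of $\mu^n_t$ can carry a macroscopic part of the covariance. The smallness threshold~$\tfrac18$ is exactly what precludes this escape.

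\textbf{Part (1).} Let $(\mu^n,V^n)\in\CE_{0,\Id}(\mu_0,\mu_1)$ be minimizing. Along admissible curves the cost $|V|^2_{\C(\mu_t)}$ reduces to $|V|^2$, so the Benamou-Brenier estimate gives $W_2(\mu^n_s,\mu^n_t)^2\le 2|t-s|\cW_{0,\Id}(\mu_0,\mu_1)^2+o(1)$. The constraint $\int|x|^2\dd\mu^n_t=d$ then gives tightness of $\{\mu^n_t\}$ for every $t$, and the standard Benamou-Brenier compactness (cf.\ Ambrosio--Gigli--Savar\'e) yields, along a subsequence, $\mu^n_t\rightharpoonup\mu_t$ pointwise in $t$ and $\mu^n_tV^n_t\rightharpoonup\mu_tV_t$ as distributions, with $(\mu,V)\in\CE(\mu_0,\mu_1)$ and
\begin{equation*}
\tfrac12\int_0^1\!\!\int|V_t|^2\dd\mu_t\dd t\le\liminf_{n\to\infty}\tfrac12\int_0^1\!\!\int|V^n_t|^2\dd\mu^n_t\dd t=\cW_{0,\Id}(\mu_0,\mu_1)^2.
\end{equation*}
The hard step is ensuring $\C(\mu_t)=\Id$ for every $t$. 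I plan to argue by contradiction: if $\C(\mu_{t_*})\prec\Id$ at some $t_*$, then a vanishing mass $\epsilon_n\to 0$ of $\mu^n_{t_*}$ must sit at radius tending to infinity while carrying a macroscopic second moment. Testing the continuity equation against $x\mapsto(|x|-R)_+^2$ and invoking Cauchy--Schwarz yields the uniform tail bound
\begin{equation*}
\sqrt{\int(|x|-R)_+^2\dd\mu^n_t}\le\sqrt{\int(|x|-R)_+^2\dd\mu_0}+\sqrt{t\cdot 2\,\cW_{0,\Id}(\mu_0,\mu_1)^2+o(1)}\,.
\end{equation*}
Combining this with the rigid identity $\int|x|^2\dd\mu^n_{t_*}=d$ and tracking the Cauchy--Schwarz loss forces the total action to exceed $\tfrac14$, contradicting $2\cW_{0,\Id}(\mu_0,\mu_1)^2<\tfrac14$. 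Hence $(\mu,V)\in\CE_{0,\Id}(\mu_0,\mu_1)$ attains the infimum. Making the bookkeeping for the threshold rigorous is the main technical obstacle.

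\textbf{Part (2).} Apply the splitting of Theorem~\ref{thm:main-cov}. The functional $R\mapsto\cW_{0,\Id}(R_\#\widebar\mu_0,\widebar\mu_1)^2+\MOP_R(\mu_0,\mu_1)^2$ is lower semicontinuous on the compact group $\SO(d)$ (the first summand by the compactness argument in Part~(1) combined with continuity of the push-forward under rotations, the second by coercivity of $I$ from~\eqref{eq:def:MOP:action}), so the infimum is attained at some $R^*\in\SO(d)$. Since $\MOP_{R^*}\ge\MOP$, the hypothesis (read as $\cW^2<\tfrac18+\MOP(\mu_0,\mu_1)^2$) implies
\begin{equation*}
\cW_{0,\Id}(R^*_\#\widebar\mu_0,\widebar\mu_1)^2\le\cW(\mu_0,\mu_1)^2-\MOP(\mu_0,\mu_1)^2<\tfrac18,
\end{equation*}
so Part~(1) provides an optimal shape curve $\widehat\mu$ between the normalized marginals. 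The finite-dimensional moment problem $\MOP_{R^*}$ admits an optimizer $(m^*,C^*)$ by the direct method applied to the coercive convex action $I$ on $\AC([0,1],\R^d\times\S_{\succcurlyeq0}^d)$, using Lemma~\ref{lem:apriori-cov} to prevent degeneracy along sequences of bounded action. Reassembling $\mu_t=(T_{m^*_t,A^*_t}^{-1})_\#\widehat\mu_t$ with $A^*_t$ built from $C^*_t$ via~\eqref{eq:Adot} then identifies, by Remark~\ref{rem:rel-opt}, a $\cW$-geodesic connecting $\mu_0$ and $\mu_1$.
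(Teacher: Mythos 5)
Your overall skeleton (direct method, superposition/Benamou--Brenier lower semicontinuity, and in Part (2) the reduction via the splitting to an optimal rotation plus the two subproblems) is sound, but the crux of Part (1) --- the step where the threshold $\tfrac18$ is used --- does not work as you describe, and Part (2) inherits the gap since it rests on Part (1). Your plan is to derive a \emph{direct} contradiction: if $\C(\mu_{t_*})\neq\Id$ in the limit, some second moment escapes to infinity along $\mu^n_{t_*}$, and a tail estimate should force the action above $\tfrac14$. This cannot succeed quantitatively: the covariance deficit $\eps:=d-\int|x|^2\dd\mu_{t_*}$ can be \emph{arbitrarily small} while still nonzero, and transporting mass $m_n\to0$ out to radius $R_n$ with $m_nR_n^2\approx\eps$ at time $t_*$ and back to the (tight) marginals costs kinetic action only of order $\eps$ (roughly $\tfrac{\eps}{t_*}+\tfrac{\eps}{1-t_*}\ge 4\eps$ before the factor $\tfrac12$). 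So loss of tightness is perfectly compatible with admissible curves of action far below $\tfrac18$; no fixed lower bound like $\tfrac14$ can be extracted from testing the continuity equation against $(|x|-R)_+^2$, and the "bookkeeping" you defer is not a technicality but the missing idea.

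The threshold $\tfrac18$ enters through a \emph{variational improvement}, not an a priori estimate: one must show that a minimizing sequence cannot lose tightness because otherwise it can be strictly improved. Concretely (this is the paper's route), lift $(\mu^n,V^n)$ to measures on path space by the superposition principle, replace the trajectories with $|\gamma_{t_0}|\ge k$ by straight segments between their endpoints, and then renormalize the perturbed curve so that mean and covariance are again $0$ and $\Id$. The rerouting saves an amount comparable to the escaping action $\int_0^1 F_s\,\dd s$ (and $\int_0^1F_s\,\dd s\gtrsim 4\eps$), while restoring the covariance constraint costs at most about $4\cA^0\int_0^1F_s\,\dd s$, where $\cA^0$ is the limiting action; the competitor is strictly better exactly when $4\cA^0<\tfrac12$, i.e.\ $\cA^0<\tfrac18$, contradicting minimality of the sequence (not smallness of the action of an arbitrary admissible curve). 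Your Part (2) reduction is essentially fine modulo this: the reading of the hypothesis with $\MOP(\mu_0,\mu_1)^2$ is the intended one, and attainment of the infimum over $R\in\SO(d)$ can be justified (e.g.\ $R\mapsto\cW_{0,\Id}(R_\#\widebar\mu_0,\widebar\mu_1)$ is continuous because pure rotation curves $t\mapsto R(t)_\#\widebar\mu_0$ stay in $\cP_{0,\Id}(\R^d)$ with small action, plus the triangle inequality); also note that $I(m,C)$ is not obviously jointly convex, so for the moment problem you should argue as in Proposition~\ref{prop:opt-mC-ex} via the $A$-parametrization and weak $H^1$ compactness rather than by convexity. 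But without the improvement construction above, the existence claim in Part (1), and hence in Part (2), is not established.
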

We also present a second approach to existence of geodesics for the constrained transport problem assuming reflection symmetry of the marginals in the following sense but no restriction on the distance.
\begin{definition}[$d$-fold reflection symmetry]\label{def:reflection-symmetry}
	The reflection operators $\sigma_1,\ldots,\sigma_d:\R^d\to\R^d$ along the $d$ canonical hyperplanes are defined by 
	\[
	\sigma_k((x_1,...,x_{k-1},x_k,x_{k+1},...,x_d)) := (x_1,...,x_{k-1},-x_k,x_{k+1},...,x_d)\,.
	\]   
	A measure $\mu\in\cP(\R^d)$ has a \emph{$d$-fold reflection symmetry} if $(\sigma_k)_\#\mu=\mu$ for all $k=1,\ldots,d$.
\end{definition}
\begin{theorem}[Existence of shape geodesics II]\label{thm:ex-opt-shape}
Let $\mu_0,\mu_1\in \mathcal P_{0,\Id}(\R^d)$ be absolutely continuous w.r.t.~Lebesgue measure and with $d$-fold reflection
symmetry (see Defintion~\ref{def:reflection-symmetry}). Then $\mu_0,\mu_1$ are connected by a $\cW_{0,\Id}$-geodesic.
\end{theorem}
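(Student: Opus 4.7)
The plan is to establish existence by the direct method of the calculus of variations, using the $d$-fold reflection symmetry to rescue the tightness of the covariance matrix that was the main obstruction in Theorem~\ref{thm:existence-smalldist}.

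First I would take a minimizing sequence $(\mu^n,V^n)\in\CE_{0,\Id}(\mu_0,\mu_1)$ for $\cW_{0,\Id}(\mu_0,\mu_1)^2$ and symmetrize it: since both marginals are invariant under the reflection group $G=\{\pm 1\}^d$ acting componentwise on $\R^d$, and the kinetic action $(\mu,\mu V)\mapsto \int_0^1\!\int \tfrac12|V|^2\dx{\mu}\dx{t}$ is jointly convex in $(\mu,\mu V)$ and equivariant under each $\sigma_k$, averaging each $\mu^n_t$ over the $G$-action (with correspondingly transformed vector fields) produces a new minimizing sequence of $d$-fold reflection symmetric curves. The constraints $\mean(\mu^n_t)=0$ and $\C(\mu^n_t)=\Id$ are preserved, because the symmetry forces a zero mean and a diagonal covariance matrix whose diagonal entries are left invariant by averaging.

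Next I would invoke the superposition principle to lift each curve to a probability measure $\eta^n$ on $C([0,1];\R^d)$ with $(e_t)_\#\eta^n=\mu^n_t$ and $\int\!\int_0^1|\dot\gamma_t|^2\dx{t}\dx{\eta^n(\gamma)}\leq A$ where $A:=\cW_{0,\Id}(\mu_0,\mu_1)^2+o(1)$. The reflection symmetry is inherited on path space by averaging. Combined with $\int|\gamma_0|^2\dx{\eta^n}=d$ (from $\C(\mu_0)=\Id$), the action bound implies $\int\sup_{t\in[0,1]}|\gamma_t|^2\dx{\eta^n}\leq 2d+2A$, and a standard Arzel\`a-Ascoli type argument on $C([0,1];\R^d)$ provides tightness of $(\eta^n)$. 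Extracting a subsequential weak limit $\eta^\infty$ and setting $\mu^\infty_t:=(e_t)_\#\eta^\infty$, weak lower semicontinuity of the action functional yields $\mu^\infty_0=\mu_0$, $\mu^\infty_1=\mu_1$, and a limiting pair $(\mu^\infty,V^\infty)$ solving the continuity equation with action bounded by $\cW_{0,\Id}(\mu_0,\mu_1)^2$.

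The main obstacle is then to verify that the limit still satisfies $\C(\mu^\infty_t)=\Id$ for every $t$, as weak convergence with bounded second moments only guarantees $\C(\mu^\infty_t)\preccurlyeq\Id$ a priori. By the inherited reflection symmetry, $\C(\mu^\infty_t)$ is diagonal; denote its $k$-th entry $m^\infty_k(t)\leq 1$. Using $W_2$-equicontinuity of the sequence via Proposition~\ref{prop:comp:W-W2} together with the $W_2$-lower semicontinuity of weak convergence with uniform second moment bound, $\mu^\infty$ is $W_2$-continuous and each $m^\infty_k$ is continuous in $t$ with $m^\infty_k(0)=m^\infty_k(1)=1$. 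To rule out any dip $m^\infty_k(t_0)=1-\delta<1$, I would work on the lifted picture: such a dip forces a positive fraction of $\eta^n$-mass to perform an excursion with $|\gamma^k_{t_0}|\to\infty$ and to return to bounded distance by $t=1$. Cauchy-Schwarz applied to $\int_0^{t_0}|\dot\gamma^k|\dx{t}$ and $\int_{t_0}^1|\dot\gamma^k|\dx{t}$ forces the kinetic contribution of these paths to diverge, contradicting the uniform action bound. Absolute continuity of $\mu_0,\mu_1$ enters by preventing the endpoint distributions from concentrating, so that the initial and terminal positions of the runaway paths are truly bounded on a set of positive mass and the excursion cost is genuinely charged. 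Making this quantitative accounting rigorous is the technical heart of the proof.

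Once $\C(\mu^\infty_t)=\Id$ is established, $(\mu^\infty,V^\infty)\in\CE_{0,\Id}(\mu_0,\mu_1)$, and by weak lower semicontinuity of the action it is a minimizer, hence the desired $\cW_{0,\Id}$-geodesic.
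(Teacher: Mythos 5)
Your overall strategy (direct method on a symmetrized minimizing sequence, superposition principle, lower semicontinuity) is not the route the paper takes, and it contains a genuine gap at exactly the step you identify as the technical heart. The claim that a dip $m^\infty_k(t_0)=1-\delta<1$ forces ``a positive fraction of mass'' to run off to infinity, with Cauchy--Schwarz then making the kinetic contribution \emph{diverge}, is false. Loss of covariance under weak convergence with bounded second moments happens through a \emph{vanishing} mass fraction $\eps_n\to 0$ travelling to distance $R_n\to\infty$ with $\eps_n R_n^2$ bounded away from zero; the round trip of these paths costs, by exactly your Cauchy--Schwarz estimate, an action of order $\eps_n R_n^2\bigl(t_0^{-1}+(1-t_0)^{-1}\bigr)$, which is bounded (of the same order as the lost covariance), not infinite, and is therefore perfectly compatible with the uniform action bound. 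This criticality of the constraint is precisely why the paper's Proposition~\ref{prop:minimizing_sequence} must build a rerouting competitor and only obtains a contradiction under the smallness hypothesis $\cW_{0,\Id}(\mu_0,\mu_1)^2<\tfrac18$ in Theorem~\ref{thm:existence-smalldist}; your argument uses the reflection symmetry only to diagonalize $\C(\mu^\infty_t)$, which does nothing to remove this obstruction, so the proof cannot close as written. The role you assign to absolute continuity (preventing endpoint concentration) is also not operative: the endpoint marginals are fixed with tight second moments regardless.

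For comparison, the paper's proof of Theorem~\ref{thm:ex-opt-shape} (via Theorem~\ref{thm:existence-symmetry}) avoids compactness altogether. It constructs an explicit saddle point of a Lagrangian on path space: the symmetry forces the covariance multiplier to be a constant diagonal matrix $\widebar\Lambda=\diag(\omega_1^2,\dots,\omega_d^2)$ and the mean multiplier to vanish, so optimal trajectories solve $\ddot\gamma_k+\omega_k^2\gamma_k=0$ and are sinusoidal interpolations; the problem then reduces to a classical $W_2$-transport between the dilated measures $G^\omega_\#\mu_0$ and $G^\omega_\#\mu_1$, the off-diagonal covariance constraints hold automatically by symmetry and monotonicity of the optimal plan, and the on-diagonal constraints are matched by a Brouwer fixed-point argument in $\omega\in[0,\pi/2]^d$. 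Absolute continuity is used there to get uniqueness of the optimal plan, hence continuity of the fixed-point map. If you want to pursue a direct-method proof, you would need to show that under $d$-fold symmetry the rerouting competitor of Proposition~\ref{prop:minimizing_sequence} strictly beats any sequence that loses covariance without a smallness assumption, which is a substantial new ingredient rather than a routine tightness check.
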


The proof of Theorem~\ref{thm:ex-opt-shape} follows from Theorem~\ref{thm:existence-symmetry}, where we use a weak dual formulation for the covariance-constrained optimal transport problem (see Theorem~\ref{thm:infsup} and Section~\ref{sec:optimal-constraint} below for the formal duality representation).
The general existence of geodesics, without axis symmetry, is open. Hereby, we expect that the understanding of rotations is crucial, which in Theorem~\ref{thm:existence-symmetry} are ruled out due to the assumed axis symmetry.

The proof of Theorem~\ref{thm:ex-opt-shape} is based on a fixed-point argument, which we numerically implemented for empirical measures. 
In the two examples displayed in Figure~\ref{fig:ex:normalized:Wasserstein}, we compare the geodesics obtained for covariance-constrained optimal transport with normalized Wasserstein geodesics. More precisely, for $\mu_0,\mu_1$, let $(\mu_t)_{t\in [0,1]}$ be the Wasserstein geodesic and we compare with its normalization $(\widebar\mu_t)_{t\in[0,1]}$ according to Definition~\ref{def:normalization}. Our main observation is that both the plans and the trajectories are subtly different and a direct relationship is not apparent. Let us emphasize that this is in stark contrast to the situation for variance-constrained optimal transport, where the constrained geodesics are the normalization of Wasserstein ones, up to re-parametrization (see Remark~\ref{rem:varGeos:NormalW2} explaining the result of \cite{CarlenGangbo2003}).
\begin{figure}[htbp]
	\centering
	\hspace{0.05\textwidth}
	\includegraphics[width=0.4\textwidth]{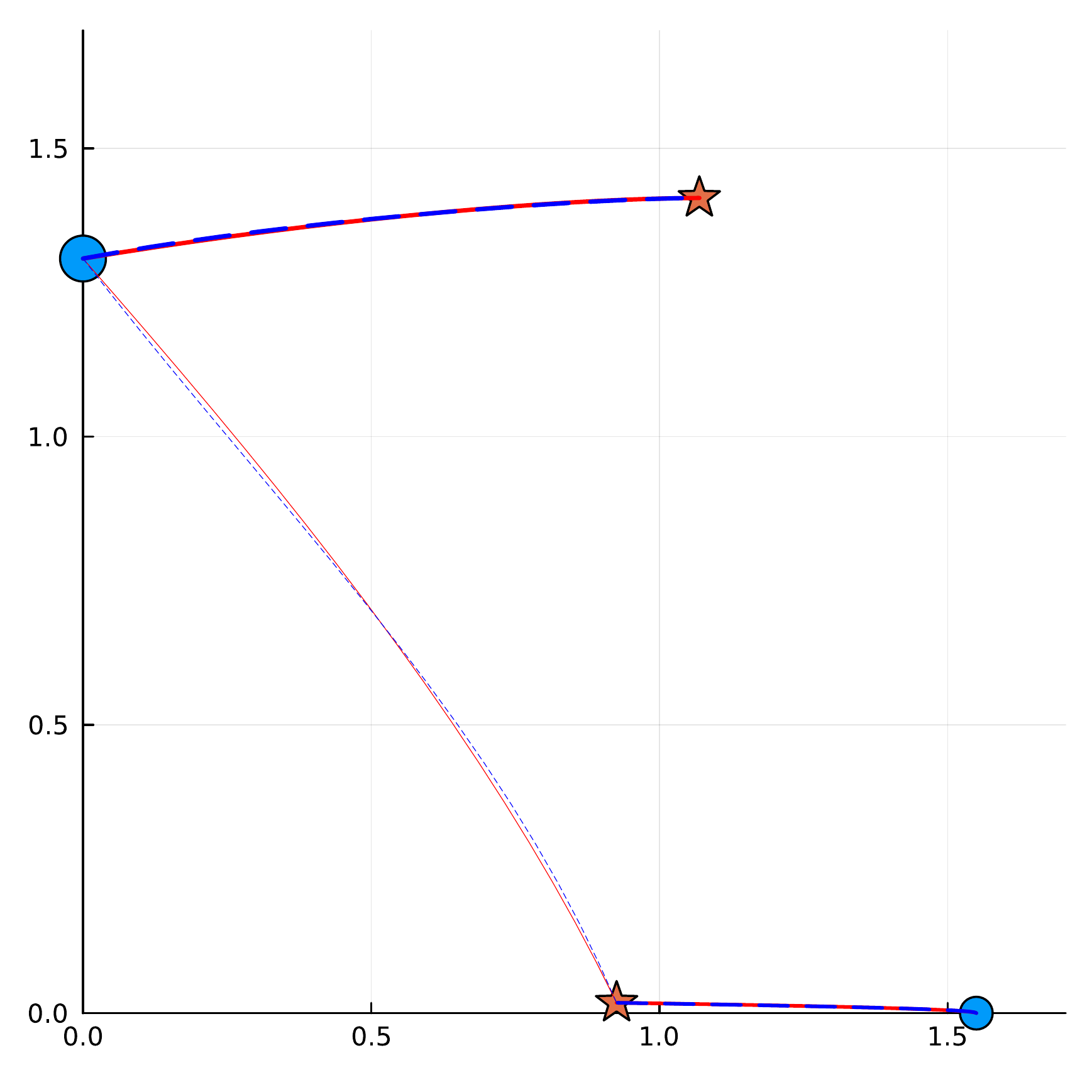}\hfill
	\includegraphics[width=0.4\textwidth]{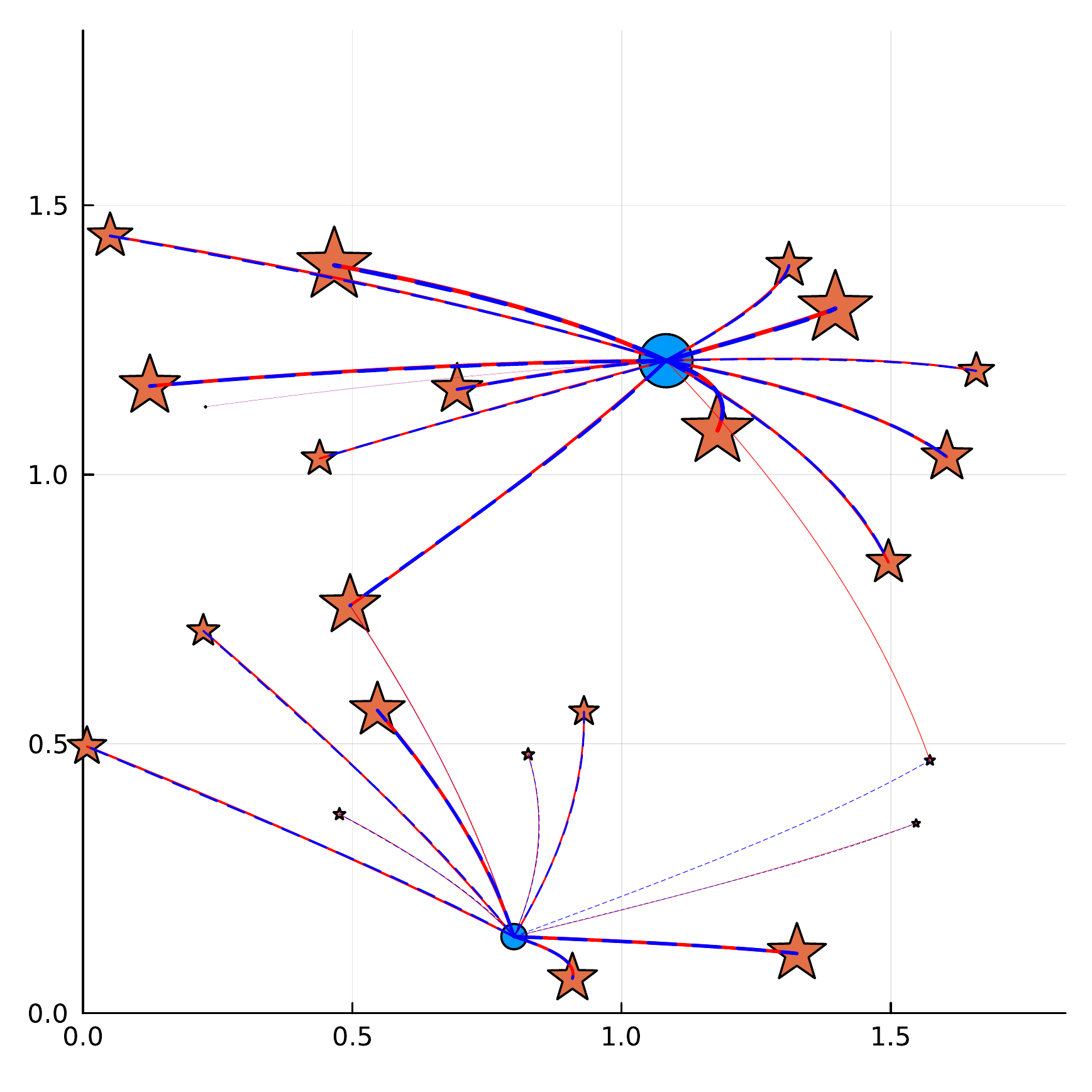}
	\hspace{0.05\textwidth}
	\caption{Comparison of the geodesics for covariance-constrained optimal transport (red) and the normalized geodesics for the classical Wasserstein distance (blue dashed). The first and second marginal are depicted with blue circles and red stars, respectively, with size representing the mass. \\
		The left picture highlights the fact, that the trajectory of mass transport are subtly different, as best observed in the transport from $(0,1.35)$ to $(0.05,0.9)$.\\
		The right picture exemplifies that the plans itself might differ, which is seen that the atom at $(1.6,0.45)$ receives mass from different sources in the covariance-constrained and normalized Wasserstein case.}\label{fig:ex:normalized:Wasserstein}
\end{figure}
Since Theorem~\ref{thm:ex-opt-shape} does not cover empirical measures, we provide in Section~\ref{sec:geo:Exs} further Examples covered by our theory highlighting both observations in a rigorous way.

We also investigate existence of optimizers for the rotational constrained and unconstrained moment problems.
\begin{theorem}[Existence of rotationally constrained moment geodesics]\label{thm:ex-Ropt-moments}
Let $C_0,C_1\in\S_{\succcurlyeq0}^d$, $R\in \SO(d)$ and $m_0,m_1\in\R^d$ such that Assumption~\ref{ass:Wfinite} holds, i.e. $m_1-m_0\in \im C_0=\im C_1$.
Then there exists an optimal pair $(m_t,C_t)_{t\in[0,1]}$ achieving the infimum for $\MOP_R$ in \eqref{eq:mean-cov-min:R}, and is given by $C_t = A_t A_t^\tran$ with $(A_t)_{t\in [0,1]}$  such that $\dot A_t A_t^\tran \in \S^d$ for $t\in[0,1]$ satisfying $A_0 = C_0^{\frac{1}{2}}$, $A_1=C_1^{\frac{1}{2}}R$ and solving the following optimality conditions: there exist $\alpha\in \R^d$ and a skew-symmetric matrix $Q$ such that  
\begin{subequations}\label{eq:EL2R}
\begin{align}
	A_t^{-\tran} A_t^{-1} \dot m_t &= \alpha \;, \label{eq:EL-mean2R} \\
	\frac{\dd}{\dd t}\big(A_t^{-1}\dot A_t\big) &=  [A_t^{-1}\dot A_t, Q] -(A_t^\tran\alpha)^{\otimes 2} \;.  \label{eq:EL-cov2R}
\end{align}
\end{subequations}
\end{theorem}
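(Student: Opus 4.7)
The plan is to recast the problem in terms of a curve of left square roots $A_t$ subject to a pointwise symmetry constraint, establish existence of a minimizer by the direct method of the calculus of variations, and derive~\eqref{eq:EL2R} by treating the terminal rotation as a finite-dimensional Lagrange constraint. Given $(m,C)\in\MC_R(\mu_0,\mu_1)$, the ODE~\eqref{eq:Adot} with $A_0=C_0^{1/2}$ uniquely determines a curve $A_t$ satisfying $A_tA_t^\tran=C_t$, $A_1=C_1^{1/2}R$, and $\dot A_t A_t^\tran=\tfrac12\dot C_t\in\S^d$. Since $A_t$ is invertible, this last property is equivalent to the pointwise constraint $B_t:=A_t^{-1}\dot A_t\in\S^d$, and by~\eqref{eq:equivalent:MOPactions} the action~\eqref{eq:def:MOP:action} reads
\[
I(m,A) = \int_0^1 \tfrac12\skp{\dot m_t,A_t^{-\tran}A_t^{-1}\dot m_t} + \tfrac12\norm{B_t}_{\HS}^2\,dt .
\]
Hence $\MOP_R(\mu_0,\mu_1)^2$ equals the infimum of $I(m,A)$ under this constraint and the prescribed endpoint data; Assumption~\ref{ass:Wfinite} allows a reduction to the case $C_0,C_1\succ 0$ by restricting to $\im C_0=\im C_1$.

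\emph{Existence.} For a minimizing sequence $(m^n,A^n)$ the action provides an $L^2$-bound on $B^n=(A^n)^{-1}\dot A^n$. The a priori argument behind Lemma~\ref{lem:apriori-cov}, adapted to the present moment action, yields uniform two-sided bounds $c\,\Id\preccurlyeq C_t^n\preccurlyeq c^{-1}\Id$ on $[0,1]$, so $A_t^n$ remains in a compact subset of $\GL_+(d)$. Combined with the $L^2$-bound on $B^n$, a Gronwall estimate gives equicontinuity of $A^n$, while the first term of $I$ controls $\dot m^n$ in $L^2$; Arzel\`a--Ascoli extracts a uniformly convergent subsequence whose limit $(m,A)$ preserves all boundary data---including the rotation constraint $A_1=C_1^{1/2}R$---and the symmetry constraint, which is closed under weak $H^1$-convergence. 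Convexity of $I$ in the velocities at fixed $(m,A)$ then gives lower semicontinuity, so the limit is a minimizer. The hard part is precisely this compactness step, since $\norm{A^{-1}\dot A}_{\HS}^2$ gives no direct lower bound on the spectrum of $C_t=A_tA_t^\tran$, and only the quantitative spectral estimate from Lemma~\ref{lem:apriori-cov} rules out degeneration of the minimizing sequence.

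\emph{Euler--Lagrange equations.} Variations in $m$ with zero boundary values immediately yield $\tfrac{d}{dt}(A^{-\tran}A^{-1}\dot m)=0$, which integrates to \eqref{eq:EL-mean2R} with a constant $\alpha\in\R^d$. For variations $\delta A=AX$ with $X_0=X_1=0$, a direct calculation gives $\delta B=\dot X+[B,X]$; preservation of the symmetry constraint forces the decomposition $X=X_s+X_a$ into symmetric and skew parts to satisfy $\dot X_a=-[B,X_s]$, so the endpoint condition $X_a(1)=0$ becomes the integral constraint $\int_0^1[B_t,X_s(t)]\,dt=0$ in $\mathfrak{o}(d)$, imposed on an otherwise free symmetric variation $X_s$ vanishing at the endpoints. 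Using cyclicity of the trace (which kills the commutator contribution to $\delta\norm{B}_{\HS}^2$) together with~\eqref{eq:EL-mean2R}, one computes
\[
\delta I = -\int_0^1 \tr\bra*{X_s\bigl(\dot B + (A^\tran\alpha)^{\otimes 2}\bigr)}\,dt .
\]
Introducing a constant skew-symmetric Lagrange multiplier $Q\in\mathfrak{o}(d)$ for the integral constraint and using $\skp{Q,[B,X_s]}_{\HS}=\tr(X_s[B,Q])$ (together with the symmetry of $[B,Q]$ guaranteed by $B^\tran=B$ and $Q^\tran=-Q$), the unrestricted variation in $X_s$ then yields the optimality condition $\dot B+(A^\tran\alpha)^{\otimes 2}=[B,Q]$, which is precisely~\eqref{eq:EL-cov2R}.
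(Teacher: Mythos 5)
Your existence argument is essentially the paper's (Proposition~\ref{prop:opt-mC-ex}): the a priori spectral bounds you invoke are Lemma~\ref{lem:mA-finiteAction}, and the compactness/lower-semicontinuity step is the same direct method, with your weak-$L^2$ argument for $(A^n)^{-1}\dot A^n$ replacing the paper's coefficient representation in the horizontal frame. Your derivation of \eqref{eq:EL2R} is a genuinely different and more elementary route than the paper's: you run a Lagrangian first-variation argument, splitting $X=X_s+X_a$, slaving $\dot X_a=-[B,X_s]$, and treating the terminal skew component as a finite-dimensional constraint with a constant multiplier $Q$, whereas the paper uses the sub-Riemannian Hamiltonian (normal extremal) formalism of Proposition~\ref{prop:geo:AR}. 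The algebra in your variation is correct, and the computation $\delta I=-\int_0^1\tr\bra*{X_s\bigl(\dot B+(A^\tran\alpha)^{\otimes2}\bigr)}\dd t$ together with $\skp{Q,[B,X_s]}_{\HS}=\tr\bra*{X_s[B,Q]}$ does reproduce \eqref{eq:EL-cov2R}.

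However, there is a genuine gap at exactly the point where Lagrangian shortcuts in sub-Riemannian problems usually fail. Your stationarity argument presumes that every first-order admissible direction (every symmetric $X_s$ with zero boundary values and $\int_0^1[B_t,X_s(t)]\dd t=0$) is tangent to a family of \emph{exactly} admissible competitors, equivalently that the Lagrange multiplier rule applies in normal form at the minimizer. This is a constraint qualification on the endpoint map of the horizontal system $\dot A=AB$, and it can fail: if, say, $B_t$ is a multiple of the identity for all $t$, then $[B_t,X_s]\equiv0$ and the linearized endpoint map does not reach the skew directions, so the kernel of the linearized constraint may be strictly larger than the tangent cone to the admissible set. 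This is precisely the phenomenon of abnormal minimizers, and without excluding strictly abnormal ones you cannot conclude that a minimizer satisfies \eqref{eq:EL2R}. The paper closes this hole via Lemma~\ref{lem:SR:brackets}: the distribution is two-bracket generating, and by the cited result of Rifford there are then no strictly abnormal geodesics, so every minimizer admits a normal extremal lift, from which the Hamiltonian computation yields \eqref{eq:EL2R} (including the constancy of $Q$, which you instead build in by ansatz). The same lemma also supplies, via Chow--Rashevsky, the non-emptiness of the admissible class for an arbitrary terminal rotation $R$ (hence finiteness of the infimum), a point your direct method takes for granted when it starts from a minimizing sequence. So the conclusion is true, but your proof needs either a verification of the surjectivity of the linearized endpoint map at the minimizer or an argument ruling out strictly abnormal minimizers; as written, the passage from constrained stationarity to \eqref{eq:EL2R} is not justified.
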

The result is a consequence of Proposition~\ref{prop:opt-mC-ex} and Proposition~\ref{prop:geo:AR}.
We establish the result using sub-Riemannian geometry by embedding the optimization problem~\eqref{eq:mean-cov-min:R} into a structure
where the constraint implied through $R[C]_1=R$ in~\eqref{e:def:MCR} can be understood as a symmetry condition (see Section~\ref{sec:optimal-moments} for details). In this way, we are able to apply results about the existence of curves and geodesics in sub-Riemannien geometry~\cite{Rifford2014}.

For spherical symmetric normalized marginals~\eqref{eq:def:spherical-normalization} (see Corollary~\ref{cor:split-Wcov}), we can also study the existence of optimizers for the covariance-constrained optimal transport \eqref{eq:OT-cov-constraint-main} and the moment optimization \eqref{eq:mean-cov-min}, separately without the need of taking the role of the rotation into account. 

Showing existence of geodesics for the unconstrained moment optimization problem \eqref{eq:mean-cov-min} is easier and even explicit solutions are available in specific cases. 
\begin{theorem}[Existence of unconstrained moment geodesics]\label{thm:ex-opt-moments}
Let $C_0,C_1\in\S_{\succcurlyeq0}^d$, and $m_0,m_1\in\R^d$ such that Assumption~\ref{ass:Wfinite} holds, i.e. $m_1-m_0\in \im C_0=\im C_1$.
Then there exists an optimal pair $(m_t,C_t)_{t\in[0,1]}$ achieving the infimum for $\MOP$ in \eqref{eq:mean-cov-min}, and satisfying for some $\alpha\in \R^d$ the optimality conditions
\begin{subequations}\label{eq:EL2}
	\begin{align}\label{eq:EL-mean2}
		C^{-1}_t\dot m_t &= \alpha\;,\\ \label{eq:EL-cov2}
		\ddot C_t &= \dot C_tC^{-1}_t \dot C_t -2 C_t(\alpha\otimes \alpha)C_t\;.
	\end{align}
\end{subequations}
Moreover, the optimizers are explicit in the following cases:
\begin{itemize}
    \item If $m_0=m_1$, then $m_t=m_0$ and
    \begin{equation}\label{eq:alpha0:Ct-intro}
        C_t=C_0^{1/2}\bra[\big]{C_0^{-1/2}C_1C_0^{-1/2}}^tC_0^{1/2} \qquad \text{for $t\in [0,1]$.}
    \end{equation}
	It follows that
    \begin{align*}
        \MOP\bra[\big]{(m_0,C_0),(m_1,C_1)}^2=
       \frac18\norm[\big]{\log\bra[\big]{C_0^{-1/2}C_1C_0^{-1/2}}}_{\HS}^2 .
    \end{align*}
    \item If $C_k=\sum_i \lambda_i(k) e^i\otimes e^i$ for $k\in \set{0,1}$, then $C(t)=\sum_{i} \lambda_i(t) e^i \otimes e^i$ and $m_i(t)=r_i\tanh(\beta_i t + \tau_i)$ for $i\in\{1,...,d\}$ and $t\in [0,1]$ are explicit solutions to~\eqref{eq:EL2} with $r_i, \beta_i, \tau_i$ being explicit constants depending on $(m_0,C_0), (m_1,C_1)$. In particular, by writing $\delta:=m_1-m_0\in \R^d$
    \begin{align}\label{eq:ex-opt-moments:orth}
        \MOP\bra[\big]{(m_0,C_0),(m_1,C_1)}^2
        \leq \sum_{i=1}^d\pra*{\mathrm{arcosh}\bra[\Big]{\frac{\delta_i}{2\lambda_i}+1}}^2\,.
    \end{align}
\end{itemize}
\end{theorem}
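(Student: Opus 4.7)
The plan is to treat the moment minimization~\eqref{eq:mean-cov-min} as a geodesic problem on the product Riemannian manifold $\R^d \times \S_{\succ 0}^d$ equipped with the metric
\[
g_{(m,C)}\bigl((v,X),(v,X)\bigr) = \langle v, C^{-1} v\rangle + \tfrac14 \tr\bigl(X C^{-1} X C^{-1}\bigr),
\]
whose $C$-component is the classical affine-invariant metric on positive-definite matrices \cite{Skovgaard1984,Moakher2005,Bhatia2006}. The main obstacle for the existence step is preventing the covariance from collapsing to the singular boundary of $\S_{\succ 0}^d$ along a minimizing sequence. I would handle this via the geodesic completeness of the affine-invariant metric: for a minimizing sequence $(m^n, C^n)$ in $\MC(\mu_0,\mu_1)$ with uniformly bounded action (Assumption~\ref{ass:Wfinite} allowing reduction to $C_0, C_1 \succ 0$), the action bound controls the affine-invariant distance from $C_0$ to $C^n_t$, yielding two-sided spectral bounds on $C^n_t$ uniform in $n$ and $t$. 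Together with the $\dot m$-term this provides $H^1$-compactness of the mean curves, and weak-strong lower semicontinuity of $I$ closes the direct method.

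For the Euler-Lagrange system~\eqref{eq:EL2}, I would take admissible variations $\delta m, \delta C$ vanishing at the endpoints. Varying in $m$ yields $\frac{d}{dt}(C_t^{-1}\dot m_t) = 0$, i.e., \eqref{eq:EL-mean2} with constant $\alpha = C_t^{-1}\dot m_t$. Varying in $C$ combines two contributions: the direct variation of $\tr(\dot C C^{-1}\dot C C^{-1})$ produces the geodesic part $\ddot C_t - \dot C_t C_t^{-1}\dot C_t$ of the affine-invariant metric, while the variation of $C^{-1}$ inside $\langle \dot m, C^{-1}\dot m\rangle$ generates the source $-2 C_t(\alpha \otimes \alpha) C_t$, producing exactly~\eqref{eq:EL-cov2}.

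When $m_0 = m_1$, the admissible constant curve $m_t \equiv m_0$ forces $\alpha = 0$ at the minimizer, and~\eqref{eq:EL-cov2} reduces to the affine-invariant geodesic equation $\ddot C_t = \dot C_t C_t^{-1}\dot C_t$. Setting $X := C_0^{-1/2} C_1 C_0^{-1/2}$ and $C_t := C_0^{1/2} X^t C_0^{1/2}$, a direct computation shows that $\dot C_t C_t^{-1} = C_0^{1/2}(\log X) C_0^{-1/2}$ is time-independent, so $\ddot C_t = \dot C_t C_t^{-1}\dot C_t$ holds trivially and the endpoints match by construction. The action evaluates to $\tfrac18\tr((\log X)^2) = \tfrac18\norm{\log X}_{\HS}^2$.

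In the simultaneously diagonal case, the diagonal ansatz $C_t = \sum_i \lambda_i(t)\, e^i\otimes e^i$ with diagonal $\alpha$ decouples~\eqref{eq:EL2} into $d$ independent scalar systems $\dot m_i = \alpha_i \lambda_i$ and $\ddot\lambda_i = \dot\lambda_i^2/\lambda_i - 2\alpha_i^2\lambda_i^2$; the second is a Liouville-type ODE which becomes $\ddot u_i + 2\alpha_i^2 e^{u_i} = 0$ under $u_i = \log\lambda_i$. Substituting $m_i(t) = r_i\tanh(\beta_i t + \tau_i)$ and requiring consistency with $\dot m_i = \alpha_i \lambda_i$ enforces the algebraic relation $\beta_i = \alpha_i r_i$ together with the identity $\lambda_i(t) = r_i^2 - m_i(t)^2$, producing a self-consistent family of solutions to the scalar Euler-Lagrange system whose free parameters are fixed by the boundary data. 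Evaluating the scalar action along such a trajectory and reparametrizing via $\artanh$ yields an integrand independent of $t$, and the length condenses to an $\arcosh$-expression of the boundary displacement, from which~\eqref{eq:ex-opt-moments:orth} follows.
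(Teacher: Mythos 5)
Your proposal is correct and follows essentially the paper's own route: existence by the direct method with two-sided spectral bounds on $C_t$ forced by the finite action (the content of Lemma~\ref{lem:mA-finiteAction} and Proposition~\ref{prop:opt-mC-ex}, which the paper phrases on the lift $(m,A)\in\R^d\times\GL_+(d)$ rather than directly on $(m,C)$), the same first-variation computation as Proposition~\ref{prop:optimal} for \eqref{eq:EL2}, the affine-invariant geodesic formula for $m_0=m_1$ as in Corollary~\ref{cor:alpha0}, and the reduction of the simultaneously diagonal case to the scalar variance-modulated problem with the $\tanh/\cosh$ solutions, as in Corollary~\ref{cor:mC:diagonal} and Theorem~\ref{thm:moments-sol}. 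Working with the Riemannian structure on $\R^d\times\S^d_{\succ 0}$ instead of the sub-Riemannian $(m,A)$ formulation is a harmless simplification here, since the unconstrained problem carries no rotation constraint.
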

This result is a direct consequence of Proposition~\ref{prop:opt-mC-ex}, Proposition~\ref{prop:optimal}, Corollary~\ref{cor:alpha0} and Corollary~\ref{cor:mC:diagonal}. 
The geodesic equations~\eqref{eq:EL2} for $\alpha = 0$ provide a Riemannian distance on $\S_{\succ0}^d$, already studied in~\cite{Skovgaard1984,OharaSudaAmari1996,Moakher2005,Bhatia2006,bhatia2009positive}. We are not aware of an extension to the case $\alpha\neq 0$ including the mean, which gives rise to new effects. 
For this reason, we expect the bound~\eqref{eq:ex-opt-moments:orth} to be only optimal under suitable assumptions on the isotropy of $C_0, C_1$ and smallness of $m_1 - m_0$. 

Comparing $\cW$ with the classical $W_2$ for two identical but shifted Gaussians~$\NN(m_0,C)$ and~$\NN(m_1,C)$, we obtain from Corollary~\ref{cor:mC:diagonal} together with Corollary~\ref{cor:ass-mean} that for $\|\delta\|=\|m_1-m_0\| \gg 1$,
\begin{equation*}
    \cW(\NN(m_0,C),\NN(m_1,C))\lesssim \log\|\delta\| \,, \text{ whereas }\, W_2(\NN(m_0,C),\NN(m_1,C))=\|\delta\|\,.
\end{equation*}
This illustrates one of the fundamental differences between the covariance-modulated optimal transport distance and the classical Wasserstein distance.

\begin{remark}
	By defining $C_t = A_t A_t^\tran$ with $(m,A)$ solving~\eqref{eq:EL2R}, we obtain a geodesic equation in terms of $C_t$ for $\MOP_R$. However, it not a closed equation in $C_t$ alone, but still needs the variable $A_t$ (which can be obtained from $C_t$ by solving~\eqref{eq:Adot}).
	Indeed, we get by a calculation using the symmetry $A^{-1} \dot A = \dot A^{\tran} A^{-\tran}$ and differentiating $\dot C = A A^\tran$ the equation
	\begin{align*}
		\ddot C = \dot C C^{-1} \dot C - \dot C \frac{A Q A^{-1} + (AQA^{-1})^{\tran}}{2} \dot C - 2 (C \alpha)^{\otimes 2} . 
	\end{align*}
	Hence, the unconstrained moment geodesics~\eqref{eq:EL2} are induced by special solutions of~\eqref{eq:EL2R} for $Q=0$.	
\end{remark}

\subsubsection{Gradient flows and convergence rates to equilibrium}\label{sec:GFintro}

Endowing the space of probability measures with bounded second moment, $\mathcal{P}_2(\R^d)$, with the  covariance-modulated optimal transport distance, we can consider infinite-dimensional gradient flow structures with respect to it. 
More precisely, given some energy functional $\cF:\mathcal{P}_2(\R^d)\to \R$, we introduce the covariance-modulated evolution equation
\begin{equation}
	\label{eq:GF}
	\partial_t\rho_t =  \nabla\cdot\bra*{\rho_t \C(\rho_t) \nabla \cF'(\rho_t)}\;, \qquad\text{for } \rho_0\in \cP_2(\R^d), 
\end{equation}
where $\cF'$ denotes the first variation of $\cF$, if it exists. This formulation provides powerful tools for analysis \cite{AGS}. For example, it follows immediately from the semi-definiteness of $\C(\rho_t)$, that the energy $\cF$ decays along solutions to \eqref{eq:GF},
\begin{align*}
	\frac{d}{dt}\cF(\rho_t)
	= -\int \langle\nabla \cF'(\rho_t), \C(\rho_t) \nabla \cF'(\rho_t)\rangle \, \dx \rho_t \le 0\,.
\end{align*}
Further, we expect (local) minimizers of $\cF$ to correspond to the asymptotic profiles of equation~\eqref{eq:GF}. 
Here, and in what follows, we focus in particular on the relative entropy $\cF(\cdot)=\cE(\cdot\,|\,\pi)$ with respect to a reference density $\pi$ that is proportional to $e^{-H}$ for the family of quadratic potentials $H:\R^d\to\R$ of the form
\begin{align}\label{eq:potential}
	H(x)= \tfrac12 \abs*{x-x_0}_B^2 \qquad\text{with mean } x_0\in\R^d \text{ and covariance }  B\in\S_{\succ0}^d .
\end{align}
Then the driving free energy becomes
\begin{equation}\label{eq:def:Erel}
    \cF(\rho)=\cE(\rho\,|\,\pi) = \int \log\bra[\Big]{\frac\rho{\pi}}\dd\rho
    = \cE(\rho) +\int H \dd \rho \;,
\end{equation}
where $\cE(\rho)=\int\rho\log\rho$ denotes the Boltzmann entropy.
The gradient flow evolution \eqref{eq:GF} becomes a covariance-modulated Fokker-Planck equation
\begin{equation}
	\label{eq:GF-FP-cov-quad-intro}
	\partial_t\rho_t = \nabla\cdot\bra*{\C(\rho_t)\bra*{\nabla \rho_t + \rho_t B^{-1}(x-x_0)}}\;.
\end{equation}
A remarkable property is that if $\rho_t$ solves \eqref{eq:GF-FP-cov-quad-intro},
then the normalized solution $\eta_t=(T_{m_t,A_t})_\#\rho_t$ with $m_t=\mean(\rho_t)$ and $A_t$ solving \eqref{eq:Adot} satisfies the classical Fokker-Planck equation with potential $h(x)=\frac12|x|^2$, also called Ornstein-Uhlenbeck semigroup, 
\begin{equation}\label{eq:FP-normalized-quad-intro}
	\partial_t \eta_t = \Delta \eta_t + \nabla \cdot (x\eta_t )\,.
\end{equation}
This fundamental property of Gaussian targets is shown in Section~\ref{sec:FP} (see Lemma~\ref{lem:FP-normalized}) and is the basis for quantified sharp estimates on the longtime behavior of solutions.
Let us denote with~$\normal_{m,C}$ a Gaussian with mean $m$ and covariance $C$,
\begin{align}
	\label{eq:defnormal}
	\normal_{m,C}(x)=\frac{1}{(2\pi)^{d/2}\bra{\det C}^{1/2}} \exp\bra[\Big]{-\frac12 |x-m|_C^2}\,,
\end{align}
For solutions $\rho_t$ to the non-linear Fokker-Planck equation \eqref{eq:GF-FP-cov-quad-intro}, we consider Gaussian approximations $\normal_{m_t,C_t}$ of $\rho_t$. 
The first crucial observation is that the moments $(m_t, C_t)=(\mean(\rho_t),\C(\rho_t))$ themselves satisfy 
a closed system of equations (see~\eqref{eq:moments-GF}).
In particular, any Gaussian $\normal_{m_t,C_t}$ solves~\eqref{eq:GF-FP-cov-quad-intro} if and only if $(m_t,C_t)$ solves this closed system as already shown in \cite{GHLS}.
Further, we denote the dissipation of the relative entropy $\cE(\rho\,|\,\pi)$ defined in \eqref{eq:def:Erel} by
\begin{align}
	\label{eq:def:Icov}
	\covI(\rho\,|\,\pi) = \int \abs[\Big]{\C(\rho)^{1/2}\nabla\log\bra[\Big]{\frac\rho{\pi}}}^2\dd \rho.
\end{align}
Note that the definition of $\cE(\cdot\,|\,\pi)$ is the one for the standard Fokker-Planck equation, while $\covI$ is a modification of the usual Fisher information
\begin{align}
	\label{eq:defI}
	\cI(\rho\,|\,\pi) = \int \abs[\Big]{\nabla\log\bra[\Big]{\frac\rho{\pi}}}^2\dd \rho.
\end{align}
The modified information $\covI$ has been introduced in \cite{GHLS}.
In the spirit of splitting shapes and moments, we prove in Lemma~\ref{lem:ent-split} the following splitting of entropy and Fisher information
\begin{subequations}\label{eq:split:EntI}
	\begin{align}
	\cE(\rho\,|\,\normal_{x_0,B}) &= \cE(\widebar\rho\,|\,\normal_{0,\Id}) + \cE(\normal_{\mean(\rho),\C(\rho)}\,|\,\normal_{x_0,B})\:, \\
	\covI(\rho\,|\,\normal_{x_0,B}) &= \cI(\widebar\rho\,|\,\normal_{0,\Id}) + \covI(\normal_{\mean(\rho),\C(\rho)}\,|\,\normal_{x_0,B})\:,
\end{align}
\end{subequations}
with $\widebar\rho$ being a normalization of $\rho$ according to Definition \ref{def:normalization}.
Similarly, for the Wasserstein distance, we obtain in Lemma~\ref{lem:W2:split} a splitting estimate of the form
\begin{equation}\label{eq:decomp:W2}
	W_2(\rho, \normal_{x_0,B}) \leq \norm{\C(\rho)}_2^{1/2} W_2\bra*{\widebar\rho,\normal_{0,\Id}} + W_2\bra*{ \normal_{\mean(\rho),\C(\rho)}, \normal_{x_0,B}}.
\end{equation}
Those splitting results are the basis to prove convergence results in entropy, Fisher information, and in the classical Wasserstein transport distance $W_2$ in Section~\ref{sec:cv}, which we summarize below.
\begin{theorem}[Entropy decay]\label{thm:decay}
Define \emph{relative condition number} by
\begin{equation}\label{eq:def:CondEnt}
    \lambda(B,C_0):=\max\set[\big]{1, \|B^{\frac{1}{2}}C_0^{-1} B^{\frac{1}{2}}\|_2}\max\set[\big]{1, \|B^{-\frac{1}{2}}C_0 B^{-\frac{1}{2}}\|_2}\,.
\end{equation}
	Solutions $\set*{\rho_t}_{t\geq 0}$ to \eqref{eq:GF-FP-cov-quad-intro} satisfy
	\begin{align}
		\label{eq:decay:ent}
		\cE(\rho_t\,|\,\normal_{x_0,B})
		&\le  \lambda(B,C_0) e^{-2t}\cE(\rho_0\,|\,\normal_{x_0,B})\, , \\
		\label{eq:decay:Fish}
		\covI(\rho_t\,|\,\normal_{x_0,B})
		&\le \lambda(B,C_0)^2 e^{-2t}\covI(\rho_0\,|\,\normal_{x_0,B})\,.
	\end{align}
	If $\mean(\rho_0)=x_0$ and $C_0=\C(\rho_0)\succcurlyeq B$, then
	\begin{equation*}
		\cE(\rho_t\,|\,\normal_{x_0,B})
		\le e^{-2t}\cE(\rho_0\,|\,\normal_{x_0,B})
		\qtq{and}
		\covI(\rho_t\,|\,\normal_{x_0,B})
		\le e^{-2t}\covI(\rho_0\,|\,\normal_{x_0,B})
		\,.
	\end{equation*}
\end{theorem}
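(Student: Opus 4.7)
The plan is to use the entropy and Fisher-information splittings~\eqref{eq:split:EntI} to break the dynamics into two decoupled pieces---a shape part satisfying a standard Ornstein-Uhlenbeck equation, and a Gaussian-to-Gaussian moment part governed by a closed ODE---and then to establish the claimed decay for each piece separately. Writing $m_t=\mean(\rho_t)$, $C_t=\C(\rho_t)$, and $\eta_t=(T_{m_t,A_t})_\#\rho_t$ with $A_t$ solving~\eqref{eq:Adot}, the splitting gives
\begin{equation*}
	\cE(\rho_t\,|\,\normal_{x_0,B}) = \cE(\widebar\rho_t\,|\,\normal_{0,\Id}) + \cE(\normal_{m_t,C_t}\,|\,\normal_{x_0,B}),
\end{equation*}
and analogously for $\covI$; since the shape entropy and Fisher information are invariant under the choice of left square root, one may replace $\widebar\rho_t$ by $\eta_t$ in these identities. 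I will bound each summand by $\lambda(B,C_0)e^{-2t}$ (respectively $\lambda(B,C_0)^2 e^{-2t}$ for $\covI$) times the corresponding initial value; summing and using $\lambda(B,C_0)\ge 1$ together with the non-negativity of the summands then yields the theorem.

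For the shape piece, Lemma~\ref{lem:FP-normalized} asserts that $\eta_t$ solves the linear Ornstein-Uhlenbeck equation~\eqref{eq:FP-normalized-quad-intro} with equilibrium $\normal_{0,\Id}$. Since the driving potential $\tfrac12|x|^2$ is $1$-convex, the classical Bakry-\'Emery argument gives $\cE(\eta_t\,|\,\normal_{0,\Id})\le e^{-2t}\cE(\eta_0\,|\,\normal_{0,\Id})$ and $\cI(\eta_t\,|\,\normal_{0,\Id})\le e^{-2t}\cI(\eta_0\,|\,\normal_{0,\Id})$, both with prefactor exactly~$1$.

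For the moment piece, inserting the Gaussian ansatz $\rho_t=\normal_{m_t,C_t}$ into~\eqref{eq:GF-FP-cov-quad-intro} produces the closed system $\dot m_t=-C_tB^{-1}(m_t-x_0)$ and $\dot C_t=2C_t-2C_tB^{-1}C_t$. Changing variables to $\xi_t:=B^{-1/2}(m_t-x_0)$ and $M_t:=B^{-1/2}C_tB^{-1/2}$ yields $\dot\xi_t=-M_t\xi_t$ together with the matrix logistic equation $\dot M_t=2M_t(\Id-M_t)$. Because $\dot M_t$ is a polynomial in $M_t$, the family $\{M_t\}_{t\ge 0}$ is simultaneously diagonalizable with $M_0$, so the dynamics decouple along the common eigenbasis: each eigenvalue $\mu_i$ follows the scalar logistic equation with explicit solution $\mu_i(t)=\mu_i(0)e^{2t}/q_i(t)$ and the associated mean component satisfies $|\xi_i(t)|^2=|\xi_i(0)|^2/q_i(t)$, where $q_i(t):=1+\mu_i(0)(e^{2t}-1)$. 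With $g(\mu):=\mu-1-\log\mu$ the Gaussian relative entropy reduces to $\tfrac12\sum_i|\xi_i|^2+\tfrac12\sum_i g(\mu_i)$, and a direct computation shows $\covI(\normal_{m_t,C_t}\,|\,\normal_{x_0,B})=\sum_i(\mu_i-1)^2+\sum_i\mu_i|\xi_i|^2$. Matters therefore reduce to the two pointwise scalar bounds
\begin{equation*}
	\frac{1}{q(t)} \le \lambda_\mu e^{-2t} \qquad \text{and} \qquad g(\mu(t)) \le \lambda_\mu e^{-2t}\,g(\mu_0),
\end{equation*}
where $\lambda_\mu:=\max(1,\mu_0)\max(1,1/\mu_0)\le\lambda(B,C_0)$. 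The first is a direct algebraic check in the two subcases $\mu_0\gtrless 1$. The second is where I expect the main technical hurdle: for $\mu_0\ge 1$ an elementary argument gives $(\mu-1)^2\ge g(\mu)$, so $t\mapsto g(\mu(t))e^{2t}$ is non-increasing and the prefactor is~$1$; for $\mu_0<1$, $g(\mu(t))e^{2t}$ may transiently increase, and one bounds its supremum over $t\ge 0$ by $(1/\mu_0)g(\mu_0)$ using the explicit form of $\mu(t)$ and $q(t)$.

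Assembling the two pieces and using $\lambda(B,C_0)\ge 1$ yields~\eqref{eq:decay:ent} and~\eqref{eq:decay:Fish}; the quadratic prefactor in the Fisher bound arises naturally from the analogous but more singular pointwise estimates for $(\mu-1)^2$ and $\mu|\xi|^2$ (decaying at rates $e^{-4t}$ and $e^{-2t}$ respectively), where the normalisation by the initial quantities can contribute a second factor of $\lambda_\mu$. For the sharpened case, $\mean(\rho_0)=x_0$ forces $\xi_0=0$ and hence $\xi_t\equiv 0$, removing the mean contribution entirely; the hypothesis $C_0\succcurlyeq B$ translates into $M_0\succcurlyeq\Id$, and the logistic flow keeps all eigenvalues $\mu_i(t)\ge 1$ throughout, which is precisely the regime in which $g(\mu(t))e^{2t}$ and $(\mu(t)-1)^2 e^{2t}$ are monotone non-increasing. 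The prefactors then collapse to~$1$, yielding the sharp $e^{-2t}$ decay.
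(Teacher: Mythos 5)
Your proposal is correct and takes essentially the same route as the paper: split the entropy and Fisher information into shape and Gaussian moment parts (Lemma~\ref{lem:ent-split}), use the Ornstein--Uhlenbeck decay \eqref{eq:OU-decay} for the normalized shape via Lemma~\ref{lem:FP-normalized}, and treat the moment part through the eigenvalues of $B^{-1/2}C_tB^{-1/2}$ exactly as in Lemma~\ref{lem:Gauss-decay} --- your $q_i(t)$ is the explicit solution \eqref{eq:sol:Ct} in disguise, and your per-eigenvalue identity $|\xi_i(t)|^2=|\xi_i(0)|^2/q_i(t)$ is equivalent to Lemma~\ref{lem:moment-decay}. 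The one step you flag as a hurdle, namely $g(\mu(t))e^{2t}\le g(\mu_0)/\mu_0$ for $\mu_0<1$, is true and closes in one line by the below-secant inequality for the convex function $g$ (the paper's argument): since $\mu(t)$ lies between $\mu_0$ and $1$, one has $g(\mu(t))\le \frac{\mu(t)-1}{\mu_0-1}\,g(\mu_0)=\frac{g(\mu_0)}{q(t)}\le \frac{e^{-2t}}{\mu_0}\,g(\mu_0)$.
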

\begin{remark}
	The decay estimate for the shape-term (corresponding to normalized solutions) follows from classical results on the asymptotic behavior of the Fokker-Planck equation. The decay estimates in Theorem~\ref{thm:decay} then follow by combining this classical estimate with a relaxation result for the moment-term (corresponding to a Gaussian approximation of the solution) that we derive explicitly in Lemma~\ref{lem:Gauss-decay}, see Section~\ref{sec:cv}. 
	In particular, we have individual decay estimates for every term in the splitting~\eqref{eq:split:EntI} and the prefactor $\lambda(B,C_0)$ only enters through the estimate of the moment-part.
	
	Theorem~\ref{thm:decay} also provides decay to equilibrium in $L^1$ thanks to the Csisz\'ar-Kullback-Pinsker inequality~\cite{CK-ineq}.
\end{remark}
The final result obtained in Section~\ref{sec:W2:cv} is a quantitative convergence to equilibrium in the classical Wasserstein distance, where we again make crucial use of the splitting into shape and moments.
\begin{theorem}[Wasserstein decay]\label{thm:W2:convergence}
	For a solution $\set*{\rho_t}_{t\geq 0}$ to \eqref{eq:GF-FP-cov-quad-intro} starting from $\rho_0\in \cP_2(\R^d)$ with $m_0=\mean(\rho_0)$, $C_0=\C(\rho_0)$, holds the decay estimate
	\begin{align*}
	\MoveEqLeft	W_2(\rho_t, \normal_{x_0,B})\\
		&\leq e^{-t} \kappa(B, C_0) \pra[\Big]{ \inf_{R\in \SO(d)} \!\!\! W_2\bra*{R_\# \widebar\rho_0,\normal_{0,\Id}}^2 \! 
			+\! \abs*{m_0-x_0}_{C_0}^2\!\! + \norm[\big]{\Id - \bra[\big]{B^{\frac{1}{2}}C_0^{-1}B^{\frac{1}{2}}}^{\frac{1}{2}}}_\HS^2 }^{\!\frac{1}{2}}
	\end{align*}
	where $\kappa(B, C_0) := \norm{B}_2 \max\set[\big]{ 1, \norm[\big]{B^{-\frac{1}{2}}C_0 B^{-\frac{1}{2}} }_2}$ and 
	$\widebar\rho_0$ is the normalization of $\rho_0$ from Definition~\ref{def:normalization}.
\end{theorem}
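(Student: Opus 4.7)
The plan is to apply the splitting estimate~\eqref{eq:decomp:W2} and then treat shape and moments separately: the former using that a suitable affine normalization of $\rho_t$ is an Ornstein--Uhlenbeck flow (Lemma~\ref{lem:FP-normalized}), and the latter using the exponential decay along the closed finite-dimensional ODE for $(m_t,C_t)$ (Lemma~\ref{lem:Gauss-decay}). Writing $m_t=\mean(\rho_t)$ and $C_t=\C(\rho_t)$, inequality~\eqref{eq:decomp:W2} gives
\[
W_2(\rho_t,\normal_{x_0,B}) \leq \norm{C_t}_2^{1/2}\,W_2(\widebar\rho_t,\normal_{0,\Id}) + W_2(\normal_{m_t,C_t},\normal_{x_0,B}),
\]
so the task is to estimate both summands and to bound the prefactor $\norm{C_t}_2^{1/2}$ uniformly in time.

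For the shape part I would invoke Lemma~\ref{lem:FP-normalized}: for every $R\in\SO(d)$, the normalized curve $\eta_t:=(T_{m_t,A_t})_\#\rho_t$, with $A_t$ solving~\eqref{eq:Adot} from initial datum $A_0=C_0^{1/2}R^\tran$, solves the linear Ornstein--Uhlenbeck equation~\eqref{eq:FP-normalized-quad-intro} with initial density $R_\#\widebar\rho_0$. Since $\normal_{0,\Id}$ is rotation invariant, $W_2(\widebar\rho_t,\normal_{0,\Id})=W_2(\eta_t,\normal_{0,\Id})$, and the classical $e^{-t}$-contraction of the Ornstein--Uhlenbeck semigroup in $W_2$ together with the optimization over $R$ yields
\[
W_2(\widebar\rho_t,\normal_{0,\Id}) \leq e^{-t}\inf_{R\in\SO(d)} W_2(R_\#\widebar\rho_0,\normal_{0,\Id}).
\]

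For the moment summand, $(m_t,C_t)$ satisfies a closed matrix ODE of Riccati type, whose unique equilibrium is $(x_0,B)$. Using the Bures--Wasserstein identity for $W_2$ between Gaussians, diagonalizing $B^{-1/2}C_tB^{-1/2}$ so that the Riccati flow decouples into scalar logistic equations, and then applying Lemma~\ref{lem:Gauss-decay}, I expect an exponential decay of the form
\[
W_2(\normal_{m_t,C_t},\normal_{x_0,B})^2 \lesssim e^{-2t}\bigl(\abs{m_0-x_0}_{C_0}^2 + \norm[\big]{\Id-(B^{1/2}C_0^{-1}B^{1/2})^{1/2}}_\HS^2\bigr),
\]
with a prefactor expressible in terms of $\norm{B}_2$ and $\norm{B^{-1/2}C_0 B^{-1/2}}_2$. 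The same Loewner comparison used inside Lemma~\ref{lem:Gauss-decay} provides the uniform a priori bound $\norm{C_t}_2 \leq \kappa(B,C_0)$, since the Riccati flow keeps $C_t$ trapped in the Loewner interval determined by $C_0$ and $B$.

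Combining these three ingredients in~\eqref{eq:decomp:W2}, using $a+b\leq\sqrt{2}\,\sqrt{a^2+b^2}$, and absorbing all numerical factors into $\kappa(B,C_0)$ yields the stated bound. The main technical obstacle is the matching of prefactors in the Gaussian moment decay with the specific $C_0$-weighted mean term $\abs{m_0-x_0}_{C_0}^2$ and the Hilbert--Schmidt term $\norm{\Id-(B^{1/2}C_0^{-1}B^{1/2})^{1/2}}_\HS^2$ appearing on the right-hand side: this requires the change of variables $C_t\mapsto B^{-1/2}C_t B^{-1/2}$ (which turns the matrix Riccati equation into a commuting scalar problem on its eigenvalues) together with a careful tracking of how the resulting spectral bounds propagate through the Bures--Wasserstein formula, in order to produce exactly the constant $\kappa(B,C_0)$ with the optimal $e^{-t}$ rate.
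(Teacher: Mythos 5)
Your proposal follows essentially the same route as the paper: the splitting estimate of Lemma~\ref{lem:W2:split}, the uniform bound $\norm{\C(\rho_t)}_2\le\kappa(B,C_0)$ of Lemma~\ref{lem:Cov:moment:est}, the Ornstein--Uhlenbeck $W_2$-contraction for the normalized flow together with the rotation freedom in the choice of square root of $C_0$ (giving the infimum over $R\in\SO(d)$), and an exponential estimate for the Gaussian moment part. The only adjustment is that the moment decay you ``expect'' is exactly the paper's Lemma~\ref{lem:W2:est:moments} (proved via the Bures--Wasserstein formula, the explicit solution $C_t^{-1}=(1-e^{-2t})B^{-1}+e^{-2t}C_0^{-1}$, the identity $\abs{m_t-x_0}_{C_t}=e^{-t}\abs{m_0-x_0}_{C_0}$, and the Araki--Lieb--Thirring inequality), rather than Lemma~\ref{lem:Gauss-decay}, which concerns entropy and Fisher information.
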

\begin{remark}\label{rmk:carrillo-vaes}
	Theorem~\ref{thm:W2:convergence} recovers and improves the convergence result obtained in \cite{CarrilloVaes} for the covariance-weighted Fokker-Planck equation~\eqref{eq:GF-FP-cov-quad-intro}.
	It is already shown in~\cite{CarrilloVaes} that this exponential rate of convergence is optimal. However, \cite{CarrilloVaes} showed this estimate with a complicated prefactor on the right-hand side, which can become quite large as it depends in a non-trivial way on the initial condition $\rho_0$ and the parameters $x_0,B$ of the target measure.
	
	In our case, the normalization technique allows for a decomposition of the classical Wasserstein distance into a shape and moment part~\eqref{eq:decomp:W2} (see also Lemma~\ref{lem:W2:split}), for which convergence estimates can be obtained individually. This makes the constant in the final estimate of Theorem~\ref{thm:W2:convergence} much more transparent consisting of: (i) a multiplicative relative condition number between the covariance of the target and the initial condition; and (ii) additive errors measuring the mismatch in shape and moments (mean, covariance), respectively. 
	
	Actually, we show in Remark~\ref{rem:W2:WC} that the error for the mean and covariance is the Wasserstein distance $W_{2,C_0}$ with respect to the weighted norm $\abs*{\cdot}_{C_0}$. 
	In this way, we can concisely rewrite the main estimate of Theorem~\ref{thm:W2:convergence} as
	\begin{equation*}
		W_2(\rho_t, \normal_{x_0,B}) \leq e^{-t} \kappa(B, C_0) \pra[\Big]{ \inf_{R\in \SO(d)} W_2\bra*{R_\# \widebar\rho_0,\normal_{0,\Id}}^2  + W_{2,C_0}\bra*{\normal_{m_0,C_0},\normal_{x_0,B}}^2}^{\frac{1}{2}} ,
	\end{equation*}
	highlighting the splitting structure. Finally, by inspecting the proof, we also have the bound
	\begin{equation*}
		W_{2,C_0}(\rho_t, \normal_{x_0,B}) \leq e^{-t} \lambda(B, C_0) \pra[\Big]{ \inf_{R\in \SO(d)} W_2\bra*{R_\# \widebar\rho_0,\normal_{0,\Id}}^2  + W_{2,C_0}\bra*{\normal_{m_0,C_0},\normal_{x_0,B}}^2}^{\frac{1}{2}} ,
	\end{equation*}
	with $\lambda(B,C_0)$ as in~\eqref{eq:def:CondEnt}, showing that the \emph{relative condition number} $\lambda(B,C_0)$ is universal in the estimates for entropy, Fisher information and Wasserstein distance.
\end{remark}

\subsubsection{Duality, displacement convexity and functional inequalities}\label{sss:intro:convexity}

The covariance-con\-strained optimal transport problem~\eqref{eq:OT-cov-constraint-main} has a duality structure, which differs by an additional Lagrange multiplier from the one of the Wasserstein distance. The Lagrange multiplier gives rise to a global coupling of the geodesics manifesting the induced interaction due to the covariance-constraint.

Since our main results do not make use of the duality formula so far, we only formally derive the duality statement in Section~\ref{sec:optimal-constraint} and leave its rigorous justification for future research.
\begin{ftheorem}[Dual formulation of the constrained problem]\label{thm:infsup}
	The constrained optimal transport distance $\cW_{0,\Id}$ given in \eqref{eq:OT-cov-constraint-main} can be expressed as
	\begin{align}\label{eq:dualW}
		\cW_{0,\Id}(\mu_0,\mu_1)^2 &= \inf_{\mu,V} \int_0^1\int \frac12 |V_t|^2\dd\mu_t\dd t 
		= 
		\sup_{\psi,\alpha,\Lambda}
		\int \psi_1\dd\mu_1 - \int\psi_0\dd\mu_0 + \int_0^1\int \tr\Lambda_t \dd t\,,
	\end{align}
	where the infimum on the left is taken over $(\mu,V)\in \CE_{0,\Id}(\mu_0,\mu_1)$, while the supremum on the right is taken over functions $\psi:[0,1]\times\R^d\to\R$ and $\Lambda:[0,1]\to\S^{d}$, $\alpha:[0,1]\to\R^d$ subject to the modified Hamilton-Jacobi subsolution constraint
	\begin{equation}\label{eq:HJ-mod}
		\partial_t\psi +\frac12|\nabla\psi|^2 + \tr\bigl[\Lambda(x\otimes x)\bigr]-\ip{\alpha}{x} \leq 0\;.
	\end{equation}
	In particular, the optimality conditions for a $\cW_{0,\Id}$-geodesic are given by
	\begin{equation}\label{eq:geodesic-eqs}
		\begin{split}
			\partial_t\mu +\nabla\cdot(\mu\nabla\psi) & =0\;,\\
			\partial_t\psi +\frac12|\nabla\psi|^2+\tr\big[\Lambda(x\otimes x)\big] &=  0\;,\\
		\end{split}
	\qquad\text{with}\qquad
	\begin{split}
		\alpha&=0\;, \\
			\Lambda &= \frac{1}{2}\int\left(\nabla\psi\otimes \nabla \psi\right) \dd\mu\;.
	\end{split}
	\end{equation}
	In particular, we have that $\tr[\Lambda_t]=\frac12\int |\nabla\psi_t|^2\dd\mu_t = \cW_{0,\Id}(\mu_0,\mu_1)^2$ is constant in time.
\end{ftheorem}
The functions $\alpha$ and $\Lambda$ act as Lagrange multipliers for the constraint in mean and covariance, respectively.  The fact that the mean constraint is not active at the optimum is consistent with the fact that Wasserstein geodesics have mean zero at all times if the marginals have mean zero. 

Another striking effect of the covariance-constraint or -modulated transport geometry is that it improves the convexity properties of internal energy functionals along optimal interpolations in the space of probability measures. Namely, for a probability measure $\mu\in \cP_2(\R^d)$ let us define the Boltzmann-Shannon entropy by
\begin{equation*}
\cE(\mu) = \int \rho(x)\log \rho(x) \dd x\;,
\end{equation*}
if $\mu$ is absolutely continuous w.r.t.~Lebesgue measure with density $\rho$ and let us set $\cE(\mu)=\infty$ else. We show that $\cE$ is 1-convex along geodesics of the covariance-constrained transport distance $\cW_{0,\Id}$, and satisfies a slightly weaker strict convexity property along geodesics of the covariance-modulated transport distance $\cW$. Recall that along geodesics in the Wasserstein distance $W_2$, the entropy $\cE$ is merely convex and not $\lambda$-convex for any $\lambda >0$. 
\begin{theorem}[Geodesic convexity]\label{thm:main-entroBoltzmann entpy-cov}
For any constant speed $\cW_{0,\Id}$-geodesic $(\mu_s)_{s\in[0,1]}$ we have
\[
\cE(\mu_s)\leq (1-s) \cE(\mu_0) + s\cE(\mu_1) -\frac12 s(1-s) \cW_{0,\Id}(\mu_0,\mu_1)^2\;.
\]
For any constant speed $\cW$-geodesic $(\mu_s)_{s\in[0,1]}$ we have that 
\begin{equation}\label{eq:E-convexW}
 \cE(\mu_s)\leq (1-s)\cE(\mu_0) + s\cE(\mu_1) -\frac12 s(1-s) \cW_{0,\Id}(R_{\#}\widebar\mu_0,\widebar\mu_1)^2\;,
\end{equation}
where $R_{\#}\widebar\mu_0$ and $\widebar\mu_1$ are the normalisations of $\mu_0,\mu_1$ appearing in the splitting result in Theorem~\ref{thm:main-cov}.
\end{theorem}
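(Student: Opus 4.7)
The plan is to prove the two statements in order, with the $\cW$-case reducing to the $\cW_{0,\Id}$-case via the splitting of Theorem~\ref{thm:main-cov}. For the first inequality, I start with a constant-speed $\cW_{0,\Id}$-geodesic $(\mu_s)_{s\in[0,1]}$. By the optimality conditions~\eqref{eq:geodesic-eqs} from Formal Theorem~\ref{thm:infsup}, the associated velocity field is a gradient $V_s=\nabla\psi_s$ where $\psi_s$ solves the modified Hamilton--Jacobi equation
\[
\partial_s\psi_s + \tfrac{1}{2}|\nabla\psi_s|^2 + \tr\bigl[\Lambda_s(x\otimes x)\bigr] = 0,
\]
with Lagrange multiplier $\Lambda_s = \tfrac12\int(\nabla\psi_s\otimes\nabla\psi_s)\dd\mu_s$ enforcing the covariance constraint, and the constant-speed identity $\tr\Lambda_s\equiv \cW_{0,\Id}(\mu_0,\mu_1)^2$. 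First I derive $\tfrac{d}{ds}\cE(\mu_s)=-\int\Delta\psi_s\dd\mu_s$ from the continuity equation by a standard integration by parts. Differentiating once more, substituting the HJ equation for $\partial_s\psi_s$, and using the Bochner identity $\tfrac12\Delta|\nabla\psi|^2=|D^2\psi|^2+\nabla\psi\cdot\nabla\Delta\psi$ together with the elementary identity $\Delta\tr[\Lambda(x\otimes x)]=2\tr\Lambda$, the $\nabla\psi\cdot\nabla\Delta\psi$ cross-terms cancel and I obtain
\[
\tfrac{d^2}{ds^2}\cE(\mu_s)=\int|D^2\psi_s|^2\dd\mu_s + 2\tr\Lambda_s \geq \cW_{0,\Id}(\mu_0,\mu_1)^2.
\]
Integrating this second-order inequality over $[0,1]$ produces the claimed $1$-convexity estimate.

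For the second inequality, I invoke the splitting from Theorem~\ref{thm:main-cov} together with Remark~\ref{rem:rel-opt}: a constant-speed $\cW$-geodesic $(\mu_s)$ decomposes into an $\MOP_R$-optimal moment curve $(m_s,C_s)$ for the optimal rotation $R$ and a shape curve $\hat\mu_s=(T_{m_s,A_s})_\#\mu_s$ that is a constant-speed $\cW_{0,\Id}$-geodesic between $\widebar\mu_0$ and $R^{\tran}_{\#}\widebar\mu_1$, equivalent to one between $R_{\#}\widebar\mu_0$ and $\widebar\mu_1$ by the symmetry noted after Theorem~\ref{thm:main-cov}. The linear change of variables under the affine map $T_{m_s,A_s}$ yields the entropy identity
\[
\cE(\mu_s)=\cE(\hat\mu_s) - \tfrac{1}{2}\log\det C_s,
\]
which at the endpoints reduces correctly thanks to rotation invariance of $\cE$. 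Applying Part~1 to $\cE(\hat\mu_s)$ handles the shape contribution. For the determinant, I use the $\MOP_R$-geodesic equation~\eqref{eq:EL-cov2R} of Theorem~\ref{thm:ex-Ropt-moments}: taking the trace of $\tfrac{d}{ds}(A_s^{-1}\dot A_s)=[A_s^{-1}\dot A_s,Q] - (A_s^{\tran}\alpha)^{\otimes 2}$ and using that the trace of a commutator vanishes gives $\tfrac{d^2}{ds^2}\log\det A_s = -|A_s^{\tran}\alpha|^2 = -\alpha^{\tran}C_s\alpha \leq 0$; hence $\log\det C_s = 2\log\det A_s$ is concave in $s$, so $-\tfrac12\log\det C_s \leq -\tfrac12[(1-s)\log\det C_0 + s\log\det C_1]$. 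Combining this with Part~1 and reassembling the endpoints via $\cE(\mu_i) = \cE(\hat\mu_i) - \tfrac12\log\det C_i$ produces~\eqref{eq:E-convexW}.

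The main obstacle is the rigorous justification of the Hamilton--Jacobi computation in Part~1, since Formal Theorem~\ref{thm:infsup} only derives the duality formally. On sufficiently smooth geodesics (available for example under the assumptions of Theorems~\ref{thm:existence-smalldist} or~\ref{thm:ex-opt-shape}) the integration by parts, Bochner identity, and the interchange of $\partial_s$ with the $\mu_s$-integral are routine. In the general case a regularization or approximation argument is needed to handle possible non-smoothness of $\psi_s$ and to legitimise the Lagrange-multiplier optimality conditions that pin down $\tr\Lambda_s \equiv \cW_{0,\Id}(\mu_0,\mu_1)^2$. Part~2, once Part~1 is established, is essentially algebraic: it uses only rotation invariance of the Boltzmann entropy, the linear change-of-variables formula, and the explicit $A$-equation for the $\MOP_R$-geodesic.
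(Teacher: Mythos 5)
Your Part~2 is essentially the paper's own argument (proof of Formal Theorem~\ref{fthm:convex-W}, made rigorous in Corollary~\ref{cor:approx_cvx}): split $\cE(\mu_s)=\cE(\hat\mu_s)-\log\det A_s$, use the trace of the moment geodesic equation~\eqref{eq:EL-cov2R} to get $\tfrac{\dd^2}{\dd s^2}(-\log\det A_s)=\skp{\alpha}{C_s\alpha}\ge 0$, and apply the $\cW_{0,\Id}$-convexity to the shape curve. That part is fine, conditional on the first statement.

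The genuine gap is in your Part~1. Your argument rests entirely on the optimality conditions~\eqref{eq:geodesic-eqs} — the gradient structure $V_s=\nabla\psi_s$, the modified Hamilton--Jacobi equation, the identity $\Lambda_s=\tfrac12\int\nabla\psi_s\otimes\nabla\psi_s\,\dd\mu_s$ and $\tr\Lambda_s\equiv\cW_{0,\Id}(\mu_0,\mu_1)^2$ — but these come from Formal Theorem~\ref{thm:infsup}, whose rigorous justification the paper explicitly leaves open ("we only formally derive the duality statement \dots and leave its rigorous justification for future research"). Your computation therefore reproduces the paper's \emph{formal} Section~\ref{sec:convexity} (Formal Theorem~\ref{prop:convex-U} with $P(\rho)=\rho$), not a proof of the theorem; and your proposed fix does not close the hole: Theorems~\ref{thm:existence-smalldist} and~\ref{thm:ex-opt-shape} give existence of geodesics, not smoothness of the geodesic or of a dual potential $\psi_s$, nor existence of the Lagrange multipliers, and the theorem is asserted for \emph{every} constant speed geodesic, so a smallness/symmetry restriction would not suffice either. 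The paper's actual rigorous route is different: it proves the Evolution Variational Inequality of Theorem~\ref{thm:func-ineq} (Theorem~\ref{thm:EVI}) by adapting the Daneri--Savar\'e semigroup-interpolation argument to $\cW_{0,\Id}$ — the key structural input being that the Ornstein--Uhlenbeck semigroup preserves $\cP_{0,\Id}(\R^d)$, so the estimate $\partial_t\cA+\partial_s\cE\le -2s\cA$ can be run inside the constrained class — and then invokes the general implication EVI $\Rightarrow$ $\lambda$-convexity along (almost-)geodesics from \cite{DaneriSavare2008}, which requires no regularity of the geodesic or of any dual potential. (Incidentally, your formal second derivative $\int\|D^2\psi_s\|_\HS^2\,\dd\mu_s+2\tr\Lambda_s$ differs by a factor in the zero-order term from the paper's formal formula, but since both exceed $\cW_{0,\Id}(\mu_0,\mu_1)^2$ this does not affect the claimed inequality; the real issue is the missing rigorous foundation for the whole Hamilton--Jacobi computation.)
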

This result can be obtained formally from the optimality conditions for constrained geodesics as we will explain in Section~\ref{sec:convexity}, where we also consider more general entropies, incorporating non-linear diffusion. We will derive it rigorously as a consequence of the following Evolution Variational Inequality (EVI) in Section~\ref{sec:FuncIneq}. We denote by $(P_t)_{t\geq 0}$ the Ornstein-Uhlenbeck semigroup i.e.~$\eta_t := P_t\eta_0$ for $t>0$ is the solution to $\partial_t\eta_t=\Delta\eta_t +\nabla\cdot(\eta_t\nabla V)$ starting from $\eta_0\in \cP_{0,\Id}(\R^d)$ with $V(x)=\frac12|x|^2$. Its stationary solution is	 the standard Gaussian $\gamma={\sf N}_{0,\Id}$.
\begin{theorem}[Evolution Variational Inequality]\label{thm:func-ineq}
Any $\eta,\nu\in \cP_{0,\Id}(\R^d)$ satisfy the Evolution Variational Inequality (EVI)
  \begin{align}\label{eq:EVI-constraint-intro}
    \frac{\dd^+}{\dd t} \cW_{0,\Id}(P_t \eta,\nu)^2 + \cW_{0,\Id}(P_t \eta,\nu)^2\leq \cE(\nu)-\cE(P_t \eta)\;.
\end{align}
\end{theorem}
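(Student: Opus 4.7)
The plan is to follow the classical Daneri--Savar\'e route: combine the $1$-geodesic convexity of $\cE$ along $\cW_{0,\Id}$-geodesics, already established in Theorem~\ref{thm:main-entroBoltzmann entpy-cov}, with the identification of $(P_t)_{t\geq 0}$ as the $\cW_{0,\Id}$-gradient flow of $\cE$ on the constraint manifold $\cP_{0,\Id}$. The general paradigm ``geodesic $\lambda$-convexity $+$ gradient flow $\Rightarrow$ EVI$_\lambda$'' then yields~\eqref{eq:EVI-constraint-intro}, modulo the normalization factor inherent in the $\tfrac12$ appearing in the action~\eqref{eq:OT-cov-constraint-main}.

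First I would verify that $(P_t)_{t\geq 0}$ leaves $\cP_{0,\Id}$ invariant and that its velocity field at $\eta_t=P_t\eta$, namely $V_t=-\nabla\log\eta_t-x$, represents the $\cW_{0,\Id}$-gradient of $\cE$. Invariance is a direct moment computation: for $\eta_t\in\cP_{0,\Id}$,
\begin{equation*}
	\frac{d}{dt}\mean(\eta_t)=-\mean(\eta_t), \qquad \frac{d}{dt}\C(\eta_t)=2\bigl(\Id-\C(\eta_t)\bigr),
\end{equation*}
so $(0,\Id)$ is a fixed point. Moreover, an integration by parts gives $\int V_t\,\dd\eta_t=0$ and $\int V_t\otimes x\,\dd\eta_t=\Id-\Id=0$, which together say that $V_t$ is tangent to $\cP_{0,\Id}$ at $\eta_t$; in fact $V_t$ is the orthogonal projection onto this tangent space of the Wasserstein gradient of $\cE(\cdot\,|\,\gamma)$, displaying $(P_t)$ as the metric gradient flow of $\cE$ (equivalently of $\cE(\cdot\,|\,\gamma)$, since the two functionals differ only by a constant on $\cP_{0,\Id}$) in $(\cP_{0,\Id},\cW_{0,\Id})$.

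Having established the gradient-flow picture, I would fix $\nu\in\cP_{0,\Id}$, pick a constant-speed $\cW_{0,\Id}$-geodesic $(\mu_s^t)_{s\in[0,1]}$ from $P_t\eta$ to $\nu$, and apply the $1$-convexity in Theorem~\ref{thm:main-entroBoltzmann entpy-cov} to the curve $s\mapsto\cE(\mu_s^t)$. Differentiation at $s=0^+$ yields
\begin{equation*}
	\frac{d^+}{ds}\cE(\mu_s^t)\bigg|_{s=0^+}\leq \cE(\nu)-\cE(P_t\eta)-\tfrac12\cW_{0,\Id}(P_t\eta,\nu)^2.
\end{equation*}
The other half of the argument is the first-variation chain-rule inequality
\begin{equation*}
	\frac{d^+}{dt}\tfrac12\cW_{0,\Id}(P_t\eta,\nu)^2\leq -\frac{d^+}{ds}\cE(\mu_s^t)\bigg|_{s=0^+},
\end{equation*}
which I would establish by comparing the action of the reference geodesic with that of the competitor $s\mapsto P_{h(1-s)}\mu_s^t$, a curve from $P_{t+h}\eta$ to $\nu$; expanding the action to first order in $h$ and using that $V_t$ is the $\cW_{0,\Id}$-steepest-descent direction of $\cE$ produces the desired bound. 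Adding the two inequalities and rearranging gives the asserted EVI.

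The main obstacle is the rigorous justification of this chain-rule inequality, which requires differentiability (from the right) of $t\mapsto\cW_{0,\Id}(P_t\eta,\nu)^2$ together with control of the $O(h)$ error when deforming the optimal curve through $P_{h(1-s)}$. A further subtlety is that $\cW_{0,\Id}$-geodesics are presently guaranteed only for marginals that are either close, via Theorem~\ref{thm:existence-smalldist}, or possess $d$-fold reflection symmetry, via Theorem~\ref{thm:ex-opt-shape}. A natural workaround is to establish~\eqref{eq:EVI-constraint-intro} first for $t>0$ so small that $\cW_{0,\Id}(P_t\eta,\nu)^2<\tfrac18$ (hence geodesics exist), and then to propagate the estimate to all later times via the weak $\cW_{0,\Id}$-contractivity encoded in the EVI itself; alternatively, one can avoid explicit geodesics altogether by testing against Hamilton--Jacobi subsolutions of the dual formulation (Formal Theorem~\ref{thm:infsup}), differentiating the resulting pairing along the Ornstein--Uhlenbeck flow and passing to the supremum over admissible triples $(\psi,\alpha,\Lambda)$.
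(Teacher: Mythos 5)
Your route has a genuine gap: it is circular relative to what is actually available. You take as input the $1$-geodesic convexity of $\cE$ along $\cW_{0,\Id}$-geodesics (Theorem~\ref{thm:main-entroBoltzmann entpy-cov}), but in the paper that convexity statement is only proved \emph{rigorously as a consequence of the EVI} (via Corollary~\ref{cor:approx_cvx}); the direct convexity argument in Section~\ref{sec:convexity} is formal, since it rests on the optimality conditions~\eqref{eq:geodesic-eqs} coming from the duality of Formal Theorem~\ref{thm:infsup}, which is itself only derived formally. For the same reason your fallback of ``testing against Hamilton--Jacobi subsolutions of the dual formulation'' does not rescue the argument. Beyond the circularity, the implication ``geodesic $\lambda$-convexity $+$ metric gradient flow $\Rightarrow$ EVI$_\lambda$'' is not a general theorem in metric spaces (EVI is strictly stronger), and the ingredients you would need to run it here --- existence of $\cW_{0,\Id}$-geodesics between arbitrary marginals, right-differentiability of $t\mapsto\cW_{0,\Id}(P_t\eta,\nu)^2$, and the first-variation chain-rule inequality obtained by deforming an optimal curve through $P_{h(1-s)}$ --- are exactly the unproved hard points; Theorems~\ref{thm:existence-smalldist} and~\ref{thm:ex-opt-shape} only give geodesics for small distance or under reflection symmetry, and your proposed bootstrap from small $t$ does not remove the need for geodesics between $P_t\eta$ and a general $\nu$ at later times.

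The paper avoids all of this by arguing directly in the Eulerian framework of Daneri--Savar\'e: since $\cW_{0,\Id}$ is defined by the same Benamou--Brenier action as $\tfrac12 W_2$, only over constraint-respecting curves, one takes an \emph{almost} minimizing curve $(\mu_s,\nabla\phi_s)\in\CE_{0,\Id}(\nu,\eta)$ (no geodesic existence needed), interpolates by $\mu^t_s:=P_{st}\mu_s$ --- which stays in $\cP_{0,\Id}(\R^d)$ because the Ornstein--Uhlenbeck semigroup fixes mean zero and identity covariance --- and computes $\partial_t\mathcal A(\mu^t_s,\phi^t_s)+\partial_s\cE(\mu^t_s|\eta_\infty)=-s\,\mathcal B(\mu^t_s,\phi^t_s)\le -2s\,\mathcal A(\mu^t_s,\phi^t_s)$, where $\mathcal B\ge 2\mathcal A$ is the Bakry--\'Emery commutation estimate for the OU generator; integrating in $s$ and optimizing over the curve yields~\eqref{eq:EVI-constraint-intro}. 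Your observation that the OU flow preserves the constraints and that its velocity field is tangent to $\cP_{0,\Id}$ is correct and is indeed the structural fact the paper exploits, but the convexity-based derivation you build on it cannot be made rigorous with the tools at hand; the logical order must be reversed, proving the EVI first and deducing convexity from it.
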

As a direct consequence of the previous EVI we obtain a stability result for the Fokker--Planck equation in the covariance-constrained distance.
\begin{corollary}[Stability]\label{cor:stability-intro}
	The Ornstein-Uhlenbeck semigroup $(P_t)_{t\geq 0}$ is exponentially contractive on $\cP_{0,\Id}(\R^d)$, that is any $\eta^1_0,\eta^2_0\in \cP_{0,\Id}(\R^d)$ satisfy the contraction estimate
	\begin{equation*}
		\cW_{0,\Id}(P_t\eta^1_0,P_t\eta^2_0)\leq e^{-t}\cW_{0,\Id}(\eta^1_0,\eta^2_0)\;.
	\end{equation*}
\end{corollary}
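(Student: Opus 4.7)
\textbf{Proof proposal for Corollary~\ref{cor:stability-intro}.} The claim is the standard derivation of exponential contraction from a $1$-EVI, so the plan is to apply the Evolution Variational Inequality~\eqref{eq:EVI-constraint-intro} to each of the two semigroup trajectories while freezing the opposite argument, and then to combine the two resulting inequalities along the diagonal in order to cancel the entropy terms.

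Concretely, fix $s\ge 0$ and apply Theorem~\ref{thm:func-ineq} with $\eta=\eta^1_0$ and $\nu=P_s\eta^2_0$ to obtain
\begin{equation*}
    \frac{\dd^+}{\dd t}\cW_{0,\Id}(P_t\eta^1_0,P_s\eta^2_0)^2 + \cW_{0,\Id}(P_t\eta^1_0,P_s\eta^2_0)^2 \;\leq\; \cE(P_s\eta^2_0)-\cE(P_t\eta^1_0).
\end{equation*}
Exchanging the roles of the two initial data and renaming variables gives the symmetric inequality with $\dd^+/\dd s$ and with opposite sign on the right-hand side. Introducing $g(t):=\cW_{0,\Id}(P_t\eta^1_0,P_t\eta^2_0)^2$ and adding the two EVIs along the diagonal $t=s$, the entropy contributions cancel and one formally ends up with
\begin{equation*}
    \frac{\dd^+}{\dd t}g(t) + 2\,g(t) \leq 0,
\end{equation*}
which by Grönwall's lemma yields $g(t)\leq e^{-2t}g(0)$, i.e.~the desired contraction.

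The main obstacle is to make the addition along the diagonal rigorous: since the EVI provides information only on one-sided Dini derivatives in a single argument, the naive chain rule $\frac{\dd^+}{\dd t}g(t)=\partial_t^+ f(t,t)+\partial_s^+ f(t,t)$, with $f(t,s)=\cW_{0,\Id}(P_t\eta^1_0,P_s\eta^2_0)^2$, has to be justified. The standard remedy, in the spirit of Ambrosio--Gigli--Savar\'e and Daneri--Savar\'e, is to integrate both EVIs in time over $[0,h]$, add the resulting integral inequalities, and then pass to the limit $h\downarrow 0$ using the fact that $f$ is upper semi-continuous and that along each EVI trajectory the function $r\mapsto e^{r}\cW_{0,\Id}(P_r\eta,\nu)^2$ is non-increasing (this is a direct consequence of Theorem~\ref{thm:func-ineq}). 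Applying this monotonicity along each variable separately and then combining yields $e^{2t}g(t)\le g(0)$ without having to differentiate on the diagonal.

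Once this contraction has been established, it is straightforward to check that no additional assumption on $\eta^1_0,\eta^2_0$ is needed beyond them lying in $\cP_{0,\Id}(\R^d)$, since the EVI of Theorem~\ref{thm:func-ineq} is itself stated on this class. In particular, the estimate is sharp: it is saturated in the Gaussian case, consistent with the optimal decay rate $e^{-t}$ for the Ornstein--Uhlenbeck semigroup, and matches the $1$-geodesic convexity of $\cE$ from Theorem~\ref{thm:main-entroBoltzmann entpy-cov}, which is the underlying geometric reason for the contraction.
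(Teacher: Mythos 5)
Your overall route is the same as the paper's: the paper deduces the corollary directly from the EVI of Theorem~\ref{thm:func-ineq} by invoking the general principle that an EVI with rate $1$ implies $e^{-t}$-contraction (citing \cite[Prop.~3.1]{DaneriSavare2008}), and your proposal simply sketches the standard doubling-of-variables proof of that principle: apply the EVI to each trajectory with the other frozen, add along the diagonal so the entropy terms cancel, and conclude by Grönwall. That core argument is correct.

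One detail in your rigorization is wrong, however. The claim that $r\mapsto e^{r}\cW_{0,\Id}(P_r\eta,\nu)^2$ is non-increasing is \emph{not} a consequence of Theorem~\ref{thm:func-ineq}: the EVI only gives
\begin{equation*}
\frac{\dd^+}{\dd r}\Bigl(e^{r}\cW_{0,\Id}(P_r\eta,\nu)^2\Bigr)\;\le\; e^{r}\bigl(\cE(\nu)-\cE(P_r\eta)\bigr),
\end{equation*}
and the right-hand side has no sign in general. Consequently the shortcut of ``applying this monotonicity along each variable separately and then combining'' does not yield $e^{2t}g(t)\le g(0)$: if you freeze one argument and let only the other evolve, the entropy difference survives, and it only cancels in the symmetric combination along the diagonal. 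So the cancellation genuinely requires the two-variable (Kruzhkov-type) argument — integrate both EVIs, add, and handle the diagonal limit using the local regularity of $(t,s)\mapsto\cW_{0,\Id}(P_t\eta^1_0,P_s\eta^2_0)^2$ — which is precisely the content of the result of Daneri--Savar\'e that the paper cites. With that correction your argument goes through and coincides with the paper's.
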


Under more restrictive assumptions, we also obtain at least formally a stability result for the covariance-modulated gradient flow: For any two solutions $\mu_t^1,\mu_t^2$ of \eqref{eq:GF-FP-cov-quad-intro} such that $\mean(\mu_0^1)=\mean(\mu_0^2)=x_0$ and $\C(\mu_0^1),\C(\mu_0^2)\succcurlyeq \frac12B$ we have
\[\cW(\mu_t^1,\mu_t^2)\leq e^{-t}\cW(\mu_0^1,\mu_0^2)\quad \forall t\geq 0\;.\]
See the discussion in Section \ref{sec:FuncIneq}, in particular Remark \ref{rem:Wstability}.

As a consequence of displacement convexity, we derive a constrained version of the HWI inequality relating the entropy, transport distance and the Fisher information~\eqref{eq:defI}.
\begin{proposition}[HWI Inequality]\label{prop:HWI-intro}
For any $\eta_0,\eta_1\in \mathcal{P}_{0,\Id}(\R^d)$ connected by a $\cW_{0,\Id}$-geodesic, one has
 \begin{align}\label{eq:HWI_1_intro}
 \cE(\eta_0) &\le 
\cE(\eta_1) + 2\sqrt{\cI(\mu_0)}\cW_{0,\Id}(\eta_0,\eta_1)
    - \frac12\cW_{0,\Id}(\eta_0,\eta_1)^2\,.
\end{align}
\end{proposition}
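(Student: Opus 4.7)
The plan is to derive \eqref{eq:HWI_1_intro} from the $1$-geodesic convexity of the Boltzmann entropy along $\cW_{0,\Id}$-geodesics established in Theorem~\ref{thm:main-entroBoltzmann entpy-cov}, by the classical Otto--Villani-style slope argument adapted to the constrained geometry. Let $(\eta_s)_{s\in[0,1]}$ be a constant-speed $\cW_{0,\Id}$-geodesic from $\eta_0$ to $\eta_1$, which exists by hypothesis, and let $V_s$ denote its associated velocity field in $\cP_{0,\Id}(\R^d)$. The convexity estimate rearranges to
\[
\cE(\eta_0)-\cE(\eta_1) \;\le\; \frac{\cE(\eta_0)-\cE(\eta_s)}{s} \;-\; \tfrac12(1-s)\,\cW_{0,\Id}(\eta_0,\eta_1)^2 \qquad \forall s\in(0,1],
\]
so it suffices to bound $\limsup_{s\downarrow 0}[\cE(\eta_0)-\cE(\eta_s)]/s$ by $2\sqrt{\cI(\eta_0)}\,\cW_{0,\Id}(\eta_0,\eta_1)$ and then let $s\downarrow 0$.

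For the slope-type bound I use the chain rule for the Boltzmann entropy along the continuity equation $\partial_s\eta_s+\nabla\cdot(\eta_s V_s)=0$, which gives formally $\tfrac{d}{ds}\cE(\eta_s)=\int\nabla\log\eta_s\cdot V_s\,d\eta_s$. By Cauchy--Schwarz and the constant-speed identity $\int|V_s|^2 d\eta_s=2\,\cW_{0,\Id}(\eta_0,\eta_1)^2$ (the factor $2$ originating from the $\tfrac12$ convention in the definition~\eqref{eq:OT-cov-constraint-main}) one obtains
\[
\Bigl|\tfrac{d}{ds}\cE(\eta_s)\Bigr| \;\le\; \sqrt{2\,\cI(\eta_s)}\;\cW_{0,\Id}(\eta_0,\eta_1)\,.
\]
Integrating on $[0,s]$, dividing by $s$, and taking $\limsup_{s\downarrow 0}$ via a Fatou-type argument on $r\mapsto \sqrt{\cI(\eta_r)}$ yields $\limsup_{s\downarrow 0}[\cE(\eta_0)-\cE(\eta_s)]/s\le \sqrt{2\cI(\eta_0)}\,\cW_{0,\Id}(\eta_0,\eta_1)\le 2\sqrt{\cI(\eta_0)}\,\cW_{0,\Id}(\eta_0,\eta_1)$, which combined with the rearranged convexity bound gives \eqref{eq:HWI_1_intro} after $s\downarrow 0$.

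The main obstacle is the rigorous justification of the chain rule and of the slope estimate near $s=0$: the velocity $V_s$ of a constrained geodesic need not be smooth, $\eta_s$ need not be absolutely continuous a priori, and $\cI(\eta_s)$ may fail to be continuous at $s=0$. A standard workaround is to regularize $\eta_0,\eta_1$ along the Ornstein--Uhlenbeck semigroup $P_t$, which preserves $\cP_{0,\Id}(\R^d)$ and already appears in Theorem~\ref{thm:func-ineq}, run the above argument for the smoothed marginals where the chain rule holds classically, and pass to the limit $t\downarrow 0$ using the lower semicontinuity of $\cE$ and $\cI$ together with continuity of $\cW_{0,\Id}$. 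Alternatively, one may argue abstractly in the Ambrosio--Gigli--Savar\'e framework of $1$-geodesically convex length spaces: convexity yields the slope inequality $\cE(\eta_0)-\cE(\eta_1)\le |\partial^-\cE|(\eta_0)\,\cW_{0,\Id}(\eta_0,\eta_1)-\tfrac12\cW_{0,\Id}(\eta_0,\eta_1)^2$, and it then suffices to check $|\partial^-\cE|(\eta_0)\le 2\sqrt{\cI(\eta_0)}$ in the constrained geometry, which again reduces to Cauchy--Schwarz of $\nabla\log\eta_0$ against admissible velocities in $\cP_{0,\Id}(\R^d)$.
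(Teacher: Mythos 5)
Your overall strategy (rearrange the $1$-convexity of $\cE$ along $\cW_{0,\Id}$-geodesics and control the difference quotient at $s=0$ by a slope-type bound) is the same as the paper's, but your primary route for the slope bound has a genuine gap. After the chain rule and Cauchy--Schwarz you need
$\limsup_{s\downarrow 0}\frac1s\int_0^s\sqrt{\cI(\eta_r)}\,\dd r\le\sqrt{\cI(\eta_0)}$,
i.e.\ \emph{upper} semicontinuity of $r\mapsto\cI(\eta_r)$ at $r=0$ along the constrained geodesic. Fatou and the lower semicontinuity of the Fisher information give the opposite inequality $\liminf_{r\to0}\cI(\eta_r)\ge\cI(\eta_0)$, and there is no a priori control of $\cI(\eta_r)$ for $r>0$ in terms of the endpoint (it may even be infinite), so the ``Fatou-type argument'' does not close. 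The chain rule itself is also unjustified: no interior regularity of $\cW_{0,\Id}$-geodesics is available in the paper. Your first workaround does not repair this, because mollifying the marginals with the Ornstein--Uhlenbeck semigroup neither smooths the interior of the geodesic nor controls $\cI(\eta_s)$ for $s>0$, and moreover a $\cW_{0,\Id}$-geodesic between $P_t\eta_0$ and $P_t\eta_1$ need not exist (existence is only known under smallness or symmetry, Theorems~\ref{thm:existence-smalldist} and~\ref{thm:ex-opt-shape}), whereas the hypothesis only provides one between $\eta_0$ and $\eta_1$.

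Your second workaround is essentially the paper's proof, but as stated it hides the decisive step. The paper avoids any derivative along the constrained geodesic: it bounds $\frac1s(\cE(\eta_s)-\cE(\eta_0))$ from below using the \emph{Wasserstein} subdifferential of the entropy at $\eta_0$ (namely $\nabla\log\eta_0$, via \cite[Thm.~10.4.6]{AGS}), Cauchy--Schwarz against the optimal $W_2$-map $T_{\eta_0}^{\eta_s}-\Id$, and then converts $W_2(\eta_0,\eta_s)$ into $\cW_{0,\Id}(\eta_0,\eta_s)$ using the two-sided comparison \eqref{eq:W_2comp}, which yields $\lim_{s\to0}\frac1sW_2(\eta_0,\eta_s)\le 2\,\cW_{0,\Id}(\eta_0,\eta_1)$ and hence the constant $2$ in \eqref{eq:HWI_1_intro}. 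If you phrase it as a metric-slope argument, the point is exactly that $|\partial^-\cE|_{\cW_{0,\Id}}(\eta_0)\le 2\sqrt{\cI(\eta_0)}$ follows from the Wasserstein slope $\sqrt{\cI(\eta_0)}$ together with \eqref{eq:W_2comp}; a direct ``Cauchy--Schwarz against admissible velocities in $\cP_{0,\Id}(\R^d)$'' would reintroduce the very chain-rule/regularity issue you are trying to avoid. With that step made explicit, your fallback argument coincides with the paper's proof; your primary argument, as written, does not go through.
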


\subsection{Scalar modularity: Variance-modulated optimal transport}\label{sec:var-intro}

\subsubsection{Definition}

In the one-dimensional setting, the distance \eqref{eq:OT-cov-main} corresponds to variance-modulated optimal transport. 
One could instead also consider an optimal transport problem in any dimension with modulation given by $\var(\mu_t) = \tr\C(\mu_t)$, which we refer to as scalar modularity or variance-modulated transport. 
We present the analysis for this setting, highlighting in which way the anisotropy induced by the covariance in problem \eqref{eq:OT-cov-main} differs from the scalar modularity in higher dimensions.
As we are overall mainly concerned with the matrix case, our motivation here is to draw attention to important similarities and differences between the matrix and the scalar modularity. 
We summarize the results for the variance-modulated optimal transport distance in this subsection, and postpone the proofs to Appendix~\ref{sec:scalar}.
\begin{definition}[Variance-Modulated Optimal Transport]
Given $\mu_0,\mu_1\in \cP_2(\R^d)$ set
\begin{equation}
  \label{eq:OT-var}
  \cWv(\mu_0,\mu_1)^2 = \inf \set*{
    \int_0^1\frac{1}{2\var(\mu_t)}\int |V_t|^2\dd \mu_t\dd t~:~(\mu,V)\in\CE(\mu_0,\mu_1)}\;.
\end{equation}
\end{definition}
\begin{remark}
In general, $\cWv(\mu_0,\mu_1)=0$ if and only if $\mu_0=\mu_1$. Further, we have that if $\mu_0\neq \mu_1$ and
  either $\var(\mu_0)=0$ or $\var(\mu_1)=0$, then
  $\cWv(\mu_0,\mu_1)=\infty$ (for details, Lemma~\ref{lem:apriori}). In particular, the distance to any Dirac distribution is infinite.
\end{remark}
Similar to the matrix case, the problem \eqref{eq:OT-var} can be equivalently written as a minimization problem for the evolution of mean and variance plus an independent constrained transport problem where the mean and variance are fixed to~$0$ and~$1$, respectively.
\begin{definition}[Constraint Optimal Transport]
Given $\mu_0,\mu_1\in \cP_2(\R^d)$, set
\begin{equation}\label{eq:OT-var-constraint-main}
   \cWv_{0,1}(\mu_0,\mu_1)^2 = \inf\set*{\int_0^1\int\frac12|V_t|^2\dd\mu_t\dd t~:~(\mu,V)\in \CE^{\var}_{0,1}(\mu_0,\mu_1)}\;,  
\end{equation}
where $\CE^{\var}_{0,1}(\mu_0,\mu_1)$ is the set of pairs $(\mu,V)\in\CE(\mu_0,\mu_1)$ such that
 $\mean(\mu_t)=0$ and $\var(\mu_t)=1$ for all $t\in[0,1]$.
 \end{definition}
\begin{definition}[Variance-normalization]
	\label{def:normalization-var}
	Given $m\in\R^d$, $\sigma>0$, define $T_{m,\sigma}:\R^d\to\R^d$ by
	\begin{align*}
		T_{m,\sigma}(x)=\frac{x-m}{\sigma}\;. %
	\end{align*}
	If $\mu\in\cP_2(\R^d)$ with $\mean(\mu)=m$ and $\var(\mu)=\sigma^2>0$, then
	$\widebar\mu:=(T_{m,\sigma})_\#\mu$ is its \emph{variance-normalization} or just \emph{normalization} if the constext is clear, satisfying
	$\mean(\widebar\mu)=0$, $\var(\widebar\mu)=1$. 
\end{definition}
 
 \begin{remark}\label{rem:varGeos:NormalW2}
  The constrained minimization problem defining $\cWv_{0,1}$ has been
  studied in detail by Carlen and Gangbo in \cite{CarlenGangbo2003}. In
  particular, it is shown that the optimal curve $\widebar\mu_t$ is obtained as the normalization of $\tilde\mu_{\tau(t)}$,
  where $(\tilde\mu_t)$ is the Wasserstein geodesic connecting
  $\widebar\mu_0,\widebar\mu_1$ and $\tau:[0,1]\to[0,1]$ is a time
  reparametrization ensuring that the curve obtained after
  normalization has constant Wasserstein action. 
  From \cite[Lemma 3.2]{CarlenGangbo2003}, it follows that the mean
  and variance of the Wasserstein geodesic $(\tilde\mu_t)_{t\in[0,1]}$ evolve as
  \begin{align}\label{e:mean-var:W2}
    m(\tilde\mu_t)=0\;,\qquad \var(\tilde\mu_t) = 1-t(1-t)W_2(\widebar\mu_0,\widebar\mu_1)\;.
  \end{align}
\end{remark}
We also introduce a minimization problem for mean and variance.
\begin{definition}[Moment Optimization Problem]
Given $\mu_0,\mu_1\in \cP_2(\R^d)$, define
\begin{equation}
  \label{eq:mean-var-min}
  \vMOP(\mu_0,\mu_1)^2 = \inf\set*{\int_0^1\frac{|\dot m_t|^2+|\dot\sigma_t|^2}{2\sigma_t^2}\dd t~:~(m,\sigma)\in\MV(\mu_0,\mu_1)}\;.
\end{equation}
Here $\MV(\mu_0,\mu_1)$ denotes the set of all absolutely continuous
functions $m:[0,1]\to\R^d$ and $\sigma:[0,1]\to[0,\infty)$ such that
$m_i=\mean(\mu_i)$ and $\sigma_i^2=\var(\mu_i)$ for $i=0,1$. 
\end{definition}

\subsubsection{Splitting in shape and moments}

We have the following equivalent description of the variance-modulated optimal transport problem.
\begin{theorem}[Splitting the distance]\label{thm:main-var}
  Let $\mu_0,\mu_1\in \mathcal P_2(\R^d)$ with $\var(\mu_i)>0$ for $i=1,2$, then
  $\cWv(\mu_0,\mu_1)<\infty$ and we have
  \begin{align}\label{eq:rewrite-var-main}
    \cWv(\mu_0,\mu_1)^2 = \vMOP(\mu_0,\mu_1)^2 + \cWv_{0,1}(\widebar\mu_0,\widebar\mu_1)^2\;,
  \end{align}
  where $\widebar\mu_0$ and $\widebar\mu_1$ are the normalizations of the
  marginals.
\end{theorem}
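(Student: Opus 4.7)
The strategy mirrors the proof of the matrix splitting (Theorem~\ref{thm:main-cov}), but is considerably simpler because in the scalar case there is no freedom in the choice of ``square root'' of the variance and hence no outer minimization over $\SO(d)$. The plan is to establish the identity \eqref{eq:rewrite-var-main} by showing both inequalities via a bijection between admissible curves in $\CE(\mu_0,\mu_1)$ and triples consisting of: (i) a moment curve $(m_t,\sigma_t)\in\MV(\mu_0,\mu_1)$, and (ii) a variance-normalized curve $(\tilde\mu_t,\tilde V_t)\in \CE^{\var}_{0,1}(\widebar\mu_0,\widebar\mu_1)$, in such a way that the action decomposes as a sum.

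Given $(\mu_t,V_t)\in \CE(\mu_0,\mu_1)$ with finite action, set $m_t=\mean(\mu_t)$, $\sigma_t^2=\var(\mu_t)$, and define the time-dependent diffeomorphism $\Phi_t(x)=(x-m_t)/\sigma_t$. Pushing forward by $\Phi_t$ gives a normalized curve $\tilde\mu_t:=(\Phi_t)_\#\mu_t$ with marginals $\widebar\mu_0,\widebar\mu_1$, and a straightforward change of variables in the continuity equation produces the transformed velocity field
\[
	\tilde V_t(y) = \frac{1}{\sigma_t}\Bigl( V_t(\sigma_t y + m_t) - \dot m_t - \dot\sigma_t\, y\Bigr),
\]
so that $(\tilde\mu_t,\tilde V_t)\in \CE^{\var}_{0,1}(\widebar\mu_0,\widebar\mu_1)$. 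The key pointwise identity to be verified is
\[
	\frac{1}{2\sigma_t^2}\int |V_t|^2\dd\mu_t \;=\; \frac12\int |\tilde V_t|^2\dd\tilde\mu_t \;+\; \frac{|\dot m_t|^2+\dot\sigma_t^2}{2\sigma_t^2}.
\]
To prove this, expand $|V_t - \dot m_t - (\dot\sigma_t/\sigma_t)(x-m_t)|^2$ under $\mu_t$. The cross-term is evaluated via the continuity equation, which yields the standard identities
\[
	\dot m_t = \int V_t\dd\mu_t,\qquad \sigma_t\dot\sigma_t = \int \langle x-m_t,V_t\rangle\dd\mu_t,
\]
after which the computation collapses algebraically, using $\int(x-m_t)\dd\mu_t=0$ and $\int|x-m_t|^2\dd\mu_t=\sigma_t^2$. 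Integrating this pointwise identity in time gives the splitting of the action and hence the inequality ``$\ge$'' in \eqref{eq:rewrite-var-main}.

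For the reverse inequality, given any $(m,\sigma)\in\MV(\mu_0,\mu_1)$ with $\sigma_t>0$ on $[0,1]$ and any $(\tilde\mu_t,\tilde V_t)\in \CE^{\var}_{0,1}(\widebar\mu_0,\widebar\mu_1)$, define $\mu_t:=(\Phi_t^{-1})_\#\tilde\mu_t$ with the natural velocity
\[
	V_t(x) = \sigma_t\tilde V_t(\Phi_t(x)) + \dot m_t + \frac{\dot\sigma_t}{\sigma_t}(x-m_t).
\]
A direct check shows $(\mu_t,V_t)\in \CE(\mu_0,\mu_1)$, $\mean(\mu_t)=m_t$, $\var(\mu_t)=\sigma_t^2$, and the same computation as above gives the splitting of the action with equality. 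Taking infima over $(\tilde\mu,\tilde V)$ and $(m,\sigma)$ yields the inequality ``$\le$'' in \eqref{eq:rewrite-var-main}. Finiteness $\cWv(\mu_0,\mu_1)<\infty$ then follows since $\cWv_{0,1}(\widebar\mu_0,\widebar\mu_1)<\infty$ (by the Carlen--Gangbo construction recalled in Remark~\ref{rem:varGeos:NormalW2}) and $\vMOP(\mu_0,\mu_1)<\infty$ (by choosing any smooth $(m,\sigma)$ bounded away from zero).

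The main obstacle is the low regularity of admissible competitors: $(\mu_t)$ is only weakly continuous, $(m_t,\sigma_t)$ only absolutely continuous, and $V_t$ only in $L^2(\dd\mu_t\dd t)$. One must justify differentiating $m_t$ and $\sigma_t^2$ in time from the distributional continuity equation, which requires an approximation argument with appropriate test functions (truncating $x$ and $|x|^2$) and the finite-action bound $\int_0^1\sigma_t^{-2}\int|V_t|^2\dd\mu_t\dd t<\infty$. One must also verify that $\sigma_t>0$ uniformly on $[0,1]$ along any finite-action curve (see Lemma~\ref{lem:apriori}), so that the normalization $\Phi_t$ is well-defined and the push-forward computation in the reverse direction is legitimate.
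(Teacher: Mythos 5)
Your proposal is correct and follows essentially the same route as the paper's proof in Appendix~\ref{sec:scalar}: the same normalization $T_{m_t,\sigma_t}$, the same transformed velocity field, the same exact action-splitting identity (the paper's~\eqref{eq:action-trans-var}, proved there by evaluating the cross term through the continuity equation with a quadratic test function, which is your two moment identities packaged together), the bijection between admissible curves giving both inequalities, and Lemma~\ref{lem:apriori} to keep $\sigma_t$ bounded away from zero along finite-action competitors. The only (harmless) difference is the finiteness argument: the paper uses the Wasserstein geodesic between $\mu_0,\mu_1$ directly as a finite-action competitor via the variance bound of Proposition~\ref{prop:cov_along_W2}, whereas you obtain it from finiteness of $\cWv_{0,1}(\widebar\mu_0,\widebar\mu_1)$ (Carlen--Gangbo) together with the ``$\le$'' direction of the splitting, which is equally valid.
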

Further, the optimizers of all three minimization problems are given explicitly. 
\begin{theorem}[Optimal curve]\label{thm:optimalcurve-var}
The optimal curve in \eqref{eq:OT-var} exists and is obtained by
  shifting and scaling the optimal curve for
  $\cWv_{0,1}(\widebar\mu_0,\widebar\mu_1)$ to the optimal mean and
  covariance. More precisely, it is given by
  \begin{align*}
    \mu_t = (T_{m_t,\sigma_t}^{-1})_\#\widebar\mu_t\;,
  \end{align*}
  where $\widebar\mu_t$ is the optimal curve for the constrained problem
  $\cWv_{0,1}(\widebar\mu_0,\widebar\mu_1)$ and $(m_t,\sigma_t)$ is the
  optimizer of \eqref{eq:mean-var-min}. 
\end{theorem}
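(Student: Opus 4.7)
The plan is to leverage the splitting identity of Theorem~\ref{thm:main-var}, namely
\[
\cWv(\mu_0,\mu_1)^2 = \vMOP(\mu_0,\mu_1)^2 + \cWv_{0,1}(\widebar\mu_0,\widebar\mu_1)^2,
\]
and to exhibit an explicit admissible curve whose total variance-modulated action saturates this identity. Let $(m_t,\sigma_t)\in\MV(\mu_0,\mu_1)$ be a minimizer of $\vMOP$, and let $(\widebar\mu_t,\widebar V_t)\in\CE^{\var}_{0,1}(\widebar\mu_0,\widebar\mu_1)$ be a minimizer of the constrained problem $\cWv_{0,1}(\widebar\mu_0,\widebar\mu_1)$. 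The candidate curve is $\mu_t := (T_{m_t,\sigma_t}^{-1})_\#\widebar\mu_t$, i.e.\ the time-dependent affine pushforward $y\mapsto \sigma_t y + m_t$ of the constrained optimal shape curve.

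Differentiating $\int f(\sigma_t y + m_t)\dd\widebar\mu_t(y)$ in $t$ and using the continuity equation for $(\widebar\mu_t,\widebar V_t)$ shows that $(\mu_t)_{t\in[0,1]}$ satisfies the continuity equation with velocity field
\[
V_t(x) \;=\; \dot m_t + \frac{\dot\sigma_t}{\sigma_t}(x-m_t) + \sigma_t\,\widebar V_t\bigl(T_{m_t,\sigma_t}(x)\bigr).
\]
Changing variables $y=T_{m_t,\sigma_t}(x)$ and expanding $|V_t|^2$ produces three contributions together with the cross terms $\sigma_t\dot m_t\cdot\int \widebar V_t\dd\widebar\mu_t$ and $\sigma_t\dot\sigma_t\int y\cdot\widebar V_t\dd\widebar\mu_t$. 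Both of these vanish: the identities $\int y\dd\widebar\mu_t=0$ and $\int |y|^2\dd\widebar\mu_t=1$ hold for every $t\in[0,1]$ by the constraint $(\widebar\mu_t,\widebar V_t)\in\CE^{\var}_{0,1}$, and differentiating them in $t$ against the continuity equation yields $\int \widebar V_t\dd\widebar\mu_t=0$ and $\int y\cdot \widebar V_t\dd\widebar\mu_t=0$ for a.e.~$t$. Consequently,
\[
\frac{1}{2\sigma_t^2}\int |V_t|^2\dd\mu_t \;=\; \frac{|\dot m_t|^2 + \dot\sigma_t^2}{2\sigma_t^2} + \frac12\int |\widebar V_t|^2\dd\widebar\mu_t,
\]
and integration in $t$ gives $\vMOP(\mu_0,\mu_1)^2+\cWv_{0,1}(\widebar\mu_0,\widebar\mu_1)^2=\cWv(\mu_0,\mu_1)^2$. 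Hence $(\mu_t,V_t)\in\CE(\mu_0,\mu_1)$ is admissible and action-minimizing, proving the claim.

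What remains is existence of the two building blocks. The optimizer of $\cWv_{0,1}(\widebar\mu_0,\widebar\mu_1)$ is provided by the Carlen--Gangbo construction recalled in Remark~\ref{rem:varGeos:NormalW2}; for $\vMOP$, the integrand $(|\dot m|^2+\dot\sigma^2)/(2\sigma^2)$ is the kinetic energy of the hyperbolic metric on the upper half space $\R^d\times(0,\infty)$, whose geodesics between prescribed endpoints $(m_i,\sigma_i)$ exist and admit explicit expressions, guaranteeing in particular that $\sigma_t>0$ along the interpolation. The only delicate point I anticipate is the regularity of the cross-term cancellation inside the distributional framework of $\CE^{\var}_{0,1}$: this is handled by observing that $t\mapsto\int y\dd\widebar\mu_t$ and $t\mapsto\int |y|^2\dd\widebar\mu_t$ are absolutely continuous functions identically equal to $0$ and $1$, so their distributional derivatives against smooth test functions vanish and yield the required moment identities for $\widebar V_t$ without needing pointwise smoothness.
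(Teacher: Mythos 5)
Your proposal is correct and follows essentially the same route as the paper: the paper proves Theorem~\ref{thm:optimalcurve-var} inside Steps 2--3 of the proof of Theorem~\ref{thm:main-var}, via the exact action identity \eqref{eq:action-trans-var} under the normalization $T_{m_t,\sigma_t}$, with the cross terms killed by testing the continuity equation against linear/quadratic functions (your differentiation of the moment constraints is the same computation), and with the building-block optimizers supplied by Carlen--Gangbo for $\cWv_{0,1}$ and the explicit hyperbolic-type geodesics of Theorem~\ref{thm:moments-sol} for $\vMOP$. The only cosmetic difference is the direction of the transformation (you de-normalize the optimal constrained pair, while the paper normalizes arbitrary admissible pairs and invokes the resulting bijection), which does not change the substance of the argument.
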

Moreover, in Section~\ref{sec:sol-m-var}, we provide an explicit solution formula for the optimization problem of the mean and variance~\eqref{eq:mean-var-min}, with optimality conditions given by
\begin{subequations}\label{eq:var:moments}
\begin{align}
	\pderiv{}{t}\bra*{ \frac{\dot m}{\sigma^2}} &= 0 \label{eq:mean-var:mean}\\
	\frac{\ddot\sigma}{\sigma} - \frac{\bra*{\dot\sigma}^2}{\sigma^2} &= - \ \frac{\bra*{\dot m}^2}{\sigma^2} \label{eq:mean-var:var}
\end{align}
\end{subequations}
The solution to the system~\eqref{eq:var:moments} is summarized as follows.
\begin{theorem}[Solving the mean-variance optimization problem]
\label{thm:moments-sol}
Let $m_i=\mean(\mu_i)$ and $\sigma_i^2=\var(\mu_i)$ for $i=0,1$.
By setting $n=\abs*{m_1-m_0}$, the moment distance is given by
\begin{equation}\label{e:mean-var:opt}
	\vMOP(\mu_0,\mu_1)^2 
	= \frac12\abs*{ \log \bra*{\frac{n^2 + \sigma_0^2 +\sigma_1^2 
			 - \sqrt{\bra*{n^2 + \sigma_0^2 +\sigma_1^2}^2 
			 	- 4 \sigma_0^2 \sigma_1^2}}{2\sigma_0 \sigma_1}}}^2\:,
\end{equation}
Further, in the case $n>0$,
the optimal curves for \eqref{eq:mean-var-min} are given by
\begin{subequations}\label{eq:moments-var-explicit}
\begin{align}\label{eq:mean-var:mean-explicit}
	m(t)
	&=m_0+(m_1-m_0) \frac{\tanh\bra*{\beta t+t_0}-\tanh\bra*{t_0}}{\tanh\bra*{\beta+t_0}-\tanh\bra*{t_0}}\,, \\
	\label{eq:mean-var:sigma-explicit}
	\sigma(t) &= \frac{n}{\tanh\bra*{\beta+t_0}-\tanh\bra*{t_0}} \;\cdot\;
	\frac{1}{\cosh(\beta t + t_0)} \,.
\end{align}
\end{subequations}
with $\beta= \sqrt{2}\,\vMOP(\mu_0,\mu_1)\geq 0 $ and 
\begin{equation}\label{eq:mean-var:t0-explicit}
	t_0 = \log \bra*{\frac{\sigma_0^2 - \sigma_1^2 -n^2 +
		\sqrt{\bra*{n^2 + \sigma_0^2 +\sigma_1^2}^2 
			- 4 \sigma_0^2 \sigma_1^2}}{2 n \sigma_0}} \geq 0 .
\end{equation}
For $m_0=m_1=m$ (i.e. $n=0$), the curves are given by $m(t)=m$ and $\sigma(t)= \sigma_0^{1-t} \sigma_1^{t}$.
\end{theorem}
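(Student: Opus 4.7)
The plan is to integrate the Euler--Lagrange system~\eqref{eq:var:moments} explicitly, match the four boundary values, and evaluate the action. Existence and uniqueness of a minimizer come for free from the observation that the integrand in~\eqref{eq:mean-var-min} is (half of) the standard Riemannian metric of the Poincar\'e upper half-space model of hyperbolic space $\mathbb{H}^{d+1}=\{(m,\sigma):m\in\R^d,\,\sigma>0\}$, so that the minimizers of~\eqref{eq:mean-var-min} are precisely the hyperbolic geodesics connecting $(m_0,\sigma_0)$ to $(m_1,\sigma_1)$, and $\vMOP(\mu_0,\mu_1)^2$ is half the square of the hyperbolic distance between these points.

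\textbf{Step 1 (reduction to one dimension).} Equation~\eqref{eq:mean-var:mean} asserts that $\alpha:=\dot m/\sigma^2\in\R^d$ is conserved. Integrating shows that $m(t)-m_0$ stays on the ray through $\alpha$, and $m(1)=m_1$ forces $\alpha$ to be a non-negative multiple of $\hat e:=(m_1-m_0)/n$ when $n>0$. Writing $m(t)=m_0+\mu(t)\hat e$ with $\mu(0)=0$, $\mu(1)=n$, the system reduces to $\dot\mu=\alpha_0\sigma^2$ coupled with~\eqref{eq:mean-var:var}, where $\alpha_0:=|\alpha|\geq 0$.

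\textbf{Step 2 (explicit integration).} With $u:=\log\sigma$, equation~\eqref{eq:mean-var:var} becomes the Liouville-type ODE $\ddot u=-\alpha_0^2 e^{2u}$ whose first integral reads $\dot u^2+\alpha_0^2 e^{2u}=\beta^2$ for some $\beta\geq 0$. A direct check shows that the ansatz $\sigma(t)=K/\cosh(\beta t+t_0)$ solves the equation provided $\alpha_0 K=\beta$, and a standard phase-plane argument shows that all positive bounded solutions are of this form. Integration of $\dot\mu=\alpha_0 K^2/\cosh^2(\beta t+t_0)$ combined with $\alpha_0 K^2/\beta=K$ gives $\mu(t)=K[\tanh(\beta t+t_0)-\tanh(t_0)]$; eliminating $K$ via $n=K[\tanh(\beta+t_0)-\tanh(t_0)]$ recovers~\eqref{eq:mean-var:mean-explicit} and~\eqref{eq:mean-var:sigma-explicit}.

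\textbf{Step 3 (matching endpoints).} The conditions $\sigma(0)=\sigma_0$, $\sigma(1)=\sigma_1$ translate into $K=\sigma_0\cosh(t_0)=\sigma_1\cosh(\beta+t_0)$, and after multiplication by the common factor $K$ the mean condition becomes $n=\sigma_1\sinh(\beta+t_0)-\sigma_0\sinh(t_0)$. Using $\cosh^2-\sinh^2=1$ to eliminate $\cosh(\beta+t_0)$ and squaring yields the explicit pair
\begin{equation*}
\sinh(t_0)=\frac{\sigma_0^2-\sigma_1^2-n^2}{2n\sigma_0}\:,\qquad \sinh(\beta+t_0)=\frac{n^2+\sigma_0^2-\sigma_1^2}{2n\sigma_1}\:.
\end{equation*}
The identity $\arsinh(x)=\log\bigl(x+\sqrt{x^2+1}\bigr)$ together with the algebraic rewriting
\begin{equation*}
(\sigma_0^2-\sigma_1^2-n^2)^2+4n^2\sigma_0^2=(n^2+\sigma_0^2+\sigma_1^2)^2-4\sigma_0^2\sigma_1^2
\end{equation*}
converts the formula for $t_0=\arsinh(\sinh(t_0))$ into the shape announced in~\eqref{eq:mean-var:t0-explicit}.

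\textbf{Step 4 (action and closed-form distance).} Substituting the candidate into the integrand of~\eqref{eq:mean-var-min} yields the constant value $\alpha_0^2\sigma^2+\dot u^2=\beta^2\bigl(1/\cosh^2(\beta t+t_0)+\tanh^2(\beta t+t_0)\bigr)=\beta^2$, so the minimal action equals $\beta^2/2$ and hence $\beta=\sqrt{2}\,\vMOP(\mu_0,\mu_1)$. Combining the addition formula $\cosh(\beta)=\cosh(\beta+t_0)\cosh(t_0)-\sinh(\beta+t_0)\sinh(t_0)$ with the expressions from Step~3 and $K^2=\sigma_0^2(1+\sinh^2(t_0))$, a short algebraic simplification produces the clean identity
\begin{equation*}
\cosh(\beta)=\frac{n^2+\sigma_0^2+\sigma_1^2}{2\sigma_0\sigma_1}\:.
\end{equation*}
Writing $\beta=\arcosh(y)=\log\bigl(y+\sqrt{y^2-1}\bigr)$ for $y:=(n^2+\sigma_0^2+\sigma_1^2)/(2\sigma_0\sigma_1)$ and using $y-\sqrt{y^2-1}=\bigl(y+\sqrt{y^2-1}\bigr)^{-1}$ gives exactly~\eqref{e:mean-var:opt}. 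The degenerate case $n=0$ is handled directly: the reduction forces $\alpha_0=0$ and $m\equiv m_0$, so $\ddot u=0$, and matching $\sigma(0)=\sigma_0$, $\sigma(1)=\sigma_1$ yields $\sigma(t)=\sigma_0^{1-t}\sigma_1^t$ with action $\tfrac{1}{2}\log^2(\sigma_1/\sigma_0)$, which coincides with the limit $n\downarrow 0$ of~\eqref{e:mean-var:opt}.

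The main technical obstacle is the algebraic bookkeeping of Steps~3 and~4---in particular, tracking the signs of the hyperbolic quantities when squaring and executing the simplifications that collapse the mixture of $\sinh$ and $\cosh$ terms into the symmetric closed-form expressions for $t_0$ and $\cosh(\beta)$. The second (smaller) technical point is justifying that bounded positive solutions of $\ddot u=-\alpha_0^2 e^{2u}$ are exhausted by the $\cosh^{-1}$ ansatz; this reduces to a standard separation of variables on the conserved-energy level set.
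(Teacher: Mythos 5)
Your proposal is correct, and its computational core coincides with the paper's own proof in the appendix (Section on the mean--variance problem): integrate the conserved quantity $\alpha=\dot m/\sigma^2$, solve the resulting Liouville-type equation for $\sigma$ by the ansatz $\sigma(t)=K/\cosh(\beta t+t_0)$, match the endpoints, and read off the action as $\beta^2/2$. Where you differ is in the execution and in one genuine addition. First, you resolve the boundary conditions by the linear elimination $K=\sigma_0\cosh(t_0)=\sigma_1\cosh(\beta+t_0)$, $n=\sigma_1\sinh(\beta+t_0)-\sigma_0\sinh(t_0)$, obtaining $\sinh(t_0)$ and $\sinh(\beta+t_0)$ in closed form and then $\cosh\beta=\frac{n^2+\sigma_0^2+\sigma_1^2}{2\sigma_0\sigma_1}$ via the addition theorem; the paper instead substitutes $\beta=\log\delta$, $t_0=\log\gamma$ and solves two quadratics. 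Both routes give identical formulas \eqref{e:mean-var:opt} and \eqref{eq:mean-var:t0-explicit} (your algebraic identity $(\sigma_0^2-\sigma_1^2-n^2)^2+4n^2\sigma_0^2=(n^2+\sigma_0^2+\sigma_1^2)^2-4\sigma_0^2\sigma_1^2$ checks out), and your version is arguably cleaner. Second, your identification of the action in \eqref{eq:mean-var-min} as half the Dirichlet energy of the hyperbolic upper half-space metric $\frac{|dm|^2+d\sigma^2}{\sigma^2}$ is a genuine supplement: it yields existence, uniqueness and minimality of the connecting curve (Cartan--Hadamard), a point the paper's proof leaves implicit since it only manipulates the Euler--Lagrange system \eqref{eq:var:moments}; it also gives an independent check of $\cosh\beta$ through the standard hyperbolic distance formula.

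One small remark your derivation makes visible: since $\sinh(t_0)=\frac{\sigma_0^2-\sigma_1^2-n^2}{2n\sigma_0}$, the parameter $t_0$ is negative whenever $\sigma_0^2<\sigma_1^2+n^2$ (e.g.\ $\sigma_0=\sigma_1$, $n>0$, where $\sigma$ first increases and then decreases), so the inequality $t_0\geq 0$ asserted in \eqref{eq:mean-var:t0-explicit} (and the claim $\gamma\geq1$ in the paper's proof) does not hold in general; this is a slip in the paper's statement rather than a gap in your argument, whose formula for $t_0$ agrees with the paper's.
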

By direct inspection, equation \eqref{e:mean-var:opt} leads to the following asymptotic expressions for small and large $n$ respectively.
\begin{corollary}[Asymptotics for the means]\label{cor:ass-mean}
In particular, for $n\ll 1$
\begin{align*}
	\vMOP(\mu_0,\mu_1)^2 &=\frac12 \abs*{\log \sigma_0 - \log \sigma_1}^2 +\frac{\log \sigma_0^2 - \log \sigma_1^2}{\sigma_0^2 - \sigma_1^2} \frac{n^2}{2} + O\bra*{n^4}\,,
\end{align*}
For $n\gg 1$ on the other hand, the asymptotics are given by
\begin{align*}
      	\vMOP(\mu_0,\mu_1)^2 =\frac12 \abs*{\log \bra*{ \frac{\sigma_0\sigma_1}{n^2}  +  O\bra*{n^{-4}}} }^2\,.
\end{align*}
\end{corollary}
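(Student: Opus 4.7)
The plan is to derive both asymptotic expansions directly from the closed-form expression \eqref{e:mean-var:opt}, which gives
\[
  \vMOP(\mu_0,\mu_1)^2 = \frac12 \bigl|\log F(n^2)\bigr|^2,
  \qquad
  F(s):=\frac{s+a-\sqrt{(s+a)^2-b^2}}{b},
\]
with the abbreviations $a:=\sigma_0^2+\sigma_1^2$ and $b:=2\sigma_0\sigma_1$. Since both asymptotic regimes concern the behavior of $F$, the whole argument reduces to a careful Taylor expansion of this one scalar function, once for $s\to 0$ and once for $s\to\infty$. By symmetry in $(\sigma_0,\sigma_1)$ there is no loss in assuming $\sigma_0>\sigma_1$ in the small-$n$ expansion (the case $\sigma_0=\sigma_1$ can be recovered by continuity at the end, as only the even powers of $n$ appear).

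For the small-$n$ regime, I first evaluate $F(0)=\frac{a-\sqrt{a^2-b^2}}{b}=\frac{\sigma_0^2+\sigma_1^2-|\sigma_0^2-\sigma_1^2|}{2\sigma_0\sigma_1}=\sigma_1/\sigma_0$, which immediately gives the leading term $\tfrac12|\log\sigma_0-\log\sigma_1|^2$. For the next order, I differentiate: with $D(s):=\sqrt{(s+a)^2-b^2}$ one has $D'(s)=(s+a)/D(s)$, so $D'(0)=a/(\sigma_0^2-\sigma_1^2)$ and hence
\[
  F'(0)=\frac{1-D'(0)}{b}= -\frac{\sigma_1}{\sigma_0(\sigma_0^2-\sigma_1^2)}.
\]
Writing $\log F(s)=\log F(0) + \tfrac{F'(0)}{F(0)} s + O(s^2)$ and squaring yields a cross term $\tfrac{2\log(\sigma_0/\sigma_1)}{\sigma_0^2-\sigma_1^2}\,s$, which after multiplication by $\tfrac12$ and substitution $s=n^2$ reproduces the claimed coefficient $\tfrac{\log\sigma_0^2-\log\sigma_1^2}{\sigma_0^2-\sigma_1^2}\cdot\tfrac{n^2}{2}$. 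The $O(n^4)$ remainder comes from the standard error bound in Taylor's formula, which is justified because $F$ is smooth in $s$ on a neighborhood of $0$ (its argument under the square root is strictly positive there, being equal to $(\sigma_0^2-\sigma_1^2)^2>0$ at $s=0$).

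For the large-$n$ regime, I factor $(s+a)$ out of the square root and expand:
\[
  \sqrt{(s+a)^2-b^2}=(s+a)\bigl(1-\tfrac{b^2}{2(s+a)^2}+O((s+a)^{-4})\bigr),
\]
so that
\[
  F(s)=\frac{1}{b}\Bigl(\frac{b^2}{2(s+a)}+O((s+a)^{-3})\Bigr)=\frac{\sigma_0\sigma_1}{s+a}+O(s^{-3}).
\]
Substituting $s=n^2$ and using $s+a=n^2+O(1)$ gives $F(n^2)=\sigma_0\sigma_1/n^2+O(n^{-4})$, and plugging this into $\tfrac12|\log F|^2$ delivers the stated asymptotic.

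The main (very minor) obstacle is bookkeeping signs and absolute values when resolving $\sqrt{(\sigma_0^2-\sigma_1^2)^2}=|\sigma_0^2-\sigma_1^2|$ at $s=0$, and verifying that the remainder from the square-root expansion combines cleanly into the advertised order. Neither step is substantive, so the whole corollary is a direct computation from Theorem~\ref{thm:moments-sol}; no additional geometric or variational input is required.
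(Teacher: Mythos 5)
Your proposal is correct and coincides with what the paper does: the paper derives Corollary~\ref{cor:ass-mean} ``by direct inspection'' of the closed-form expression \eqref{e:mean-var:opt} from Theorem~\ref{thm:moments-sol}, and your Taylor expansions of $F(s)=\bigl(s+a-\sqrt{(s+a)^2-b^2}\bigr)/b$ at $s=0$ and $s\to\infty$ (with $s=n^2$, $a=\sigma_0^2+\sigma_1^2$, $b=2\sigma_0\sigma_1$) are exactly the computation being invoked, with the values $F(0)=\sigma_1/\sigma_0$ and $F'(0)/F(0)=-1/(\sigma_0^2-\sigma_1^2)$ reproducing both stated coefficients. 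The only caveat is cosmetic: for $\sigma_0=\sigma_1$ the displayed coefficient is a $0/0$ expression and $F$ loses smoothness at $s=0$, so that case is not literally covered by ``continuity'' of the remainder, but the paper treats it separately via the $\operatorname{arcosh}$ formula anyway.
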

Moreover, if $\sigma_0=\sigma_1=\sigma>0$, the expression~\eqref{e:mean-var:opt} simplifies to
\[
  \vMOP(\mu_0,\mu_1)^2 =\frac12 \abs*{\operatorname{arcosh}\bra*{\frac{n^2}{2\sigma^2}+1}}^2\,.
\]
Note that the expression for $\vMOP$  is not a convex function in $n$, since it is quadratic for $n\ll 1$ and behaves logarithmic for $n\gg 1$.

\subsubsection{Gradient flows}\label{sec:GF-var}

Endowing the space of probability measures with bounded second moment $\mathcal{P}_2(\R^d)$ with the $\cWv$-distance, and considering the gradient flow in this topology for a given energy functional $\cF:\mathcal{P}_2(\R^d)\to \R$, we obtain the evolution
\begin{equation}
	\label{eq:var:GF}
	\partial_t\rho = \var(\rho_t) \nabla\cdot\bra*{\rho_t \nabla \cF'(\rho_t)}\;,
\end{equation}
and again we observe that the energy $\cF$ decays along solutions to \eqref{eq:var:GF},
\begin{align*}
    \frac{d}{dt}\cF(\rho_t)
    = -\var(\rho_t)\int \left|\nabla \cF'(\rho_t)\right|^2 \dx\rho_t \,.
\end{align*}
In particular, we consider here the gradient flow for the relative entropy $\cE(\rho\,|\,\rho_\infty)$ with the target given by $\rho_\infty=\normal_{x_0,B}$:
\begin{equation}
	\label{eq:var:GF-KL}
	\partial_t\rho_t = \var(\rho_t) \nabla\cdot\bra*{ \bra*{\nabla \rho_t + \rho B^{-1}(x-x_0)}}\,.
\end{equation}
If $B^{-1}$ is not a multiple of the identity, the mean and variance will in general not satisfy a closed system of ODEs. Hence, we consider the mean and covariance $C_t= \C(\rho)$ of $\rho$, which satisfy the system
\begin{subequations}\label{eq:moments-var}
\begin{align}
	\dot m_t &= - \var(\rho_t) B^{-1} (m_t - x_0) \\
	\dot C_t &= 2 \var(\rho_t) (\Id - B^{-1} C_t) ,
\end{align}
\end{subequations}
which is again a closed system for $(m_t, C_t)$ by noting that $\var(\rho_t) = \tr C_t$.

\begin{proposition}\label{prop:ent-decay-var}
Let 
\begin{equation}\label{eq:def:lambda}
\lambda(B,C_0):=\min\set*{\frac{d}{\kappa(B)}, \frac{d}{\|C_0^{-1}\|_2  \|B\|_2}}\,,
\end{equation}
where $\kappa(B)=\|B^{-1}\|_2 \|B\|_2$ denotes the matrix condition number of $B\in\S_{\succ0}^d$.
Then $\rho_\infty$ satisfies a logarithmic Sobolev inequality (LSI),
\begin{equation}\label{eq:LSI-var}
	\cE(\rho | \rho_\infty) \leq\frac12 \|B\|_2 \int \abs[\Big]{ \nabla \log\bra[\Big]{ \frac{\rho}{\rho_\infty}}}^2 \dx{\rho}\,,
\end{equation}
and the entropy decays exponentially,
\begin{equation*}
	\cE(\rho_t | \rho_\infty) \leq \exp\bra*{- 2 t\lambda} \cE(\rho_0 | \rho_\infty)\,.
\end{equation*}
\end{proposition}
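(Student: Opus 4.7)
My plan is to combine three ingredients: the Bakry-\'Emery log-Sobolev inequality for the Gaussian target, the entropy-dissipation identity for the flow \eqref{eq:var:GF-KL}, and a minimum principle for the covariance ODE in \eqref{eq:moments-var} that yields a uniform lower bound on $\var(\rho_t)$.

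For the LSI \eqref{eq:LSI-var}: the target $\rho_\infty=\normal_{x_0,B}$ has log-density $-H - \log Z$ with $H(x)=\tfrac12|x-x_0|_B^2$ and Hessian $\nabla^2 H = B^{-1} \succcurlyeq \|B\|_2^{-1}\Id$, so the standard Bakry-\'Emery criterion immediately gives the log-Sobolev inequality with constant $\|B\|_2$. Differentiating the relative entropy along \eqref{eq:var:GF-KL} and integrating by parts produces the dissipation identity
\begin{equation*}
	\frac{d}{dt}\cE(\rho_t\,|\,\rho_\infty) = -\var(\rho_t)\,\cI(\rho_t\,|\,\rho_\infty),
\end{equation*}
which combined with the LSI yields $\tfrac{d}{dt}\cE(\rho_t\,|\,\rho_\infty) \leq -\tfrac{2\var(\rho_t)}{\|B\|_2}\cE(\rho_t\,|\,\rho_\infty)$. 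Gr\"onwall then reduces the theorem to a pointwise-in-time lower bound $\var(\rho_t)/\|B\|_2 \geq \lambda$.

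For the variance bound, set $\mu := \min\{\lambda_{\min}(B),\lambda_{\min}(C_0)\}$. A direct computation gives $d\mu/\|B\|_2 = \min\{d/\kappa(B),\, d/(\|C_0^{-1}\|_2\|B\|_2)\} = \lambda$, and since $\var(\rho_t) = \tr C_t \geq d\lambda_{\min}(C_t)$, it suffices to show that $C_t \succcurlyeq \mu\Id$ is preserved for all $t\geq 0$. I would apply a minimum principle to the symmetric form of the covariance equation
\begin{equation*}
	\dot C_t = \var(\rho_t)\bigl(2\Id - B^{-1}C_t - C_t B^{-1}\bigr),
\end{equation*}
which one obtains by differentiating $C_t = \int (x-m_t)^{\otimes 2}\dd\rho_t$ directly (the compact notation in \eqref{eq:moments-var} is to be read modulo this symmetrization). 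By hypothesis $C_0\succcurlyeq\mu\Id$, and if at some first time $t_0>0$ one has $C_{t_0}v = \mu v$ for a unit vector $v$, then the symmetry of $C$, together with $v^\top B^{-1}v \leq 1/\lambda_{\min}(B)$ and $\mu\leq\lambda_{\min}(B)$, give
\begin{equation*}
	v^\top\dot C_{t_0} v = 2\var(\rho_{t_0})\bigl(1 - \mu\,v^\top B^{-1}v\bigr) \geq 2\var(\rho_{t_0})\bigl(1-\mu/\lambda_{\min}(B)\bigr)\geq 0.
\end{equation*}
Hence the smallest eigenvalue of $C_t$ cannot cross $\mu$ from above, so $\lambda_{\min}(C_t)\geq\mu$ and $\var(\rho_t)\geq d\mu$ for all $t\geq 0$.

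The main obstacle is making this matrix minimum principle fully rigorous: eigenvalues of $C_t$ need not be $C^1$ at crossings, so the \emph{first time} argument must be carried out via a small perturbation (e.g.\ replace $\mu$ by $\mu-\eps$ and pass to the limit), or by working with the Lipschitz quantity $t\mapsto\inf_{|v|=1}v^\top C_t v$ and invoking Danskin-type differentiation. A secondary technicality is global well-posedness of the closed moment ODE \eqref{eq:moments-var}, which follows from the matching matrix maximum principle $C_t\preccurlyeq\max\{\lambda_{\max}(B),\lambda_{\max}(C_0)\}\Id$ (same argument, reversed), keeping $\var(\rho_t)$ in a bounded interval bounded away from zero and infinity.
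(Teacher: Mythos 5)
Your proposal is correct, and it follows the same overall skeleton as the paper (Bakry--\'Emery LSI with constant $\|B\|_2/2$, the dissipation identity $\frac{\dd}{\dd t}\cE(\rho_t\,|\,\rho_\infty)=-\var(\rho_t)\,\cI(\rho_t\,|\,\rho_\infty)$, and Gr\"onwall once $\var(\rho_t)\ge d\min\{\lambda_{\min}(B),\lambda_{\min}(C_0)\}$ is established), but it obtains the key variance lower bound by a different device. The paper exploits the scalar nature of the modulation: it rescales time via $\dd\tau=\dd t/\var(\rho_t)$, which turns the covariance equation into the autonomous linear ODE $\dot{\tilde C}_\tau=2(\Id-B^{-1}\tilde C_\tau)$ with an explicit solution, and then reads off the two-sided bound $d\min\{\|B^{-1}\|_2^{-1},\|C_0^{-1}\|_2^{-1}\}\le\tr\tilde C_\tau\le d\max\{\|B\|_2,\|C_0\|_2\}$ directly from the formula. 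You instead stay in the original time variable and prove invariance of the spectral set $\{C\succcurlyeq\mu\Id\}$, $\mu=\min\{\lambda_{\min}(B),\lambda_{\min}(C_0)\}$, by a matrix minimum principle applied to the symmetrized covariance equation $\dot C_t=\var(\rho_t)(2\Id-B^{-1}C_t-C_tB^{-1})$; your touching-time computation and the $\eps$-perturbation (or Dini-derivative/Danskin) fix you flag are the standard and correct way to make this rigorous, and your matching maximum principle handles global well-posedness just as the paper's explicit upper bound does. The trade-off: the paper's route is shorter and gives the bounds with no regularity caveats (and the upper bound for free), while yours is slightly more robust in that it never needs the explicit exponential formula --- in particular it works verbatim with the correctly symmetrized covariance evolution, whereas the paper's written matrix solution solves the unsymmetrized equation and is only legitimate here because the trace (all that is needed) coincides for both. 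Both arguments land on the same constant $\lambda=d\mu/\|B\|_2$, so your Gr\"onwall step reproduces the stated decay rate exactly.
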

The above result tells us that, for a rate independent of the potential, we need the initial covariance $C_0$ larger than the one of the potential $H$ and need to consider a class of potentials, i.e. matrices $B$, which are uniformly isotropic, measured by the matrix condition number~$\kappa(B)$.
The proof is based on the scalar nature of the variance, which allows us to arrive at a time-homogeneous problem after introducing the new time
\begin{equation}\label{eq:def:tau-scale}
	\dx \tau := \frac{\dx t}{\var(\rho_t)} \,
\end{equation}
and to apply the classic entropy method (see the proof of Proposition~\ref{prop:ent-decay-var} in Appendix~\ref{appendix:Proof:prop:ent-decay-var}).

The following two remarks make the comparison with the entropy decay results of the relative entropy $\cE(\rho\,|\,\rho_\infty)$ for the covariance-modulated Fokker-Planck equation and the classical Wasserstein distance, respectively.
\begin{remark}[Comparison with the covariance case]
 Notice the difference between this entropy decay estimate, and the corresponding estimate for solutions to the covariance-modulated gradient flow as stated in Theorem~\ref{thm:decay}.
 Here, for scalar modularity, the comparison between the initial covariance $C_0$ and the target covariance $B$ happens in the exponential rate, whereas for the covariance-modulated distance, this comparison appears in the multiplicative constant as a pre-factor in the estimate~\eqref{eq:decay:ent},
 which additionally is only present if $m_0\ne x_0$. 
 Consequently, we observe two crucial differences between the variance-modulated (Proposition~\ref{prop:ent-decay-var}) and the covariance-modulated case (Theorem~\ref{thm:decay}) in that
 \begin{enumerate}
 	\item the rate of convergence for the latter is always independent of $B$, $C_0$;
    \item the choice of $C_0$ only matters if $m_0\neq x_0$ and only alters the pre-factor in front of the universal optimal exponential rate in the case when $C_0$ is small compared to $B$.
\end{enumerate}
 Also note that the dependency on the initial covariance $C_0$ of $\lambda$ in~\eqref{eq:def:lambda} is always present, even for $m_0=x_0$.
\end{remark}

\begin{remark}[Comparison with the classical case]\label{rem:varComparison}
Transforming the moment equations~\eqref{eq:moments-var} to the time-scale $\tau$ from~\eqref{eq:def:tau-scale} with $\tilde\rho_\tau=\rho_{t(\tau)}$, we obtain the moment equations associated to the standard Ornstein-Uhlenbeck process, corresponding to the gradient flow of $\cE(\rho\,|\,\rho_\infty)$ for the classical $W_2$-distance. Then, it follows from the LSI \eqref{eq:LSI-var} that
\begin{equation*}
	\cE(\tilde \rho_\tau | \rho_\infty) \leq \exp\bra*{ - \frac{2\tau}{\|B\|_2}} \cE(\rho_0 | \rho_\infty)\, .
\end{equation*}
\end{remark}

\subsection{Connection to inverse problems: The Ensemble Kalman Sampler}\label{sec:IP}

One important application of the covariance-modulated optimal transport problem is the appearance of the corresponding gradient flow \eqref{eq:GF} in the context of analyzing ensemble Kalman methods for solving inverse problems, in particular in the framework of the Bayesian approach. Our analysis of the covariance-modulated distance is strongly motivated by one such method for sampling from the likelihood or the Bayesian posterior distribution, the \emph{Ensemble Kalman Sampler} (EKS) proposed in \cite{GHLS}. There, the distance $\cW$ defined in \eqref{eq:OT-cov-main} has been introduced under the name \emph{Kalman-Wasserstein metric}. A rigorous analysis of this metric and the properties of the corresponding geometry were lacking. Below, we indicate the relation of the gradient flow~\eqref{eq:GF} to this class of inverse problems.

Consider the forward problem~\cite{kaipio2006statistical}
\begin{equation}
    \label{eq:inverse_problem}
    y = G(x) + \xi\,,
\end{equation}
where
the point $x \in \R^d$ is the \emph{unknown parameter},
the map $G:\R^d\to\R^K$ defines the \emph{forward model},
the random vector $\xi$ introduces \emph{observational noise},
and finally $y \in \R^K$ is the (noisy) \emph{observation}.
For the inverse problem, an observation $\bar y\in\R^K$ is given, and the task is to find the posterior distribution $\pi$ that quantifies the probability of a parameter $x$  giving rise to the data we observed. That is, one assumes a priori distributions for $x$ and a given noise distribution for $\xi$, then asks for the conditional distribution $\pi$ of $x$ given $y=\bar y$ in \eqref{eq:inverse_problem}.

One of the standard assumption in the literature
is that both $x$ and $\xi$ are independently normally distributed, with zero mean and respective covariance matrices $\varSigma \in \S^d_{\succ0}$ and $\varGamma\in \S^K_{\succ0}$. In this case, one obtains the following explicit formula for the posterior distribution:
\begin{equation*}%
    \pi(x) = \frac{\exp\bigl(-f(x) \bigr)}{\int_{\R^{d}}\exp\bigl(-f(x) \bigr) \dd x}\,
    \quad\text{with}\quad 
    f(x)=\tfrac12|y-G(x)|_{\varGamma}^2+\tfrac12 |x|_\varSigma^2\,.
\end{equation*}
In many applications, the forward model $G$ may be a complicated non-linear function, or may not have a closed analytical form and should be thought of as a “black box” for which evaluations may be very costly to obtain in some settings. Further, derivatives of $G$ may not be available or prohibitively expensive to compute.
Therefore, instead of working with the explicit formula above directly, one often has to resort to finding samples from $\pi$ that allow for downstream tasks such as approximating moments, different integrals with respect to $\pi$, and other quantities of interest.
A popular approach to generate such an ensemble of (at least approximate) samples $X=\{x^{(j)}\}_{j=1}^J$ is via interacting particle systems. There are manifold possibilities to define such dynamical systems; a particular requirement in the situation at hand, however, is that the dynamics is \emph{derivative free}, which means that the SDEs might involve evaluations of $G$ but not of its Jacobian $DG$. One such derivative free method is the Ensemble Kalman Sampler (EKS) as proposed in \cite{GHLS}.
The stochastic dynamics of the EKS are given by the following system of SDEs driven by Brownian motions $\set*{W^{(j)}}_{j=1,\dots, J}$,
\begin{align} \label{eq:EKS}
    \dot{x}^{(j)} = - \frac{1}{J}\sum_{k = 1}^J  \skp[\big]{ G(x^{(k)}) - \widebar{G}, G(x^{(j)}) - \bar y }_\varGamma  x^{(k)}  - 
    \C(X) \varSigma^{-1}x^{(j)} +\sqrt{2\C(X)}  \dot{W}^{(j)},
\end{align}
where $\widebar G$ is $G$'s empirical average, and $\C(X)$ is the covariance matrix of $X$'s empirical distribution,
\begin{align*}
    \widebar{G} := \frac{1}{J} \sum_{j = 1}^J  G(x^{(j)}),
    \quad
    \bar x := \frac1J \sum_{k=1}^J x^{(j)},
    \quad
    \C(X) := \frac1J \sum_{j=1}^J\big(x^{(j)}-\bar x\big)\otimes\big(x^{(j)}-\bar x\big)\,.
\end{align*}
The first two terms on the right-hand side of \eqref{eq:EKS} are intended to drive particles towards a (local) minimum of $f(x)$. More precisely, the sum of these two terms is an approximation of $-C(X)\nabla f(x)$. The relation of the second term to the gradient of $\frac12|x|_\varSigma^2$ is obvious, and the first term is built from an approximation to the gradient of $\frac12|G(x)-\bar y|_\varGamma^2$ by difference quotients, see the derivation of \eqref{eq:EKS:particle} below for more details. 
The third term was introduced in \cite{GHLS} as a way to prevent particle collapse in the Ensemble Kalman Inversion (EKI) algorithm~\cite{carrassi2018data}, turning an optimization method into a sampling method. Notably, in \eqref{eq:EKS} the noise is acting directly on the particles themselves, whereas in the noisy EKI it arises from the observation~$y$ being perturbed. The benefit of introducing noise on the particles, rather than the data, was demonstrated in \cite{KovachkiStuart2018_ensemble} in the context of optimization. 

We shall now indicate the relation of the EKS algorithm~\eqref{eq:EKS} to the covariance-modulated gradient flow \eqref{eq:GF}. 
With that aim, we perform a linear approximation and a mean-field limit. 
We thus work under the implicit hypothesis either that the particles are all close together so that $|x_k-x_j|$ is small for any $j,k$ and therefore $G(x_j)\approx G(x_k)$, or that the ensemble $X$ is concentrated in a region of $\R^d$ on which the Jacobian of $G$ is approximately constant. As a first step, we substitute the linearization 
\begin{align*}
    \bigl(G(x^{(j)}) - \widebar{G}\bigr) \approx A(x^{(j)}-\widebar{x})\,,\qquad
    A:=DG(\widebar{x})
\end{align*}
into \eqref{eq:EKS} to obtain, using the identity $\frac{1}{J} \sum_{k=1}^J \bra*{G(x^{(k)}) - \widebar G}=0$, and by approximation, the following system
\begin{align}
    \dot{x}^{(j)}
     &= - \frac{1}{J}\sum_{k = 1}^J  \langle G(x^{(k)}) - \widebar{G}, G(x^{(j)}) - \bar y \rangle_\varGamma  \bra[\big]{x^{(k)}-\widebar{x}}  - 
    \C(X) \varSigma^{-1}x^{(j)} +\sqrt{2\C(X)}  \dot{W}^{(j)}  \label{eq:EKS:particle}\\
    &\approx - \frac{1}{J}\sum_{k = 1}^J \skp[\big]{ A(x^{(k)}-\widebar x), Ax^{(j)} - \bar y }_\varGamma \bra[\big]{ x^{(k)}-\widebar x}  - 
    \C(X) \varSigma^{-1}x^{(j)} +\sqrt{2\C(X)}  \dot{W}^{(j)}  \nonumber\\
    &=- \frac{1}{J}\sum_{k = 1}^J \pra[\Big]{ \bra[\big]{x^{(k)}-\widebar x}\otimes \bra[\big]{x^{(k)}-\widebar x}} A^\tran\varGamma^{-1}\bra[\big]{Ax^{(j)} - \bar y}   - 
    \C(X) \varSigma^{-1}x^{(j)} +\sqrt{2\C(X)}  \dot{W}^{(j)} \nonumber\\
    &=-\C(X)\nabla H(x^{(j)}) +\sqrt{2\C(X)}\dot{W}^{(j)}, \nonumber
\end{align}
where we define the quadratic potential $H$ as
\begin{align*}
    H(x) = \frac12 |x-x_0|_B^2 \quad \text{for}\qquad B^{-1}=A^\tran\varGamma^{-1}A+\varSigma^{-1} \text{ and } x_0= BA^\tran\varGamma^{-1}\bar y\,.
\end{align*}
Provided that the initial average $\bar x$ is close to the minimum $x_0$ of the considered local quadratic approximation $H$ of $f$, it is reasonable to assume that particles remain in the region where this approximation is valid as time advances, even though $A=DG(\bar x)$ will not be exactly true anymore at later times. Note that, up to constants, $f(x)=H(x)$ if $G(x)=Ax$ is linear, and then the approximation above is exact. If $G$ is non-linear but differentiable, we can still consider the preconditioned gradient descent derived in~\eqref{eq:EKS:particle} with $H$ replaced by $f$. Investigating how close this evolution is to the particle ensemble obtained from EKS in the setting when $G$ is nearly linear is the subject of ongoing research.

The gradient's pre-factor $C(X)$ in \eqref{eq:EKS:particle} --- which is the origin of the covariance-modulation considered in the work at hand --- is not only convenient for writing out the derivative-free approximation of $f$'s gradient, but actually has significant consequences on the particle dynamics. The system~\eqref{eq:EKS:particle} represents 
a dynamically pre-conditioned Langevin MCMC method, i.e., a time-continuous version of the stochastic Newton method for approximation of~$\pi$. The optimal pre-conditioning for that method is the inverse Hessian of $f$, in the sense that the method's convergence rate is essentially universal, i.e., independent of the specific shape of $f$. See e.g.~\cite{MWBG} and references therein. The idea behind is that under the pre-conditioned dynamics, the forces excerted on the particles are always that of a normalized quadratic potential. If the Hessian of $f$ is not accessible, a surrogate is needed for pre-conditioning. It has been observed, see e.g. \cite{chada2019tikhonov} and references therein, that the empirical covariance matrix of the particles is suitable. The intuitive reason is that as the particle distribution adapts to $\pi$ over time, it becomes approximately Gaussian near $f$'s minimum, with covariance matrix given approximately by the inverse of $f$'s Hessian near the minimum point.

In the second step, we perform the infinite particle limit $J\to\infty$ of system~\eqref{eq:EKS:particle}, assuming that the empirical distribution converges in a sufficiently strong manner to a probability density $\rho$. In particular, we assume convergence of the covariance matrix,
\begin{align*}
    \C(X) \to \C(\rho)=\int \bigl(x-\mean(\rho)\bigr) \otimes\bigl(x-\mean(\rho)\bigr)\,\rho(x)\dd x \qquad \text{ as } J\to \infty\,. 
\end{align*}
The SDE~\eqref{eq:EKS:particle} then becomes $\dot x =-\C(\rho) \nabla H(x) + \sqrt{2\C(\rho)} \,\dot{W}$, with corresponding Fokker-Planck equation
\begin{align}\label{eq:GF-EKS}
    \partial_t \rho 
    &= \nabla \cdot\bigl(\rho \, \C(\rho) \, \nabla H\bigr) + D^2: \bra*{\C(\rho)\rho}
    = \nabla \cdot\bigl(\rho \, \C(\rho) \, \nabla( H + \log\rho)\bigr).
\end{align}
This mean-field limit has recently been rigorously analyzed in \cite{DingLi} together with explicit convergence rates in terms of the number of particles $J$. Note that equation~\eqref{eq:GF-EKS} is precisely our $\cW$-gradient flow as defined in \eqref{eq:GF} for the choice of energy
\begin{equation}\label{eq:energy-EKS}
    \cE(\rho):=\int\log\rho \dx\rho + \int H\,\rho\dd x\,.
\end{equation}
The results in Theorems~\ref{thm:decay} and~\ref{thm:W2:convergence} about the long-time asymptotics of \eqref{eq:GF-EKS} are fully coherent with the aforementioned observation in the literature that pre-conditioning with the covariance matrix leads to a universal convergence rate in the stochastic Newton method for approximating $\pi$. Specifically, we provide quantitative estimates on the speed of convergence of~$\rho$ to its long-time limit $\rho_\infty\approx\pi$ in an adapted metric, and that speed is universal and independent of $H$'s Hessian matrix $B$. Consequently, provided that there are sufficiently many particles to justify the mean-field approximation, and provided those particles are concentrated in a spatial region of $\R^d$ where the quadratic approximation $H$ of the potentially fully nonlinear $f$ is valid, the EKS method is expected to converge with a universal rate.

In the past, there has been significant activity devoted to the gradient flow structure associated with the Kalman
filter itself~\cite{kalman1960new,KB}, which motivated the wider family of algorithms known as \emph{Ensemble Kalman Methods}, including EKI and EKS.  A well-known result is that for a constant state process, Kalman filtering is the gradient flow with respect to the Fisher-Rao metric~\cite{LMMR,HalderGeorgiou2018,Ollivier2017_online}. It is worth noting that the Fisher-Rao metric connects to the covariance matrix, see details in~\cite{IG2}.
Furthermore, the papers \cite{schillings2017analysis,schillings2018convergence} study continuous time limits of EKI algorithms and
exhibit a gradient flow structure for the standard least squares loss function, preconditioned by the empirical covariance of the particles~\cite{Weissmann2022}; a related structure was highlighted in \cite{bergemann2010localization}. 
Recent works \cite{herty2018kinetic,CarrilloVaes} also study the corresponding mean-field perspective for EKI and EKS. In particular, in \cite{CarrilloVaes}, the authors showed exponential convergence to equilibrium for solutions to \eqref{eq:GF-EKS}
with rate~1 in Wasserstein distance, in the case of a linear forward model $G(x)=Ax$. This result was shown to be optimal for the rate of convergence, and corresponds to the rate we obtain in the same setting in the covariance-modulated distance $\cW$ (Corollary~\ref{cor:stability-intro}); choosing the distance adapted to the geometry of the gradient flow allows us to derive an improved multiplicative constant, see Remark~\ref{rmk:carrillo-vaes}.

\section{Shape vs Moments}\label{sec:shape-moments}

\subsection{Basic properties and notation}

Before turning to the covariance-modulated problem, let us recall some facts about the evolution of the mean and covariance along Wasserstein geodesics, that is optimizers of
\begin{equation}\label{def:W2:BB}
	W_2(\mu_0,\mu_1)^2 := \inf\set*{\int_0^1 \int \abs{V_t}^2 \dx{\mu_t}(x) \dx{t}~:~\partial_t\mu_t+\nabla\cdot(\mu_t V_t)=0}\,.
\end{equation}
In this section, we consider a general integer dimension $k\in \set*{1,\dots, d}$.
\begin{proposition}[Covariance matrix along $W_2$-geodesic]\label{prop:cov_along_W2}
	For $\mu_0,\mu_1\in \cP_{2,+}(\R^k)$ let $\set*{\mu_t}_{t\in [0,t]}$ be the optimal $W_2$-geodesic and $\gamma\in \Pi(\mu_0,\mu_1)$ an optimal coupling, then $\mean(\mu_t)=(1-t) \mean(\mu_0) + t \mean(\mu_1)$ and
	\begin{equation}\label{e:cov_along_W2}
		\C(\mu_t) = (1-t)^2 \C(\mu_0)+ t^2\C(\mu_1) + 2 t(1-t)\Cov(\gamma) \qquad\text{for all } t\in [0,1],	
	\end{equation}
	where
	\begin{equation}\label{e:def:cov}
		\Cov(\gamma) := \frac{1}{2} \iint \pra[\big]{ (x-\mean(\mu_0))\otimes (y-\mean(\mu_1)) + (y-\mean(\mu_1))\otimes (x-\mean(\mu_0))} \dx\gamma(x,y) 
	\end{equation}
	satisfies
	\begin{equation}\label{e:Cov:est}
		0 \preccurlyeq \Cov(\gamma) \preccurlyeq  \tfrac{1}{2}\bra[\big]{\C(\mu_0)+ \C(\mu_1)} 
	\end{equation}
	and hence in particular
	\begin{equation}\label{e:est:cov_along_W2}
		(1-t)^2 \C(\mu_0) + t^2 \C(\mu_1) \preccurlyeq\C(\mu_t) \preccurlyeq (1-t) \C(\mu_0) + t \C(\mu_1) \qquad\text{for all } t\in [0,1].
	\end{equation}
	Moreover, the covariance satisfies the identity and bound
	\begin{align}\label{eq:cov-est-ref}
	  \C(\mu_t)
	  = & (1-t)\C(\mu_0) +t\C(\mu_1)-t(1-t)\int \Big[ y-\mean(\mu_1) - x +\mean(\mu_0)\Big]^{\otimes 2}\dd\gamma(x,y)\\
	  \succcurlyeq & (1-t)\C(\mu_0) +t\C(\mu_1)-t(1-t)\big(W_2(\mu_0,\mu_1)^2-|\mean(\mu_0)-\mean(\mu_1)|^2\big)\Id\;.
	\end{align}
	Similarly, the variance satisfies the identity
	\begin{equation}\label{e:var_along_W2}
		\var(\mu_t) = (1-t) \var(\mu_0) + t \var(\mu_1) - t(1-t)\bra*{ W_2(\mu_0,\mu_1)^2 - 2\abs{\mean(\mu_0)-\mean(\mu_1)}^2}  .
	\end{equation}
\end{proposition}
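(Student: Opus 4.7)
The plan is to leverage the displacement-interpolation representation $\mu_t = (T_t)_\# \gamma$ with $T_t(x,y) = (1-t)x + ty$, where $\gamma \in \Pi(\mu_0,\mu_1)$ is an optimal coupling. Pushing forward directly yields $\mean(\mu_t) = (1-t)\mean(\mu_0) + t\mean(\mu_1)$. For the covariance identity~\eqref{e:cov_along_W2}, I would write $T_t(x,y) - \mean(\mu_t) = (1-t)(x-m_0) + t(y-m_1)$ with $m_i = \mean(\mu_i)$, take the tensor square, and integrate against $\gamma$: the two pure square terms reproduce $(1-t)^2\C(\mu_0)$ and $t^2\C(\mu_1)$, while the cross term produces $2t(1-t)\Cov(\gamma)$ after the symmetrization built into the definition~\eqref{e:def:cov}.

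For the alternative identity in~\eqref{eq:cov-est-ref}, I would use $(1-t)^2 = (1-t) - t(1-t)$ and $t^2 = t - t(1-t)$ to recast~\eqref{e:cov_along_W2} as a linear interpolation minus a $t(1-t)$ correction, then collect that correction into the perfect tensor square $[y-m_1-(x-m_0)]^{\otimes 2}$. The matrix lower bound in~\eqref{eq:cov-est-ref} then follows from $M \preccurlyeq \tr(M)\Id$ applied to the positive semidefinite matrix $\int [y-m_1-x+m_0]^{\otimes 2} \dd\gamma$; its trace equals $\int |(y-x)-(m_1-m_0)|^2\dd\gamma = W_2(\mu_0,\mu_1)^2 - |m_0-m_1|^2$ by combining the optimality identity $\int|x-y|^2\dd\gamma = W_2^2$ with $\int(y-x)\dd\gamma = m_1-m_0$. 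The variance identity~\eqref{e:var_along_W2} is then the trace of~\eqref{eq:cov-est-ref} taken as an equality.

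For the two-sided bound~\eqref{e:Cov:est} on $\Cov(\gamma)$, the workhorse is the $2k\times 2k$ block matrix
\[
 \begin{pmatrix} \C(\mu_0) & K(\gamma) \\ K(\gamma)^{\tran} & \C(\mu_1)\end{pmatrix},
\qquad K(\gamma) := \int (x-m_0)\otimes(y-m_1)\dd\gamma(x,y),
\]
which is positive semidefinite as the full covariance of the $\R^{2k}$-valued vector $(X-m_0, Y-m_1)$ under $\gamma$; this observation holds for \emph{any} coupling, not just the optimal one. Testing this block matrix against $(u,-u)$ gives $\skp{u,\C(\mu_0) u} + \skp{u,\C(\mu_1) u} - 2\skp{u, \Cov(\gamma) u} \geq 0$, which is precisely the upper bound $\Cov(\gamma) \preccurlyeq \tfrac12(\C(\mu_0) + \C(\mu_1))$. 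Plugging~\eqref{e:Cov:est} back into~\eqref{e:cov_along_W2} then yields the two-sided bound~\eqref{e:est:cov_along_W2}.

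The main obstacle I anticipate is the lower bound $\Cov(\gamma) \succcurlyeq 0$, which must genuinely exploit optimality of $\gamma$, since testing the block matrix against $(u,u)$ only produces $\Cov(\gamma) \succcurlyeq -\tfrac12(\C(\mu_0)+\C(\mu_1))$. The scalar consequence $\tr\Cov(\gamma) \geq 0$ is immediate from cyclical monotonicity applied to two independent samples $(X_i,Y_i)\sim\gamma$, yielding $E\,\skp{X_1-X_2,Y_1-Y_2}\geq 0$. Promoting this to the matrix inequality is more delicate: in $k=1$ the comonotonicity of the optimal coupling suffices, while in higher dimensions I would invoke Brenier's theorem, writing $y=\nabla\phi(x)$ for a convex $\phi$, and reduce to showing $\Cov_\gamma(\skp{v,X},\skp{v,\nabla\phi(X)}) \geq 0$ for every $v\in\R^k$. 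The delicate point is that the projection of an optimal coupling along $v$ is in general not itself optimal for the projected marginals, so a direct 1D monotonicity argument is unavailable and some quantitative use of $\nabla^2\phi \succcurlyeq 0$ will be needed.
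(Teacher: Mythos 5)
Your construction of the identities runs almost step for step parallel to the paper's proof: the paper parametrizes the geodesic through the Brenier map, $\mu_t=\bigl((1-t)\Id+t\nabla\psi\bigr)_\#\mu_0$, expands the tensor square to obtain \eqref{e:cov_along_W2}, derives \eqref{eq:cov-est-ref} from the same algebraic identity $((1-t)a+tb)^{\otimes 2}=(1-t)a^{\otimes 2}+tb^{\otimes 2}-t(1-t)(a-b)^{\otimes 2}$ that you use, bounds the correction term by its trace exactly as you do, and proves the right half of \eqref{e:Cov:est} by the tensor Cauchy--Schwarz inequality $x\otimes y+y\otimes x\preccurlyeq x^{\otimes 2}+y^{\otimes 2}$, which is the same estimate as your block-covariance matrix tested against $(u,-u)$ (and, as you correctly note, needs no optimality of the coupling). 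One remark on the last item: taking the trace of \eqref{eq:cov-est-ref}, as you propose, gives $\var(\mu_t)=(1-t)\var(\mu_0)+t\var(\mu_1)-t(1-t)\bigl(W_2(\mu_0,\mu_1)^2-|\mean(\mu_0)-\mean(\mu_1)|^2\bigr)$, not the stated \eqref{e:var_along_W2} with the factor $2$; the stated version cannot be correct (translate a Gaussian: the variance is constant along the geodesic, and your formula respects this while \eqref{e:var_along_W2} does not), so your route in fact produces the corrected identity, and the factor $2$ in \eqref{e:W2:Cov}--\eqref{e:var_along_W2} is a slip in the paper.

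The genuine gap is precisely the one you flagged: you never establish $\Cov(\gamma)\succcurlyeq 0$, hence neither the left half of \eqref{e:Cov:est} nor the lower bound in \eqref{e:est:cov_along_W2}. The paper's argument writes $y=\nabla\psi(x)$, replaces $m_1$ by $\nabla\psi(m_0)$ using $\int(x-m_0)\dd\mu_0=0$, and then reduces, via Aleksandrov differentiability and a Taylor expansion, to the pointwise claim $u\otimes(Au)\succcurlyeq\lambda_{\min}(A)\,u\otimes u\succcurlyeq 0$ for $A=\nabla^2\psi\succcurlyeq0$. Your caution about this step is justified: that pointwise inequality is false (the symmetric part of $u\otimes Au$ is already indefinite for $A=\diag(1,\eps)$ and $u=(1,1)^\tran$), and the conclusion itself fails in general. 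Indeed, take $\mu_0$ centered with $\C(\mu_0)=\Sigma=\diag(1,\eps)$ and $T(x)=Ax$ with $A=e\otimes e+\delta\Id$, $e=(1,1)^\tran$; $T$ is the gradient of a convex quadratic, hence the optimal map onto $\mu_1=T_\#\mu_0$, and then $\Cov(\gamma)=\tfrac12(\Sigma A+A\Sigma)$, whose determinant $4\eps(1+\delta)^2-(1+\eps)^2$ is negative for small $\eps$ and $\delta$. So only the trace statement $\tr\Cov(\gamma)\ge 0$ (monotonicity of $\nabla\psi$, equivalently your two-sample cyclical-monotonicity argument) and the one-dimensional case survive; the matrix lower bounds cannot be obtained from $\nabla^2\psi\succcurlyeq0$ along the lines you sketch because they are not true as stated. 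In summary, your attempt proves everything in the proposition that does not depend on $\Cov(\gamma)\succcurlyeq0$, and the step you could not complete is exactly the point where the paper's own proof breaks down.
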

\begin{proof}
	Since all measures in $\cP_{2,+}(\R^k)$ are absolutely continuous, we have the existence of a transport map and accordingly dual Kantorovich potential $\psi: \R^k\to \R$ such that $(\nabla\psi)_\# \mu_0 = \mu_1$ (see e.g.~\cite{Villani2003}). 
	Hence, we have that $\mu_t= \bra*{(1-t)\Id + t \nabla \psi}_\# \mu_0$, which allows us to calculate
	\begin{align*}
		\mean(\mu_t) = (1-t)\int x \dx\mu_0(x) + t \int \nabla \psi(x) \dx\mu_0(x) = (1-t)\mean(\mu_0)+ t\mean(\mu_1).
	\end{align*}
	Hence, by using the notation $x^{\otimes 2} = x \otimes x$, we get
	\begin{align*}
		\C(\mu_t) &= \int \bra*{x-\mean(\mu_t)}^{\otimes 2} \dx\mu_t(x) \\
		&= \int \bra[\Big]{(1-t)x+ t \nabla\psi(x)-(1-t)\mean(\mu_0)-t\mean(\mu_1)}^{\otimes 2} \dx\mu_0(x )\\
		&= (1-t)^2\int \bra*{x-\mean(\mu_0)}^{\otimes 2} \dx\mu_0 + t^2 \int \bra*{\nabla\psi(x)-\mean(\mu_1)}^{\otimes 2} \dx\mu_0(x) \\
		&\quad + t(1-t) \int (x-\mean(\mu_0))\otimes (\nabla\psi(x)-\mean(\mu_1)) \dx\mu_0(x)\\
		&\quad + t(1-t) \int (\nabla\psi(x)-\mean(\mu_1))\otimes (x-\mean(\mu_0))  \dx\mu_0(x) \\
		&= (1-t)^2 \C(\mu_0) + t^2 \C(\mu_1) + 2 t(1-t) \Cov(\gamma), 
	\end{align*}
	for the optimal coupling $\gamma = (\Id,\nabla\psi)_\# \mu_0$.
	By using the identity
	\begin{equation*}
	((1-t)a+tb)^{\otimes 2}=(1-t)a^{\otimes 2}+t b^{\otimes 2}-t(1-t)(a-b)^{\otimes 2}
	\end{equation*}
	with $a=x-\mean(\mu_0)$ and $b=\nabla\psi(x)-\mean(\mu_1)$ and the estimate $(a-b)^{\otimes 2}\preccurlyeq |a-b|^2\Id$, we obtain from the second line above alternatively
	\begin{align}
	    \C(\mu_t) =&(1-t)\C(\mu_0) +t\C(\mu_1)-t(1-t)\int \big[\nabla\psi(x)-x -(\mean(\mu_1)-\mean(\mu_0))\big]^{\otimes 2}\dd\mu_0(x)\nonumber\\ \label{eq:cov-est-ref2}
	    \succcurlyeq &(1-t)\C(\mu_0) +t\C(\mu_1)-t(1-t)\int |\nabla\psi(x)-x -(\mean(\mu_1)-\mean(\mu_0))|^2\dd\mu_0(x)\Id\\\nonumber
	  = &(1-t)\C(\mu_0) +t\C(\mu_1)-t(1-t)\big(W_2(\mu_0,\mu_1)^2-|\mean(\mu_0)-\mean(\mu_1)|^2\big)\Id\;,
	\end{align}
	where in the last step we use that $\int \skp{\nabla\psi(x)-x,\mean(\mu_1)-\mean(\mu_0)} \dx\mu_0(x) = |\mean(\mu_0)-\mean(\mu_1)|^2$.
	To prove that $\Cov(\gamma)\succcurlyeq 0$, we let $m_0=\mean(\mu_0)$ and $m_1=\mean(\mu_1)$, then we have
	\begin{align*}
		2\Cov(\gamma) &= \int \pra*{ \bra*{x- m_0} \otimes \bra*{\nabla\psi(x)-m_1}+\bra*{\nabla\psi(x)-m_1}\otimes \bra*{x- m_0}} \dx\mu_0(x)\\	  
		&=\int \pra*{ \bra*{x- m_0} \otimes \bra*{\nabla\psi(x)-\nabla\psi(m_0)}+\bra*{\nabla\psi(x)-\nabla\psi(m_0)}\otimes \bra*{x- m_0}} \dx\mu_0(x). 
	\end{align*}
	In this form, the non-negativity is easy to see, since by Aleksandrov's theorem~\cite{Aleksandrov1939} (see~\cite[Theorem 6.9]{EvansGariepy2015} for a modern version), the potential $\psi$ has a gradient and a Hessian almost everywhere and we find
	\[
	(a-b) \otimes \bra*{ \nabla\psi(a)-\nabla\psi(b)} \succcurlyeq 0 \qquad\text{for a.e. } a,b\in \R^k.
	\]
	Indeed, note that by a Taylor expansion it holds for some $s\in[0,1]$
	\[
	(a-b) \otimes \bra*{ \nabla\psi(a)-\nabla\psi(b)} = (a-b) \otimes \bra*{\nabla^2\psi((1-s)a+sb) (a-b)} .
	\]
	Now for $A\in \S_{\succcurlyeq0}^k$ and any $x\in \R^k$ there holds $x\otimes Ax\succcurlyeq \lambda_{\min}(A) \bra*{x\otimes x} \succcurlyeq 0$. We obtain
	\[
	\Cov(\gamma)\succcurlyeq \lambda_{\min}\C(\mu_0)\succcurlyeq 0
	\]
	as $\lambda_{\min}=\lambda_{\min}(\nabla^2\psi)(x) \ge 0$  for all $x\in\R^k$ thanks to convexity of $\psi$.
	The upper bound in~\eqref{e:Cov:est} follows by the tensoric Cauchy-Schwarz inequality $x\otimes y + y \otimes x \preccurlyeq x^{\otimes 2} + y^{\otimes 2}$ for $x,y\in \R^k$.
	
	For the identity~\eqref{e:var_along_W2}, we rewrite the Wasserstein distance like in~\cite[Proof of Lemma 3.2]{CarlenGangbo2003}, to obtain the identity
	\begin{align}\label{e:W2:Cov}
		W_2(\mu_0,\mu_1)^2 = \iint \abs{x-y}^2 \dx\gamma(x,y)&=\var(\mu_0) + \var(\mu_1) + 2\abs{\mean(\mu_0)-\mean(\mu_1)}^2 \\
		&\quad - 2\iint (x-\mean(\mu_0))\cdot (y-\mean(\mu_1)) \dx\gamma(x,y).\nonumber
	\end{align}
	We obtain~\eqref{e:var_along_W2} by identifying the last term as $-2\tr\Cov(\gamma)$ and taking the trace in~\eqref{e:cov_along_W2}.
\end{proof}
Next, we observe that a curve $(\mu,V)\in \CE(\mu_0,\mu_1)$ with finite action functional
\begin{equation}\label{eq:def:action}
    \mathcal A (\mu,V)=\frac12\int_0^1\int_{\R^d}|V_t|^2_{\C(\mu_t)}\dd\mu_t\dd t\; < \infty
\end{equation}
has uniformly bounded covariance along its evolution.
\begin{lemma}\label{lem:apriori-cov}
	Let $(\mu,V)\in\CE(\mu_0,\mu_1)$ for $\mu_0,\mu_1\in \cP(\R^d)$
	be of finite action, i.e.
	\begin{align}\label{eq:CovStability:finite:action}
		A:= \mathcal A (\mu,V)< \infty\;.
	\end{align}
	Then, the curves $t\mapsto m_t:=\mean(\mu_t)$ and $t\mapsto C_t:=\C(\mu_t)$ are absolutely continuous and $C_t$ satisfies the bound
	\begin{align*}
		C_0 e^{-2\sqrt{k_0 A}}\preccurlyeq C_t\preccurlyeq C_0 e^{2\sqrt{k_0 A}} \qquad \forall t\in [0,1] \;,
	\end{align*}
	where $k_0$ denotes the rank of $C_0$. In particular, 
	\[
	\rank(C_t)=k_0 \quad \text{ and } \quad 
	\im C_t=\im C_0 \quad \text{ for all } t\in[0,1]\,,
	\]
	and
	\[
	m_1 - m_0 \in \im C_0 . 
	\]
\end{lemma}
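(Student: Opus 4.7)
The plan is to reduce Lemma~\ref{lem:apriori-cov} to a matrix Grönwall inequality for $C_t$ driven by $a(t):=\bigl(\int|V_t|_{\C(\mu_t)}^2\dd\mu_t\bigr)^{1/2}$, whose $L^2([0,1])$ norm equals $\sqrt{2A}$ by definition of the action. I would start by deriving the weak evolution equations for $m_t=\mean(\mu_t)$ and $C_t=\C(\mu_t)$ by testing the continuity equation against $\phi(x)=\langle\xi,x\rangle$ and $\phi(x)=\langle\xi,x\rangle\langle\eta,x\rangle$, which gives
\begin{equation*}
\dot m_t=\int V_t\,\dd\mu_t\qquad\text{and}\qquad \dot C_t=\int\bigl[(x-m_t)\otimes V_t+V_t\otimes(x-m_t)\bigr]\dd\mu_t.
\end{equation*}
Finiteness of the action forces $V_t(x)\in\im C_t$ for $\mu_t$-a.e.~$x$ and a.e.~$t$, since otherwise $|V_t|_{\C(\mu_t)}^2=\infty$; moreover $x-m_t\in\im C_t$ for $\mu_t$-a.e.~$x$ by the definition of the covariance. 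Both facts are essential for making sense of the pseudo-inverse $C_t^{\dagger}$ used below.

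The next step is a directional Cauchy--Schwarz estimate. For any fixed $\xi\in\R^d$, the inclusion $V_t\in\im C_t$ permits the weighted Cauchy--Schwarz inequality
\begin{equation*}
\langle V_t,\xi\rangle^2\le|V_t|_{\C(\mu_t)}^2\,\langle\xi,C_t\xi\rangle,
\end{equation*}
while $\int\langle x-m_t,\xi\rangle^2\dd\mu_t=\langle\xi,C_t\xi\rangle$ is immediate from the definition of $C_t$. Plugging these into the formula for $\dot C_t$ and applying Cauchy--Schwarz in $\mu_t$ yields the pointwise-in-$t$ matrix inequality $-2a(t)C_t\preccurlyeq\dot C_t\preccurlyeq 2a(t)C_t$. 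A sharper variant, matching the $\sqrt{k_0A}$ scaling in the statement, can be obtained by instead estimating $\|C_t^{-1/2}\dot C_tC_t^{-1/2}\|_{\HS}\le 2\sqrt{k_t}\,a(t)$ via the trace identity $\int|C_t^{-1/2}(x-m_t)|^2\dd\mu_t=\rank C_t=k_t$.

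Now scalar Grönwall applied direction by direction to $f_\xi(t):=\langle\xi,C_t\xi\rangle$ yields a two-sided exponential sandwich $e^{-c_t}C_0\preccurlyeq C_t\preccurlyeq e^{c_t}C_0$, with $c_t$ controlled by $\int_0^ta(s)\dd s\le\sqrt{2A}$ (or its $\sqrt{k_0}$-weighted analogue). This sandwich immediately forces $\ker C_t=\ker C_0$, hence $\im C_t=\im C_0$ and $\rank C_t=k_0$ for all $t$: any $\xi$ with $\langle\xi,C_0\xi\rangle=0$ satisfies $\langle\xi,C_t\xi\rangle=0$ by the upper bound, and the reverse inclusion follows from the lower bound. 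With this in hand, $\dot m_t=\int V_t\,\dd\mu_t$ lies in $\im C_t=\im C_0$ for a.e.~$t$, so $m_1-m_0=\int_0^1\dot m_t\,\dd t\in\im C_0$.

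Absolute continuity of $m_t$ and $C_t$ then follows from the integrable bounds $|\dot m_t|\le a(t)\,\lambda_{\max}(C_t)^{1/2}$ and $\|\dot C_t\|_2\le 2a(t)\,\lambda_{\max}(C_t)$, both of which lie in $L^2([0,1])$ thanks to the uniform upper bound on $C_t$ just established and $a\in L^2([0,1])$. The main obstacle I anticipate is the directional estimate $|\langle\xi,\dot C_t\xi\rangle|\le 2a(t)\langle\xi,C_t\xi\rangle$ in the possibly degenerate case: the weighted Cauchy--Schwarz must be justified through the pseudo-inverse of $C_t$ restricted to $\im C_t$, which is delicate but clean once one exploits the almost-everywhere inclusions $V_t,\,x-m_t\in\im C_t$. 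All subsequent steps reduce to scalar Grönwall and elementary linear algebra.
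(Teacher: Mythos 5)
Your proposal is correct and follows essentially the same route as the paper's proof: derive the evolution of $m_t$ and $C_t$ from the continuity equation, use that finite action forces $V_t$ (and $x-m_t$) into $\im C_t$, apply a covariance-weighted Cauchy--Schwarz to the directional functions $h_\xi(t)=\langle \xi, C_t\xi\rangle$, close with Grönwall, and then read off rank/image constancy and $m_1-m_0=\int_0^1\dot m_t\,\dd t\in\im C_0$ exactly as the paper does. The only differences are cosmetic: your direct directional estimate gives the exponent $2\int_0^1 a(t)\,\dd t\le 2\sqrt{2A}$ without the rank factor, while your Hilbert--Schmidt variant reproduces the paper's $\sqrt{k_0}$-scaling, and, like the paper, you gloss over (but correctly identify) the truncation argument needed to justify the unbounded linear and quadratic test functions and the a priori absolute continuity of the moments.
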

In particular, if $\mu_0,\mu_1\in\cP_2(\R^d)$ are such that there exists a curve of finite action between them, then they satisfy Assumption~\ref{ass:Wfinite}.
\begin{proof}
	Note that the assumption of finite action implies that for a.e.~$t$ we have $V_t\in \im C_t$ a.e.~w.r.t.~$\mu_t$. We claim that moreover, for a.e.~$t$ and $\mu_t$-a.e.~$x$ we have $x-m_t\in \im C_t$. Indeed, for $\xi\in \ker C_t$ we have
	\begin{align*}
		\int|\ip{\xi}{x-m_t}|^2\dd\mu_t(x) = \xi^\tran C_t\xi =0\;,
	\end{align*}
	which implies that $x-m_t\in (\ker C_t)^\perp=\im C_t$ for $\mu_t$-a.e.~$x$.
	Denote by $C_t^\dagger$ the pseudo-inverse of $C_t$, and 
	let $k(t)=\rank(C_t)$.
	The estimate on the covariance matrix is obtained by defining for or $\xi \in \R^d$ with $\abs{\xi}=1$ the scalar function 
	\begin{equation}\label{eq:def:hxi}
		h_\xi(t) := \skp{\xi , C_t \xi} \,,
	\end{equation}
	 The map $t\mapsto h_\xi(t)$ is absolutely continuous along a curve of finite action by using a suitable truncation with $\varphi^R(x)\to x$ as $R\to \infty$ and $\norm{\nabla \varphi^R}_\infty \leq 1$ in the definition of $C(\mu)$. Hence, we can estimate its time-derivative for a.e.~$t\in [0,T]$ by the Cauchy-Schwarz inequality
	\begin{align*}
		\abs*{\pderiv{h_\xi(t)}{t}} &= 2 \abs*{\int \xi \cdot \bra*{x-m_t} \ \xi \cdot V_t \dx{\mu_t} }		
		= 2 \abs*{\int C_t^\frac12 \xi \cdot \bra[\big]{C_t^{\frac12}}^\dagger\bra*{x-m_t}\ C_t^\frac12 \xi \cdot \bra[\big]{C_t^{\frac12}}^\dagger V_t \dx{\mu_t}}\\
		&\leq 2h_\xi(t) \bra*{ \int \abs*{ \bra[\big]{C_t^{\frac12}}^\dagger\bra*{x-m_t}}^2 \dx\mu_t \int \abs*{ V_t}_{C_t}^2 \dx\mu_t}^\frac12 .
	\end{align*}
	By symmetry and using an orthonormal eigenbasis $\set{u_i(t)}_{i=1,\dots,d}$ for $C_t$ with associated eigenvalues $\set{\lambda_i(t)}_{i=1,\dots,d}$, we obtain
	\begin{align*}
		\int \abs*{ \bra[\big]{C_t^{\frac12}}^\dagger\bra*{x-m_t}}^2 \dx\mu_t 
		&= \sum_{i:\lambda_i(t)>0} \lambda_i(t)^{-1} \left(\int \langle u_i(t),x-m_t\rangle\dx\mu_t\right)^2
		\\
		&= \sum_{i:\lambda_i(t)>0} \lambda_i(t)^{-1}\langle u_i(t), C_t u_i(t)\rangle 
		= \abs*{\set*{i : \lambda_i(t)>0}} =:  k(t) .
	\end{align*}
	Hence, by using the finite action bound~\eqref{eq:CovStability:finite:action}, we conclude by setting $k^* := \sup_{t\in [0,1]} k(t) \leq d$, that for any $\xi\in\R^d$,
	\begin{align*}
		h_\xi(0) \exp\bra[\big]{-2\sqrt{k^* A}}
		\leq h_\xi(t)
		\leq h_\xi(0) \exp\bra[\big]{2 \sqrt{k^* A}} \,.
	\end{align*}
	This means $h_u(t)=0$ for all $t\in [0,1]$ if $u$ is in the kernel of~$C_0$. Similarly, $h_u(t)>0$ for all $t\in[0,1]$ for any $u$ in $\im(C_0)$. We conclude that $k(t)=k_0=k^*$ as well as $\im C_t=\im C_0$ for all times, and so the statement holds. 
\end{proof}
In general, $\cW(\mu_0,\mu_1)=0$ if and only if $\mu_0=\mu_1$.
Indeed, as a consequence of Lemma~\ref{lem:apriori-cov}, we have $\cW(\mu_0,\mu_1)=\infty$ if $\im C_0\neq \im C_1$ or $m_1-m_0 \not\in \im C_0$. In particular, $\cW(\mu,\delta_x)=\infty$ for all $\mu\in \mathcal{P}_2(\R^d)$ such that $\mu\neq \delta_x$ and $x\in \R^d$, since $\C(\delta_x)=0$ and hence $\ker \C(\delta_x)=\R^d$.
 
We summarize this observation in the following theorem.
\begin{theorem}[Metric structure of covariance-modulated transport]\label{thm:W:metric}
	For $k\in \set{1,\dots,d}$, let $V \subseteq \R^d$ be linear $k$-dimensional subspace, $m\in \R^d$ and denote by $m+V=\set{x \in \R^d: x-m \in V}$ the according affine subspace. Set
	\[
	  \cP_{2,+}(m+V)=\set*{ \mu \in \cP_2(m+V): \skp{\xi , \C(\mu) \xi} > 0 , \forall \xi \in V\setminus{0}} .
	\]
	Then $\bigl(\cP_{2,+}(m+V),\cW\bigr)$ is a metric space. In particular, setting $V=\R^d$, $\bigl(\cP_{2,+}(\R^d),\cW\bigr)$ is a metric space. Moreover, any two $\mu_0,\mu_1\in \cP_{2,+}(m+V)$ satisfy
	\begin{equation}\label{e:W:comp:W2}
		\frac{1}{2\lambda_{\max}^{0,1}} e^{-2\sqrt{k/\lambda_{\min}^{0,1}} W_2(\mu_0,\mu_1)}  W_2(\mu_0,\mu_1)^2 \leq \cW(\mu_0,\mu_1)^2 \leq \frac{1}{\lambda_{\min}^{0,1}} W_2(\mu_0,\mu_1)^2 , 
	\end{equation}
	with $\lambda_{\min}^{0,1}:=\min\set*{\lambda_{\min,V}(\C(\mu_0)),\lambda_{\min,V}(\C(\mu_1))}$ and $\lambda_{\min,V}(C) := \min\set{ \skp{\xi , C\xi}: \xi \in V, \norm{\xi}=1}$ and similarly for $\lambda_{\max}^{0,1}$ with $\min$ replaced by $\max$ in the previous two formulas.
\end{theorem}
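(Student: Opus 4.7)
The metric axioms reduce almost entirely to the two-sided comparison~\eqref{e:W:comp:W2}. Symmetry of $\cW$ is clear from the definition by reversing time in any admissible pair $(\mu,V) \in \CE(\mu_0, \mu_1)$; non-negativity is trivial; and the triangle inequality follows by the standard concatenation trick, picking near-optimal curves for $\cW(\mu_0,\mu_1)$ and $\cW(\mu_1,\mu_2)$ and rescaling them onto subintervals whose lengths are proportional to the respective distances. Identity of indiscernibles will then follow from the lower bound in~\eqref{e:W:comp:W2}, which forces $W_2(\mu_0,\mu_1)=0$ whenever $\cW(\mu_0,\mu_1)=0$. Thus it suffices to prove the two inequalities in~\eqref{e:W:comp:W2}. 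For admissible curves of finite action, Lemma~\ref{lem:apriori-cov} guarantees $\im C_t = \im C_0 = V$ and $\mean(\mu_t)\in m+V$ throughout, so the analysis reduces to the $k$-dimensional affine subspace $m+V$.

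\textbf{Upper bound.} I would use the classical $W_2$-geodesic $(\mu_t,V_t)$ between $\mu_0$ and $\mu_1$ as a competitor for $\cW$. Along this geodesic the Benamou--Brenier formula~\eqref{def:W2:BB} gives that $\int |V_t|^2 \dd\mu_t$ is constant in $t$ and equals $W_2(\mu_0,\mu_1)^2$, while by Proposition~\ref{prop:cov_along_W2} one has $C_t \succcurlyeq (1-t)^2 C_0 + t^2 C_1$, so that on $V$
\[
  \lambda_{\min,V}(C_t) \;\geq\; \bigl((1-t)^2 + t^2\bigr)\lambda_{\min}^{0,1} \;\geq\; \tfrac12 \lambda_{\min}^{0,1}.
\]
Therefore $|V_t|^2_{C_t} \leq |V_t|^2/\lambda_{\min,V}(C_t) \leq 2|V_t|^2/\lambda_{\min}^{0,1}$, and integrating in space and time yields $\cW(\mu_0,\mu_1)^2 \leq W_2(\mu_0,\mu_1)^2/\lambda_{\min}^{0,1}$.

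\textbf{Lower bound.} For $\epsilon>0$, I would fix a competitor $(\mu_t,V_t)\in\CE(\mu_0,\mu_1)$ for $\cW$ with action $A\leq \cW(\mu_0,\mu_1)^2+\epsilon$. Because the same pair is admissible for $W_2$ and $|V_t|^2\leq \lambda_{\max,V}(C_t)\,|V_t|^2_{C_t}$, one gets
\[
  W_2(\mu_0,\mu_1)^2 \;\leq\; \int_0^1\!\!\int|V_t|^2\dd\mu_t\dd t \;\leq\; 2A\cdot \max_{t\in[0,1]} \lambda_{\max,V}(C_t).
\]
Applying Lemma~\ref{lem:apriori-cov} on $V$ (so $k_0=k$) bounds $\max_t \lambda_{\max,V}(C_t) \leq \lambda_{\max,V}(C_0)\,e^{2\sqrt{kA}} \leq \lambda_{\max}^{0,1}\,e^{2\sqrt{kA}}$. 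Feeding the already proved upper bound $A\leq \cW(\mu_0,\mu_1)^2+\epsilon \leq W_2(\mu_0,\mu_1)^2/\lambda_{\min}^{0,1}+\epsilon$ into the exponent and letting $\epsilon\downarrow 0$ gives
\[
  W_2(\mu_0,\mu_1)^2 \;\leq\; 2\lambda_{\max}^{0,1}\,e^{2\sqrt{k/\lambda_{\min}^{0,1}}\,W_2(\mu_0,\mu_1)}\cdot \cW(\mu_0,\mu_1)^2,
\]
which is exactly the claimed lower bound after rearrangement.

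\textbf{Main difficulty.} The principal subtlety is the self-referential nature of the lower bound: Lemma~\ref{lem:apriori-cov} controls $\lambda_{\max,V}(C_t)$ only in terms of the $\cW$-action itself, so one must first establish the upper half of~\eqref{e:W:comp:W2} and then substitute it into the exponent in the lower bound. The bookkeeping with $\epsilon$-approximations is routine but needs to be carried out carefully since the exponential factor is not uniform in the competitor; and the reduction to the affine subspace $m+V$ on which Lemma~\ref{lem:apriori-cov} can be invoked with $k_0=k$ relies precisely on the rank-preservation conclusion $\im C_t = \im C_0 = V$ from that lemma.
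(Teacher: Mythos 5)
Your proof is correct and follows essentially the same route as the paper: the upper bound via the Wasserstein geodesic as competitor together with the covariance bounds of Proposition~\ref{prop:cov_along_W2}, the lower bound via the a priori estimate of Lemma~\ref{lem:apriori-cov} with the already-established upper bound fed into the exponent, and the remaining metric axioms by time reversal, gluing, and definiteness from \eqref{e:W:comp:W2}. The only (harmless) difference is that you derive the lower bound by estimating directly along near-optimal competitors, whereas the paper passes to a weak limit of a minimizing sequence and invokes lower semicontinuity of the Benamou--Brenier functional with frozen weight $C_0$; both give the same constants.
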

\begin{proof}
	We can assume without loss of generality that $m=0$.
	Hence, we can view $\cP_{2,+}(V)$ after a suitable choice of coordinates as $\cP_{2,+}(\R^k)$ for $k=\dim V$, and we consider instead $\mu_0,\mu_1\in \cP_{2,+}(\R^k)$.
	The set over which the $\inf$ in Definition~\ref{def:OT-cov-main} of the covariance-modulated transport is taken is non-empty.
	Indeed, we can consider the Wasserstein geodesic $(\mu_t,V_t)_{t\in [0,1]}$ between $\mu_0$ and $\mu_1$, which thanks to Proposition~\ref{prop:cov_along_W2} has covariance $\C(\mu_t)$ bounded by
	\[
	    \tfrac{1}{2} \lambda_{\min}^{0,1} \preccurlyeq \C(\mu_t) \preccurlyeq \lambda_{\max}^{0,1} \qquad\text{for all } t\in [0,1].
	\]
	With this, we obtain the upper bound
	\[
	  \cW(\mu_0,\mu_1)^2 \leq 
	  \frac{1}{\lambda_{\min}^{0,1}} W_2(\mu_0,\mu_1)^2 =: C_{\cW}
	\]
	Now, we can consider a sequence $(\mu^n,V^n)\in \CE(\mu_0,\mu_1)$ such that $\sup_{n} \int_0^1 \mathcal{A}(\mu^n_t,V^n_t) \dx{t}\leq C_{\cW} < \infty$ thanks to the previous bounds. By Lemma~\ref{lem:apriori-cov}, we obtain for $C_t^n := \C(\mu_t^n)$ the uniform a priori estimate 
	\[
	  e^{-2\sqrt{k C_{\cW}}} C_0 \preccurlyeq C_t^n\preccurlyeq  e^{2\sqrt{k C_{\cW}}} C_0
	\]
	By classical arguments~\cite{Villani2003}, it follows, along another suitable subsequence, that $\mu^n_t\rightharpoonup \mu_t$ weakly  for a.e. $t\in [0,1]$ and $V^n\mu^n\rightharpoonup V\mu$ in duality with $C_c([0,1]\times \R^d)$ for a pair $(\mu,V)\in \CE(\mu_0,\mu_1)$ along which 
	\begin{align*}
	  e^{-2\sqrt{k C_{\cW}}}  \int_0^1\int |V_t|^2_{C_0} \dd\mu_t\dd t 
	&\leq
	e^{-2\sqrt{k C_{\cW}}} \liminf_{n\to \infty} \int_0^1\int |V_t^n|^2_{C_0} \dd\mu_t^n\dd t\\
	&\leq \liminf_{n\to \infty} \int_0^1\int |V^n_t|^2_{C_t^n}\dd\mu^n_t\dd t 
	 \;.  
	\end{align*}  	Hence, we obtain a lower bound for the action function for $\cW$ in terms of the one for the Wasserstein distance in~\eqref{e:W:comp:W2}.
	This allows to conclude the definiteness of $\cW$ on $\cP_{2,+}(\R^k)$. The symmetry is obvious from its definition by considering the time-reversed solution to the continuity equation. For the triangle inequality, we conclude by gluing two solutions of the continuity equation, see for instance~\cite{Dolbeault-Nazaret-Savare}.
\end{proof}

\subsection{Splitting in shape and moments up to rotation}\label{sec:splitting}
In this section, we show how to arrive at the fundamental result Theorem~\ref{thm:main-cov} for splitting (up to rotations) the distance \eqref{eq:OT-cov-main} in two separate problems on the evolution of the shape, given by the covariance-constrained optimal transport problem \eqref{eq:OT-cov-constraint-main}, and the evolution of the moments, given by \eqref{eq:mean-cov-min}. In fact, it is Lemma~\ref{lem:apriori-cov} that allows us to separate the optimization over the
evolution of mean and covariance. The starting point is a two step minimization by first dynamically constraining the mean and covariance
\begin{equation}
  \label{eq:doubleinf-cov}
  \cW(\mu_0,\mu_1)^2=\inf\set*{\cW_{m,C}(\mu_0,\mu_1)^2~:~(m,C)\in\MC(\mu_0,\mu_1)}\;.
\end{equation}
Here $\MC(\mu_0,\mu_1)$, as defined in~\eqref{e:def:MC}, denotes the set of all absolutely continuous
functions $m:[0,1]\to\R^d$ and $C:[0,1]\to \S_{\succcurlyeq0}^d$ such that
$m_i=\mean(\mu_i)$ and $C_i=\C(\mu_i)$ for $i=0,1$. For given
functions $(m,C)\in\MC(\mu_0,\mu_1)$, the term $\cW_{m,C}$ is defined via
the constrained optimal transport problem
\begin{equation}\label{eq:OT-cov-constraint}
   \cW_{m,C}(\mu_0,\mu_1)^2 = \inf \Big\{\int_0^1\int\frac12|V_t|_{C_t}^2\dd\mu_t\dd t~:~(\mu,V)\in \CE_{m,C}(\mu_0,\mu_1)\Big\}\;,  
\end{equation}
where $\CE_{m,C}(\mu_0,\mu_1)$ is the set of pairs $(\mu,V)\in\CE(\mu_0,\mu_1)$ such that
 $\mean(\mu_t)=m_t$ and $\C(\mu_t)=C_t$ for all $t\in[0,1]$.

We show that problem \eqref{eq:doubleinf-cov} can be equivalently
rewritten as a minimization problem for the evolution of mean and
covariance \eqref{eq:mean-cov-min} plus a constrained optimal transport problem 
where the mean and covariance are fixed to~$0$ and~$\Id$, respectively, as stated in Theorem~\ref{thm:main-cov}.

The stated finiteness of $\cW(\mu_0,\mu_1)$ in Theorem~\ref{thm:main-cov} for $\mu_0,\mu_1\in\cP_2(\R^d)$ with $\C(\mu_0),\C(\mu_1)\in\S_{\succ0}^d$ and $\im\C(\mu_0)=\im\C(\mu_1)$ according to Assumption~\ref{ass:Wfinite} is shown in Theorem~\ref{thm:W:metric} by using a suitable possibly lower-dimensional Wasserstein-geodesic.

The result in Theorem~\ref{thm:main-cov} is based on a perfect splitting of the action in the constrained optimal transport~\eqref{eq:OT-cov-constraint}, which we state as a separate result.
\begin{proposition}\label{prop:split:curves}
	Let $(m,C)\in\MC(\mu_0,\mu_1)$ with $C_t\in\S_{\succ0}^d$
	for all $t\in[0,1]$ and $(\mu,V)\in\CE_{m,C}(\mu_0,\mu_1)$
	with 
	\[
	\int_0^1\int |V_t|^2_{C_t}\dd\mu_t\dd t <\infty .
	\]
	Let $(A_t)_{t\in [0,1]}$ solve \eqref{eq:Adot}, 
	consider the normalization $\hat\mu_t:=(T_t)_\#\mu_t$ with $T_t:=T_{m_t,A_t}=A_t^{-1}(\cdot-m_t)$
	and define the normalized vector field
	\begin{align}\label{eq:def:Vnorm}
		\hat V_t(x)&:=  A_t^{-1}\big[V_t(T_t^{-1} x)-\dot m_t - \dot A_tx\big]\qquad\text{for } t\in [0,1] \;.
	\end{align}
	Then $(\hat\mu,\hat V)\in \CE_{0,\Id}(\hat\mu_0,\hat\mu_1)$ and for a.e. $t\in [0,1]$ the following splitting formula holds
	\begin{align}\label{eq:action-trans-cov}
		\int|V_t|_{C_t}^2\dd\mu_t = \frac14\tr\bra*{\dot C_t C_t^{-1} \dot C_t C_t^{-1}}   +\ip{\dot m_t}{C_t^{-1}\dot m_t}+\int|\hat V_t|^2\dd\hat\mu_t \;.
	\end{align}
\end{proposition}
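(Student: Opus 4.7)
The plan is to verify the continuity equation for $(\hat\mu,\hat V)$ via the standard transformation formula for push-forwards under time-dependent diffeomorphisms, and then derive the action identity \eqref{eq:action-trans-cov} by a direct expansion, where the cross terms simplify miraculously thanks to the specific choice of left square root $A_t$ in \eqref{eq:Adot}.

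First I would check that $\hat\mu_t\in\cP_{0,\Id}(\R^d)$: since $T_t(x)=A_t^{-1}(x-m_t)$ is affine, $\mean(\hat\mu_t)=A_t^{-1}(m_t-m_t)=0$ and $\C(\hat\mu_t)=A_t^{-1}C_tA_t^{-\tran}=\Id$, using $A_tA_t^\tran=C_t$. For the continuity equation, the general push-forward rule gives, for $y=T_t(x)$,
\[
\hat V_t(y) = DT_t(x)\,V_t(x) + \partial_t T_t(x) = A_t^{-1}V_t(x) - A_t^{-1}\dot m_t - A_t^{-1}\dot A_t A_t^{-1}(x-m_t),
\]
which is exactly \eqref{eq:def:Vnorm}. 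Thus $(\hat\mu,\hat V)\in\CE_{0,\Id}(\hat\mu_0,\hat\mu_1)$.

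The heart of the proof is the pointwise-in-time identity \eqref{eq:action-trans-cov}. Substituting $y=T_t(x)$ and using $A_t^{-\tran}A_t^{-1}=C_t^{-1}$, the change of variables gives
\[
\int|\hat V_t|^2\dd\hat\mu_t = \int\bigl|V_t(x)-\dot m_t - \dot A_t A_t^{-1}(x-m_t)\bigr|^2_{C_t}\dd\mu_t(x).
\]
Expanding the square produces three pieces: the $|V_t|^2_{C_t}$ term, a quadratic moment term in $(x-m_t)$, and a cross term. For the moment information, I would test the continuity equation \eqref{eq:CE-cov} against linear and quadratic functions to obtain
\[
\dot m_t = \int V_t\,\dd\mu_t, \qquad \dot C_t = \int\bigl[V_t\otimes(x-m_t) + (x-m_t)\otimes V_t\bigr]\dd\mu_t.
\]
The quadratic term evaluates to $\tr(A_t^{-\tran}\dot A_t^\tran C_t^{-1}\dot A_tA_t^{-1}C_t)$; after inserting $\dot A_t=\tfrac12\dot C_tA_t^{-\tran}$ it collapses to $\tfrac14\tr(\dot C_tC_t^{-1}\dot C_tC_t^{-1})$. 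The cross term between $\dot m_t$ and $(x-m_t)$ vanishes by the zero-mean identity $\int(x-m_t)\dd\mu_t=0$.

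The step I expect to be the main obstacle, both conceptually and in notation, is the cross term $-2\int\langle V_t, C_t^{-1}[\dot m_t+\dot A_tA_t^{-1}(x-m_t)]\rangle\dd\mu_t$. The $\dot m_t$ contribution gives $-2\langle\dot m_t,C_t^{-1}\dot m_t\rangle$. For the remainder, writing $M:=C_t^{-1}\dot A_tA_t^{-1}$, the key observation is that by \eqref{eq:Adot},
\[
\dot A_t A_t^{-1} = \tfrac12\dot C_t A_t^{-\tran}A_t^{-1} = \tfrac12\dot C_t C_t^{-1},\qquad\text{so}\qquad M=\tfrac12 C_t^{-1}\dot C_tC_t^{-1}\in\S^d.
\]
This symmetry of $M$ is crucial: with $N:=\int(x-m_t)\otimes V_t\dd\mu_t$ we have $N+N^\tran=\dot C_t$, and since $M^\tran=M$, $\tr(MN)=\tr(MN^\tran)=\tfrac12\tr(M\dot C_t)=\tfrac14\tr(C_t^{-1}\dot C_tC_t^{-1}\dot C_t)$. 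Combining the three contributions yields $\int|\hat V_t|^2\dd\hat\mu_t = \int|V_t|^2_{C_t}\dd\mu_t - \langle\dot m_t,C_t^{-1}\dot m_t\rangle - \tfrac14\tr(\dot C_tC_t^{-1}\dot C_tC_t^{-1})$, which is exactly \eqref{eq:action-trans-cov} after rearrangement. The technical subtlety is justifying that the test-function computations above are valid $\mu_t$-almost everywhere in $t$; this is standard given the finite-action and absolute-continuity hypotheses, and can be done by approximating linear/quadratic test functions with compactly supported ones as in the proof of Lemma~\ref{lem:apriori-cov}.
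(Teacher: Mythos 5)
Your proposal is correct, and its overall skeleton (push-forward computation for the continuity equation, expansion of the square, and the crucial use of the symmetry $\dot A_tA_t^{-1}=\tfrac12\dot C_tC_t^{-1}$ coming from \eqref{eq:Adot}) matches the paper's proof; the genuine difference lies in how the cross term is eliminated. The paper groups the affine correction into $r_t(x)=A_t^{-1}[\dot m_t+\dot A_tT_tx]$ and shows that the cross term $2\int\skp{\hat V_t}{A_t^{-1}\dot m_t+A_t^{-1}\dot A_tx}\dd\hat\mu_t$ integrates to zero over $[0,1]$ by inserting the time-dependent quadratic function $\eta_t(x)=\tfrac12\skp{x}{A_t^{-1}\dot A_t x}+\skp{x}{A_t^{-1}\dot m_t}$ into the weak continuity equation for $\hat\mu$ and exploiting $\mean(\hat\mu_t)=0$, $\C(\hat\mu_t)=\Id$ so that the boundary and $\partial_t\eta$ contributions cancel; strictly speaking this yields the time-integrated cancellation, and one localizes in time to recover the a.e.-$t$ identity claimed in the statement. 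You instead kill the cross term pointwise in $t$: testing the continuity equation for $\mu$ with (truncated) linear and quadratic functions gives $\dot m_t=\int V_t\dd\mu_t$ and $\dot C_t=N_t+N_t^\tran$ with $N_t=\int(x-m_t)\otimes V_t\dd\mu_t$, and the symmetry of $M_t=\tfrac12C_t^{-1}\dot C_tC_t^{-1}$ then yields $\tr(M_tN_t)=\tfrac14\tr(C_t^{-1}\dot C_tC_t^{-1}\dot C_t)$, which cancels against the quadratic term exactly as needed. Your route has the advantage of producing the a.e.-$t$ splitting \eqref{eq:action-trans-cov} directly, at the price of justifying the moment-derivative identities (which, as you note, is the same truncation argument as in Lemma~\ref{lem:apriori-cov}); the paper's route packages all the bookkeeping into a single test function and only uses the continuity equation for the normalized curve, but needs the extra localization step for the pointwise statement. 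Both hinge on the same structural fact, namely the symmetry of $A_t^{-1}\dot A_t$ guaranteed by the choice \eqref{eq:Adot}, and your computation of the quadratic term and of the resulting identity is accurate.
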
 
\begin{proof}
Note that any solution  $A\in \AC([0,T],\S_{\succcurlyeq0}^d)$ to~\eqref{eq:Adot} satisfies $A_tA_t^\tran =C_t=A_t^\tran A_t$ for all $t$, since
$\dd(A_tA_t^\tran )/{\dd t}=\dot C_t$, and hence $A_t$ provides a normalization in the sense of Definition~\ref{def:normalization}.

To prove the identity~\eqref{eq:def:Vnorm}, we show that $(\hat \mu_t)_{t\in[0,1]}$ is a weakly continuous curve of probability measures in $\cP_2(\R^d)$ connecting $\hat \mu_0$ and $\hat \mu_1$ satisfying $\mean(\hat\mu_t)=0$, $\C(\hat\mu_t)=\Id$ and that $(\hat V_t)_{t\in[0,1]}$ is a Borel family of vector fields such that the continuity equation \eqref{eq:CE-cov} holds in the distributional sense. To see this, consider a test function $\psi\in C^\infty_c(\R^d)$, and compute explicitly
\begin{align*}
  \frac{\dd}{\dd t}\int\psi\dx\hat\mu_t &=   \frac{\dd}{\dd t}\int\psi\circ T_t\dd\mu_t
                                       = \int \nabla \psi\big(T_tx\big)\cdot \Big[DT_t(x)V_t(x) + \partial_tT_tx\Big]\dd\mu_t(x)\\
                                     &=\int \nabla \psi\big(T_tx\big)\cdot \Big[A_t^{-1}V_t(x) - A_t^{-1}\dot m_t -A_t^{-1}\dot A_tA_t^{-1}(x-m_t)\Big]\dd\mu_t(x)\\
  &=\int \nabla \psi(x)\cdot A_t^{-1}\Big[V_t(T_t^{-1} x)-\dot m_t-\dot A_tx\Big]\dd (T_t)_\#\mu_t(x) = \int \nabla\psi\cdot \hat V_t \dd\hat\mu_t\;.
\end{align*}
This yields the conclusion  $(\hat\mu,\hat V)\in \CE_{0,\Id}(\hat\mu_0,\hat\mu_1)$.

It remains to prove the decomposition of the action functionals~\eqref{eq:action-trans-cov}, for which we set 
$r_t(x)=A_t^{-1}\big[\dot m_t+\dot A_tT_tx\big]$ and we obtain the splitting
\begin{align*}
  \int |\hat V_t|^2\dd\hat\mu_t = \int |A_t^{-1}V_t(x) - r_t(x)|^2\dd\mu_t =  \int|V_t|^2_{C_t}\dd\mu_t - \I - \II , 
\end{align*}
where 
\begin{align*}
   \I = \int |r_t|^2\dd\mu_t , \qquad\text{and}\qquad \II = 2\int \skp*{r_t,A_t^{-1}V_t-r_t}\dd\mu_t \:.
\end{align*}
We compute, dropping again $t$ from the notation and using $A^\tran A=AA^\tran =C$,
\begin{align*}
  |r|^2 &= \ip{\dot m}{C^{-1}\dot m} +2\ip{\dot m}{C^{-1}\dot A A^{-1}(x-m)} + \ip{x-m}{A^{-\tran}\dot A^\tran A^{-\tran}A^{-1}\dot AA^{-1}(x-m)}\\
  &=\ip{\dot m}{C^{-1}\dot m} +\ip{\dot m}{C^{-1}\dot C C^{-1}(x-m)} + \frac14\ip{x-m}{C^{-1}\dot C C^{-1}\dot C C^{-1}(x-m)}\,.
\end{align*}
Hence,
\begin{align*}
  \I =\ip{\dot m}{C^{-1}\dot m}+ \frac14\tr\big(\dot C C^{-1} \dot C C^{-1}\big)\,.
\end{align*}
Finally, we claim that 
\begin{align*}
  \int_0^1 \II \dd t= \int_0^12\int\ip{\hat V_t}{A^{-1}_t\dot m+ A^{-1}_t\dot A x}\dd\hat\mu_t(x)\dd t=0\;.
\end{align*}
To see this, we define the functions $\eta_t(x):=\frac{1}{2}\langle x$, $B_t x\rangle + \langle x, \alpha_t\rangle$ for $\alpha:[0,1]\to\R^d$ and $B:[0,1]\to\R^{d \times d}$ given by
\begin{align*}
    \alpha_t:=A_t^{-1}\dot m_t\,,\quad\text{and}\quad
    B_t:=A_t^{-1} \dot A_t\,.
\end{align*}
By recalling~\eqref{eq:Adot}, we note that $B_t= \frac12 A^{-1}_t\dot{C_t}A^{-\tran}_t$ and hence it is symmetric. 
Moreover, by using that $\mean(\hat\mu_t)=0$ and $\C(\hat\mu_t)=\Id$, we have
\begin{equation}\label{eq:eta:total-deriv}
	\begin{split}
    \int \partial_t\eta_t\dd \hat\mu_t &=  \int \pra*{ \langle \dot\alpha_t,x\rangle +\frac12\langle x, \dot B_tx\rangle}\dd\hat\mu_t
    =\frac12\tr\pra*{\frac{\dd}{\dd t}(A_t^{-1}\dot A_t)}\\
    &=\frac{\dd}{\dd t}\pra*{\frac12 \tr B_t}
    =\frac{\dd}{\dd t}\int  \eta_t\dd\hat\mu_t.
\end{split}
\end{equation}
Now, by using $\eta$ as test function in the weak formulation of the continuity equation, we obtain
\begin{align*}
\MoveEqLeft\int_0^1 \int\ip{\hat V_t}{A^{-1}_t\dot m+ A^{-1}_t\dot A x}\dd\hat\mu_t(x)\dd t
=\int_0^1\int\ip{\nabla\eta}{\hat V_t}\dd\hat\mu_t\dd t\\
&=\int\eta_1\dd\hat\mu_1-\int \eta_0\dd\hat\mu_0 -\int_0^1\int \partial_t\eta\dd\hat\mu_t\dd t =0\;,
\end{align*}
by~\eqref{eq:eta:total-deriv}. Hence, we have $\II=0$ and so by combining $\I$ and $\II$, the claim \eqref{eq:action-trans-cov}.
\end{proof}
The splitting in Proposition~\ref{prop:split:curves} is exact, whereas the splitting in Theorem~\ref{thm:main-cov} is up to an optimal choice of rotation. The shape and moment terms cannot be made completely independent in \eqref{eq:rewrite-cov}, as the choice of normalization defined via $A_t$ solving \eqref{eq:Adot} used in the shape term depends on the choice of $C_t$ that also appears in the moment term. Before proving Theorem~\ref{thm:main-cov}, we make a series of remarks to highlight the role of the choice of rotation in relation to the choice of normalization in the shape part of the splitting.

\begin{remark}[Choice of left square root for $C_t$]\label{rem:choiceR}
It is the choice of left square root $A_t$ obtained from \eqref{eq:Adot} that makes the splitting result in Proposition~\ref{prop:split:curves} and Theorem~\ref{thm:main-cov} work. The crucial property used there and guaranteed by equation~\eqref{eq:Adot} is the symmetry of $A_t^{-1}\dot A_t$, which is satisfied for any choice of initial data $A_0$. The choice of $A_0$ is therefore a degree of freedom that remains in the problem, and choosing $A_0$ is equivalent to choosing $A_1$ (for given $C_t$) thanks to uniqueness of solutions to \eqref{eq:Adot}, which in turn is equivalent to fixing $R=R[C]_1=C_1^{-1/2}A_1$ in \eqref{eq:def:RC} (for a given $C_t$). The choice of rotation $R$ in the splitting of $\cW$ is therefore equivalent to the choice of the left square root of $C_0$. To understand this degree of freedom, consider instead a different initial condition $\tilde A_0 = C_0^{\frac{1}{2}} \tilde R_0$ for \eqref{eq:Adot} for some rotation $\tilde R_0\in \Orth(d)$, and define $\tilde R_t:=C_t^{-1/2}\tilde A_t$ where $\tilde A_t$ is the corresponding solution to \eqref{eq:Adot}. Then $\tilde A_t = A_t\tilde R_0$, and hence $\tilde A_t \tilde A_t^\tran=A_tA^\tran_t=C_t$. Therefore, rotating $A_0$ results in an alternative choice of left square root for $C_t$, and $\tilde R_t  = R[C]_t \tilde R_0$ with $R[C]_t$ from~\eqref{eq:def:Revolution}.
\end{remark}

\begin{remark}[Choice of normalization for $\mu_t$]\label{rem:normalizations}
When normalizing $\mu_t$ to $\hat\mu_t$ in Proposition~\ref{prop:split:curves} we used the normalization given by $T_t=A_t^{-1}(\cdot-m_t)$, for $A_t$ solving \eqref{eq:Adot} with initial condition $A_0=C_0^{1/2}$. When projecting to the constrained manifold, we could in principle choose any other normalization, for instance the canonical one given by $\widebar\mu_t=(\widebar T_t)_\# \mu_t$ with $\widebar T_t=C_t^{-1/2}(\cdot-m_t)$ as introduced in Definition~\ref{def:normalization}. It is immediate that the choice of normalization corresponds exactly to the degree of freedom in choosing the left square root of $C_t$ discussed in Remark~\ref{rem:choiceR}. To relate $\hat\mu_t$ to $\widebar\mu_t$, writing $A_t = C_t^\frac{1}{2} R[C]_t$ with $R[C]$ defined in~\eqref{eq:def:RC} one obtains
	\begin{equation*}
		\widebar \mu_t = \bra*{R[C]_t}_\# \hat\mu_t \qquad\text{for } t \in [0,1]. 
	\end{equation*}
In the proof of Theorem~\ref{thm:main-cov} we use the normalization $\hat\mu_t$, and then express the result in terms of the normalization $\widebar\mu_t$, stating the problem in terms of the normalized marginals $(\hat\mu_0,\hat\mu_1)=(\widebar\mu_0,R^\tran_\#\widebar\mu_1)$. Note that it is sufficient to only consider the specific normalization $\hat\mu_t$ for the argument in Theorem~\ref{thm:main-cov}. To see this, consider any other normalization $\tilde\mu_t=(\tilde T_t)_\#\mu_t$ with $\tilde T_t=\tilde A^{-1}_t(\cdot-m_t)$ for $\tilde A_t=A_t\tilde R_0$ obtained by fixing a rotation $\tilde R_0\in \SO(d)$, also see Remark~\ref{rem:choiceR}. Then $\widebar\mu_t = (R[C]_t\tilde R_0)_\#\tilde \mu_t$, and so
\begin{align*}
\cW_{0,\Id}(\tilde\mu_0 , \tilde\mu_1)
=   \cW_{0,\Id}(({\tilde R}_0^\tran)_\#\widebar\mu_0 , (R{\tilde R_0})^\tran_\# \widebar\mu_1)
    =\cW_{0,\Id}(\widebar\mu_0 , R^\tran_\# \widebar\mu_1)
    =\cW_{0,\Id}(\hat\mu_0 , \hat\mu_1)
\end{align*}
since $(R \tilde R_0)^\tran_{\#} \widebar\mu_1 = (\tilde R_0)^\tran_{\#} R^\tran_{\#} \widebar\mu_1$ and since $\cW_{0,\Id}$ is invariant under rotation.
	In particular, this invariance also implies that $\cW_{0,\Id}(R_{\#} \widebar\mu_0 , \widebar\mu_1)=\cW_{0,\Id}(\widebar\mu_0 , R^\tran_{\#} \widebar\mu_1)$.
\end{remark}

\begin{remark}[Evolution of rotation]\label{rem:rotation}
	We obtain a differential equation for $R_t = R[C]_t\in\Orth(d)$ by writing $\varSigma_t = C_t^{\frac{1}{2}}$ for the symmetric square root of $C_t$ and using the relation $\dot C_t = \varSigma_t \dot\varSigma_t + \dot\varSigma_t \varSigma_t$. Therewith, we get by substituting $A_t = \varSigma_t R_t$ in~\eqref{eq:Adot} the equation
	\begin{equation}\label{eq:def:Revolution}
		\dot R_t = \frac{1}{2} \pra*{ \dot\varSigma_t, \varSigma_t^{-1}} R_t , \qquad\text{and}\qquad R_0 = \Id,
	\end{equation}
	with $[A,B]=  AB-BA$ the commutator for two matrices $A,B\in \R^{d\times d}$. The symmetry of $\varSigma_t$ and $\dot\varSigma_t$ implies that $\pra{ \dot\varSigma_t, \varSigma_t^{-1}}^\tran = - \pra{ \dot\varSigma_t, \varSigma_t^{-1}}$ and hence~\eqref{eq:def:Revolution} indeed defines an evolution for an orthogonal matrix, since the tangent space in any $R\in \Orth(d)$ is $T_R\Orth(d)= \set{A\in \R^{d\times d}: R A^\tran =- A R^\tran }$. In this representation, the symmetric matrix $A^{-1}_t \dot A_t$ takes the form
	\begin{equation}\label{eq:AinvdotA:varSigma}
		A_t^{-1}\dot A_t = R_t^\tran \left(\frac{\dot\varSigma_t\varSigma_t^{-1}+\varSigma_t^{-1}\dot\varSigma_t}{2}\right) R_t \,.
	\end{equation}	
	Moreover, the representation of~\eqref{eq:def:Revolution} implies that $t \mapsto R_t \in \Orth(d)$ is absolutely continuous. Since we have chosen $R_0=\Id \in \SO(d)$, we also get $R_t \in \SO(d)$ for all $t\in [0,1]$.
\end{remark}

\begin{proof}[Proof of Theorem~\ref{thm:main-cov}:]
	The proof is based on the splitting identity~\eqref{eq:action-trans-cov} from Proposition~\ref{prop:split:curves}. 
	Noting that for $\C(\mu_0),\C(\mu_1)\in\S_{\succ0}^d$, we see from Lemma~\ref{lem:apriori-cov} that the infimum in \eqref{eq:doubleinf-cov} can be restricted to $(m,C)\in\MC(\mu_0,\mu_1)$ with $C_t\in\S_{\succ0}^d$ for all $t$.
	Given a pair $(\mu,V)\in\CE_{m,C}(\mu_0,\mu_1)$ we have also $(\mu,V+W)\in \CE_{m,C}(\mu_0,\mu_1)$ for any divergence free vector field $W$, that is $\int\ip{\nabla\phi}{W_t}\dd\mu_t=0$ for all test functions $\phi\in C^{\infty}_c(\R^d)$ and a.e.~$t$. By arguments similar to the Wasserstein case~\cite{Villani2003}, one sees that for $\mu$ fixed, the optimal vector field $V$ achieving minimial action is charactized by
	\begin{equation*}
		C_t^{-1}V_t\in \overline{\{\nabla\phi:\phi\in C^{\infty}_c(\R^d)\}}^{L^2(\mu_t)}\quad\text{for a.e.~} t \in [0,1] \;.
	\end{equation*}
	Note that if $V$ is optimal for the curve $\mu$ in this sense, then also the vector field $\hat V$ is optimal for the normalized curve $\hat\mu$. Indeed, if $V=C\nabla \phi$, then
	\begin{equation*}
		\hat V(x)=A^\tran\nabla \phi(Ax+m)-A^{-1}[\dot m+\dot Ax] = \nabla \hat\phi(x)\;,
	\end{equation*}
	with
	\begin{equation*}
		\hat\phi(x)=\phi(Ax+m) -\skp*{A^{-1}\dot m,x} -\frac12\ip{x}{A^{-1}\dot A x}\;.
	\end{equation*}
	Here again it is crucial that $A^{-1}\dot A$ is a symmetric matrix thanks to~\eqref{eq:Adot}.

	Finally, note that the admissible sets of admissible curves $\mu$ and $\hat\mu$ are in  bijection via the transformation of normalization from Proposition~\ref{prop:split:curves}.

	It remains to observe that at time $t=1$ the obtained normalization $\hat \mu_t$ differs from the normalization $\widebar\mu_t$ of Definition~\ref{def:normalization} with the symmetric square root $\C(\mu_1)^{\frac{1}{2}}$ by a rotation $R= R[C]_1\in \SO(d)$, see~Remark~\ref{rem:normalizations}.
	Hence, $\bra*{\hat\mu,\hat V}\in \CE_{0,\Id}(\widebar\mu_0,R^\tran_\# \widebar\mu_1)$. By splitting the infimum in~\eqref{eq:doubleinf-cov} into 
	\[
	   \inf\set*{ \cW_{m,C} : (m,C)\in \MC(\mu_0,\mu_1)}=  \inf_{R\in \SO(d)} \set*{ \inf\set*{ \cW_{m,C} : (m,C)\in \MC_R(\mu_0,\mu_1)}} ,
	\]
	we conclude the result~\eqref{eq:rewrite-cov} from identity \eqref{eq:action-trans-cov}.
\end{proof}

\begin{proof}[Proof of Corollary~\ref{cor:split-Wcov}]
	By the spherical symmetry of one of the normalized marginals, see~\eqref{eq:def:spherical-normalization}, and the observations from Remark~\ref{rem:normalizations}, we have that 
	\[ 
		\cW_{0,\Id}(R_\#\widebar\mu_0, \widebar\mu_1) =\cW_{0,\Id}(\widebar\mu_0,R^\tran_\# \widebar\mu_1) = \cW_{0,\Id}(\widebar\mu_0,\widebar\mu_1) \,,
	\]
	and hence the splitting~\eqref{eq:rewrite-cov} simplifies to
	\begin{align*}
\inf_{R\in \SO(d)} \set*{ \cW_{0,\Id}(R_\#\widebar\mu_0 ,  \widebar\mu_1)^2 + \MOP_R(\mu_0,\mu_1)^2 }&=\\
 	\cW_{0,\Id}(\widebar\mu_0 , \widebar\mu_1)^2 + \inf_{R\in \SO(d)} \MOP_R(\mu_0,\mu_1)^2 
&= \cW_{0,\Id}(\widebar\mu_0 , \widebar\mu_1)^2 + \MOP(\mu_0,\mu_1)^2 . \qedhere
	\end{align*}
\end{proof}
\begin{remark}[Gaussian targets]\label{rem:Gaussian}
 	For any $(m,C)\in\R^d\times\S_{\succ 0}^d$ and $R\in\SO(d)$, observe that $R_\#\normal_{m,C}=\normal_{Rm,RCR^\tran}$, and so $R_\#\widebar\normal_{m,C}=R_\#\normal_{0,\Id}=\normal_{0,\Id}=\widebar\normal_{m,C}$. Therefore, the splitting in Theorem~\ref{thm:main-cov} is exact if one of the end points $\mu_0,\mu_1$ is Gaussian. This is precisely the reason why rotations do not play a role for the gradient flows in $\cW$-distance and corresponding convergence results that we study in Section~\ref{sec:cv}, because there we are restricting our analysis to Gaussian targets.
\end{remark}
\begin{proof}[Proof of Proposition~\ref{prop:comp:W-W2}]
	We write $\mu_0 =\bra[\big]{T_{m,A}^{-1}}_\#R_{\#} \widebar\mu_0$ and $\mu_1 =\bra[\big]{T_{m,A}^{-1}}_\# \widebar\mu_0$ with $T^{-1}_{m,A}x=A x + m$ and $A = C^{\frac{1}{2}}R$ is any square root of $C$.
	Then, we apply the push-forward and obtain
	\begin{equation}
		W_2(\mu_0, \mu_1) = W_2\bra*{\bra[\big]{T_{m,A}^{-1}}_\# R_{\#}\widebar \mu_0, \bra[\big]{T_{m,A}^{-1}}_\# \widebar\mu_1}
	\end{equation}
	We can apply~\cite[Lemma 3.1]{CarrilloVaes}, where we note that in the push-forward the same mean cancels out and that $\norm*{A}_2^2 = \norm*{A A^\tran }_2 = \norm*{C}_2$, since $A$ was a square-root of $C$. Hence, we obtain $W_2(\mu_0, \mu_1)^2\leq \lambda_{\max}(C) W_2(R_{\#}\widebar \mu_0, \widebar \mu_1)^2 \leq 2\lambda_{\max}(C) \cW_{0,\Id}(R_{\#}\widebar\mu_0, \widebar\mu_1)^2$, where we note that the constrained distance has the same dynamical formulation as the Wasserstein transport up to a factor of $2$, however over a more constrained set of solution to the continuity equation, making it larger. 
	The splitting formula~\eqref{eq:rewrite-cov} provides the second estimate in~\eqref{eq:comp:W-W2}.
	The final estimate in~\eqref{eq:comp:W-W2} is a consequence of the estimate~\eqref{e:W:comp:W2} from Theorem~\ref{thm:W:metric}.
	
	For proving the estimate~\eqref{eq:W_2comp}, we recall the Benamou-Brenier formula
		\begin{equation}\label{eq:OT-BB}
			\frac12 W_2(\mu_0,\mu_1)^2 = \inf \biggl\{\int_0^1\int\frac12|V_t|^2\dd\mu_t\dd t~:~(\mu,V)\in \CE(\mu_0,\mu_1)\biggr\}\;.  
		\end{equation}
		The first inequality immediately follows since the minimization in the definition of $\cW_{0,\Id}$ is performed over a restricted set of curves. To obtain the second inequality, let $(\mu_s,V_s)_{s\in[0,1]}\in\CE(\mu_0,\mu_1)$ be a $W_2$-geodesic. Combining \eqref{e:cov_along_W2} and \eqref{eq:cov-est-ref}, we have for any $s\in [0,1]$ the estimate
		\begin{equation}\label{eq:Clb}
			\max\bra[\bigg]{\frac12,1-\frac14W_2(\mu_0,\mu_1)^2}\Id \leq \C(\mu_s)\leq \Id\;.
		\end{equation}
		Recall from \eqref{eq:cov-est-ref} that 
		\begin{equation*}
			\C(\mu_t)
			=  (1-t)\C(\mu_0) +t\C(\mu_1)-t(1-t)\Delta(\gamma)\;,\quad \Delta(\gamma)=\int \Big[ y-\mean(\mu_1) - x +\mean(\mu_0)\Big]^{\otimes 2}\!\dd\gamma(x,y),
		\end{equation*}
		with $\gamma$ an optimal coupling of $\mu_0,\mu_1$. Without loss of generality, we can assume that $\Delta(\gamma)=\diag(\delta_1,\dots,\delta_d)$. Then \eqref{eq:cov-est-ref} gives the bounds $0\leq \delta_i \leq \min\big(W_2(\mu_0,\mu_1)^2,2\big)$ and consequently
		\begin{align*}
			C_s=\diag\big(1-s(1-s)\delta_i\big)\;, \qquad\partial_sC^{1/2}_s = \frac12\diag\big((1-s(1-s)\delta_i)^{-1/2}(2s-1)\delta_i\big)\;.
		\end{align*}
		Let $\widebar\mu_s:=(T_s)_\#\mu_s\in \cP_{0,\Id}(\R^d)$ be the normalization of $\mu_s$ with $T_s=T_{C_s^{1/2},0}$ and $C_s=\C(\mu_s)$. From Proposition~\ref{prop:split:curves}, we have $(\widebar\mu,\widebar V)\in \CE(\mu_0,\mu_1)$ with $\widebar V_s(x)=C_s^{-1/2}[V_s(C_s^{1/2}x)-\partial_sC_s^{1/2}x]$ and we infer that for any $\eps$
		\begin{align}
			\MoveEqLeft
			\int |\widebar V_s|^2\dd\widebar\mu_s = \int \abs[\big]{C_s^{-1/2}\bigl(V_s(x)-\partial_s C_s^{1/2}x\bigr)}^2\dd\mu_s\\\nonumber
			&\leq (1+\eps)\int |V_s|^2_{C_s}\dd\mu_s
			+\bigl(1+\eps^{-1}\bigr)\int\abs[\big]{C_s^{-1/2}\partial_s C_s^{1/2}x}^2 \dd\mu_s(x) \label{eq:barVest}\\
			&\leq (1+\eps)\pra*{\max\set*{\frac12,1-\frac14W_2(\mu_0,\mu_1)^2}}^{-1} W_2(\mu_0,\mu_1)^2 +\bigl(1+\eps^{-1}\bigr) \tr\pra*{C_s^{-1}\bigl(\partial_sC_s^{1/2}\bigr)^2}, \nonumber
		\end{align}
		where we used \eqref{eq:Clb} in the last step. The second term above can be estimated as
		\begin{align*}
			\tr\big[C_s^{-1}\big(\partial_sC_s^{1/2}\big)^2\big] = \frac14 (2s-1)^2\sum_{i=1}^d\frac{\delta_i^2}{(1-s(1-s)\delta_i)^2} 
			\leq d\cdot W_2(\mu_0,\mu_1)^4\;, 
		\end{align*}
		where we used $\delta_i\leq W_2(\mu_0,\mu_1)^2$ in the nominator and $\delta_i\leq 2$ in the denominator. Finally, choosing $\eps = W_2(\mu_0,\mu_1)$ for instance, we get
		\begin{equation*}
			\cW_{0,\Id}(\mu_0,\mu_1)^2\leq \frac12\int_0^1\int|\widebar V_t|^2\dd\widebar\mu_t\dd t \leq \frac12 W_2(\mu_0,\mu_1)^2 + o\big(W_2(\mu_0,\mu_1)^2\big)\;,
		\end{equation*}
		which proves the claim~\eqref{eq:W_2comp}.    
\end{proof}

\subsection{Optimality conditions for the moment part}
\label{sec:optimal-moments}

In this section, we are concerned with the existence of optimizer for the problems $\MOP_R(\mu_0,\mu_1)$ in~\eqref{eq:mean-cov-min:R} for $R\in\SO(d)$ and $\MOP(\mu_0,\mu_1)$ in~\eqref{eq:mean-cov-min}. 
By the identity~\eqref{eq:mean-cov-min}, we are mainly concerned with $\MOP_R(\mu_0,\mu_1)$ in~\eqref{eq:mean-cov-min:R}.
For the discussion of existence, it will be more convenient to use directly the parametrization of $(C_t)_{t\in[0,1]}$ in terms of $(A_t)_{t\in[0,1]}$ as defined in~\eqref{eq:Adot}.
It is beneficial to understand the problem $\MOP_R(\mu_0,\mu_1)$ as an existence statement on geodesics on $\subMfd:= \R^d\times \GL_+(d)$ with a sub-Riemannian metric. 
To start the discussion, we endow $\subMfd$ with the standard metric given as the product of Euclidean and Frobenius  $\skp{(m_0,A_0),(m_1,A_0)} = m_0\cdot m_1 + A_0 : A_1$.
The sub-Riemnnian structure is implied by the equation~\eqref{eq:Adot}. More precisely, along any curve $(A_t)_{t\in[0,1]}$ satisfying~\eqref{eq:Adot}, the matrix $A_t^{-1} \dot A_t$ has to be symmetric. 
Hence, we define the admissible \emph{horizontal} tangential vectors at a point $(m,A)\in \subMfd$ as subset of the full tangent space as
\begin{equation}\label{eq:def:A:horizontal}
	H_{m,A}:= \set*{ (r,X)\in T\subMfd : A^{-1} X \in \S^d } \subset  T \subMfd:= \R^d \times \R^{d\times d}.
\end{equation}
Hereby, we note that $H_{m,A}$ is independent of $m$, but we keep it in the notation to indicate $(m,A)\in \subMfd$ as base point.
Next, we consider for $(m_i,A_i)\in \subMfd$ for $i=0,1$ \emph{horizontal} curves
satisfying the symmetry condition implied by~\eqref{eq:Adot}
\begin{equation}\label{eq:mean-cov-min:AR}
	\overline{\MC}((m_0,A_0),(m_1,A_1)) = \set*{ (m,A) \in \AC\bra*{[0,1],\subMfd} : (\dot m_t,\dot A_t) \in H_{m_t,A_t} \text{ for a.e. } t\in [0,1]  } .
\end{equation}
Note, that our notation also the boundary values $m(i) = m_i, A(i)=A_i$ for $i=0,1$ are implied.
Before turning to geodesics, we check that $\overline{\MC}((m_0,A_0),(m_1,A_1))$ is non-empty for any choice of $(m_i,A_i)\in \subMfd$.
For doing so, we apply the Chow-Rashevsky Theorem from~\cite[Theorem 1.14]{Rifford2014}, which asks us to check the existence of suitable vector fields, which are bracket generating. In the following we denote by
$e^i$  is the $i$-th unit vector in $\R^d$, and 
\[S^{(i,j)} := \begin{cases}
e^i\otimes e^i\;, & i=j\;;\\
\frac{1}{\sqrt{2}}\bra*{ e^i\otimes e^j + e^j \otimes e^i}\;, & i\neq j\: .
\end{cases}\]
for $1\leq i\leq j\leq d$ an orthonormal basis of $\S^d$ w.r.t. the Frobenius inner product.
\begin{lemma}[Two-bracket generating vector fields]\label{lem:SR:brackets}
	The horizontal vector fields 
	\begin{equation}\label{eq:def:vecs:horizontal}
		 X^i(m,A) := (Ae^i, 0) \ \text{ for } i=1,\dots,d \quad\text{ and }\quad X^{\alpha}(m,A) := (0,A S^{\alpha}) \ \text{ for }  \alpha \in \triangle_d,
	\end{equation}
	where $\triangle_d:= \set*{ (i,j): 1\leq i\leq j\leq d}$,  are \emph{two-bracket generating}, that is 
	\begin{align*}
	&\Span\set*{ X^\alpha: \alpha \in \set*{1,\dots,d}\cup \triangle_d} = H_{m,A}\\
	\text{and}\qquad
	&H_{m,A} + \Span\set*{ [X^\alpha,X^\beta]: \alpha,\beta \in \set*{1,\dots,d}\cup \triangle_d } = T\subMfd . 
	\end{align*}
	In particular, for any $(m_i,A_i)\in \subMfd$ for $i=0,1$, the set $\overline{\MC}((m_0,A_0),(m_1,A_1))$ is non-empty.
\end{lemma}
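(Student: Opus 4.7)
The plan is to prove the two spanning assertions separately and then invoke Chow-Rashevsky. For the horizontal identity, an arbitrary $(r, X) \in H_{m,A}$ satisfies $A^{-1} X \in \S^d$, so I will expand $A^{-1} X = \sum_{\alpha \in \triangle_d} s_\alpha S^\alpha$ and $A^{-1} r = \sum_{i=1}^d r_i e^i$ in the respective orthonormal bases; this directly produces the decomposition $(r, X) = \sum_i r_i X^i(m,A) + \sum_\alpha s_\alpha X^\alpha(m,A)$. The reverse inclusion is immediate since $A^{-1}(A S^\alpha) = S^\alpha \in \S^d$ and $X^i$ has no $A$-component.

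For the bracket generation, I would work in the ambient linear coordinates $(m, A)$ on $\R^d \times \R^{d \times d}$ and exploit that every $X^i, X^\alpha$ is independent of $m$ and linear in $A$. A short calculation from $[X,Y] = DY \cdot X - DX \cdot Y$ then yields $[X^i, X^j] = 0$, $[X^i, X^\alpha](m,A) = (-A S^\alpha e^i, 0) \in H_{m,A}$, and, crucially,
\begin{equation*}
[X^\alpha, X^\beta](m, A) \;=\; \bigl(0,\; A[S^\alpha, S^\beta]\bigr).
\end{equation*}
Only the last bracket has a chance of producing new directions, so the main step is to verify that the commutators $\{[S^\alpha, S^\beta]\}$ span the space of skew-symmetric matrices. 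I expect to show this through the explicit identity
\begin{equation*}
[S^{(i,j)}, S^{(i,i)}] \;=\; \tfrac{1}{\sqrt{2}}\bigl(e^j (e^i)^\tran - e^i (e^j)^\tran\bigr) \qquad \text{for } 1 \le i < j \le d,
\end{equation*}
whose right-hand sides form a basis of the skew-symmetric $d \times d$ matrices.

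Once these two ingredients are in place, invertibility of $A \in \GL_+(d)$ shows that the second component of $H_{m,A} + \Span\{[X^\alpha, X^\beta] : \alpha, \beta \in \triangle_d\}$ equals $A \cdot \S^d + A \cdot \{\text{skew-symmetric}\} = A \cdot \R^{d \times d} = \R^{d \times d}$, while its first component is already $\R^d$ through the $X^i$. Hence the smooth distribution generated by the fields \eqref{eq:def:vecs:horizontal} is (step-two) bracket-generating at every point of $\subMfd$, and the Chow-Rashevsky theorem \cite[Theorem 1.14]{Rifford2014} supplies an absolutely continuous horizontal curve joining any prescribed endpoints $(m_0, A_0), (m_1, A_1) \in \subMfd$; such a curve lies in $\overline{\MC}((m_0,A_0),(m_1,A_1))$ by definition, so the set is non-empty. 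The anticipated obstacle is purely the skew-symmetric spanning step above; everything else is a bookkeeping exercise in the linearity of the fields.
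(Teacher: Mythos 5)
Your proposal is correct and follows essentially the same route as the paper: compute the brackets of the linear fields, reduce to showing that the $S^\alpha$ together with their commutators span $\R^{d\times d}$ via the identity $[S^{(i,i)},S^{(i,j)}]=\tfrac{1}{\sqrt 2}(e^i\otimes e^j-e^j\otimes e^i)$, and conclude by Chow--Rashevsky. The extra bookkeeping you include (the explicit horizontal decomposition and the mixed brackets $[X^i,X^\alpha]$, which stay horizontal) is consistent with, and slightly more detailed than, the paper's argument.
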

\begin{proof}
	Since $A\in \GL_+(d)$, we have that $\Span\set*{ A e^i: i=1,\dots, d} = \R^d$.
	First, we calculate for any $\alpha,\beta,\gamma\in\triangle_d$ the Lie bracket of the two vector fields $V^\alpha = A S^\alpha, V^\beta= A S^\beta$, where we note that $\partial_{A_\gamma} V^{\alpha}= E_\gamma S^\alpha$ with $E_{\gamma} = e^i\otimes e^j$ for $\gamma=(i,j)$.
	With this, we obtain by explicit straightforward calcultion
	\begin{align*}
		[V^\alpha,V^\beta] = \sum_{\delta\in \set*{1,\dots,d}\cup \triangle_d} \bra*{ A \pra*{S^\alpha, S^\beta} }_	\delta  \partial_{A_\delta} .
	\end{align*}
	Hence, it is sufficient to check that $\Span\set*{ S^\alpha, [S^\beta,S^\delta]: \alpha,\beta,\delta \in\triangle_d} = \R^{d\times d}$. 
	We choose any $1\leq i<j \leq d$ and verify
	\begin{align*}
		[ S^{(i,i)}, S^{(i,j)} ] = \bra*{ e^i\otimes e^j - e^j \otimes e^i} ,
	\end{align*}
	proving the claim. The final statement follows now from the Chow-Rashevsky Theorem, see e.g.~\cite[Theorem 1.14]{Rifford2014}.
\end{proof}
Now, since we ensured that $\overline{\MC}((m_0,A_0),(m_1,A_1))$ is non-empty, we can minimize an action among those curves. 
The identity~\eqref{eq:equivalent:MOPactions} provides for $I(m,C)$ in~\eqref{eq:def:MOP:action} an equivalent action for $(m,A)\in\overline{\MC}((m_0,A_0),(m_1,A_1))$ defined by
\begin{equation}\label{eq:def:IA}
	\overline{I}(m,A)  = \frac{1}{2} \int_0^{1} \bra*{\abs*{A_t^{-1} \dot m_t}^2 + \norm{A_t^{-1} \dot A_t}_{\HS}^2 } \dx{t}  \; .
\end{equation}
In this way, we obtain the moment optimization problem on the space $\subMfd$ as
\begin{equation}\label{eq:def:MOP:AR}
	\overline\MOP((m_0,A_0),(m_1,A_1)) = \inf \set*{ \overline{I}(m,A) : (m,A)\in \overline{\MC}((m_0,A_0),(m_1,A_1)) } . 
\end{equation}
By construction, we have the equivalence for $R\in\SO(d)$, $C_0,C_1\in \S_{\succ0}^d$ and $m_0,m_1\in \R^d$
\begin{equation}\label{eq:MOP:equivs}
	\MOP_R\bra*{(m_0,C_0),(m_1,C_1)} = \overline{\MOP}\bra[\big]{(m_0,C_0^{\frac{1}{2}}),(m_1,RC_1^{\frac{1}{2}})} .
\end{equation}
Since $\subMfd$ is non-compact, we cannot directly apply results from sub-Riemannian geometry ensuring the existence of geodesics.
For doing so, we have to ensure relative compactness, which is provided by a stability estimate for curves $(m,A)\in \overline\MOP((m_0,A_0),(m_1,A_1))$ with $I(m,A)<\infty$. 
\begin{lemma}\label{lem:mA-finiteAction}
	Let $(m,A)\in \overline\MC((m_0,A_0),(m_1,A_1))$ such that $2\overline{I}(m,A)=:I<\infty$.
	Then the map $t\mapsto (m_t,A_t)$ 
	satisfies the bound
	\begin{equation}\label{eq:mc-apriori}
		A_0A_0^{\tran} e^{-2\sqrt{I}}\preccurlyeq A_t A_t^{\tran}\preccurlyeq A_0A_0^{\tran} e^{2\sqrt{I}} \qquad\text{for all } t\in [0,1]\;.
	\end{equation} 
	Moreover, the rank of $t\mapsto A_t$ is constant, that is
	\[
		\rank(A_t)=\rank(A_0) \qquad\text{for all } t\in[0,1]\,.
	\]
	Finally, if $A_0 A_0^\tran$ is non-singular we have
	\begin{equation}\label{eq:mA-H1}
		\int_0^{T}\|\dot m_t\|^2\dd t \leq I e^{2\sqrt{I}}\lambda_{\min}(A_0A_0^\tran)^{-1}\;,\qquad \int_0^{T} \|\dot A_t\|^2_{\HS}\dd t\leq  I e^{2\sqrt{I}}\lambda_{\min}(A_0 A_0^\tran)^{-1}\:.
	\end{equation}
\end{lemma}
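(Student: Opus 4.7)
The proof parallels that of Lemma~\ref{lem:apriori-cov}, with $C_t := A_t A_t^\tran$ playing the role of the covariance matrix. For any unit vector $\xi \in \R^d$, define $h_\xi(t) := \skp{\xi, C_t \xi} = \abs{A_t^\tran \xi}^2$. Because the horizontality condition~\eqref{eq:def:A:horizontal} makes $B_t := A_t^{-1}\dot A_t$ symmetric, a direct computation gives
\begin{equation*}
    \dot h_\xi(t) \;=\; 2\, \xi^\tran \dot A_t A_t^\tran \xi \;=\; 2\, (A_t^\tran \xi)^\tran B_t\, (A_t^\tran \xi),
\end{equation*}
so that $\abs{\dot h_\xi(t)} \le 2\, \norm{B_t}_{\HS}\, h_\xi(t)$. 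Applying Gronwall to $\log h_\xi(t)$ and bounding $\int_0^1 \norm{B_s}_{\HS}\, \dx s \le \bra[\big]{\int_0^1 \norm{B_s}_{\HS}^2\, \dx s}^{1/2} \le \sqrt{I}$ by Cauchy-Schwarz on $[0,1]$ and the action bound $\int_0^1 \norm{B_s}_{\HS}^2\, \dx s \le 2\overline{I}(m,A) = I$ produces the sandwich~\eqref{eq:mc-apriori}. The rank statement is then immediate: the upper bound forces $h_\xi(t) = 0$ whenever $A_0^\tran \xi = 0$, while the lower bound forces $h_\xi(t) > 0$ whenever $A_0^\tran \xi \neq 0$; hence $\ker A_t^\tran = \ker A_0^\tran$ and therefore $\rank A_t = \rank A_0$ for all $t \in [0,1]$.

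For the $H^1$-estimates under the assumption $A_0 A_0^\tran \succ 0$, the upper half of~\eqref{eq:mc-apriori} provides the pointwise spectral bound $\norm{A_t}_2^2 = \lambda_{\max}(A_t A_t^\tran) \le \lambda_{\max}(A_0 A_0^\tran)\, e^{2\sqrt{I}}$. Writing $\dot m_t = A_t \bra*{A_t^{-1} \dot m_t}$ and $\dot A_t = A_t \bra*{A_t^{-1} \dot A_t}$ and invoking submultiplicativity of the spectral norm together with $\norm{A X}_{\HS} \le \norm{A}_2 \norm{X}_{\HS}$, one obtains the pointwise estimates
\begin{equation*}
    \abs{\dot m_t}^2 \;\le\; \norm{A_t}_2^2\, \abs{A_t^{-1} \dot m_t}^2, \qquad \norm{\dot A_t}_{\HS}^2 \;\le\; \norm{A_t}_2^2\, \norm{A_t^{-1} \dot A_t}_{\HS}^2.
\end{equation*}
Integrating in time and using the identity $\int_0^1 \bra[\big]{\abs{A_t^{-1} \dot m_t}^2 + \norm{A_t^{-1} \dot A_t}_{\HS}^2}\, \dx t = 2\overline{I}(m,A) = I$ yields the bounds~\eqref{eq:mA-H1}, up to identifying the initial spectral prefactor with the one in the statement.

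The main technical point lies in the first step: since $\norm{B_t}_{\HS}$ is only $L^2_t$ and not $L^\infty_t$, Gronwall cannot be invoked naively. Instead, one views $t \mapsto \log h_\xi(t)$ as absolutely continuous with derivative bounded in absolute value by $2\norm{B_t}_{\HS} \in L^1_t$, where the $L^1_t$ bound is extracted from the $L^2_t$ action bound via Cauchy-Schwarz on the finite interval $[0,1]$. This yields a \emph{uniform} pointwise exponential estimate on all of $[0,1]$, and this uniformity is precisely what is needed to propagate the non-degeneracy of $A_0 A_0^\tran$ to every $A_t$, and hence to justify both the rank constancy and the $H^1$ bounds.
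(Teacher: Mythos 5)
Your proof is correct and follows essentially the same route as the paper's: the scalar functions $h_\xi(t)=\skp{\xi,A_tA_t^\tran\xi}$, the symmetry of $A_t^{-1}\dot A_t$ coming from horizontality, the bound $|\dot h_\xi|\le 2\norm{A_t^{-1}\dot A_t}_{\HS}\,h_\xi$ combined with Gronwall and the Cauchy--Schwarz estimate $\int_0^1\norm{A_t^{-1}\dot A_t}_{\HS}\,\dd t\le\sqrt{I}$, the kernel/image argument for rank constancy, and the spectral bound $\norm{A_t}_2^2\le e^{2\sqrt I}\lambda_{\max}(A_0A_0^\tran)$ for the $H^1$ estimates. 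The only (harmless) discrepancy is the prefactor in \eqref{eq:mA-H1}: your derivation naturally gives $\lambda_{\max}(A_0A_0^\tran)$ in place of the stated $\lambda_{\min}(A_0A_0^\tran)^{-1}$, which looks like a typo in the lemma's statement rather than a gap in your argument.
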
  
\begin{proof}
Denote by $A_t^\dagger$ the pseudo-inverse of $A_t$, and 
let $k(t)=\rank(A_t)$.
The proof follows along the same argument as the proof of Lemma~\ref{lem:apriori-cov} by defining
for $\xi \in \R^d$ with $\abs{\xi}=1$ the function $h_\xi(t) := \skp{\xi , A_t A_t^\tran \xi}$, see also~\eqref{eq:def:hxi}. 
We note that $(\dot m_t,\dot A_t) \in H_{m_t,A_t}$ also implies the symmetry $\dot A_t A_t^\tran = A_t \dot A_t^\tran$. 
By doing so, we can estimate its time-derivative for a.e.~$t\in [0,1]$ by the Cauchy-Schwarz inequality
	\begin{align*}
			\abs*{\pderiv{h_\xi(t)}{t}} 
			&= \abs*{\skp*{\xi,\bra*{\dot A_t A_t^\tran+ A_t \dot A_t^\tran} \xi}}
			= 2 \abs*{\skp*{A_t^\tran \xi, A^\dagger \dot A_t A_t^\tran\xi} }\\
			&\leq 2 \abs*{A_t^\tran\xi}^2 \norm*{A_t^\dagger \dot A_t}_{\HS} 
			= 2 h_\xi(t)  \norm*{A_t^{-1} \dot A_t}_{\HS}  .
	\end{align*}
	Hence, we conclude by Gronwall that for any $\xi\in\R^d$ and any $t\in [0,1]$,
	\begin{align}\label{eq:h:nondeg}
	      h_\xi(0) \exp\bra[\big]{-2\sqrt{I}}
	     \leq h_\xi(t)
	     \leq h_\xi(0) \exp\bra[\big]{2 \sqrt{I}} .
	  \end{align}
  This means $h_u(t)=0$ for all $t\in [0,1]$ if $u$ is in the kernel of~$A_0 A_0^\tran$. Similarly, $h_u(t)>0$ for all $t\in[0,1]$ for any $u$ in $\im(A_0A_0^\tran)$. We conclude that $\im(A_tA_t^\tran)=\im(A_0 A_0^\tran)$ for all $t\in [0,1]$.
  Hence also $U:=\im(A_tA_t^\tran)$ is a linear subspace independent of $t\in [0,1]$. 
  From finiteness of the action we infer that $\dot m_t\in \im(A_tA_t^\tran) = U$ for all $t\in [0,1]$.
  Hence also $m_1-m_0=\int_0^1\dot m_t\dd t$ belongs to $U$. 
  The bound \eqref{eq:h:nondeg} and the finiteness of the action immediately yield the bounds \eqref{eq:mA-H1}.
\end{proof}

\begin{proposition}\label{prop:opt-mC-ex}
Let $\mu_0,\mu_1\in\cP_{2}(\R^d)$ and $m_i=\mean(\mu_i)$, $C_i=\C(\mu_i)$, $i=0,1$. 
Then for any $R\in \SO(d)$, $\MOP_R(\mu_0,\mu_1) = \overline\MOP\bra[\big]{(m_0,C_0^{\frac{1}{2}}),(m_1,C_1^{\frac{1}{2}}R)}$ is finite if and only if $\im C_0=\im C_1$ and $m_1-m_0\in \im C_0=\im C_1$. If it is finite, there exists an optimal pair $(m_t,A_t)_{t\in[0,1]}\in \overline\MOP\big((m_0,A_0),(m_1,A_1)\big)$ achieving the infimum.

Similarly $\MOP(\mu_0,\mu_1)$ is finite if and only if $\im C_0=\im C_1$ and $m_1-m_0\in \im C_0=\im C_1$. If it is finite, then there exists an optimal pair $(m_t,C_t)_{t\in[0,1]}$ achieving the infimum in $\MOP(\mu_0,\mu_1)$.
\end{proposition}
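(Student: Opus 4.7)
The plan is to work in the equivalent $(m,A)$-formulation~\eqref{eq:MOP:equivs} where the horizontal structure of Section~\ref{sec:optimal-moments} is in place, and to combine the a priori bounds of Lemma~\ref{lem:mA-finiteAction} with the non-emptiness statement of Lemma~\ref{lem:SR:brackets} to run the direct method of calculus of variations. The finiteness characterization will drop out of Lemma~\ref{lem:mA-finiteAction} in one direction and Lemma~\ref{lem:SR:brackets} in the other.

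For the ``only if'' direction of finiteness, any curve with $\overline{I}(m,A)<\infty$ preserves the image of $A_0 A_0^\tran$ along the flow and has $\dot m_t \in \im(A_t A_t^\tran)$ for a.e.~$t$ by Lemma~\ref{lem:mA-finiteAction}; thus $\im C_0=\im C_1$ and $m_1-m_0=\int_0^1 \dot m_t\,dt \in \im C_0$. The same applies to $\MOP$ since $\MOP\leq \MOP_R$ for any $R$. Conversely, assume these conditions hold; identify $U:=\im C_0=\im C_1$ with $\R^k$ where $k=\rank C_0$, so that the reduced matrices $C_0,C_1\in \S_{\succ 0}^k$ and $C_0^{\frac12}, C_1^{\frac12} R\in \GL_+(k)$ become legitimate endpoints. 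Lemma~\ref{lem:SR:brackets} then provides a smooth horizontal curve joining $(m_0,C_0^{\frac12})$ to $(m_1,C_1^{\frac12}R)$, whose action is finite by continuity of $t\mapsto |A_t^{-1}\dot m_t|^2+\|A_t^{-1}\dot A_t\|_\HS^2$ on the compact interval $[0,1]$.

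For existence of an optimizer for $\MOP_R$, assume the above reduction is in force and take a minimizing sequence $(m^n,A^n)\in \overline\MC((m_0,C_0^{\frac12}),(m_1,C_1^{\frac12}R))$. Boundedness of $\overline{I}(m^n,A^n)$ together with $C_0\succ 0$ makes Lemma~\ref{lem:mA-finiteAction} yield uniform two-sided bounds $\delta \Id \preccurlyeq A_t^n (A_t^n)^\tran \preccurlyeq M\Id$ and the $H^1$-estimates~\eqref{eq:mA-H1}. By Arzela--Ascoli combined with weak $L^2$-compactness, a subsequence satisfies $(m^n,A^n)\to (m,A)$ uniformly in $C([0,1])$ and $(\dot m^n,\dot A^n)\rightharpoonup (\dot m,\dot A)$ weakly in $L^2$; the lower bound forces $(A^n)^{-1}\to A^{-1}$ uniformly, so the horizontal constraint $(A^n)^{-1}\dot A^n \in \S^k$ passes to the weak limit. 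Writing $\overline{I}(m,A)=\frac12\int \bigl\langle (\dot m,\dot A), M_t[A](\dot m,\dot A)\bigr\rangle\,dt$ with $M_t[A]\succ 0$ depending continuously on $(A_t A_t^\tran)^{-1}$, the uniform convergence $M_t[A^n]\to M_t[A]$ together with convexity in $(\dot m,\dot A)$ gives the lower semicontinuity $\overline{I}(m,A)\leq \liminf_n \overline{I}(m^n,A^n)$. Hence $(m,A)$ is admissible and attains the infimum, and via $C_t=A_t A_t^\tran$ and~\eqref{eq:MOP:equivs} this produces an optimal $(m,C)\in\MC_R(\mu_0,\mu_1)$.

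For $\MOP(\mu_0,\mu_1)$ the argument is identical except the endpoint $A^n_1$ is only subject to $A^n_1 (A^n_1)^\tran = C_1$, placing it in the compact set $C_1^{\frac12}\cdot \SO(k)$; uniform convergence of the minimizing sequence thus produces a limiting endpoint $A_1=C_1^{\frac12}R^*$ for some $R^*\in\SO(k)$, and the same compactness and lower semicontinuity argument concludes. The main technical obstacle I expect is neither the compactness itself nor the boundary conditions, but the passage to the weak limit in the horizontal symmetry constraint $(A^n)^{-1}\dot A^n\in\S^k$ and in the non-local quadratic action, both of which hinge on the uniform invertibility of $A^n_t$ that Lemma~\ref{lem:mA-finiteAction} delivers precisely when $C_0\succ 0$; this is also what makes the reduction to the non-degenerate subspace unavoidable and what ties the finiteness of $\MOP_R$ to the rank matching $\im C_0=\im C_1$.
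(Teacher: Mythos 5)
Your argument is correct and takes essentially the same route as the paper's proof: the ``only if'' direction and the reduction to the non-degenerate subspace via Lemma~\ref{lem:mA-finiteAction}, non-emptiness of the admissible class via the Chow--Rashevsky statement in Lemma~\ref{lem:SR:brackets}, and then the direct method using the $H^1$ bounds \eqref{eq:mA-H1} with uniform convergence plus weak $L^2$ convergence of the velocities. The only cosmetic differences are that the paper transfers horizontality and lower semicontinuity through the frame coefficients $u^n_\alpha$ associated with the vector fields of Lemma~\ref{lem:SR:brackets}, whereas you pass the linear constraint $\dot A^n (A^n)^\tran \in \S^d$ and a convex quadratic integrand with uniformly convergent weight to the limit directly, and for $\MOP$ you let the terminal square root vary in the compact set $C_1^{1/2}\SO(k)$ while the paper simply reruns the same argument using the bound on $C_t=A_tA_t^\tran$ --- both immaterial.
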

\begin{proof}
Lemma \ref{lem:mA-finiteAction} shows that $\overline I(m,A)$ is infinite if $\im C_0\neq \im C_1$ or $m_1-m_0\notin \im C_0$ (note that $A_i A_i^\tran = C_i$, for $i=0,1$). 
Let us assume that $\im C_0= \im C_1$ and $m_1-m_0\in \im C_0$, then we can restrict the argument by a suitable orthogonal construction to some $\R^k$ with $k=\dim \im C_0$. Hence, we can assume without loss of generality, that $C_0,C_1\in \S_{\succ0}^d$. For brevity, we set $A_0 = C_0^{\frac{1}{2}}\in \GL_+(d)$ and $A_1 = R C_1^{\frac{1}{2}}\in \GL_+(d)$. 
Then, we obtain by an application of Lemma~\ref{lem:mA-finiteAction} the existence of a curve $(m,A)\in \overline{\MC}((m_0,A_0),(m_1,A_1))$. 
Since $t\mapsto (m_t,A_t)\in \subMfd$ is uniformly continuous on $[0,1]$, we have that $\| A^{-1}_t \dot m_t\| \lesssim \| \dot m_t\|$ and $\norm{A_t^{-1} \dot A_t}_{\HS}\leq \norm{\dot A_t}_{\HS}$. Since, $(m,A)\in H^1([0,1])$, we obtain $\overline I(m,A)<\infty$ and so $\overline\MOP((m_0,A_0),(m_1,A_1)) < \infty$. 

Let $(m^n,A^n)$ be a minimizing sequence of functions in $\overline{\MC}((m_0,A_0),(m_1,A_1))$, that is
\[
  \overline\MOP\bra*{(m_0,A_0),(m_1,A_1)}=\inf_n \overline I(m^n,A^n). 
\]
From the bounds \eqref{eq:mA-H1} we infer that $m^n$ and $A^n$ are uniformly bounded in $H^1([0,1])$. 
Hence there is a function $(m,A) \in H^1([0,1])$ such that $(m^n, A^n) \rightharpoonup (m, A)$ weakly in $H^1$ and uniformly as continuous functions. In the notation of Lemma~\ref{lem:SR:brackets}, we have that there exists $u_{\alpha}^n\in L^2([0,1])$ for $\alpha\in \set{1,\dots,d}\cup\triangle_d$ such that $(\dot m^n_t,\dot A^n_t)  = \sum_{\alpha} u_\alpha^n(t) X^\alpha(m_t^n,A_t^n)$ and 
$\overline I(m^n,A^n)=\sum_\alpha\|u_\alpha^n\|^2_{L^2([0,1])}$ is bounded. Thus, up to a further subsequence also $u_{\alpha}^n$ to $u_\alpha$ weakly in $L^2([0,1])$.
Hence by the uniform convergence of $(m^n,A^n)$ and smoothness (linearity) of~$X^\alpha$, we deduce
\[
  (\dot m_t, \dot A_t) = \sum_{\alpha} u_\alpha(t) X^\alpha(m_t,A_t) . 
\]
In particular $(m,A)\in \overline{\MC}((m_0,A_0),(m_1,A_1))$ is again horizontal.

Then we have
\begin{align*}
	\overline I(m,A) &= \frac{1}{2} \int_0^{1} \bra*{\abs*{A_t^{-1} \dot m_t}^2 + \norm*{A_t^{-1} \dot A_t}_{\HS}^2 } \dx{t} 
	\\
	&=
	\liminf_n \frac{1}{2} \int_0^{T} \bra*{ \abs*{A_t^{-1} \dot m_t^n}^2+ \norm*{A_t^{-1} \dot A_t^n}_{\HS}^2 }\dd t \\
	&=
	\liminf_n \frac{1}{2} \int_0^{T} \bra*{ \abs*{(A_t^n)^{-1} \dot m_t^n}^2+ \norm*{(A_t^n)^{-1} \dot A_t^n}_{\HS}^2 }\dd t 
	\\
	&= \liminf_n \overline I(m^n,A^n) 
	= \overline\MOP\bra*{(m_0,A_0),(m_1,A_1)} \:.
\end{align*} 
Hence the pair $(m,A)$ is a minimizer.

The proof of the statement for $\MOP(\mu_0,\mu_1)$ follows the same argument by noting that Lemma~\ref{lem:mA-finiteAction} also provides a bound on $C_t = A_t A_t^\tran$.
\end{proof}  
To characterize the optimal mean and covariance square root solving the Gaussian part of the covariance-modulated optimal transport distance, \eqref{eq:mean-cov-min:R}, we use the Hamiltonian formalism developed for geodesics in sub-Riemannian context (see e.g.~\cite[Sec.~2.2]{Rifford2014}. The constraint gives rise to a Lagrange multiplier, which might be active (non-zero) or not, leading to normal or abnormal geodesics. 
Thanks to Lemma~\ref{lem:SR:brackets} our sub-Riemannian structure is two-bracket generating, which results in only trivial (constant) abnormal geodesics (see~\cite[Theorem 2.22 and Example 2.1]{Rifford2014}, also~\cite[Sec.~6]{AgrachevLee2009} and \cite[Sec.~4.2]{FigalliRifford2010}).
In this way, we can characterize the geodesic equations in the following proposition.
\begin{proposition}\label{prop:geo:AR}
Let $(m_i,A_i)\in \subMfd$ for $i=0,1$, then any optimizer $(m_t,A_t)_{t\in[0,1]}$ for $\overline\MOP((m_0,A_0),(m_1,A_1))$ satisfies for some $\alpha\in \R^d$ the system~\eqref{eq:EL2R} with boundary values implied.
\end{proposition}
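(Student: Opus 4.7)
The approach is to derive the geodesic equations via the Pontryagin Maximum Principle for normal sub-Riemannian extremals. Lemma~\ref{lem:SR:brackets} establishes that the horizontal distribution is two-bracket generating, so the references already cited in the paper guarantee that every minimizer is a \emph{normal} extremal; hence it suffices to write down and manipulate the normal geodesic equations.

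I would first recast the problem as a standard optimal control problem. Every horizontal tangent vector at $(m,A)\in\subMfd$ can be written uniquely as $(Au, AS)$ with $u\in\R^d$ and $S\in\S^d$, so the action becomes $\overline I(m,A) = \tfrac12\int_0^1(|u_t|^2 + \|S_t\|_{\HS}^2)\,dt$ with controlled dynamics $\dot m = Au$ and $\dot A = AS$. Introducing costates $p\in\R^d$ and $P\in\R^{d\times d}$, the normal Hamiltonian is
$$H(m,A,p,P,u,S) = \langle p, Au\rangle + \tr(P^\tran AS) - \tfrac12\bigl(|u|^2 + \|S\|_{\HS}^2\bigr).$$
Maximization over $u\in\R^d$ and over $S\in\S^d$ (the symmetry restriction on $S$ is essential) yields $u = A^\tran p$ and $S = \sym(A^\tran P)$, while the adjoint equations read $\dot p = -\partial_m H = 0$ and $\dot P = -\partial_A H = -(pu^\tran + PS)$. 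The conservation $p\equiv\alpha$ then gives $\dot m = AA^\tran\alpha$, i.e.\ \eqref{eq:EL-mean2R}.

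For \eqref{eq:EL-cov2R}, I set $M_t := A_t^\tran P_t$ and decompose $M_t = B_t + Q_t$ into $B_t := \sym(M_t) = A_t^{-1}\dot A_t$ and $Q_t := \asym(M_t) \in \mathfrak{so}(d)$. A short computation using $\dot A = AB$ and $\dot P = -\alpha\alpha^\tran A - PB$ yields $\dot M = [B,M] - (A^\tran\alpha)^{\otimes 2}$. Taking the symmetric and skew parts of this identity and exploiting $[B,B]=0$ simultaneously produces
$$\dot B = [B, Q] - (A^\tran\alpha)^{\otimes 2}, \qquad \dot Q = 0.$$
The first identity is exactly \eqref{eq:EL-cov2R}, and the second asserts that $Q$ is constant along the geodesic, justifying its treatment as a fixed skew-symmetric matrix in the statement.

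The main obstacle is the conservation law $\dot Q \equiv 0$; without it, the right-hand side of \eqref{eq:EL-cov2R} would carry a time-dependent skew matrix and the closed form would fail. Conceptually, $Q$ is the momentum map for the residual $\Orth(d)$-symmetry of the problem (the freedom in choosing a left square root of $C = AA^\tran$, cf.~Remark~\ref{rem:choiceR}), and this invariance is what allows the equation for $B$ to close with a fixed $Q$. The remaining PMP hypotheses are supplied by the $H^1$ regularity of minimizers established in Lemma~\ref{lem:mA-finiteAction} and Proposition~\ref{prop:opt-mC-ex}.
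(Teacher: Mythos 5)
Your proposal is correct and follows essentially the same route as the paper: both invoke the two-bracket-generating property to rule out strictly abnormal minimizers and then derive \eqref{eq:EL2R} from the normal Hamiltonian (PMP) equations, with the constancy of the skew matrix $Q$ obtained by separating symmetric and antisymmetric parts. Your bookkeeping via $M_t=A_t^\tran P_t$ and $\dot M=[B,M]-(A^\tran\alpha)^{\otimes 2}$ is a slightly cleaner repackaging of the paper's frame-based computation, but not a different argument.
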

\begin{proof}
	We define the cotangent action for $(p,P)\in T^*\subMfd = \R^d \times \R^{d\times d}$ on $(r,X)\in T\subMfd$ by the pairing
	\[
	  \skp*{ (p,P) , (v,V)}_{T^*\subMfd\times T\subMfd} = p\cdot r + P: X . 
	\]
	The Riemannian inner product on $T\subMfd\times T\subMfd$ inducing the action functional~\eqref{eq:def:IA} at the point $(m,A)\in \subMfd$ is given by
	\begin{equation*}
		\skp[\big]{(v,V),(w,W)}_{(m,A)} = (A^{-1}v) \cdot (A^{-1} w) + (A^{-1} V) : (A^{-1} W) . 
	\end{equation*} 
	In the coordinates from Lemma~\ref{lem:SR:brackets} we define the Hamiltonian
	\begin{align*}
		H\bra[\big]{(m,A),(p,P)} &= \frac{1}{2}  \sum_{\alpha \in \set*{1,\dots,d}\cup \triangle_d} \abs[\Big]{\skp[\big]{ (p,P),  X^\alpha(m,A)}_{T^*\subMfd\times T\subMfd}}^2 \\
		&= \frac{1}{2} \pra[\bigg]{ \sum_{i=1}^d \bra*{p \cdot (Ae^i)}^2 + \sum_{\alpha \in\triangle_d} \bra*{P : (A S^\alpha)}^2}
	\end{align*}%
Since the distribution $H$ is 2-bracket generating, there are no strictly abnormal geodesics \cite[Theorem 2.22 and Example 2.1]{Rifford2014}. Hence every constant speed geodesic $(m_t,A_t)_{t\in[0,1]}$ admits a normal extremal lift, i.e.~it can be lifted to a smooth curve $\big((m_t,A_t), (p_t,P_t)\big)_{t\in[0,1]}$ in $T^*\subMfd$. This curve is an integral curve of the Hamiltonian vector field $(\partial H/\partial(p,P), -\partial H/\partial(m,A)\big)$. Explicitly, it satisfies the ODEs
\begin{align}\label{eq:Hamilton1}
\dot m &= \sum_{i=1}^d (p\cdot X^i)X^i\;,\quad
&\dot A &= \sum_{\alpha\in\Delta}(P: X^\alpha)X^\alpha\;,\\\label{eq:Hamilton2}
\dot p &= -\sum_{i=1}^d (p\cdot X^i)p\cdot D_mX^i\;,\quad
&\dot P &=-\sum_{i=1}^d (p\cdot X^i)p\cdot D_AX^i -\sum_{\alpha\in\Delta}(P: X^\alpha)P:D_AX^\alpha\;.
\end{align}
The  first line \eqref{eq:Hamilton1} tells that $p\cdot X^i$ and $P:X^\alpha$ are the coordinates of $(\dot m,\dot A)$ in the orthonormal basis given by $X^i$, $X^\alpha$. Hence we deduce $p\cdot Ae^i = \ip{\dot m}{ X^i}_A= A^{-1}\dot m \cdot e^i$ for $i=1,\dots,d$ and 
$P:AS^\alpha= \ip{\dot A}{X^\alpha}_A = A^{-1}\dot A:S^\alpha$ for all $\alpha\in \triangle_d$. 
This implies that
\begin{equation}\label{eq:pP} 
p=A^{-\tran}A^{-1} \dot m \;, \qquad\text{and}\qquad P= A^{-\tran} \big(A^{-1} \dot A + Q\big)\;,
\end{equation}
for a family of skew-symmetric matrices $(Q_t)_{t\in[0,1]}$. In particular, for any symmetric $S$ we have $A^\tran P:S=A^{-1}\dot A:S$.

The first equation in \eqref{eq:Hamilton2} simplifies since $X^i$ is independent of $m$ for $i=1,\dots,d$ and hence $\dot p = 0$. By setting $\alpha= p$, we get~\eqref{eq:EL-mean2R}. From the second equation in \eqref{eq:Hamilton2} we calculate for any $Y\in \R^{d\times d}$, by noting that $D_A X^i[Y] = Y e^i$ and $D_A X^\alpha[Y] = Y S^\alpha$ and using again~\eqref{eq:pP}, that
	\begin{align*}
		\dot P : Y &= - \sum_{i=1}^d \bra*{ p \cdot X^i} p\cdot (Y e^i) - \sum_{\alpha\in\triangle_d} \bra*{ P : X^\alpha} \bra*{P : (Y S^\alpha)} \\
		&= - \sum_{i=1}^d \bra*{ (A^{-1} \dot m) \cdot e^i} \pra*{\bra{ A^{-\tran} A^{-1} \dot m  \otimes e^i } : Y } - \sum_{\alpha \in \triangle_d} \bra*{ (A^{-1} \dot A) : S^\alpha}\bra*{ P S^\alpha : Y} \\
		&= - \bra*{A^{-\tran} A^{-1} \dot m \otimes A^{-1} \dot m} : Y - P A^{-1} \dot A : Y ,
	\end{align*}
	where we used the identities $\sum_i (\alpha \cdot e^i) e^i =\alpha$ for any $\alpha \in \R^d$ and $\sum_{\alpha\in \triangle_d} (S : S^\alpha) S^\alpha = S$ for any $S\in \S^d$.
	Hence, we identify $\dot P$ as
	\begin{align}\label{eq:Hamilton3}
		\dot P &= - A^{-\tran} A^{-1} \dot m \otimes A^{-1} \dot m -PA^{-1} \dot A  = -\alpha\otimes A^\tran\alpha -PA^{-1}\dot A\;.
	\end{align}
	To arrive at an equation independent of $P$, we take the time derivative in~\eqref{eq:pP} and obtain
	\begin{align*}
		\dot P &= \pderiv{}{t} \bra*{ A^{-\tran} A^{-1} \dot A  +A^{-\tran} Q}\\
		&= - 2 A^{-\tran} A^{-1} \dot A A^{-1} \dot A + A^{-\tran} A^{-1} \ddot A -A^{-\tran}\dot A^\tran A^{-\tran} Q + A^{-\tran}\dot Q, 
	\end{align*}
	where we used that $A$ is horizontal, i.e. $A^{-1} \dot A = \dot A^\tran A^{-\tran}$. Comparing this expression for $\dot P$ with \eqref{eq:Hamilton3} and multiplying with $A^\tran$ leads to 
	\begin{align*}
	\frac{\dd}{\dd t}\big(A^{-1}\dot A\big) &= A^{-1}\ddot A - A^{-1}\dot AA^{-1}\dot A\\
	&= -(A^\tran\alpha)^{\otimes 2} + \big(A^{-1}\dot AQ-QA^{-1}\dot A\big)-\dot Q\;.
	\end{align*}
The last term is antisymmetric while all other terms are symmetric. Hence, we infer that $Q$ is constant.
Thus we have 
\begin{equation*}
\frac{\dd}{\dd t}\big(A^{-1}\dot A\big) =  -(A^\tran\alpha)^{\otimes 2} + [A^{-1}\dot A, Q]\;,
\end{equation*} 	
which gives \eqref{eq:EL-cov2R}.
\end{proof}

 \begin{lemma}\label{cor:Aexplicit:m0_alt}
 	For given $(m_i,A_i)\in \subMfd$ for $i=0,1$ with $m_0=m_1=m$, any optimizer of $\overline\MOP((m,A_0),(m,A_1))$ is of the form
 	\begin{equation}\label{eq:Aexplicit:m0}
 		m_t = m \qquad\text{and}\qquad  A_t = A_0 e^{t B} e^{- t B^{\mathsf{asym}}} \qquad\text{for } t\in[0,1] ,
 	\end{equation}
    with $B\in \R^{d\times d}$ an optimizer of 
    \[
        \frac{1}{2} \inf_{B\in \R^{d\times d}}\set*{ \norm{B^{\mathsf{sym}}}_{\HS}^2 : A_0^{-1} A_1 = e^{B} e^{-B^{\mathsf{asym}}}} = \overline\MOP((m,A_0),(m,A_1)) ,
    \]
    where $B^{\mathsf{sym}} := \frac{B+B^\tran}{2}$ and $B^{\mathsf{asym}} := \frac{B-B^\tran}{2}$.
\end{lemma}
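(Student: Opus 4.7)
The plan is to combine the existence statement in Proposition~\ref{prop:opt-mC-ex} with the geodesic system in Proposition~\ref{prop:geo:AR} and solve the resulting ODEs explicitly under the endpoint constraint $m_0 = m_1 = m$. Proposition~\ref{prop:opt-mC-ex} furnishes an optimizer $(m_t, A_t) \in \overline{\MC}((m, A_0), (m, A_1))$, and by Proposition~\ref{prop:geo:AR} it satisfies system~\eqref{eq:EL2R} for some $\alpha \in \R^d$ and some skew-symmetric $Q \in \R^{d\times d}$. First I would force $\alpha = 0$: equation~\eqref{eq:EL-mean2R} gives $\dot m_t = C_t\alpha$ with $C_t = A_t A_t^\tran \succ 0$ (non-degeneracy by Lemma~\ref{lem:mA-finiteAction} since $A_0 \in \GL_+(d)$); integrating over $[0,1]$ with $m_0 = m_1$ yields $\bigl(\int_0^1 C_t\,dt\bigr)\alpha = 0$, and positive definiteness of the time-average forces $\alpha = 0$ and therefore $m_t \equiv m$.

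With $\alpha = 0$, equation~\eqref{eq:EL-cov2R} collapses to the linear commutator ODE $\frac{d}{dt} X_t = [X_t, Q]$ for the symmetric matrix $X_t := A_t^{-1}\dot A_t$, whose explicit solution is $X_t = e^{-tQ} X_0 e^{tQ}$ (verified by differentiating and using that $Q$ commutes with $e^{\pm tQ}$; symmetry is preserved because $e^{tQ} \in \SO(d)$). To recover $A_t$, I set $\tilde A_t := A_t e^{-tQ}$; using $Q e^{-tQ} = e^{-tQ} Q$, the relation $\dot A_t = A_t X_t$ turns into the autonomous linear equation $\dot{\tilde A}_t = \tilde A_t (X_0 - Q)$, which integrates to $A_t = A_0 e^{t(X_0 - Q)} e^{tQ}$. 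Setting $B := X_0 - Q$ and decomposing $B = B^{\sym} + B^{\asym}$ gives $B^{\sym} = X_0$, $B^{\asym} = -Q$, and hence the desired form
\begin{equation*}
A_t = A_0 e^{tB} e^{-tB^{\asym}},
\end{equation*}
with the endpoint condition at $t=1$ reading exactly $A_0^{-1} A_1 = e^B e^{-B^{\asym}}$.

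Finally, orthogonal invariance of the Hilbert--Schmidt norm under conjugation by $e^{\pm tQ}$ yields $\|X_t\|_{\HS} = \|X_0\|_{\HS} = \|B^{\sym}\|_{\HS}$ for every $t$, so the action along this curve is
\begin{equation*}
\overline{I}(m, A) = \tfrac12\int_0^1\|X_t\|_{\HS}^2\,dt = \tfrac12\|B^{\sym}\|_{\HS}^2,
\end{equation*}
and the infimum formula drops out. The main subtlety I anticipate is matching the two infima: existence in Proposition~\ref{prop:opt-mC-ex} together with the first-order optimality forces every minimizer into the explicit form above, giving an admissible $B$; conversely, for any $B$ satisfying the endpoint constraint, the curve $A_t = A_0 e^{tB} e^{-tB^{\asym}}$ is a smooth horizontal competitor since $A_t^{-1}\dot A_t = e^{tB^{\asym}} B^{\sym} e^{-tB^{\asym}}$ is symmetric by orthogonality of $e^{tB^{\asym}}$, so both infima coincide.
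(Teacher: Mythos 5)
Your proof is correct and follows essentially the same route as the paper: reduce to $\alpha=0$ so that the Euler--Lagrange system of Proposition~\ref{prop:geo:AR} becomes the commutator ODE for $A_t^{-1}\dot A_t$, solve it by conjugation with $e^{tQ}$, and read off the action via orthogonal invariance of the Hilbert--Schmidt norm. The only (harmless) deviations are that you deduce $m_t\equiv m$ from \eqref{eq:EL-mean2R} together with positive definiteness of $\int_0^1 C_t\,\dd t$ rather than directly from the form of the action $\overline I$, and that you spell out the converse check that every admissible $B$ yields a horizontal competitor with action $\tfrac12\norm{B^{\sym}}_{\HS}^2$, a point the paper leaves implicit.
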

\begin{proof}
We first note from the form of $\overline I(m,A)$ that any optimal curve must satisfy $m_t=m_0=m_1$ for all $t$. Such an optimal curve is a sub-Riemannian geodesic in the moment manifold $\subMfd$ and by Proposition \ref{prop:geo:AR} it must satisfy \eqref{eq:EL2R}. 
Since here $\alpha=A_t^{-\tran}A_t^{-1}\dot m_t=0$ we obtain from~\eqref{eq:EL-cov2R}
$\frac{\dd}{\dd t}(A_t^{-1}\dot A_t) = [A_t^{-1}\dot A_t,Q]$ for some skew symmetric matrix $Q$, and hence
\begin{equation}\label{eq:AdotA}
A_t^{-1}\dot A_t = e^{-tQ}Ze^{tQ}
\end{equation}
for a suitable symmetric matrix $Z\in \S^d$. Defining $X_t=e^{tQ}A_te^{-tQ}$, we deduce that 
\[\dot X_t = QX_t - X_t Q +X_t Z\;.\]
Note that $X_0=A_0$. The unique solution for $X$ is given by
\[X_t = e^{tQ}X_0e^{t(Z-Q)} \; , \]
implying the representation~\eqref{eq:Aexplicit:m0} by setting $B=Z-Q$.
Finally, using \eqref{eq:AdotA}, the action of the curve is given by
\[\overline{I}(m,A) = \frac{1}{2} \int_0^1 \norm{ A_t^{-1}\dot A_t}_{\HS}^2\dd t = \frac{1}{2}\norm{Z}_{\HS}^2\;. \qedhere\]
\end{proof}
We now turn to the unconstrained moment optimization problem $\MOP(\mu_0,\mu_1)$ in~\eqref{eq:mean-cov-min}.
We complement the Hamiltonian approach for the derivation of geodesics in the sub-Riemannian framework with a more straightforward derivation via the minimization of the energy~\eqref{eq:def:MOP:action} for the Riemannian case. 
\begin{proposition}\label{prop:optimal}
	Let $\mu_0,\mu_1\in\cP(\R^d)$ satisfy Assumption~\ref{ass:Wfinite}, then any optimizer $(m,C)\in \MC(\mu_0,\mu_1)$ of the minimization problem \eqref{eq:mean-cov-min} satisfies~\eqref{eq:EL2} for some suitable $\alpha\in\R^d$. 

	Moreover, for any optimal curve $(m,C)\in \MC(\mu_0,\mu_1)$ of \eqref{eq:mean-cov-min} the map $[0,1]\ni t\mapsto \skp[\big]{\dot m_t,C_t^{-1}\dot m_t} +\frac14\tr\bra[\big]{\dot C_tC^{-1}_t\dot C_tC^{-1}_t}$ is constant.
\end{proposition}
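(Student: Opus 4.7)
My plan is to derive the Euler--Lagrange equations by taking variations of the functional $I(m,C)$ with fixed endpoints, and then deduce the second (conservation-of-energy) statement either from the fact that the Lagrangian is autonomous and homogeneous of degree two in the velocities, or by a direct computation using the Euler--Lagrange equations.

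First, I fix an optimizer $(m,C) \in \MC(\mu_0,\mu_1)$. By Lemma~\ref{lem:apriori-cov}, we may restrict to a subspace on which $C_t \succ 0$ uniformly on $[0,1]$, so that inverses and the Riemannian setting make sense. I take admissible smooth variations $m^\eps_t = m_t + \eps v_t$ with $v_0=v_1=0$ and $C^\eps_t = C_t + \eps D_t$ with $D_t \in \S^d$ and $D_0=D_1=0$; by positivity of $C_t$, the perturbed curve remains in $\S_{\succ0}^d$ for $|\eps|$ small. Using $\partial_\eps (C^\eps_t)^{-1}\big|_{\eps=0} = -C_t^{-1}D_tC_t^{-1}$ and computing $\pderiv{}{\eps}I(m^\eps,C^\eps)\big|_{\eps=0}=0$, the mean-variation (setting $D\equiv 0$) gives $\int_0^1\langle \dot v_t, C_t^{-1}\dot m_t\rangle\,dt=0$ for all admissible $v$; integration by parts yields $\frac{d}{dt}(C_t^{-1}\dot m_t)=0$, hence \eqref{eq:EL-mean2} with $\alpha := C_0^{-1}\dot m_0$.

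Next, the covariance-variation (setting $v\equiv 0$) produces, after one integration by parts in the term involving $\dot D_t$,
\begin{equation*}
  \int_0^1 \tr\!\Bigl[D_t\Bigl(-\tfrac14 \tfrac{d}{dt}(C_t^{-1}\dot C_tC_t^{-1}) - \tfrac14 C_t^{-1}\dot C_tC_t^{-1}\dot C_tC_t^{-1} - \tfrac12\,\alpha\otimes\alpha\Bigr)\Bigr]dt = 0,
\end{equation*}
where I used $\langle \dot m_t, C_t^{-1}D_tC_t^{-1}\dot m_t\rangle = \tr(D_t\,\alpha\otimes\alpha)$ together with \eqref{eq:EL-mean2}. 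Since $D_t$ ranges over arbitrary symmetric matrices, the (symmetric) bracket must vanish, yielding $\tfrac{d}{dt}(C_t^{-1}\dot C_tC_t^{-1}) = -C_t^{-1}\dot C_tC_t^{-1}\dot C_tC_t^{-1} - 2\alpha\otimes\alpha$. Expanding the left-hand side as $C_t^{-1}\ddot C_tC_t^{-1} - 2C_t^{-1}\dot C_tC_t^{-1}\dot C_tC_t^{-1}$ and conjugating by $C_t$ on both sides produces \eqref{eq:EL-cov2}.

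For the constancy statement, the Lagrangian
\begin{equation*}
  L(m,C,\dot m,\dot C) = \tfrac12\langle \dot m, C^{-1}\dot m\rangle + \tfrac18 \tr(\dot C C^{-1}\dot C C^{-1})
\end{equation*}
is autonomous, so by the standard Noether/energy argument the Hamiltonian $H = \dot m\cdot\partial_{\dot m}L + \tr(\dot C\,\partial_{\dot C}L) - L$ is conserved along critical curves. Since $L$ is homogeneous of degree two in $(\dot m,\dot C)$, Euler's identity gives $\dot m\cdot\partial_{\dot m}L + \tr(\dot C\,\partial_{\dot C}L) = 2L$, hence $H = L$, which is exactly $\tfrac12$ of the claimed quantity. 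Alternatively, one can verify this directly: differentiating $K(t) := \langle \dot m_t,C_t^{-1}\dot m_t\rangle + \tfrac14 \tr(\dot C_tC_t^{-1}\dot C_tC_t^{-1})$ and substituting \eqref{eq:EL-mean2}--\eqref{eq:EL-cov2} makes the contributions cancel pairwise, giving $\dot K \equiv 0$.

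The calculations are routine once set up correctly; the only genuine care is needed in two places: restricting variations $D_t$ to symmetric matrices (the terms appearing in the first variation are automatically symmetric, so no symmetrization step is lost), and ensuring admissibility of the variation, which follows from the openness of $\S_{\succ0}^d$ together with Lemma~\ref{lem:apriori-cov}.
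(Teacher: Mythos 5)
Your proposal is correct and follows essentially the same route as the paper: a first variation of $I(m,C)$ with endpoint-fixed perturbations of $m$ and symmetric perturbations of $C$, an integration by parts, and identification of the Euler--Lagrange system \eqref{eq:EL2}, with the variational computation matching the paper's term by term. For the constancy of the action density the paper performs exactly the direct differentiation you mention as an alternative, while your primary Noether/homogeneity argument is an equally valid (slightly more conceptual) way to reach the same conclusion.
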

\begin{proof}
We show that the Euler-Lagrange equations for the minimization problem \eqref{eq:mean-cov-min} are given by \eqref{eq:EL-mean2}-\eqref{eq:EL-cov2}. 
Indeed, for the optimizer $(m,C)\in \MC(\mu_0,\mu_1)$ and any variation $n\in \AC([0,1],\R^d)$ with $n(0)=n(1)=0$ and $D\in \AC([0,1],\R^{d\times d}_{\sym})$ with $D(0)=D(1)=0$ of $m$ and
$C$, respectively, we obtain (dropping $t$ from the notation):
\begin{align*}
  0 &= \frac{\dd}{\dd \varepsilon} I(m+\varepsilon n, C+\varepsilon D)\\
    &= \int_0^1\pra*{ \dot n \cdot C^{-1}\dot m  - \frac12 \dot m \cdot C^{-1}DC^{-1}\dot m
      +\frac14\tr\big(\dot D C^{-1}\dot C C^{-1}\big)
      -\frac 14 \tr\big(DC^{-1}\dot C C^{-1}\dot C C^{-1}\big)}\dd t\\
    & = \int_0^1 \Biggl[ -\frac{\dd{}}{\dd t}\big(C^{-1}\dot m\big) \cdot n -\frac12 C^{-1} (\dot m\otimes \dot m) C^{-1}: D  \\
    &\qquad\quad 
      -\frac 14 \frac{\dd}{\dd t}\big[C^{-1}\dot C C^{-1}\big]: D 
	  - \frac 14 C^{-1}\dot C C^{-1}\dot C C^{-1}: D \Biggr]\dd t\\
    &=  - \int_0^1 \frac{\dd}{\dd t}\big(C^{-1}\dot m\big) \cdot n \dd t+\int_0^1\frac 14 \big[C^{-1}\dot C C^{-1}\dot C C^{-1}
      - C^{-1}\ddot C C^{-1}-C^{-1}\dot m\dot m^\tran C^{-1}\big]: D \dd t\;.
       \end{align*}
       This yields the Euler-Lagrange equations~\eqref{eq:EL2}.
Next, we differentiate the action density corresponding to the minimization problem~\eqref{eq:mean-cov-min}.
By direct calculation using~\eqref{eq:EL2}, we obtain for any $t\in(0,1)$
\begin{align*}
\MoveEqLeft\pderiv{}{t} \pra*{ \frac12\ip{\dot m_t}{C_t^{-1}\dot m_t} +\frac18\tr \bigl(\dot C_tC^{-1}_t\dot C_tC^{-1}_t\bigr)}\\
&= \frac12\ip{\alpha}{\dot C_t \alpha} +\frac14\tr \big(\ddot C_tC^{-1}_t\dot C_tC^{-1}_t\big)
-\frac14\tr \big(\dot C_tC^{-1}_t\dot C_tC^{-1}_t\dot C_tC^{-1}_t\big)=0\,,
\end{align*}
and hence the action density is equal to $I(m,C)$ on $[0,1]$.
\end{proof}
In the case where $\mean(\mu_0)=\mean(\mu_1)$, the remaining metric for the Covariance part is an existing Riemannian one on $\S_{\succ 0}$ with explicit geodesics, which we state from~\cite[Theorem 6.1.6]{Bhatia2019}.
       \begin{corollary}\label{cor:alpha0}
       Let $\mu_0,\mu_1\in \cP_{2,+}(\R^d)$ with $\mean(\mu_0)=\mean(\mu_1)=m\in \R$ and $C_0=\C(\mu_0)$, $C_1 = \C(\mu_1)$, then the mean is constant,
       i.e.
       \[m_t=m \quad \text{ for all } \quad t\in[0,1]\,,\]
       and the covariance satisfies
       \begin{equation}\label{eq:alpha0:Ct}
       		C_t = C_0^{\frac{1}{2}} \bra*{ C_0^{-\frac{1}{2}} C_1 C_0^{-\frac{1}{2}}}^t C_0^{\frac{1}{2}} .
       \end{equation}
   		Hereby, the power $t\in (0,1)$ is well-defined by spectral calculus, since the matrix $C_0^{-\frac{1}{2}} C_1 C_0^{\frac{1}{2}}$ is symmetric and positive. 
   		The moment distance is explicitly given by
   		\[
   		  \MOP(\mu_0,\mu_1)^2 = \frac{1}{8} \norm[\big]{\log\bra[\big]{C_0^{-\frac{1}{2}} C_1 C_0^{-\frac{1}{2}}}}_{\HS}^2 .
   		\]
        In particular, if $C_0$, $C_1$ commute, 	formula~\eqref{eq:alpha0:Ct} becomes %
       \[
       C_t=C_0^{1-t}C_1^t \qquad\text{and}\qquad \MOP(\mu_0,\mu_1)^2 = \frac{1}{8} \sum_{i=1}^d \abs[\big]{\log \lambda_i(C_0) - \log \lambda_i(C_1)}^2  \,.
       \]
       \end{corollary}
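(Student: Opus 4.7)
The plan is to apply Proposition~\ref{prop:optimal} and identify the Lagrange multiplier $\alpha$ as zero under the present hypothesis. First I would observe that the action~\eqref{eq:def:MOP:action} splits into a non-negative mean term $\int \frac12\skp*{\dot m_t,C_t^{-1}\dot m_t}\dd t$ and a covariance term depending only on $C$. Since $\mean(\mu_0)=\mean(\mu_1)=m$, replacing the mean component of any candidate curve by the constant $m_t\equiv m$ preserves admissibility, leaves the covariance term unchanged, and drives the mean term to zero. Hence any optimizer satisfies $\dot m_t\equiv 0$, so $m_t\equiv m$, and~\eqref{eq:EL-mean2} is fulfilled by taking $\alpha=0$.

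With $\alpha=0$ the second Euler--Lagrange equation~\eqref{eq:EL-cov2} reduces to $\ddot C_t=\dot C_tC_t^{-1}\dot C_t$, which is the geodesic equation on $\S_{\succ0}^d$ for the classical affine-invariant Riemannian metric $g_C(A,B)=\frac14\tr(AC^{-1}BC^{-1})$. The uniqueness and explicit form of the connecting geodesic is the content of \cite[Theorem~6.1.6]{Bhatia2019}. To keep the argument self-contained I would verify~\eqref{eq:alpha0:Ct} directly: setting $L:=\log\bra*{C_0^{-1/2}C_1C_0^{-1/2}}$, which is well-defined and symmetric since $C_0^{-1/2}C_1C_0^{-1/2}\in\S_{\succ0}^d$, a short differentiation shows
\begin{equation*}
\dot C_t C_t^{-1}=C_0^{1/2}LC_0^{-1/2}\qquad\text{is independent of }t .
\end{equation*}
This identity immediately yields both $\ddot C_t=\dot C_tC_t^{-1}\dot C_t$ and the correct endpoints $C_0,C_1$.

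For the distance formula, the $t$-independence of $\dot C_tC_t^{-1}$ together with cyclicity of the trace gives $\tr\bra*{\dot C_tC_t^{-1}\dot C_tC_t^{-1}}=\tr(L^2)=\norm*{L}_\HS^2$, so substituting into~\eqref{eq:def:MOP:action} and using $\dot m_t\equiv0$ delivers $\MOP(\mu_0,\mu_1)^2=\frac18\norm*{L}_\HS^2$. In the commuting case, simultaneous diagonalization of $C_0$ and $C_1$ reduces $C_0^{-1/2}C_1C_0^{-1/2}$ to $C_0^{-1}C_1$, yielding $C_0^{1/2}(C_0^{-1}C_1)^tC_0^{1/2}=C_0^{1-t}C_1^t$, and $\norm*{L}_\HS^2$ decomposes along the joint eigenbasis as the stated sum of squared log-eigenvalue differences.

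The main subtle point is that Proposition~\ref{prop:optimal} supplies only necessary conditions on optimizers. Global optimality of the ansatz~\eqref{eq:alpha0:Ct} therefore rests on combining the existence granted by Proposition~\ref{prop:opt-mC-ex} (Assumption~\ref{ass:Wfinite} is trivially satisfied since $C_0,C_1\succ0$) with uniqueness of the affine-invariant geodesic, so that the guaranteed minimizer necessarily coincides with the explicit curve.
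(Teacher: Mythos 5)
Your proposal is correct and follows essentially the same route as the paper: the statement is obtained by noting that with equal means the mean part of the action decouples (so $m_t\equiv m$, $\alpha=0$), after which the covariance part is exactly the affine-invariant Riemannian metric on $\S_{\succ0}^d$, whose unique geodesic and distance formula are taken from \cite[Theorem 6.1.6]{Bhatia2019}. Your direct verification that $\dot C_tC_t^{-1}=C_0^{1/2}LC_0^{-1/2}$ is constant, and your explicit handling of the necessary-versus-sufficient issue via Proposition~\ref{prop:opt-mC-ex} plus uniqueness of the affine-invariant geodesic, are sound and simply make explicit what the paper delegates to the citation.
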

In the case when the covariance matrix admits an autonomous eigendecomposition, we can show that particular solutions of the optimality conditions can be reduced to those in the variance-modulated optimal transport problem, which are explicitly identified in Theorem~\ref{thm:moments-sol}.
\begin{corollary}\label{cor:mC:diagonal}
    Assume the covariance matrices $C_0, C_1 \in \S_{\succ0}^d$ have the same eigenvectors and that $m_1-m_0$ is an eigenvector, that is
    \begin{equation}
	C_t = \sum_{i} \lambda_i(t) e^i \otimes e^i \quad\text{for } t\in \set{0,1} \quad\text{and}\quad m_1-m_0\in \operatorname{span}\set{e^\ell} \quad\text{for some } \ell \in \set*{1,\dots, d}
\end{equation}
where $\set{e^i}_{i=1}^d$ is a time-independent orthornomal system. 

Then, a solution $(m,C)$ to \eqref{eq:EL-mean2}, \eqref{eq:EL-cov2} is given by letting $m_t = \sum_{i} \hat m_i(t) e^i$  and $C_t=\sum_i \lambda_i(t) e^i\otimes e^i$, with coefficients $(\hat m_i(t),\sqrt{\lambda_i(t)})$ given as solutions to
\eqref{eq:mean-var:mean-explicit}--\eqref{eq:mean-var:sigma-explicit} where $n_i=\abs{m_1-m_0} \delta_{i\ell}$,
with boundary conditions $(0,\sqrt{\lambda_i(0)})$ and $(n_i,\sqrt{\lambda_i(1)})$ given via $(m_0,C_0)$ and $(m_1,C_1)$.
In particular, for $i\ne \ell$, we have $\lambda_i(t) = \lambda_i(0)^{1-t} \lambda_i(1)^t$.
\end{corollary}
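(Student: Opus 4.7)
The plan is to substitute the diagonal ansatz directly into the optimality system~\eqref{eq:EL2}, show that the equations decouple componentwise in the common eigenbasis $\{e^i\}$, and identify each one-dimensional component with the mean-variance system~\eqref{eq:var:moments} of Theorem~\ref{thm:moments-sol}. Since the statement only asserts existence of a particular solution (rather than uniqueness), it suffices to exhibit an admissible curve satisfying~\eqref{eq:EL-mean2}--\eqref{eq:EL-cov2}.

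Concretely, set $m_t=\sum_i \hat m_i(t)\,e^i$ and $C_t = \sum_i \lambda_i(t)\,e^i\otimes e^i$, so that $C_t^{-1} = \sum_i \lambda_i(t)^{-1} e^i\otimes e^i$ and $\dot C_t = \sum_i \dot\lambda_i(t)\, e^i\otimes e^i$ commute with each $e^i\otimes e^i$. I would take the Lagrange multiplier $\alpha = a\, e^\ell$ to respect the ansatz. Then \eqref{eq:EL-mean2} reads $\lambda_i(t)^{-1}\dot{\hat m}_i(t) = a\,\delta_{i\ell}$, which forces $\hat m_i$ to be constant for $i\ne \ell$ (consistent with the boundary data, since $m_1-m_0 \in \operatorname{span}\{e^\ell\}$), and $\dot{\hat m}_\ell = a\,\lambda_\ell$. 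For~\eqref{eq:EL-cov2} the right-hand side splits into $\dot C_t C_t^{-1}\dot C_t = \sum_i (\dot\lambda_i^2/\lambda_i)\, e^i\otimes e^i$ and $2 C_t(\alpha\otimes\alpha) C_t = 2a^2\lambda_\ell^2\, e^\ell\otimes e^\ell$, yielding the decoupled scalar equations
\begin{equation*}
\ddot\lambda_i = \frac{\dot\lambda_i^2}{\lambda_i}\quad (i\neq\ell),\qquad
\ddot\lambda_\ell = \frac{\dot\lambda_\ell^2}{\lambda_\ell} - 2a^2\lambda_\ell^2.
\end{equation*}

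For $i\ne\ell$, the first equation is equivalent to $(\log\lambda_i)''=0$, so $\lambda_i(t)=\lambda_i(0)^{1-t}\lambda_i(1)^t$, matching the no-mean-displacement case of the moment problem with $n_i=0$. For $i=\ell$, the substitution $\sigma_\ell = \sqrt{\lambda_\ell}$ transforms $\ddot\lambda_\ell = \dot\lambda_\ell^2/\lambda_\ell - 2a^2\lambda_\ell^2$ into
\begin{equation*}
\frac{\ddot\sigma_\ell}{\sigma_\ell} - \frac{\dot\sigma_\ell^2}{\sigma_\ell^2} = -a^2\sigma_\ell^2 = -\frac{(\dot{\hat m}_\ell)^2}{\sigma_\ell^2},
\end{equation*}
which together with $\dot{\hat m}_\ell/\sigma_\ell^2 = a = \mathrm{const.}$ is precisely the one-dimensional system~\eqref{eq:var:moments}. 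Applying Theorem~\ref{thm:moments-sol} with boundary values $(0,\sqrt{\lambda_\ell(0)})$ and $(|m_1-m_0|,\sqrt{\lambda_\ell(1)})$ (and $n_i=|m_1-m_0|\delta_{i\ell}$) yields the explicit profiles~\eqref{eq:mean-var:mean-explicit}--\eqref{eq:mean-var:sigma-explicit} on the $\ell$-th component, completing the construction.

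The argument is essentially algebraic; the only delicate point is checking that the Lagrange multiplier can indeed be chosen in the $e^\ell$ direction, which is what reduces the system to the variance-modulated case. Since the ansatz satisfies both the Euler--Lagrange equations and the prescribed boundary values by construction, it is an admissible critical point of the action functional~$I$; no further tightness or compactness argument is required, which is why I expect the main technical burden here to be only bookkeeping rather than any genuine obstacle.
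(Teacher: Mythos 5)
Your argument is correct and follows essentially the same route as the paper's proof: plug the diagonal ansatz (with multiplier $\alpha$ along $e^\ell$) into \eqref{eq:EL2}, observe the componentwise decoupling, substitute $\sigma_i=\sqrt{\lambda_i}$ to recover the scalar system \eqref{eq:var:moments}, and invoke Theorem~\ref{thm:moments-sol}; the direct observation $(\log\lambda_i)''=0$ for $i\neq\ell$ is just the $n=0$ case of that theorem. No gaps.
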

\begin{proof}
We observe that~\eqref{eq:EL-cov2} preserves the symmetry of $C_t$ and by Lemma~\ref{lem:apriori-cov}, we also get that if $\cW(\mu_0,\mu_1)<\infty$, then $0<C_t<\infty$ for $t\in [0,1]$. Hence, we make the Ansatz that $C_t = E \Lambda_t E^\tran $ with $E=\sum_{i=1}^d e^i \otimes e^i$
and $\Lambda_t = \diag\bra*{\lambda_1(t),\dots,\lambda_d(t)}$. 
With this choice, the equation~\eqref{eq:EL-cov2} simplifies after multiplying by $E^\tran $ and $E$ from left and right to
\begin{align*}
	\ddot \Lambda_t &= \dot \Lambda_t \Lambda_t^{-1} \dot \Lambda_t - 2 \Lambda_t E^\tran  \alpha\alpha^\tran  E \Lambda_t .
\end{align*}
Now, by assumption, we have that $ E^\tran  \alpha\alpha^\tran  E = \diag\bra*{\delta_{i\ell}}_{i=1}^d$ and hence the system is of diagonal form and we get for $i=1,\dots,d$ the ODEs
\begin{equation}
	\ddot \lambda_i(t) = \frac{(\dot \lambda_i(t))^2}{\lambda_i(t)} - 2 \lambda_\ell(t)^2 \hat \alpha_\ell^2 \delta_{i\ell} .
\end{equation}
Substituting $\sigma_i(t) = \sqrt{\lambda_i(t)}$, we arrive at the system~\eqref{eq:mean-var:mean}-\eqref{eq:mean-var:var} of the variance-modulated optimal transport problem. Therefore, Theorem~\ref{thm:moments-sol} provides explicit solutions for $(\hat m_i,\sigma_i)$ as claimed.
\end{proof}

\section{Existence of geodesics}\label{sec:existence}

\subsection{Existence at small distance}

\textbf{Strategy.}
The proof of Theorem~\ref{thm:existence-smalldist} follows an argument by contradiction, which we split into several steps. To show existence of minimizers for $\cW_{0,\Id}$, we consider a minimizing sequence $(\mu^n, V^n)$ for $\cW_{0,\Id}(\mu_0,\mu_1)$ and show relative compactness of the measure-flux pair $(\mu^n$, $V^n\mu^n)$ in weak topologies. By a standard argument, we can extract and identify a limiting measure-velocity pair $(\mu, V)$ solving the continuity equation.
The problem is then to show that the constraints on mean and covariance are not lost in the limit, namely that $\C(\mu_s)=\Id$ and $\mean(\mu_s)=0$ for all~$s\in[0,1]$. By contradiction, we assume the second moments are not tight and use this to construct a competitor by rerouting mass that leaves a large ball and normalizing the resulting measures. The rerouting will decrease the length of a fraction of the transport curves but potentially decrease the covariance. Hence, the normalization step might increase the action. Both competing effects are carefully estimated. Finally, the assumption that $\cW_{0,\Id}(\mu_0,\mu_1)$ is sufficiently small, allows us to show that the competitor has smaller action. This yields the desired contradiction.

We will then reduce the question of the existence of minimizers for $\cW$ to that of $\cW_{0,\Id}$ using the splitting in shape and moments.
 
The main ingredient for showing tightness of the second moments for minimizing sequences is contained in the following proposition. 
For a pair $(\mu,V)\in\CE_{0,\Id}(\mu_0,\mu_1)$ we use the notation
\[
  \cA(\mu,V)=\frac12\int_0^1\int |V_t|^2\dd\mu_t\dd t\;.
\]
\begin{proposition}\label{prop:minimizing_sequence}
 Let $\mu_0^n,\mu_1^n\in \cP_{0,\Id}(\R^d)$ and let $(\mu^n, V^n)$ be a sequence in $\CE_{0,\Id}(\mu_0^n,\mu_1^n)$ such that
 \[
\lim_n\cA(\mu^n,V^n)<\frac18
\]
exists and $\mu^n_t\to\mu_t$ weakly for all $t\in[0,1]$ for a curve $(\mu_t)_{t\in[0,1]}$ in $\cP_{2}(\R^d)$ with $\mu_0,\mu_1\in \cP_{0,\Id}(\R^d)$. 
If there is $t_0\in(0,1)$ such that $\mu_{t_0}\notin\cP_{0,\Id}(\R^d)$ then there exists a sequence $(\tilde\mu^n,\tilde V^n)\in \CE_{0,\Id}(\mu^n_0,\mu^n_1)$ connecting the same sequence of marginals such that 
\[
\liminf_{n\to\infty} \cA(\tilde\mu^{n},\tilde V^{n}) < \lim_{n\to\infty}\cA(\mu^n,V^n)\;. 
\]
\end{proposition}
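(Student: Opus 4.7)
My plan is to combine the superposition principle with a rerouting argument and the splitting from Section~\ref{sec:splitting}. By Lisini's superposition principle, lift each $(\mu^n,V^n)$ to $\eta^n\in\cP(\AC([0,1];\R^d))$ with $(e_t)_\#\eta^n=\mu^n_t$ and $\iint_0^1|\dot\gamma|^2\,dt\,d\eta^n=2\cA(\mu^n,V^n)$. Since $\mu^n_0,\mu^n_1\in\cP_{0,\Id}$ converge weakly to $\mu_0,\mu_1\in\cP_{0,\Id}$ and all share the common second moment $d$, convergence at the endpoints is in $W_2$, so the families $\{|\gamma(i)|^2\}_n$ are uniformly integrable under $\eta^n$ for $i=0,1$. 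At $t_0$, tightness together with uniform integrability of $|x|$ yields $\mean(\mu_{t_0})=0$, while Fatou gives $\C(\mu_{t_0})\preccurlyeq\Id$; the hypothesis $\mu_{t_0}\notin\cP_{0,\Id}$ thus forces the second-moment defect
\[ \delta := d-\int|x|^2\,d\mu_{t_0} > 0. \]

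Next I localize the escape and reroute. For $\rho\ll R$ set $\Gamma^n_{\rho,R}:=\{\gamma\,:\,|\gamma(0)|,|\gamma(1)|\le\rho,\ |\gamma(t_0)|>R\}$. Using uniform integrability of endpoints and the Chebyshev bound $\eta^n(\{|\gamma(t_0)|>R\})\le d/R^2$, one shows that for $\rho$ large and $R\gg\rho$ one has $\int_{\Gamma^n_{\rho,R}}|\gamma(t_0)|^2\,d\eta^n\ge\delta/2$. By Cauchy--Schwarz applied to $\int_0^{t_0}\dot\gamma\,dt$ and $\int_{t_0}^1\dot\gamma\,dt$, every $\gamma\in\Gamma^n_{\rho,R}$ satisfies $\int_0^1|\dot\gamma|^2\,dt\ge(|\gamma(t_0)|-\rho)^2/(t_0(1-t_0))\ge 4(|\gamma(t_0)|-\rho)^2$. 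Construct $\tilde\eta^n$ by replacing each $\gamma\in\Gamma^n_{\rho,R}$ with its constant-speed chord $\tilde\gamma(t):=(1-t)\gamma(0)+t\gamma(1)$, and let $\tilde\mu^n_t:=(e_t)_\#\tilde\eta^n$ with associated barycentric velocity $\tilde V^n$, so $(\tilde\mu^n,\tilde V^n)\in\CE(\mu^n_0,\mu^n_1)$. Since each chord contributes kinetic energy at most $4\rho^2$, the action is reduced by a definite positive amount:
\[ \cA(\mu^n,V^n)-\cA(\tilde\mu^n,\tilde V^n) \ge \delta-o_{\rho,R}(1). \]

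Finally, renormalize. The moments $(\tilde m^n_t,\tilde C^n_t)$ of $\tilde\mu^n_t$ deviate from $(0,\Id)$ by $o_R(1)$ uniformly in $n,t$, so $\tilde C^n_t\succ 0$ and the normalization $\hat\mu^n_t:=(T_{\tilde m^n_t,A^n_t})_\#\tilde\mu^n_t$, with $A^n$ solving \eqref{eq:Adot} from $A^n_0=\Id$, is well-defined with $\hat\mu^n_t\in\cP_{0,\Id}$ for every $t$. Since $\tilde\mu^n_0=\mu^n_0$ and $\tilde\mu^n_1=\mu^n_1$ already lie in $\cP_{0,\Id}$, the normalization is, up to a residual rotation at $t=1$ close to $\Id$ that can be absorbed by post-composing with a constant orthogonal map at no action cost, trivial at the endpoints; hence $(\hat\mu^n,\hat V^n)\in\CE_{0,\Id}(\mu^n_0,\mu^n_1)$. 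Applying the splitting identity of Proposition~\ref{prop:split:curves} to $(\tilde\mu^n,\tilde V^n)$ and using $\tilde C^n_t\to\Id$ to relate $|\tilde V|^2$ and $|\tilde V|^2_{\tilde C^n}$ yields
\[ \cA(\hat\mu^n,\hat V^n) \le (1+o_R(1))\,\cA(\tilde\mu^n,\tilde V^n), \]
and combining with the rerouting step gives $\liminf_n\cA(\hat\mu^n,\hat V^n) < \lim_n\cA(\mu^n,V^n)$ for $\rho,R$ large enough.

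The main obstacle is the renormalization step: verifying that the perturbed covariance $\tilde C^n_t$ stays uniformly close to $\Id$ and strictly positive definite, controlling the residual rotation at $t=1$, and accounting for the mismatch between the $\cW_{0,\Id}$-action density $|\tilde V|^2$ and the covariance-modulated density $|\tilde V|^2_{\tilde C^n}$ to which Proposition~\ref{prop:split:curves} directly applies. The smallness hypothesis $\lim_n\cA(\mu^n,V^n)<\tfrac18$ enters here through the comparison $W_2^2\le 2\cW_{0,\Id}^2$ from Proposition~\ref{prop:comp:W-W2}, which bounds the $W_2$-displacement of $\mu^n_{t_0}$ from the marginals and keeps $\|\tilde C^n_t-\Id\|$ strictly below one in operator norm after the rerouting.
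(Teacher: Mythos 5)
Your overall strategy — superposition, rerouting the trajectories that escape at time $t_0$ onto chords, then renormalizing back onto $\cP_{0,\Id}(\R^d)$ — is the same as the paper's, and your identification of the second-moment defect $\delta=d-\int|x|^2\,\dd\mu_{t_0}>0$ (via $\C(\mu_{t_0})\preccurlyeq\Id$ and $\C(\mu_{t_0})\neq\Id$) is fine. The argument breaks, however, at the renormalization step. You claim that the moments $(\tilde m^n_t,\tilde C^n_t)$ of the rerouted curve deviate from $(0,\Id)$ by $o_R(1)$ uniformly in $t$, so that normalizing costs only a factor $1+o_R(1)$. This is false near $t_0$: the trajectories you remove are exactly those carrying the non-tight part of the second moment, so $\tr\bigl(\Id-\tilde C^n_{t_0}\bigr)\geq \int_{\Gamma^n_{\rho,R}}|\gamma(t_0)|^2\,\dd\eta^n - o \gtrsim \delta/2$, a quantity bounded below independently of $R$ — only the \emph{mass} of $\Gamma^n_{\rho,R}$ is $O(R^{-2})$, not its second moment; that is precisely what non-tightness means. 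Hence the covariance deficit created by the rerouting is of the same order as the kinetic energy you gain, the normalization stretches the surviving velocities by a factor $1+\Theta(\delta)$ rather than $1+o_R(1)$, and your claimed inequality $\cA(\hat\mu^n,\hat V^n)\leq(1+o_R(1))\cA(\tilde\mu^n,\tilde V^n)$ fails. The correct bookkeeping (the paper's Steps 5--7) shows the renormalization cost is comparable to $4\cA(\mu^n,V^n)$ times the removed kinetic energy, while the gain is one half of that removed energy; the hypothesis $\lim_n\cA(\mu^n,V^n)<\tfrac18$ enters exactly at this point (through the sign of $4\cA^{k,0}-\tfrac12$), and it is also why the paper reroutes only a fraction $\alpha\leq\tfrac12$ of the escaping trajectories, so that gain and cost can be compared linearly in $\alpha$. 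In your write-up the role you assign to $\tfrac18$ — keeping $\tilde C^n_t$ positive definite via the comparison with $W_2$ — only makes the normalization well defined; it does not produce a strict decrease of the action. As written, your argument would yield existence of constrained geodesics with no smallness assumption at all, which the paper does not obtain and explicitly leaves open.

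Two secondary points. First, the localization bound $\int_{\Gamma^n_{\rho,R}}|\gamma(t_0)|^2\,\dd\eta^n\geq\delta/2$ is not justified uniformly in $n$: discarding trajectories with $|\gamma(t_0)|>R$ but $\max(|\gamma(0)|,|\gamma(1)|)>\rho$ requires controlling $\int_{\{\max(|\gamma(0)|,|\gamma(1)|)>\rho\}}\int_0^1|\dot\gamma|^2\,\dd t\,\dd\eta^n$, and the kinetic energy need not be uniformly integrable with respect to the endpoint location; the paper sidesteps this by rerouting the whole set $\{|\gamma_{t_0}|\geq k\}$ and bounding the chord energy through the uniform integrability of the endpoint second moments on that small-mass set. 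Second, the residual rotation at $t=1$ cannot be "absorbed by post-composing with a constant orthogonal map at no action cost": a constant rotation applied to the entire curve also rotates the marginal at $t=0$, while a time-dependent rotation has positive action; fixing the endpoint identification requires an actual argument.
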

\begin{proof}\hfill\\ \noindent
	{\bf Step 1.} 
	Assume that there is $t_0\in[0,1]$ with $\mu_{t_0}\notin\cP_{0,\Id}(\R^d)$. Since $\mu^n_t\in \cP_{0,\Id}(\R^d)$ for all $n$, its second moments are uniformly bounded and we can infer $\mean(\mu_t)= \lim_n\mean(\mu^n_t)= 0$ for all $t$. Hence, we must have $\C(\mu_{t_0})\neq \Id=\lim_n\C(\mu_{t_0}^n)$. If the second moment at time $t_0$ was tight, i.e.~
	\begin{equation}\label{eq:tightness_alt}
		\forall \eps>0 \; \exists R>0 \; \forall n:\qquad \int \boldsymbol{1}_{\{|x|\geq R\}}|x|^2\dd\mu^n_{t_0}(x)\leq \eps\;,
	\end{equation}
	this would imply the convergence $\lim_n\C(\mu^n_{t_0})=\C(\mu_{t_0})$. Hence, we infer on the contrary that there exists $\eps>0$ such that for all $k\in\mathbb N$ there exists $n=n(k)$ with
	\begin{equation}\label{eq:contra_alt}
		\int \boldsymbol{1}_{\{|x|\geq k\}}|x|^2\dd\mu^{n(k)}_{t_0}(x)\geq \eps\;.
	\end{equation}
From now on, we consider the (relabeled) subsequence $(\mu^k,V^k)=(\mu^{n(k)},V^{n(k)})$. Using \eqref{eq:contra_alt} we will construct a new sequence $(\tilde\mu^k,\tilde V^k)\in \CE_{0,\Id}(\mu^k_0,\mu^k_1)$ with $\liminf_k\cA(\tilde\mu^k,\tilde V^k)<\lim_k\cA(\mu^k,V^k)$.

	\smallskip\noindent
	{\bf Step 2.} By the superposition principle for absolutely continuous curves in the Wasserstein space~\cite{AGS} there exist probabilities $\pi^k\in \cP(\Gamma)$ on the space $\Gamma:=C([0,1],\R^d)$ concentrated on solutions to the ODE $\dot\gamma_t= V^k_t(\gamma_t)$ such that 
	\[\int_0^1\int_{\R^d} |V^k_t|^2\dd\mu^k_t\dd t = \int_\Gamma \int_0^1 |\dot\gamma_t|^2\dd t\dd\pi^k(\gamma)\;.\]
	Let us set $\tilde\pi^k:=\pi^k |_{\Gamma^k}$ with $\Gamma^k:=\{\gamma \in \Gamma: |\gamma_{t_0}|\geq k\}$.
	Let $q$ be any coupling of $\tilde\mu_i:=(e_i)_\#\tilde\pi^k$ with $i=0,1$, where $e_t:\gamma\mapsto \gamma_t$ is the evaluation map. Denote for $x,y\in \R^d$ by $\gamma^{x,y}$ the straight line connecting $x,y$ and define $\bar\pi^k:=\int_{\R^d\times \R^d} \gamma^{x,y}\dd q(x,y)$. Set $\hat\pi^{k}:=\bar\pi^k-\tilde\pi^k$ and for $\alpha\in[0,1]$ set
	\[\pi^{k,\alpha} := \pi^k+\alpha \hat\pi^k\;.\] 	
	By evaluation for any $t\in [0,1]$, the measure $\pi^{k,\alpha}$ gives rise to a curve of measures $\nu_t^{k,\alpha}:=(e_t)_\#\pi^{k,\alpha}$, where $e_t:C\big([0,1];\R^d\big)\to\R^d$, $\gamma\mapsto \gamma_t$ is the evaluation map at time $t$. Denote the moments of this curve by
\begin{equation}\label{eq:k-moments}	
m_t^{k,\alpha}: = \mean(\nu^{k,\alpha}_t)=\int_\Gamma \gamma_t\dd\pi^{k,\alpha}(\gamma)\,,\qquad
	C^{k,\alpha}_t:=\C(\nu^{k,\alpha}_t)=\int_{\Gamma} \bra[\big]{\gamma_t-m_t^{k,\alpha}}^{\otimes 2} \dd\pi^{k,\alpha}(\gamma)\;.
	\end{equation}
Define $(A_t^{k,\alpha})_{t\geq 0}$ as solution to $\dot A_t^{k,\alpha}:=\frac12 \dot C^{k,\alpha}_t(A_t^{k,\alpha})^{-\tran}$ starting from $A_0^{k,\alpha}:=(C_0^{k,\alpha})^{1/2}$ as usual, so that $A^{k,\alpha}_t(A_t^{k,\alpha})^\tran=C^{k,\alpha}_t$ and $(A_t^{k,\alpha})^{-1}\dot A_t^{k,\alpha}$ is symmetric. Finally, we normalize the curve by setting
\[
\mu^{k,\alpha}_t = \big((A^{k,\alpha}_t)^{-1}(\cdot-m^{k,\alpha}_t)\big)_\#\nu^{k,\alpha}_t\;.
\]
Similarly, we can normalize $\pi^{k,\alpha}$ by setting $\theta^{k,\alpha}$ as the image of $\pi^{k,\alpha}$ under the map $(\gamma_t)\mapsto \big((A^{k,\alpha}_t)^{-1}(\gamma_t-m^{k,\alpha}_t)\big)$.
The measure $\theta^{k,\alpha}$ gives rise to a pair $(\mu^{k,\alpha},V^{k,\alpha})\in\CE(\mu^k_0,\mu^k_1)$, defined by 
	\[\int_{\R^d} \phi \dd\mu^{k,\alpha}_t = \int_\Gamma \phi(\gamma_t)\dd\theta^{k,\alpha}(\gamma)\;,\qquad \int_{\R^d} \ip{\Phi}{ V_t^{k,\alpha}}\dd\mu^{k,\alpha}_t = \int_\Gamma \ip{\Phi(\gamma_t)}{\dot\gamma_t}\dd\theta^{k,\alpha}(\gamma)\]
	for any test functions $\phi\in C_b(\R^d)$, $\Phi\in C_c(\R^d;\R^d)$. Now $\mu_t^{k,\alpha}\in \cP_{0,\Id}(\R^d)$ for all $t$ and we have
	\[
	\cA(\mu^{k,\alpha},V^{k,\alpha})=\frac12\int_0^1\int_{\R^d} |V^{k,\alpha}_t|^2\dd\mu_t^{k,\alpha}\dd t = \frac12\int_\Gamma \int_0^1 |\dot\gamma_t|^2\dd t\dd\theta^{k,\alpha}(\gamma) \;.
	\]
	Note that we have not changed the marginals at times $t=0,1$ in this construction.
	
	\smallskip\noindent
	{\bf Step 3.} We claim that as $k\to \infty$
	\begin{align}
		\label{eq:nomarginalmoment_alt}
		\int_\Gamma \left(1+|\gamma_0|^2+|\gamma_1|^2\right)\dd\tilde\pi^k \to 0 
		\quad\text{and}\quad
		\sup_{t\in[0,1]}\int_\Gamma |\gamma_t|\dd\tilde\pi^k\to 0\;,\quad 
		\int_\Gamma \int_0^1|\dot\gamma_t|\dd t\dd\tilde\pi^k \to 0\;.
	\end{align}
	Indeed, note first that 
	\[\tilde\pi^k(\Gamma)=\int_\Gamma 1_{\{|\gamma_{t_0}|\geq k\}}\dd\pi^k \leq \frac{1}{k^2} \int_\Gamma |\gamma_{t_0}|^2\dd\pi^k=\frac{d}{k^2}\to 0\quad \text{ as } k\to\infty\,. \]
	We have
	\[\int_\Gamma|\gamma_0|^2\dd\tilde\pi^k \leq  \int_\Gamma 1_{\{|\gamma_0|\geq R\}}|\gamma_0|^2\dd\tilde\pi^k + R^2 \tilde\pi^k(\Gamma) = \int_\Gamma 1_{\{|x|\geq R\}}|x|^2\dd\mu^k_0 + R^2 \tilde\pi^k(\Gamma) \;.\]
	Since $\lim_k\C(\mu_0^k)=\C(\mu_0)$, the second moments at $t_0$ are tight. Hence, this term can be made arbitrarily small by first choosing $R$ sufficiently large and then choosing $k$ sufficiently large. The same argument applies to $\int_\Gamma |\gamma_1|^2\dd\tilde\pi^k$, yielding the first claim in \eqref{eq:nomarginalmoment_alt}. To obtain the second claim we estimate
	\begin{align*}
		\int_\Gamma |\gamma_t|\dd\tilde\pi^k  
		&\leq \int_\Gamma 1_{\{|\gamma_t|\geq R\}}|\gamma_t|\dd\tilde\pi^k  +R \tilde\pi^k(\Gamma)
		\leq \frac{1}{R}\int_\Gamma |\gamma_t|^2\dd\pi^k  +R \tilde\pi^k(\Gamma)\;,\\
		\int_\Gamma \int_0^1|\dot\gamma_t|\dd t\dd\tilde\pi^k(\gamma) 
		&\leq
		\int_\Gamma \int_0^1 1_{\{|\dot \gamma_t|\geq R\}}|\dot \gamma_t|\dd t\dd\tilde\pi^k  +R \tilde\pi^k(\Gamma)
		\leq \frac{1}{R}\int_\Gamma \int_0^1|\dot\gamma_t|^2\dd\pi^k +R \tilde\pi^k(\Gamma)\;,
			\end{align*}
where we used that $\tilde\pi^k\leq \pi^k$ by construction. Since the integrals on the right-hand side are bounded in k (uniformly in t in the first case) we can make these terms arbitrarily small as before.

	As a consequence of \eqref{eq:nomarginalmoment_alt}, we obtain
	\begin{equation}\label{eq:noaction_alt}
		\sup_{t\in[0,1]}\int_\Gamma \pra*{1+ |\gamma_t|^2+ |\dot\gamma_t|^2  }\dd\bar\pi^k \to 0 \quad \text{ as } k\to\infty\;,
	\end{equation}
	since for $\bar\pi^k$ a.e.~$\gamma$ we have $\gamma_t=(1-t)\gamma_0+t\gamma_1$ by construction.

	\smallskip\noindent
	{\bf Step 4.} Let us from now on freely drop the superscripts $k,\alpha$ and the subscipt $t$ or parts of them from the notation if clear from context.
	From \eqref{eq:k-moments} and the fact that $\mu^k_t\in \cP_{0,\Id}(\R^d)$ we deduce
	\[m_t = \int_\Gamma \gamma_t\dd(\pi+\alpha(\bar\pi-\tilde\pi))=\alpha \int_\Gamma \gamma_t \dd(\bar\pi-\tilde\pi) \;,\quad \dot m_t = \alpha \int_\Gamma \dot\gamma_t\dd(\bar\pi-\tilde\pi)\;.\]
		Moreover,
	\begin{align*}
		C_t &= \int_\Gamma \big(\gamma_t-m_t\big)^{\otimes 2}
\dd\pi^\alpha=
           \int_\Gamma \gamma_t^{\otimes 2}\dd\big(\pi+\alpha (\bar\pi-\tilde\pi)\big)
-m_t^{\otimes 2}\\
		&= \Id+\alpha \int_\Gamma \gamma_t^{\otimes 2} \dd (\bar\pi-\tilde\pi) - m_t^{\otimes 2}\;,\\
		\dot C_t &= \int_\Gamma \big((\dot\gamma-\dot m)\otimes(\gamma-m) + (\gamma-m)\otimes(\dot\gamma-\dot m)\big)\dd\pi^\alpha \;.
	\end{align*}
	From \eqref{eq:nomarginalmoment_alt} and \eqref{eq:noaction_alt} we infer that for any $\delta>0$ and $k$ sufficiently large, we have
	\begin{align}\label{eq:Cupperlower1_alt}
		(1-\delta)\Id  -\alpha E_t
		\preccurlyeq C_t
		\preccurlyeq (1+\delta)\Id\;,
	\end{align}
	with $E_t:=\int\gamma_t^{\otimes 2}\dd\tilde\pi$. We use the following bounds on the inverse of a matrix: For $B,D$ symmetric, with $B$ positive definite and $0\leq D\leq \frac12 B$ we have
	\begin{equation}\label{eq:inverse-bound_alt}
		B^{-1} -B^{-1}DB^{-1}\preccurlyeq \big(B+D)^{-1}\preccurlyeq B^{-1}+2B^{-1}DB^{-1}\;.
	\end{equation}
	Hence, using that $0\preccurlyeq E_t\preccurlyeq \Id$ and \eqref{eq:inverse-bound_alt}, we obtain for $\alpha\leq \frac12$ that
	\begin{align}\label{eq:Cupperlower2_alt}
		(1- \delta) \Id \preccurlyeq \big(C_t\big)^{-1} \preccurlyeq (1+2\delta) \Id +2\alpha E_t\;.
	\end{align}

	\smallskip\noindent
	{\bf Step 5.}  Let us set 
	\[a^{k,\alpha}_t := \int_\Gamma |\dot\gamma_t|^2\dd\theta^{k,\alpha}\;,\qquad \cA^{k,\alpha}:=\frac12\int_0^1a^{k,\alpha}_t\dd t\;. \]
	We can assume that $t\mapsto \frac12 a_t^{k,0}=\cA^{k,0}$ is constant after possibly reparametrizing in time. This would only decrease the value of $\frac12\int_0^1|V^k_t|^2\dd\mu^k_t\dd t$. Dropping $k$, $\alpha$ and $t$ mostly from the notation, we calculate
	\begin{align*}
		a^{\alpha}&=\int_\Gamma \abs*{\dot\gamma}^2\dd\theta^\alpha =\int_\Gamma \abs[\Big]{\frac{\dd}{\dd t}\bigl(A^{-1}(\gamma-m)\bigr)}^2\dd\pi^\alpha
		= \int_\Gamma\abs[\big]{A^{-1}(\dot\gamma-\dot m) -A^{-1}\dot AA^{-1}(\gamma-m)}^2\dd\pi^\alpha \\
		&=\int_\Gamma |\dot\gamma|_C^2\dd\pi^\alpha \!-\! |\dot m|_C^2 + \!\int_\Gamma \pra*{ \abs[\big]{A^{-1}\!\dot A A^{-1}(\gamma-m)}^2 \! - 2 \skp[\big]{A^{-1}(\dot\gamma-\dot m),A^{-1}\!\dot A A^{-1}(\gamma-m)}}\!\dd\pi^\alpha\\
		&=\int_\Gamma |\dot\gamma|_C^2\dd\pi^\alpha \!-|\dot m|_C^2 + \tr\pra[\big]{A^{-1}\dot A A^{-1}CA^{-\tran}\dot A^\tran A^{-\tran}}
		-\tr\pra[\big]{A^{-1}\dot A A^{-1}\dot C A^{-\tran}}\\
		&=\int_\Gamma |\dot\gamma|_C^2\dd\pi^\alpha \!-|\dot m|_C^2  -\Vert A^{-1}\dot A \Vert^2_{\HS}\leq \int_\Gamma |\dot\gamma|_C^2\dd\pi^\alpha\;.
	\end{align*}
	Here we have used, $\dot A=\frac12 \dot CA^{-\tran}$ and the symmetry of $A^{-1}\dot A$ in the last two equalities.
	From~\eqref{eq:Cupperlower2_alt} and \eqref{eq:noaction_alt} we obtain for any $\delta>0$ and $k$ sufficiently large that 
	\begin{align}\label{eq:action-est_alt}
	a^\alpha &\leq \int_\Gamma |\dot\gamma|_C^2\dd\big(\pi+\alpha(\bar\pi-\tilde\pi)\big) \leq a^0 +\alpha (2 a^0 \tr E  - F)+\delta\;,
		\end{align}
	with $F:=\int_\Gamma |\dot\gamma|^2\dd\tilde\pi$.

	\smallskip\noindent	
	{\bf Step 6.} We bound the quantities appearing in \eqref{eq:action-est_alt}. We have for $\delta>0$ and $k$ sufficiently large using \eqref{eq:nomarginalmoment_alt}
	\begin{align}\nonumber
		\int_0^1\tr\pra*{E_s}\dd s
		&= \int_0^1\int_\Gamma |\gamma_s|^2\dd\tilde \pi\dd s 
		= \int_0^1\int_\Gamma \abs*{\gamma_0+\int_0^s\dot\gamma_r\dd r}^2\dd\tilde\pi\dd s\\\label{eq:tracebound_alt}
		&\leq \delta + 2 \int_\Gamma\int_0^1|\dot\gamma_s|^2\dd s\dd\tilde\pi=\delta+2\int_0^1F_s\dd s\;.
	\end{align}
	We also note that
	\begin{align}\label{eq:tildespeed_alt}
		\int_0^1F_s\dd s=\int_0^1\int_\Gamma|\dot\gamma_s|^2\dd\tilde\pi\dd s
		\geq \frac{1}{t_0}\int_\Gamma |\gamma_{t_0}-\gamma_{0}|^2\dd\tilde\pi+\frac{1}{1-t_0}\int_\Gamma |\gamma_{1}-\gamma_{t_0}|^2\dd\tilde\pi \geq 4 \eps -\delta\;,
	\end{align}
	where in the last step we have expanded the square and used \eqref{eq:nomarginalmoment_alt} and the assumption \eqref{eq:contra_alt} on $\tilde\pi$, which is equivalent to $\epsilon \le \int_\Gamma|\gamma_{t_0}|^2 \dd\tilde\pi$. 

	\smallskip\noindent
	{\bf Step 7.} We conclude from the previous steps as follows. Collecting \eqref{eq:action-est_alt}, \eqref{eq:tracebound_alt}, and using that $s\mapsto a^{k,0}_s=2\cA^{k,0}$ is constant, we have for $k$ sufficiently large
	\begin{align}\label{eq:Abound_alt}
		\cA^{k,\alpha} \leq & \cA^{k,0} +\alpha\bra*{4\cA^{k,0} - \frac12}\int_0^1F_s\dd s +\delta\;.
	\end{align}
	Recall that by assumption $\lim_k \cA^{k,0}<\frac{1}{8}$. Since $\int_0^1F_s\dd s$ is bounded away from $0$ by \eqref{eq:tildespeed_alt} and since $\delta$ can be chosen arbitrarily small as $k\to\infty$, \eqref{eq:Abound_alt} yields that 
	\[
	\liminf_k \cA(\mu^{k,\alpha},V^{k,\alpha}) = \liminf_k \cA^{k,\alpha} <\lim_k \cA^{k,0} = \lim_k \cA(\mu^k,V^k)\;.\]
	Hence, with $(\mu^{k,\alpha},V^{k,\alpha})$ we have found a sequence with lower asymptotic action as claimed. This finishes the proof.
\end{proof}

\begin{proof}[Proof of Theorem~\ref{thm:existence-smalldist}, part (1)]
\hfill \\ \noindent
 Let $\mu_0,\mu_1\in\cP_{0,\Id}(\R^d)$ with $\cW_{0,\Id}(\mu,\mu_1)<\frac18$ and let $(\mu^n,V^n)\subset \CE(\mu_0,\mu_1)$ be a minimizing sequence for $\cW_{0,\Id}(\mu_0,\mu_1)$, i.e.
	\[\cW_{0,\Id}(\mu_0,\mu_1)^2 = \lim_n\int_0^1\int |V^n_t|^2\dd\mu^n_t\dd t\;.\]
	In particular, $\int_0^1\int|V_n|^2\dd\mu_n\dd t$ is bounded in $n$.
	Following e.g.~ the argument in~\cite{Dolbeault-Nazaret-Savare} (see also the proof of Theorem~\ref{thm:W:metric}),
	one can show that up to a subsequence we have $\mu^n_t\dd t\rightharpoonup \mu_t\dd t$ weakly and $V^n_t\mu^n_t\dd t\rightharpoonup V_t\mu_t\dd t$ in duality with $C_c(\R^d\times [0,1])$ for a pair $(\mu,V)\in \CE(\mu_0,\mu_1)$, as well as $\mu^n_t\rightharpoonup\mu_t$ for all $t\in[0,1]$. From Proposition \ref{prop:minimizing_sequence} and the fact that $(\mu^n,V^n)$ is a minimizing sequence, we infer that $\mu_t\in\cP_{0,\Id}(\R^d)$ for all $t\in[0,1]$. It remains to show that $(\mu,V)$ achieves minimal action. For this recall the joint lower semicontinuity of the Benamou-Brenier functional
	\[(\mu,W)\mapsto \mathcal B(\mu,W):=\int \alpha\Big(\frac{\dd W}{\dd\sigma},\frac{\dd\mu}{\dd\sigma}\Big)\dd\sigma\;,\quad\text{with}\quad \alpha(w,s) = \begin{cases}
		\frac{|w|^2}{s}  & s>0\;,\\
		0 & s=0, w=0\;,\\
		\infty &\text{ else ,}
	\end{cases}\]
	where $\sigma$ is an arbitrary reference measure s.t. $\mu,W\ll \sigma$, see \cite{BenamouBrenier2000}.
	This yields
	\begin{align*}
		\int_0^1 \! \int \! |V_t|^2\dd\mu_t\dd t =  \int_0^1 \! \mathcal B(\mu_t,V_t)\dd t\leq 
	 \liminf_n\int_0^1 \! \mathcal B(\mu^n_t,V^n_t)\dd t
		=\liminf_n\int_0^1\!\int \! |V^n_t|^2\dd\mu^n_t\dd t\;.
	\end{align*}
Thus $(\mu,V)\in\CE_{0,\Id}(\mu_0,\mu_1)$ is a minimizer, i.e.~ $\frac12\int_0^1\int|V_t|^2\dd\mu_t = \cW_{0,\Id}(\mu_0,\mu_1)^2$.
\end{proof}

\begin{proof}[Proof of Theorem~\ref{thm:existence-smalldist}, part (2)]
\hfill \\ \noindent
Let $\mu_0,\mu_1\in \cP_{2,+}(\R^d)$ such that $\cW(\mu_0,\mu_1)^2<\frac18+\MOP(\mu_0,\mu_1)^2$. From the splitting result Theorem~\ref{thm:main-cov} we have that
\begin{equation}\label{eq:part2-1}
\cW(\mu_0,\mu_1)^2 
=
\inf\Big\{ \cA(\mu,V) + I(m,C)\Big\}\;,
\end{equation}
with $\cA(\mu,V):=\frac12\int_0^1 \int |V_t|^2\dd\mu_t\dd t$ and where the infimum is taken over $R\in \SO(d)$, $(\mu,V)\in \CE_{0,\Id}(R_\#\bar\mu_0,\bar\mu_1)$, and $(m,C)\in \MC_R(\mu_0,\mu_1)$.
Let $R_n$, $(\mu^n,V^n)\in\CE_{0,\Id}((R_n)_\#\bar\mu_0,\bar\mu_1)$, and $(m^n,C^n)\in \MC_{R_n}(\mu_0,\mu_1)$ be minimizing sequences, such that
\[ \cW(\mu_0,\mu_1)^2=\lim_n \cA(\mu^n,V^n)+I(m^n,C^n) . \]
By compactness of $\SO(d)$ we have up to taking a subsequence that $R_n\to R$ for some $R\in \SO(d)$. Arguing as in Proposition \ref{prop:opt-mC-ex}, we can assume that up to taking a further subsequence $(m^n,C^n)\to (m,C)$ uniformly and weakly in $H^1([0,1])$ for some $(m,C)\in \MC_R(\mu_0,\mu_1)$. As in the proof of part (1) we can show that up to a further subsequence $\mu^n_t\dd t\rightharpoonup \mu_t\dd t$ weakly and $V^n_t\mu^n_t\dd t\rightharpoonup V_t\mu_t\dd t$ in duality with $C_c(\R^d\times [0,1])$ for a pair $(\mu,V)\in \CE(R_\#\bar\mu_0,\bar\mu_1)$, as well as $\mu^n_t\rightharpoonup\mu_t$ for all $t\in[0,1]$. Moreover, we can assume that for this subsequence $\lim_n\cA(\mu^n,V^n)=\liminf_n\cA(\mu^n,V^n)$. Since $I(m^n,C^n)\geq \MOP(\mu_0,\mu_1)$ for all $n$, we deduce $\lim_n\cA(\mu^n,V^n)<\frac18$.
We infer from Proposition \ref{prop:minimizing_sequence} and the fact that $(\mu^n,V^n)$ is part of a minimizing sequence that $\mu_t\in\cP_{0,\Id}(\R^d)$ for all $t$. Finally, we conclude from the lower semicontinuity of $\cA(\cdot,\cdot)$ as in the proof of part (1) and the lower semicontinuity of $I(\cdot,\cdot)$ 
\[
\cA(\mu,V) + I(m,C) \leq \liminf_n \cA(\mu^n,V^n) + I(m^n,C^n) = \cW(\mu_0,\mu_1)^2\;.
\]
Hence, the tuple $\big(R,(\mu,V),(m,C)\big)$ constitutes a minimizer for 
\eqref{eq:part2-1}. The curve $\tilde\mu_t:=(A_t\cdot +m_t)_\#\mu_t$ with $A$ defined from $C$ by \eqref{eq:Adot} is a $\cW$-geodesic connecting $\mu_0,\mu_1$.
\end{proof}

\subsection{Existence under symmetry: Proof of Theorem \ref{thm:ex-opt-shape}}

This section gives the result on the (conditional) existence of geodesics in the covariance-constrained metric under a symmetry hypothesis, see Theorem~\ref{thm:ex-opt-shape}.
The proof below heavily relies on the interplay between the primal and the dual minimization problem and is thus very different from the proof of Theorem \ref{thm:existence-smalldist} above.

Our starting point is the following Lagrangian representation of geodesics. Let $\Gamma:=H^1([0,1];\R^d)$ be the space of curves $\gamma:[0,1]\to\R^d$ of finite energy (representing the ``mass particle trajectories''). Consider the space $\mathfrak M_+(\Gamma)$ of Borel measures on $\Gamma$. A measure $M\in\mathfrak M_+(\Gamma)$ is admissible for the moment constrained problem with two normalized marginals $\mu_0$ and $\mu_1$ if
\begin{align}
    \label{eq:admissibleM}
    \law_M(\gamma_0) = \mu_0,\quad    
    \law_M(\gamma_1) = \mu_1,\quad
    \xpt_M[\gamma_t]\equiv 0,\quad
    \xpt_M[\gamma_t\otimes \gamma_t] \equiv \Id \  \text{ for a.e. $t\in[0,1]$}.
\end{align}
Above, $\gamma_t$ for $t\in[0,1]$ is the random vector associated with the curves $\gamma\in\Gamma$. The first two conditions are the marginal constraints, the third and fourth are the moment constraints, fixing mean and covariance. The marginal constraints imply that $M$ is a probability measure on $\Gamma$, so $\xpt_M$ is a genuine expectation. Among the admissible measures $M$, we seek to minimize the integrated kinetic energy of the curves:
\begin{align}
    \label{eq:kinetic}
    \xpt_M\left[\int_0^1|\dot\gamma_t|^2\dd t\right]\longrightarrow\min.
\end{align}
In the context of unconstrained optimal transport, this formulation in terms of paths has been made rigorous in \cite{Lisini}, see also \cite[Sec.~8.2]{AGS}. 

We define for $\omega\in[0,\pi/2]^d$ the component-wise linear dilation $G^\omega:\R^d\to\R^d$ by
\begin{align}
    \label{eq:Gomega}
    [G^\omega x]_k = \left(\frac{\omega_k}{\sin\omega_k}\right)^{1/2}x_k,
\end{align}
with the understanding that $[G^\omega x]_k=x_k$ if $\omega_k=0$.
\begin{theorem}\label{thm:existence-symmetry}
    Let $\mu_0,\mu_1\in\cP_{0,\Id}(\R^d)$ be $d$-fold reflection symmetric (see Defintion~\ref{def:reflection-symmetry}), and assume that $\mu_0$ is absolutely continuous. Then there exists a minimizer $\widebar M$ for the constrained geodesic problem \eqref{eq:kinetic} subject to \eqref{eq:admissibleM}. Moreover, the minimizer is geometrically characterized as follows: there are parameters $\omega_1,\ldots,\omega_d\in[0,\pi/2]$, and there is a map $S:\R^d\to\R^d$ such that $\widebar M$-almost every trajectory $(\gamma(s))_{s\in[0,1]}$ emerging from $x:=\gamma(0)\in\R^d$ is given component-wise, for $k=1,\ldots,d$, by
    \begin{align}
        \label{eq:geodesicbydilation}
        \gamma_k(s) = \frac{\sin\omega_k(1-s)}{\sin\omega_k}x_k + \frac{\sin\omega_ks}{\sin\omega_k}S_k(x).
    \end{align}
    Finally, with $G^\omega$ from \eqref{eq:Gomega} with the same $\omega_1,\ldots,\omega_d$ as before, the constrained transport distance amounts to
    \begin{align}
        \label{eq:cW20Id}
        \cW_{0,\Id}(\mu_0,\mu_1)^2 = W_2\big(G^\omega_\#\mu_0,G^\omega_\#\mu_1\big)^2 - 2\sum_{k=1}^d\frac{\omega_k}{\sin\omega_k}\Big(1-\cos\omega_k-\frac12\omega_k\sin\omega_k\Big).
    \end{align}
\end{theorem}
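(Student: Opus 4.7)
The plan is to guess the structure of minimizers from the formal optimality system derived in Theorem~\ref{thm:infsup}, reduce the problem to a finite-dimensional fixed-point equation, and solve it by Brouwer's theorem.

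From Theorem~\ref{thm:infsup} a constrained geodesic carries a symmetric Lagrange multiplier $\Lambda_t \in \S^d$ for the covariance constraint. Because $\mu_0,\mu_1$ are reflection-symmetric (Definition~\ref{def:reflection-symmetry}) and the problem is invariant under reflections, symmetrization of competitors allows us to take $\Lambda$ diagonal and constant in time, $\Lambda=\tfrac12\diag(\omega_1^2,\dots,\omega_d^2)$ with $\omega_k\ge 0$. The associated Hamilton-Jacobi equation $\partial_t\psi+\tfrac12|\nabla\psi|^2+\tfrac12\sum_k\omega_k^2 x_k^2=0$ decouples coordinate-wise, and its characteristics satisfy $\ddot\gamma_k+\omega_k^2\gamma_k=0$; matching $\gamma(0)=x$ and $\gamma(1)=S(x)$ then produces exactly the ansatz~\eqref{eq:geodesicbydilation}. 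Plugging this ansatz into \eqref{eq:admissibleM}, one checks that the mean condition is automatic (since $\mu_0,\mu_1$ have mean zero), while the time-independent covariance condition collapses, via the identity $\sin^2\omega_k=\sin^2(\omega_k(1-t))+\sin^2(\omega_k t)+2\sin(\omega_k(1-t))\sin(\omega_k t)\cos\omega_k$, to the diagonal cross-covariance requirement $\int x_j y_k\,d\pi(x,y)=\cos\omega_j\,\delta_{jk}$ on the coupling $\pi=(\Id,S)_\#\mu_0$.

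The heart of the argument is a fixed-point scheme on the compact convex set $[0,\pi/2]^d$. For each $\omega$, apply Brenier's theorem to obtain the optimal transport map $T^\omega$ between $G^\omega_\#\mu_0$ and $G^\omega_\#\mu_1$ (using absolute continuity of $\mu_0$, preserved under the diffeomorphism $G^\omega$) and set $S^\omega:=(G^\omega)^{-1}\circ T^\omega\circ G^\omega$. By uniqueness of the optimal coupling and the $d$-fold reflection symmetry of the marginals, $T^\omega\circ\sigma_k=\sigma_k\circ T^\omega$ for every $k$; the change of variables $x\mapsto\sigma_k x$ then forces $\int x_j S^\omega_k(x)\,d\mu_0=0$ for $j\ne k$. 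Monotonicity of $T^\omega$ (gradient of a convex function) together with the reflection symmetries yields the diagonal values $c_k(\omega):=\int x_k S^\omega_k(x)\,d\mu_0\in[0,1]$, so $F(\omega)_k:=\arccos c_k(\omega)$ defines a continuous self-map of $[0,\pi/2]^d$. Brouwer's theorem delivers a fixed point $\omega^*$; the measure $\widebar M$ concentrated on the trajectories~\eqref{eq:geodesicbydilation} with parameters $\omega^*$ and map $S^{\omega^*}$ is then admissible. A direct computation of $\xpt_{\widebar M}[\int_0^1|\dot\gamma|^2\,dt]$ using the ansatz and the fixed-point relation yields the closed formula~\eqref{eq:cW20Id}, and minimality is verified by exhibiting the corresponding dual triple $(\psi,\alpha=0,\Lambda)$ from the Brenier potential of the dilated problem and saturating the duality in Theorem~\ref{thm:infsup}.

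The main obstacle will be establishing continuity of $F$: this requires stability of Brenier maps under the perturbation $\omega\mapsto G^\omega_\#\mu_i$, which follows from uniqueness of optimal plans combined with weak compactness of couplings, but the dependence on $\omega$ has to be tracked uniformly on $[0,\pi/2]^d$. A secondary subtlety is the sign bound $c_k(\omega)\ge 0$, which I expect to obtain by a rearrangement argument exploiting all $d$ reflections simultaneously together with the cyclic monotonicity of $T^\omega$. Finally, closing the optimality argument rigorously requires care because the duality~\eqref{eq:dualW} is stated only formally in the paper; a self-contained primal comparison (perturbing $\widebar M$ within the admissible class and using convexity of the cost along reparametrisations of the sinusoidal ansatz) may be preferable to invoking the dual theorem directly.
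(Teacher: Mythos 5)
Your construction of the candidate optimizer is essentially the paper's: the sinusoidal trajectories generated by a constant diagonal quadratic multiplier, the dilation $G^\omega$, the reduction of the diagonal covariance constraint to the fixed-point relation $\cos\omega_k=\int x_k S^\omega_k(x)\,\dd\mu_0$, the use of reflection symmetry plus cyclic monotonicity of the Brenier map to kill the off-diagonal constraints and to get $c_k(\omega)\in[0,1]$, Brouwer on $[0,\pi/2]^d$ with continuity from stability of optimal plans, and the final computation of \eqref{eq:cW20Id} all match the paper's proof. The genuine gap is the optimality (and hence existence) step. You propose either to invoke Theorem~\ref{thm:infsup} — which, as you note, is only formally derived (the inf--sup exchange is not justified), so it cannot be cited as is — or to fall back on a ``self-contained primal comparison'' by perturbing $\widebar M$ inside the admissible class and using convexity along reparametrisations of the sinusoidal ansatz. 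The latter does not close the argument: the cost \eqref{eq:kinetic} and the constraints \eqref{eq:admissibleM} are linear in $M$, so ``local optimality along segments'' is literally the same statement as global optimality, and comparisons restricted to the sinusoidal family say nothing about admissible competitors supported on entirely different curves (which is the whole difficulty; note also that direct methods fail here because tightness of second moments is exactly what breaks down, cf.\ Theorem~\ref{thm:existence-smalldist}). So as written, neither route proves that your fixed-point measure minimizes, nor that a minimizer exists.

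The fix is the one the paper uses, and it is close to your ``saturate the duality with the Brenier potential'' instinct, made rigorous without strong duality: you only need the easy (weak) direction, implemented in the Lagrangian path-space formulation. Concretely, for \emph{any} bounded continuous $\varphi_0,\varphi_1$ and any constant diagonal $\Lambda=\diag(\omega_k^2)$ satisfying the pointwise inequality $\varphi_0(\gamma_0)+\varphi_1(\gamma_1)\le\int_0^1(|\dot\gamma_t|^2-\gamma_t^\tran\Lambda\gamma_t)\,\dd t$ on all of $\Gamma$, integrating against an arbitrary admissible $M$ and using the constraints gives the lower bound $\xpt_M[\int_0^1|\dot\gamma_t|^2\dd t]\ge\int_0^1\tr\Lambda\,\dd t+\int\varphi_0\dd\mu_0+\int\varphi_1\dd\mu_1$; this step is elementary and needs no minimax theorem. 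Choosing $\varphi_i$ to be the Kantorovich potentials of the dilated problem composed with $G^\omega$, corrected by the explicit quadratic terms coming from the integration by parts along sinusoidal curves, the inequality holds for all curves, and equality holds $\widebar M$-almost surely by optimality of $\pi^\omega$; hence $\widebar M$ attains the lower bound and is a global minimizer. Incorporating this dual certificate (rather than citing the formal Eulerian duality or a primal perturbation) is what your proposal is missing.
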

\begin{remark}
    The assumption on the absolute continuity of $\mu_0$ has been made mainly to ensure uniqueness of the transport plan in a related unconstrained optimal transport problem, and this uniqueness is essential for continuity of the fixed point map that is defined at the very end of the proof. 
    With slight changes in the proof, the condition can be  relaxed to the assumption that for any choice of $\omega\in[0,\pi/2]^d$, there is a unique reflection symmetric optimal plan $\pi^\omega$ for the transport from $G^\omega_\#\mu_0$ to $G^\omega_\#\mu_1$.
    
    The symmetry hypothesis is far more essential for this proof, and also for the representation~\eqref{eq:geodesicbydilation} of ``mass transport by component-wise dilation''. Intuitively, from the $d^2$-many covariance-related constraints $\xpt[ \gamma_k(t)\gamma_\ell(t)]\equiv\delta_{k\ell}$, only the $d$ constraints for $k=\ell$ are active, and the other ones are automatically satisfied for symmetry reasons. In general, mass particle trajectories have a much more complicated form than \eqref{eq:geodesicbydilation}, as explained in Remark \ref{rmk:aftergeodesic} after the proof. 
\end{remark}
\begin{proof}
    The idea of the proof is to obtain the geodesic from a relatively explicit construction of a saddle point for a related functional. Specifically, we choose $\mathcal X:=\mathfrak M_+(\Gamma)$, the space of (non-negative) Borel measures on $\Gamma=H^1([0,1];\R^d)$, and $\mathcal Y:=L^\infty([0,1];\R^{d\times d}_\text{sym})\times L^\infty([0,1];\R^d)\times C_b(\R^d)\times C_b(\R^d)$, the set of quadruples $(\Lambda,m,\varphi_0,\varphi_1)$ of Lagrange multipliers. The functional $F:\mathcal X\times\mathcal Y\to\overline{\R}$ is given by
    \begin{align*}
        F(X,Y) &= \int_\Gamma\left[\int_0^1 \big(|\dot\gamma_t|^2 - m_t^\tran\gamma_t - \gamma_t^\tran \Lambda_t\gamma_t\big)\dd t - \varphi_0(\gamma_0) - \varphi_1(\gamma_1)\right]\dd M(\gamma) \\
        &\qquad +\int_0^1\tr[\Lambda_t] \dd t + \int\varphi_0\dd\mu_0 + \int\varphi_1\dd\mu_1 
    \end{align*}
    with $X=M\in \mathcal X$ and $Y=(\Lambda,m,\varphi_0,\varphi_1)\in \mathcal Y$.
    Below, we derive an explicit form of the functionals $I:\mathcal X\to\overline{\R}$ and $J:\mathcal Y\to\overline{\R}$, defined by
    \begin{align}
        I(\widebar Y):=\inf_X F(X,\widebar Y), \qquad J(\widebar X):=\sup_Y F(\widebar X,Y).
    \end{align}
    Then, we will use an ansatz in order to find a saddle point $(\widebar X,\widebar Y)\in\mathcal X\times\mathcal Y$ of $F$, that is 
    \begin{align}
        \label{eq:towardsduality}
        -\infty < J(\widebar X) \le I(\widebar Y) < \infty.    
    \end{align}
    Then $J(\widebar X)=F(\widebar X,\widebar Y)=I(\widebar Y)$. For later reference, we point out an immediate but essential consequence of \eqref{eq:towardsduality}:
    \begin{align}
        \label{eq:isminimizer}
        \text{$\widebar X$ is a global minimizer of $J$}.
    \end{align}
     Indeed, $J(\tilde X)<J(\widebar X)$ for some $\tilde X\in\mathcal X$ would imply in particular $F(\tilde X,\widebar Y)<F(\widebar X,\widebar Y)$, contradicting \eqref{eq:towardsduality}.
     
    The computation of $I(\widebar Y)$ is straight-forward from the definition of $F$. Since $\widebar M$ can give arbitrarily large weight to curves $\gamma$ for which the expression in the square bracket is negative, we obtain that $I(\widebar Y)=-\infty$ unless 
    \begin{align}
        \label{eq:phibelow}
        \widebar\varphi_0(\gamma_0)+\widebar\varphi_1(\gamma_1)\le\int_0^1 \big(|\dot\gamma_t|^2 - \widebar m_t^\tran\gamma_t - \gamma_t^\tran \widebar\Lambda_t\gamma_t\big)\dd t
        \quad\text{for all $\gamma\in\Gamma$},
    \end{align}
    in which case the infimum is attained e.g. at $\widebar M\equiv0$, with value
    \begin{align*}
        I(\widebar Y) = \int_0^1\tr[\widebar\Lambda_t] \dd t + \int\widebar\varphi_0\dd\mu_0 + \int\widebar\varphi_1\dd\mu_1.
    \end{align*}
    To compute $J(\widebar X)$ for a given \emph{probability measure} $\widebar X=\widebar M$ on $\Gamma$, rewrite $F$ in the following form:
    \begin{align*}
        F(\widebar X,Y) 
        &= \xpt_{\widebar M}\left[\int_0^1|\dot\gamma_t|^2\dd t\right] 
        - \int_0^1 m_t^\tran\xpt_{\widebar M}[\gamma_t]\dd t
        + \int_0^1\tr\left[\Lambda_t\big(\Id-\xpt_{\widebar M}[\gamma_t\otimes \gamma_t]\big)\right]\dd t \\
        &\qquad + \left(\int\varphi_0\dd\mu_0-\xpt_{\widebar M}[\varphi_0(\gamma_0)]\right) + \left(\int\varphi_1\dd\mu_1-\xpt_{\widebar M}[\varphi_1(\gamma_1)]\right).
    \end{align*}
    It follows that $J(\widebar X)=\infty$ unless $\widebar M$ satisfies \eqref{eq:admissibleM}, in which case  
    \begin{align*}
        J(\widebar X) = \xpt_{\widebar M}\left[\int_0^1|\dot\gamma_t|^2\dd t\right].
    \end{align*}
    Below, we produce $\widebar X$ and $\widebar Y$ such that the saddle point property \eqref{eq:towardsduality} holds. In view of the general fact \eqref{eq:isminimizer}, the respective $\widebar M=\widebar X$ is then a solution of the minimization problem \eqref{eq:kinetic} under the constraint~\eqref{eq:admissibleM}.
    
    From the above representations of $I$ and $J$, it follows that $(\widebar X,\widebar Y)$ is a saddle point if a probability measure $\widebar M$ and Lagrange multipliers $\widebar\Lambda$, $\widebar m$, $\widebar\varphi_0$, $\widebar\varphi_1$ are such that $\widebar M$ satisfies \eqref{eq:admissibleM}, that $\widebar\Lambda$, $\widebar m$, $\widebar\varphi_0$, $\widebar\varphi_1$ satisfy \eqref{eq:phibelow}, and that
    \begin{align}
        \label{eq:IJ0}
        \int_0^1\tr[\widebar\Lambda_t]\dd t + \int\varphi_0\dd\mu_0 + \int\varphi_1\dd\mu_1 \ge \xpt_{\widebar M}\left[\int_0^1|\dot\gamma_t|^2\dd t\right].
    \end{align}
    Using the constraints, \eqref{eq:IJ0} can equivalently be stated as
    \begin{align}
        \label{eq:IJ1}
        \xpt_{\widebar M}\big[\widebar\varphi_0(\gamma_0) + \widebar\varphi_1(\gamma_1)\big]
        \ge \xpt_{\widebar M}\left[\int_0^1\big(|\dot\gamma_t|^2-\widebar m_t^\tran\gamma_t-\gamma_t^\tran \widebar\Lambda_t\gamma_t\big)\dd t\right].
    \end{align}
    Hence, alternatively, for a saddle point, the following is sufficient:

    \paragraph{Saddle point condition I}
    \emph{The constraints \eqref{eq:admissibleM} and \eqref{eq:phibelow} are satisfied, and equality in \eqref{eq:phibelow} holds $\widebar M$-almost surely.}
    \smallskip
    
    We simplify this condition further by making the ansatz that $\widebar m\equiv0$, and that  $\widebar \Lambda$ is a diagonal matrix, with $t$-independent entries $\omega_1^2$ to $\omega_d^2$, where $\omega_k\in[0,\pi/2]$. For these choices, the right-hand side of \eqref{eq:phibelow} reduces to 
    \begin{align}\label{eq:gamma:min}
        \int_0^1 \big(|\dot\gamma|^2 - \widebar m^\tran\gamma - \gamma^\tran \widebar\Lambda\gamma\big)\dd t
        = \sum_{k=1}^d\int_0^1 \big(\dot\gamma_k(t)^2 - \omega_k^2\gamma_k(t)^2\big)\dd t.
    \end{align}
    The sum is minimized by a curve $\gamma$ if each of its terms is minimized by the respective component~$\gamma_k$. The respective minimizer for given $\gamma_k(0)$ and $\gamma_k(1)$ satisfies the Euler-Lagrange equation~$\ddot\gamma_k+\omega_k^2\gamma_k=0$. Since $\omega_k\in[0,\pi/2]$, the minimizing curve is thus given by $\gamma=\theta(\gamma(0),\gamma(1))$, where the continuous linear map $\theta^\omega:\R^d\times\R^d\to\Gamma$ is defined as \begin{align*}
        \big[\theta^\omega(x,y)\big]_k(t) = \frac{\sin\omega_k(1-t)}{\sin\omega_k}x_k + \frac{\sin\omega_kt}{\sin\omega_k}y_k
        \qquad\text{for $t\in[0,1]$}\,,
    \end{align*}
    for each component $k=1,\ldots,d$; if $\omega_k=0$, then
    \begin{align*}
        \big[\theta^\omega(x,y)\big]_k(t) = (1-t)x_k+ty_k
    \end{align*}
    instead. Hence, integrating by parts,
    \begin{equation}
        \label{eq:otbyparts}
        \begin{split}
            \sum_{k=1}^d\int_0^1 \big(\dot\gamma_k^2 - \omega_k^2\gamma_k^2\big)\dd t
            &=  \sum_{k=1}^d\left(\gamma_k(1)\dot\gamma_k(1) - \gamma_k(0)\dot\gamma_k(0)\right) \\
            &=  \sum_{k=1}^d\frac{\omega_k}{\sin\omega_k}\Big(\cos\omega_k[\gamma_k(0)^2+\gamma_k(1)^2]-2\gamma_k(0)\gamma_k(1)\Big) \\
            &= \big|G^\omega(\gamma(1)-\gamma(0))\big|^2 - \sum_{k=1}^d\frac{\omega_k(1-\cos\omega_k)}{\sin\omega_k}\left(\gamma_k(0)^2+\gamma_k(1)^2\right),
        \end{split}
    \end{equation}
    where $G^\omega:\R^d\to\R^d$ is the linear map defined in \eqref{eq:Gomega}. With that, we obtain from Condition I above another sufficient criterion for a saddle point for our particular choice of $\widebar\Lambda$:
    
    \paragraph{Saddle point condition II}
    \emph{$\widebar M$ satisfies \eqref{eq:admissibleM}, $\widebar M$ is concentrated on curves of the form $\theta^\omega(x,y)$, and}
    \begin{align}
        \label{eq:darkside}
        \widebar\varphi_0(x)+\widebar\varphi_1(y) \le \big|G^\omega(y-x)\big|^2 
        - \sum_{k=1}^d\frac{\omega_k(1-\cos\omega_k)}{\sin\omega_k}(x_k^2+y_k^2)
    \end{align}
    \emph{holds for all $(x,y)$, with equality for $(\gamma(0),\gamma(1))_\#\widebar M$-almost all $(x,y)$}. In \eqref{eq:darkside}, the $k$th quotient is interpreted as zero if $\omega_k=0$.
    \smallskip

    $\widebar M$ and $\widebar\varphi_0$, $\widebar\varphi_1$ satisfying condition II are now obtained by specializing our ansatz further. For an $\omega\in[0,\pi/2]^d$ determined below, consider the ``usual'' unconstrained $W_2$-optimal transport from $G^\omega_\#\mu_0$ to $G^\omega_\#\mu_1$. By the assumed absolute continuity of $\mu_0$ and $\mu_1$, there is an essentially unique optimal plan $\pi^\omega$, and it is of the form $\pi^\omega=(\Id,T^\omega)_\#\mu_0$ with a transport map $T^\omega:\R^d\to\R^d$. Further, there are associated Kantorovich potentials $\psi^\omega_0$ and $\psi^\omega_1$, with $T^\omega=\Id-\nabla\psi_0^\omega$. The Kantorovich potentials have the property that
    \begin{align}
        \label{eq:brightside}
        \psi^\omega_0(\xi)+\psi^\omega_1(\eta) \le |\xi-\eta|^2 \quad \text{for all $\xi,\eta\in\R^d$},
    \end{align}
    with equality for $\pi^\omega$-almost all $(\xi,\eta)$. Define accordingly
    \begin{align*}
        \widebar\varphi_0(x) := \psi_0(G^\omega x) - \sum_{k=1}^d\frac{\omega_k(1-\cos\omega_k)}{\sin\omega_k}x_k^2,
        \quad
        \widebar\varphi_1(y) := \psi_1(G^\omega y) - \sum_{k=1}^d\frac{\omega_k(1-\cos\omega_k)}{\sin\omega_k}y_k^2,
    \end{align*}
    again with the $k$th quotient interpreted as zero if $\omega_k=0$, and let 
    \begin{align}
    \label{eq:theM}
        \widebar M:=\theta^\omega_\#\big((G^\omega)^{-1},(G^\omega)^{-1}\big)_\#\pi^\omega. 
    \end{align}
    It easily follows from these definitions that inequality \eqref{eq:brightside} is equivalent to inequality \eqref{eq:darkside}, and that equality in \eqref{eq:brightside} for $\pi^\omega$-almost every $(\xi,\eta)$ is equivalent to equality in \eqref{eq:darkside} for $(\gamma(0),\gamma(1))_\#\widebar M$-almost every $(x,y)$.
    
    Finally, we need to define $\omega$ such that \eqref{eq:admissibleM} is satisfied. The marginal conditions follow immediately, 
    since by definition of $\widebar M$ in \eqref{eq:theM}, we have for every test function $f\in C_c(\R^d)$:
    \begin{align*}
        \MoveEqLeft\int_\Gamma f(\gamma(0))\dd\widebar M
        = \int_{\R^d\times\R^d} f(x)\dd\big((G^\omega)^{-1},(G^\omega)^{-1}\big)_\#\pi^\omega(x,y) \\
        &= \int_{\R^d\times\R^d}f\big((G^\omega)^{-1}(\xi)\big)\dd\pi^\omega(\xi,\eta) 
        = \int_{\R^d} f\big((G^\omega)^{-1}(\xi)\big)\dd G^\omega_\#\mu_0(\xi)
        = \int_{\R^d} f(x)\dd\mu_0(x),
    \end{align*}
    and similarly for the other marginal.
    For the mean constraint, simply note that 
    \begin{align*}
        \xpt_{\widebar M}[\gamma_t]
        = \int \big[\theta((G^\omega)^{-1}(\xi),(G^\omega)^{-1}(\eta))\big](t)\,\dd \pi^\omega(\xi,\eta)
        = \theta(\mean(\mu_0),\mean(\mu_1))(t)=0
    \end{align*}
    thanks to the linearity of $(x,y)\mapsto[\theta^\omega(x,y)](t)$ for any fixed $t\in[0,t]$, and the assumption that $\mean(\mu_0)=\mean(\mu_1)=0$.
    The covariance-constraint amounts to two conditions, on-diagonal and off-diagonal. The off-diagonal condition is
    \begin{align}
        \label{eq:varconstraint-off}
        0=\xpt_{\widebar M}[\gamma_k(t)\gamma_\ell(t)],
    \end{align}
    for all $k,\ell=1,\ldots,d$ with $k\neq\ell$. We show that this is a consequence of the $d$-fold reflection symmetry of $\mu_0$ and $\mu_1$: the symmetry of $\mu_0$ and $\mu_1$ is inherited by $G^\omega_\#\mu_0$ and $G^\omega_\#\mu_1$, and also by the optimal plan $\pi^\omega$ in the sense that $(\sigma_k,\sigma_k)_\#\pi^\omega=\pi^\omega$. For a proof of the latter, consider $\tilde\pi:=(\sigma_k,\sigma_k)_\#\pi^\omega$ for some $k\in\{1,\ldots,d\}$. By $\sigma_k$-invariance of the marginals, $\tilde\pi$ is a transport from $G^\omega_\#\mu_0$ to $G^\omega_\#\mu_1$ as well, and the associated transport cost is the same as for $\pi^\omega$, since $|\sigma_k(x)-\sigma_k(y)|^2=|x-y|^2$ for arbitrary $x,y\in\R^d$. By uniqueness of the optimal plan, $\tilde\pi=\pi^\omega$. Now the symmetry of $\pi^\omega$ implies (recall that $k\neq\ell$):
    \begin{align*}
        \int \xi_k\eta_\ell\dd\pi^\omega(\xi,\eta) 
        = \int [\sigma_k\xi]_k[\sigma_k\eta]_\ell\dd\pi^\omega(\xi,\eta)
        = \int (-\xi_k)\eta_\ell\dd\pi^\omega(\xi,\eta) ,
    \end{align*}
    and therefore, the integral vanishes, implying \eqref{eq:varconstraint-off}.

    It remains to prove that for an appropriate choice of $\omega\in[0,\pi/2]^d$, the on-diagonal condition 
    \begin{align}
        \label{eq:varconstraint-on}
       1=\xpt_{\widebar M}[\gamma_k(t)^2]
    \end{align}
    is satisfied for each $k=1,\ldots,d$. We start by observing that the aforementioned symmetry of $\pi^\eps$ also implies that
    \begin{align}
        \label{eq:justonecone}
      \int \xi_k\eta_k\dd\pi^\omega(\xi,\eta) = 2^d \int_{\R_{\ge0}^{2d}} \xi_k\eta_k\dd\pi^\omega(\xi,\eta) .
    \end{align}
    Indeed, the optimal plan $\pi^\omega$ is cyclically monotone for the squared euclidean distance, and is in particular monotone in the graphical sense, i.e., if $(\xi,\eta)$ and $(\xi',\eta')$ both are in the support of $\pi^\omega$, then $(\eta'-\eta)^\tran(\xi'-\xi)\ge0$. Choosing $\xi'=\sigma_k(\xi)$, $\eta'=\sigma_k(\eta)$, then, by symmetry, $(\xi',\eta')$ is in $\pi^\omega$'s support if and only if $(\xi,\eta)$ is, and the monotonicity inequality amounts to $\xi_k\eta_k\ge0$ in that case. So $\pi^\omega$ is only supported at points $(\xi,\eta)$ where $\xi_k$ and $\eta_k$ have the same sign, for all $k=1,\ldots,d$. There are precisely $2^d$ such cones in $\R^{2d}$, and by symmetry of $\pi^\omega$, the restriction of $\pi^\omega$ to any of these cones is obtained from $\pi^\omega$'s restriction to $\R_{\ge0}^{2d}$ via push-forward by at most $d$ appropriate reflections of the form $(\sigma_k,\sigma_k):\R^{2d}\to\R^{2d}$. Formula \eqref{eq:justonecone} is a simple consequence of that.

    The next step in our proof of \eqref{eq:varconstraint-on} is to show that
    \begin{align}
        \label{eq:fixedpointbound}
        0\le \xpt_{\widebar M}[\gamma_k(0)\gamma_k(1)] \le 1.
    \end{align}
    By definition of $\widebar M$ from $\pi^\omega$, we have that
    \begin{align}
        \label{eq:gamma01}
        \xpt_{\widebar M}[\gamma_k(0)\gamma_k(1)] 
        = \int \big[(G^\omega)^{-1}\xi\big]_k\big[(G^\omega)^{-1}\eta\big]_k\dd\pi^\omega(\xi,\eta)
        = \frac{\sin\omega_k}{\omega_k}\int \xi_k\eta_k\dd\pi^\omega(\xi,\eta).
    \end{align}
    The right-hand side is clearly non-negative thanks to \eqref{eq:justonecone}. An estimate from above follows by means of
    \begin{align*}
        \left|\int \xi_k\eta_k\dd\pi^\omega(\xi,\eta)\right|
        &\le\left(\int\xi_k^2\dd\pi^\omega(\xi,\eta)\right)^{1/2}
        \left(\int\eta_k^2\dd\pi^\omega(\xi,\eta)\right)^{1/2} \\
        &=\left(\int (G^\omega x)_k^2\dd\mu_0(x)\right)^{1/2}\left(\int (G^\omega y)_k^2\dd\mu_1(y)\right)^{1/2}
        =\frac{\omega_k}{\sin\omega_k},
    \end{align*}    
    showing \eqref{eq:fixedpointbound}.

    Next, we make the essential observation that \eqref{eq:varconstraint-on} is implied by
    \begin{align}
        \label{eq:towardsfixedpoint}
        \xpt_{\widebar M}[\gamma_k(0)\gamma_k(1)]=\cos\omega_k
    \end{align} 
    for $k=1,\ldots,d$. Indeed,
    \begin{align*}
        \xpt_{\widebar M}[\gamma_k(t)^2]
        &= \xpt_{\widebar M}[\theta_k(\gamma(0),\gamma(1))^2](t)
        = \xpt_{\widebar M}\left[\left(\frac{\sin\omega_k(1-t)}{\sin\omega_k}\gamma_k(0)+\frac{\sin\omega_kt}{\sin\omega_k}\gamma_k(1)\right)^2\right] \\
        &= \frac{\sin^2\omega_k(1-t)}{\sin^2\omega_k}\xpt_{\widebar M}[\gamma_k(0)^2] + \frac{\sin^2\omega_kt}{\sin^2\omega_k}\xpt_{\widebar M}[\gamma_k(1)^2] \\
        &\qquad  + \frac{2\sin\omega_k(1-t)\sin\omega_kt}{\sin^2\omega_k}\xpt_{\widebar M}[\gamma_k(0)\gamma_k(1)] \\
        &= \frac1{\sin^2\omega_k}\big[\sin^2\omega_k(1-t)+\sin^2\omega_kt+2\cos\omega_k\sin\omega_k(1-t)\sin\omega_kt\big] = 1,
    \end{align*}
    where the last equality follows by elementary trigonometric identities as follows:
    \begin{align*}
        &\sin^2\omega_k(1-t)+\sin^2\omega_kt+2\cos\omega_k\sin\omega_k(1-t)\sin\omega_kt \\
        &= \bigl(\sin\omega_k\cos\omega_kt\!-\!\cos\omega_k\sin\omega_kt\bigr)^2\!\! +\sin^2\omega_kt + \! 2\cos\omega_k\bigl(\sin\omega_k\cos\omega_kt\!-\!\cos\omega_k\sin\omega_kt\bigr)\sin\omega_kt \\
        &= \sin^2\omega_k\cos^2\omega_kt + \cos^2\omega_k\sin^2\omega_kt + \sin^2\omega_kt - 2\cos^2\omega_k\sin^2\omega_kt \\
        & = (1-\cos^2\omega_k)\sin^2\omega_kt + \sin^2\omega_k\cos^2\omega_kt \\
        & = \sin^2\omega_k(\sin^2\omega_kt+\cos^2\omega_kt)
        = \sin^2\omega_k.
    \end{align*}
    In conclusion, the on-diagonal condition \eqref{eq:varconstraint-on} is satisfied if $\omega$ is chosen such that \eqref{eq:towardsfixedpoint} holds. The existence of an appropriate $\omega$ is now obtained by a fixed point argument: define a fixed point operator $R:[0,\pi/2]^d\to[0,\pi/2]^d$ by
    \begin{align*}
        R_k(\omega) = \arccos\xpt_{\widebar M}[\gamma_k(0)\gamma_k(1)].
    \end{align*}
    Well-definedness follows from \eqref{eq:fixedpointbound}. Concerning continuity: if $\omega^n\to\omega^*$ converges in $[0,\pi/2]^d$ as $n\to\infty$, then $G^{\omega_n}_\#\mu_0$ and $G^{\omega_n}_\#\mu_1$ converge to their respective limits $G^{\omega_*}_\#\mu_0$ and $G^{\omega_*}_\#\mu_1$ in $\cP_2(\R^d)$. By \cite[Proposition 7.1.3]{AGS}, the respective optimal transport plans $\pi^{\omega_n}$ from $G^{\omega_n}_\#\mu_0$ to $G^{\omega_n}_\#\mu_1$ are narrowly compact, and any limit point is an optimal plan for the transport from $G^{\omega_*}_\#\mu_0$ to $G^{\omega_*}_\#\mu_1$. Again thanks to absolute continuity, that limit $\pi^{\omega_*}$ is unique. Moreover, by uniform integrability of the second moments of $G^{\omega_n}_\#\mu_0$ and $G^{\omega_n}_\#\mu_1$, also $\pi^{\omega_n}$'s second moment is uniformly integrable, and so
    \[ \int\xi_k\eta_k\dd\pi^{\omega_n}(\xi,\eta) \to \int\xi_k\eta_k\dd\pi^{\omega_*}(\xi,\eta). \]
    Recalling the definition of $R$ above and the representation \eqref{eq:gamma01} of $\xpt_{\widebar M}[\gamma_k(0)\gamma_k(1)]$, it immediately follows that $R(\omega_n)\to R(\omega_*)$.
    
    In conclusion, $R$ possesses at least one fixed point $\omega$, thanks to Brouwer's fixed point theorem. 
    
    Now, the map $S$ in \eqref{eq:geodesicbydilation} is obtained as follows: $\pi^\omega$ connects $\xi$ to $\eta=T^\omega(\xi)$; according to~\eqref{eq:theM}, the measure $\widebar M$ connects $x=(G^\omega)^{-1}(\xi)$ to $y=(G^\omega)^{-1}(T^\omega(\xi))$, and so, $S = (G^\omega)^{-1}\circ T^\omega\circ G^\omega$.
    
    Finally, to compute
    \begin{align*}
        \cW_{0,\Id}(\mu_0,\mu_1)^2 = \xpt_{\widebar M}\left[\int_\Gamma|\dot\gamma(t)|^2\dd t\right],
    \end{align*}
    we use the representation obtained in \eqref{eq:otbyparts}:
    \begin{align*}
        \xpt_{\widebar M}\left[\int_\Gamma|\dot\gamma(t)|^2\dd t\right]
        &= \int_\Gamma \big|G^\omega(\gamma(1))-G^\omega(\gamma(0))\big|^2\dd\widebar M(\gamma) + \sum_{k=1}^d\omega_k^2\int_0^1\xpt[\gamma_k(t)^2]\dd t \\
        &\qquad - \sum_{k=1}^d\frac{\omega_k(1-\cos\omega_k)}{\sin\omega_k}\big(\xpt_{\widebar M}[\gamma_k(0)^2]+\xpt_{\widebar M}[\gamma_k(1)^2]\big) \\
        &= \int_{\R^d\times\R^d}|y-x|^2\dd\pi^\omega(x,y) + \sum_{k=1}^d\omega_k^2 - 2\sum_{k=1}^d \frac{\omega_k(1-\cos\omega_k)}{\sin\omega_k} \\
        &= W_2(G^\omega_\#\mu_0,G^\omega_\#\mu_1)^2 - 2 \sum_{k=1}^d \frac{\omega_k}{\sin\omega_k}\Big(1-\cos\omega_k-\frac12\omega_k\sin\omega_k\Big),
    \end{align*}
    which is \eqref{eq:cW20Id}.
\end{proof}
\begin{remark}
    \label{rmk:aftergeodesic}
    In principle, a similar approach seems feasible even without symmetry assumptions. For that, however, one would need to consider more general (in particular $t$-dependent) symmetric positive semi-definite matrices $\widebar\Lambda$. Notice that the sufficient \emph{Condition I} for a saddle point in the proof above is general, i.e., does not depend on our specific choice of $\widebar\Lambda$. In order to simplify that condition further, one needs to understand the minimizer of the integral on the right-hand side of \eqref{eq:phibelow} for given $\gamma(0)$ and $\gamma(1)$. Assuming that minimizers exist (for that, $\widebar\Lambda$ needs to be ``sufficiently small'', in analogy to $\omega\in[0,\pi/2]^d$ in~\eqref{eq:gamma:min}), they are given by solutions to the Euler-Lagrange equation 
    \begin{align}
        \label{eq:yetanotherEL}
        \ddot\gamma(t)+\widebar\Lambda_t\gamma(t)=0.
    \end{align}
    Solution curves can be written in the form $\gamma(t)=A_t\gamma_0+B_t\gamma_1$, with time-dependent matrices $A_t$ and $B_t$ such that $A_0=B_1=\Id$, $A_1=B_0=0$. An integration by parts in \eqref{eq:phibelow} yields the equivalent condition
    \begin{align*}
        \widebar\varphi_0(x) + \widebar\varphi_1(y) \le y^\tran \dot B_1 y - x^\tran \dot A_0x + y^\tran (\dot B_0^\tran +\dot A_1)x \quad \text{for all $x,y\in\R^d$}.
    \end{align*}
    The goal is to find appropriate $\widebar\varphi_0$, $\widebar\varphi_1$ and a plan $\tilde\pi$ with marginals $\mu_0$ and $\mu_1$ such that equality holds for $\tilde\pi$-a.e. $(x,y)$. Since the right-hand side above is a quadratic form in $(x,y)$, this problem is again related to an optimal transport with respect to the classical Wasserstein distance.
    
    The main obstacle to carrying out the generalization is the covariance-constraint. A necessary condition that restricts the shape of $\widebar\Lambda$ is easily derived: introducing the (a priori $t$-dependent) matrix $V_t=\xpt_{\widebar M}[\dot\gamma_t\otimes\gamma_t]$, and using \eqref{eq:yetanotherEL} above, it follows by successive differentiation in $t$ (recalling symmetry $\widebar\Lambda^\tran =\widebar\Lambda$) that
    \begin{align*}
        \Id = \xpt_{\widebar M}[\gamma_t\otimes \gamma_t]\ \Rightarrow\ 
        0 = V+V^\tran \ \Rightarrow\ 
        0 = \xpt[\dot\gamma\otimes\dot\gamma] - \widebar\Lambda\ \Rightarrow\ 
        0 = V\widebar\Lambda + \widebar\Lambda V^\tran  + \dot{\widebar\Lambda}.
    \end{align*}
    Therefore, it follows that $\dot V = -\widebar\Lambda + \widebar\Lambda=0$, and so the skew-symmetric matrix $V$ is actually independent of $t$. Further, since $\dot{\widebar\Lambda} = \widebar\Lambda V-V\widebar\Lambda$, it follows that $\widebar\Lambda_t=e^{tV^\tran }\Lambda_0e^{tV}$, with a $t$-independent symmetric positive semi-definite $\Lambda_0$. But this is far from sufficient: the parameters $V$ and $\Lambda_0$ still need to be determined such that the consistency relations $V=\xpt[\dot\gamma(0)\otimes\gamma(0)]$ and $\Lambda_0=\xpt[\dot\gamma(0)\otimes \dot\gamma(0)]$ hold --- this corresponds to the fixed point problem in the proof above. 
    The consistency relation for the general case even within the restrictive class of $\widebar\Lambda$'s is left open, mainly because the map from $(V,\Lambda_0)$ to $(\dot A_0,\dot B_0)$ is still only poorly understood.
\end{remark}

\subsection{Simple examples}\label{sec:geo:Exs}
For a $d$-fold reflection symmetric measure $\mu\in\cP(\R^d)$ that does not give mass to the $d$ coordinate hyperplanes, define its \emph{symmetry generator} as the probability measure $\tilde\mu\in\cP(\R_{>0}^d)$ obtained by restriction of $\mu$ to $\R_{>0}^d$ and normalization by the factor $2^d$. Clearly, $\mu$ and $\tilde\mu$ are in one-to-one correspondence, and we call $\mu$ \emph{symmetry generated} by $\tilde\mu$. Notice that a $d$-fold reflection symmetric~$\mu$ belongs to $\cP_{0,\Id}(\R^d)$ if and only if its generator $\tilde\mu$ satisfies 
\begin{align}
    \label{eq:generatorcond}
    \int_{\R_{>0}^d}x_k^2\dd\tilde\mu(x)=1 \quad \text{for all $k=1,\ldots,d$}.
\end{align}
Moreover, if $\pi$ is an optimal plan for the (unconstrained) transport between two $d$-fold reflection symmetric measures $\mu_0$ and $\mu_1$, then $\pi$'s normalized restriction $\tilde\pi$ to $\R_{>0}^d\times\R_{>0}^d$ is an optimal plan for the transport between the respective generators $\tilde\mu_0$ and $\tilde\mu_1$, and conversely, an optimal $\tilde\pi$ generates an optimal $\pi$ via symmetry.
\begin{example}
    \label{xmp:alltoone}   
    Let $\mu_0,\mu_1\in\cP_{0,\Id}(\R^d)$ be symmetry generated by some absolutely continuous $\tilde\mu\in\cP(\R_{>0}^d)$, and by the Dirac measure $\delta_p$ at $p=(1,1,\ldots,1)\in\R^d$, respectively. Define, for $k=1,\ldots,d$,
    \begin{align}
        \label{eq:xmpomega}
        \rho_k := \int_{\R_{>0}^d} x_k\dd\tilde\mu(x)\in(0,1),
        \quad\text{and}\quad
        \omega_k:= \arccos\rho_k\in(0,\pi/2).
    \end{align}
    Then we obtain
    \begin{align}
        \label{eq:xmpW}
        \cW_{0,\Id}(\mu_0,\mu_1)^2 = |\omega|^2.
    \end{align}
    To see this, it suffices to observe that for an arbitrary $\omega\in[0,\pi/2]^d$, the unique optimal plan from $G^\omega_\#\tilde\mu$ to $G^\omega_\#\delta_p$ is given by $\tilde\pi^\omega=G^\omega_\#\tilde\mu\otimes G^\omega_\#\delta_p$, and so
    \begin{align*}
        \int_{\R^d\times\R^d}\xi_k\eta_k\dd\pi^\omega(\xi,\eta) = \frac{\omega_k}{\sin\omega_k}\int_{\R^d}x_k\dd\tilde\mu(x) = \frac{\omega_k}{\sin\omega_k}\rho_k.
    \end{align*}
    Hence, according to \eqref{eq:gamma01},
    \begin{align*}
        \xpt_{\widebar M}[\gamma_k(1)\gamma_k(0)] = \rho_k,
    \end{align*}
    independently of $\omega$. Thus the solution to the fixed point condition \eqref{eq:towardsfixedpoint} is indeed given by $\omega$ from \eqref{eq:xmpomega}. For the unconstrained Wasserstein distance, we obtain
    \begin{align*}
        W_2(G^\omega_\#\mu_0,G^\omega_\#\mu_1)^2 
        &= \int_{\R_{>0}^d\times\R_{>0}^d} |\xi-\eta|^2\dd\pi^\omega(\xi,\eta)
        = \int_{\R_{>0}^d}|G^\omega(x-p)|^2\dd\tilde\mu(x)\\
        &= \sum_{k=1}^d \frac{\omega_k}{\sin\omega_k}\int_{\R_{>0}^d}|x-p|^2\dd\tilde\mu(x) 
        = 2\sum_{k=1}^d\frac{\omega_k}{\sin\omega_k}(1-\rho_k).
    \end{align*}
    This is the first term in \eqref{eq:cW20Id}. For the second term, observe that
    \begin{align*}
        2\frac{\omega_k}{\sin\omega_k}\Big(1-\cos\omega_k-\frac12\omega_k\sin\omega_k\Big)
        = 2\frac{\omega_k}{\sin\omega_k}(1-\rho_k)-\omega_k^2,
    \end{align*}
    so that the difference in \eqref{eq:cW20Id} amounts to \eqref{eq:xmpW}.
    
    Next, we show in this simple example that the geodesics for constrained and for unconstrained optimal transport have a different shape. According to \eqref{eq:geodesicbydilation}, the mass transport from any point $x\in\R_{>0}^d$ in the support of $\tilde\mu$ to $p$ is along a curve $\gamma$ of the form
    \begin{align*}
        \gamma_k(s) = \frac{\sin\omega_ks+x_k\sin\omega(1-s)}{\sin\omega_k}.
    \end{align*}
    For a point $x$ in general position (specifically, $x_k\neq1$ for $k=1,\ldots, d$), the trace of this curve is a straight line segment if and only if $\omega_1=\omega_2=\cdots=\omega_d$, i.e., if all $\rho_k$ in \eqref{eq:xmpomega} are identical. Note that even in this special case, the motion of the mass is not at uniform speed as it is in the unconstrained transport.
    
    We shall now compare the particle traces above with the traces of particles for a properly re-scaled unconstrained optimal transport. The classical Wasserstein geodesic from $\tilde\mu$ to $\delta_p$ is given by the transport map $T^t:\R_{>0}^d\to\R_{>0}^d$ with $T^t(x)=(1-t)x+tp$. We apply a scaling along the $d$ coordinate directions to ensure that, at any $t\in[0,1]$,
    \begin{align*}
        \int_{\R_{>0}^d} x_k^2 \dd T^t_\#\tilde\mu(x) = 1.
    \end{align*}
    Recalling \eqref{eq:generatorcond} and also the notation from \eqref{eq:xmpomega}, we obtain
    \begin{align*}
        1 = \int_{\R_{>0}^d} T^t_k(x)^2\dd\mu(x) 
        = \int_{\R_{>0}^d}\bigl((1-t)^2x_k^2 + t^2 + 2t(1-t)x_k\bigr)\dd\mu(x)
        = 1 - 2t(1-t)(1-\rho_k).
    \end{align*}
    Accordingly, the rescaled transport map $\hat T^t$ is given by
    \begin{align*}
        \hat T_k^t(x) = \frac{T_k^t(x)}{\sqrt{1-2t(1-t)(1-\rho_k)}} 
        = \frac{(1-t)x_k+t}{\sqrt{1-2t(1-t)(1-\rho_k)}}.
    \end{align*}
    We show that the particle traces of $\gamma$ and $\hat T$ --- for the same initial point $x$ --- do not agree in general. More precisely, we show that the terminal velocities $u$ and $v$ of $\gamma$ and $\hat T$ (upon arrival at $p$), respectively, point into different directions. Indeed, for $k=1,\ldots,d$,
    \begin{align*}
        u_k = \frac{\dd}{\dd t}\bigg|_{t=1} T_k^t(x) &= (1-x_k) - (1-q_k) = \cos\omega_k-x_k, \\
        v_k = \frac{\dd}{\dd s}\bigg|_{s=1} \gamma_k(s) &= \frac{\omega_k}{\sin\omega_k}(\cos\omega_k-x_k),
    \end{align*}
    and so $v_k=\frac{\omega_k}{\sin\omega_k}u_k$. For points $x$ in general position ($x_k\neq1$ for all $k=1,\ldots,d$), the only case in which $u$ and $v$ are parallel is the aforementioned special situation that $\omega_1=\omega_k=\cdots=\omega_d$, when all curves $\gamma$ are actually (non-linearly parametrized) straight line segments. 
\end{example}
\begin{example}
    A special case of the previous example in dimension $d=2$ is the transport from the uniform measure $\tilde\mu$ on an axis-parallel rectangle to the Dirac measure at $p=(1,1)$. The rectangle runs from $m_1-\delta_1$ to $m_1+\delta_1$ horizontally, and from $m_2-\delta_2$ to $m_2+\delta_2$ vertically. The conditions on $m_k$ and $\delta_k$ are most easily formulated in terms of $\omega_1,\omega_2\in[0,\pi/6]$: condition \eqref{eq:generatorcond} is satisfied if
    \begin{align*}
        q_k = m_k = \cos\omega_k,\quad \delta_k = \sqrt3 \sin\omega_k \qquad (k=1,2);
    \end{align*}
    the restriction $\omega_k\le\pi/6$ reflects that the rectangle must lies in the first quadrant. In Figure \ref{fig:rectangle}, we compare different particle trajectories from the corners of the rectangle to $p$: unconstrained optimal transport, re-scaled unconstrained optimal transport, and constrained optimal transport. As expected from the computations at the end of Example \ref{xmp:alltoone} above, the curves for the re-scaled unconstrained and for the constrained optimal transport are extremely close.
    \begin{figure}
		\begin{minipage}[b]{\linewidth}
		 \centering
        \includegraphics[width=0.45\textwidth]{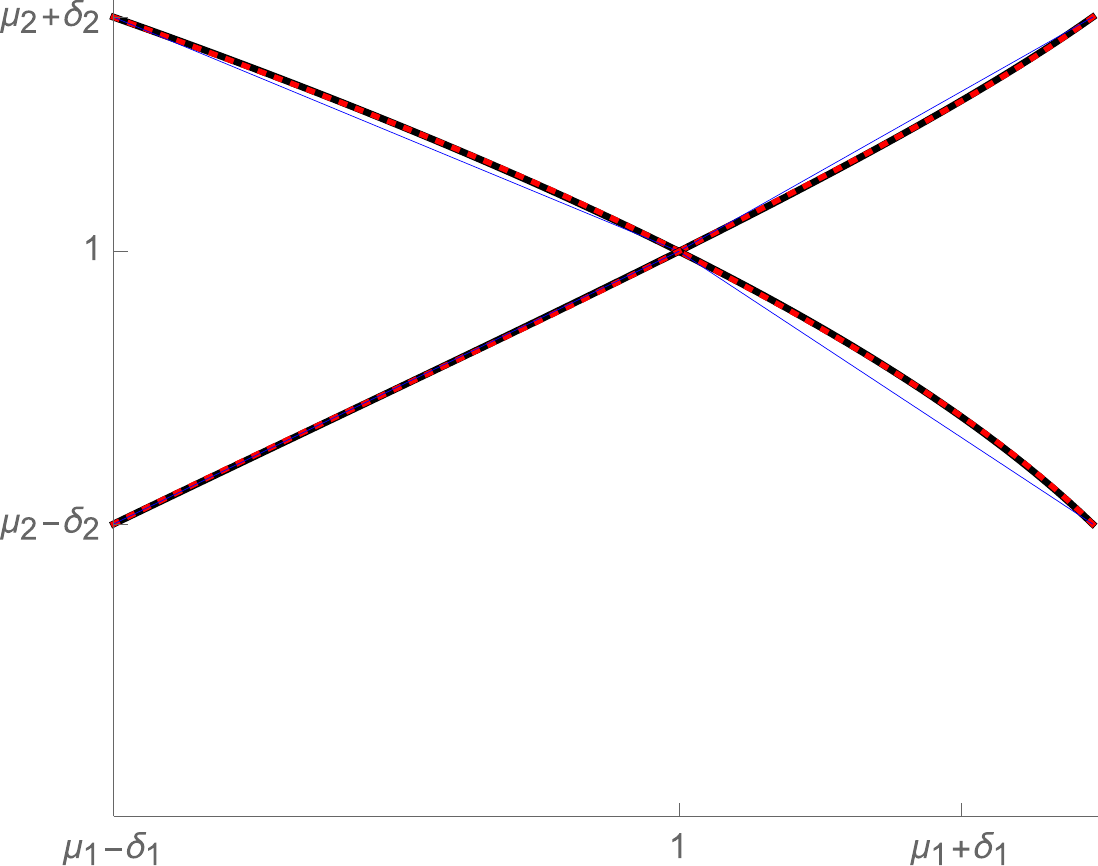}\hfill \includegraphics[width=0.45\textwidth]{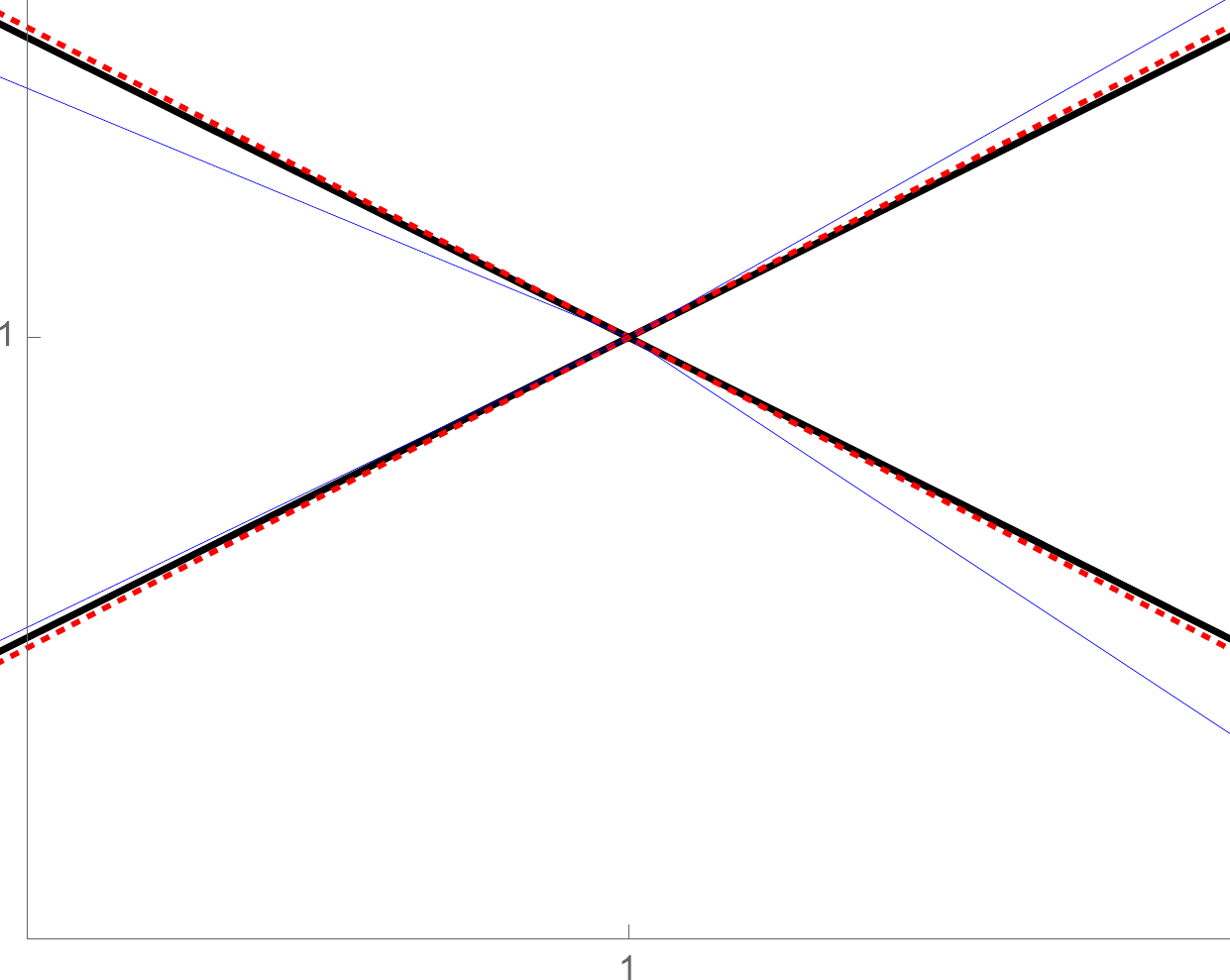}
  		\end{minipage}
        \caption{Comparison of particle trajectories for the optimal transport from the rectangle to $(1,1)$: unconstrained transport (thin blue line), re-scaled unconstrained transport (dotted red line), constrained transport (solid black line). Left: the respective four particle trajectories, emerging from the corners of the rectangle. Right: close-up near the terminal point $(1,1)$, for $x,y\in[0.99,1.01]$.}
        \label{fig:rectangle}
    \end{figure}
\end{example}
\begin{example}
    We consider the constrained transport between two measures $\mu_0$ and $\mu_1$ in the plane $\R^2$ that are symmetry generated by the following probability measures $\tilde\mu_0$ and $\tilde\mu_1$ on $\R_{>0}^2$: $\tilde\mu_0$ is the uniform measure on the disk $D$ with center $c=(m,m)$ and radius $\rho$, and $\tilde\mu_1$ consists of two point measures of mass one half at $p^+$ and $p^-$, respectively. To guarantee \eqref{eq:generatorcond}, the parameters are subject to the following conditions: 
    \begin{align*}
        (p^+_1)^2+(p^-_1)^2=2,\quad (p^+_2)^2+(p^-_2)^2=2,\quad \frac14\rho^2+m^2=1.
    \end{align*}
    Further, $\rho<m$ is obviously needed, which amounts to $\rho<\sqrt{4/5}=0.894\ldots$
    We write the positions of $p^+$ and $p^-$ in the form
    \begin{align*}
        p^\pm = s^\alpha\pm \frac12e^\alpha, \quad e^\alpha = \begin{pmatrix}\cos\alpha \\ \sin\alpha\end{pmatrix},
        \quad s^\alpha_1=\frac12\sqrt{3+\sin^2\alpha},\quad s^\alpha_2=\frac12\sqrt{3+\cos^2\alpha},
    \end{align*}
    i.e., the connecting line from $p^-$ to $p^+$ is of unit length and has an angle $\alpha\in(0,\pi/2)$ with respect to the horizontal axis. The solution to the unconstrained optimal transport problem from $\tilde\mu_0$ to $\tilde\mu_1$ is to cut the circle through its center $c$ along a straight line orthogonal to $e^\alpha$, and then to transport the mass in the ``upper-right half'' and the ``lower-left half'', respectively, to $p^+$ and $p^-$. \emph{We shall see below that the solution to the constrained problem is the same, but with a cut along a line of modified angle $\alpha'$ instead of $\alpha$.} See Figure \ref{fig:anglesketch} below for an illustration.
    
    \begin{figure}
        \centering
        \includegraphics[width=0.4 \linewidth]{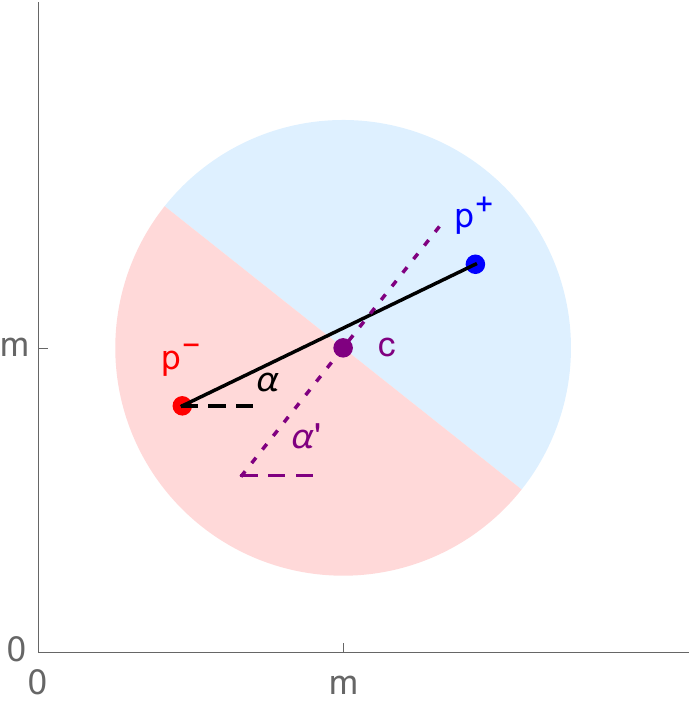}
        \caption{The mass on the upper-right (blue) and lower-left (red) halves of the circle is transported to the point masses at $p^+$ and at $p^-$, respectively. The angle $\alpha'$ of the (purple) line orthogonal to the division line is in general \emph{not} identical to the angle $\alpha$ of the (black) line connecting $p^-$ to $p^+$.}
        \label{fig:anglesketch}
    \end{figure}
    
    For any given pair $(\omega_1,\omega_2)$ with $0\le\omega_k\le\pi/2$, the optimal plan $\tilde\pi^\omega$ for the (unconstrained) transport problem between $G^\omega_\#\tilde\mu_0$ and $G^\omega_\#\tilde\mu_1$ is easily obtained from elementary geometric considerations: observe that $G^\omega_\#\tilde\mu_0$ is the uniform measure on the ellipse $G^\omega(D)$, while $G^\omega_\#\tilde\mu_1$ consists of the two point measures of mass one half each at $G^\omega(p^+)$ and $G^\omega(p^-)$; the connecting line between these points is parallel to $G^\omega e^\alpha$. The two parts of $G^\omega(D)$ that are transported to the points $p^+$ and $p^-$, respectively, are obtained by cutting the ellipse into two halves of equal area along a line \emph{orthogonal to} $G^\omega e^\alpha$; the cut is thus parallel to $JG^\omega e^\alpha$ with the left rotation $J=\begin{pmatrix}0&-1\\1&0\end{pmatrix}$. By symmetry of the ellipse, the cut passes through the center of $G^\omega(D)$. The pulled-back plan $\tilde\sigma^\omega:=((G^\omega)^{-1},(G^\omega)^{-1})_\#\tilde\pi^\omega$ therefore assigns two halves of the disc $D$ to the points $p^+$ and $p^-$, respectively, with the division line parallel to $(G^\omega)^{-1}JG^\omega e^\alpha$ and through $D$'s center $c$. A vector orthogonal to that division line is given by
    \begin{align*}
        v^\omega = J^\tran(G^\omega)^{-1}JG^\omega e^\alpha 
        = \begin{pmatrix}
            (G^\omega_1/G^\omega_2)\cos\alpha \\ (G^\omega_2/G^\omega_1)\sin\alpha
        \end{pmatrix},
    \end{align*}
    and its normalization $V^\omega=v^\omega/|v^\omega|$ of unit length is given by
    \begin{align*}
        V^\omega_1 = \big(1+(G^\omega_2/G^\omega_1)^4\tan^2\alpha\big)^{-1/2}, 
        \quad
        V^\omega_2 = \big(1+(G^\omega_1/G^\omega_2)^4\cot^2\alpha\big)^{-1/2}.
    \end{align*}
    For later reference, note that the slope of $V^\omega$ with respect to the horizontal axis is 
    \begin{align*}
        A^\omega = v^\omega_2/v^\omega_1 = (G^\omega_2/G^\omega_1)^2\tan\alpha,
    \end{align*}
    and that for $\omega^*$ being the fixed point of 
    \begin{align}
        \label{eq:yetanotherfixedpoint}
        \cos\omega_k = \xpt[\gamma_k(0)\gamma_k(1)] \qquad (k=1,2),
    \end{align}
    we have that $\tan\alpha'=A^{\omega^*}$. Below, we shall derive from \eqref{eq:yetanotherfixedpoint} directly a fixed point equation that has $A^{\omega^*}$ as solution.

    The center of mass of the two half discs are located, respectively, at
    \begin{align*}
        c\pm\beta\rho V^\omega \quad\text{with}\quad \beta=\frac4{3\pi}.
    \end{align*}
    We thus obtain --- recalling that $e^\alpha_1=\cos\alpha$ and $e^\alpha_2=\sin\alpha$ ---
    \begin{align*}
        \xpt[\gamma_k(0)\gamma_k(1)]
        &= \int_{\R_{>0}^2\times\R_{>0}^2} x_ky_k\dd\tilde\sigma^\omega(x,y) \\
        &= \frac{p^+_k}4\big(m+\beta\rho V^\omega_k\big)         
        + \frac{p^-_k}4\big(m-\beta\rho V^\omega_k\big)
        = \frac{ms^\alpha_k}2 + \frac{\beta\rho e^\alpha_k}4 V^\omega_k.
    \end{align*}
    Recall that $G^\omega_k=\sqrt{\omega_k/\sin\omega_k}$, and define accordingly the function 
    \begin{align*}
        f(z) = \frac{\arccos z}{\sin\arccos z} = \frac{\arccos z}{\sqrt{1-z^2}}.
    \end{align*}
    Then the fixed point equations \eqref{eq:yetanotherfixedpoint} imply that $A=A^{\omega^*}$ is a solution of
    \begin{align*}
        A = \tan\alpha\,\frac{f\left(\frac{ms^\alpha_2}2 + \frac{\beta\rho e^\alpha_2}4 (1+A^{-2})^{-1/2}\right)}{f\left(\frac{ms^\alpha_1}2 + \frac{\beta\rho e^\alpha_1}4 (1+A^2)^{-1/2}\right)}. 
    \end{align*}
    For any fixed $\alpha\in(0,\pi/2)$, the expression on the right-hand side above is monotonically decreasing with respect to $A>0$, with Lipschitz constant less than one, hence the (unique) fixed point is easily approximated by simple iteration. A numerical evaluation of the difference $\alpha'-\alpha$ for $\alpha\in[0,\pi/2]$ and different radii $\rho$ is given in Figure \ref{fig:modifiedangle}. The expected antisymmetry about $\alpha=\pi/2$ is clearly visible, as well as the coincidence $\alpha'=\alpha$ in the three special positions $\alpha=0$, $\alpha=\pi/2$ and $\alpha=\pi/4$. The left plot indicates a qualitative change in the behaviour of $\alpha'-\alpha$ in dependence of $\rho$; for larger radii $\rho>0.61$, the angle $\alpha'$ lags behind $\alpha$, and for smaller radii $\rho<0.60$, the angle $\alpha'$ is ahead of $\alpha$. The transition behaviour for $0.60<\rho<0.61$ is complicated, as is indicated for $\rho=0.6015$ in the right plot of Figure~\ref{fig:modifiedangle}. 
    \begin{figure}
        \centering
        \includegraphics[width=0.45\linewidth]{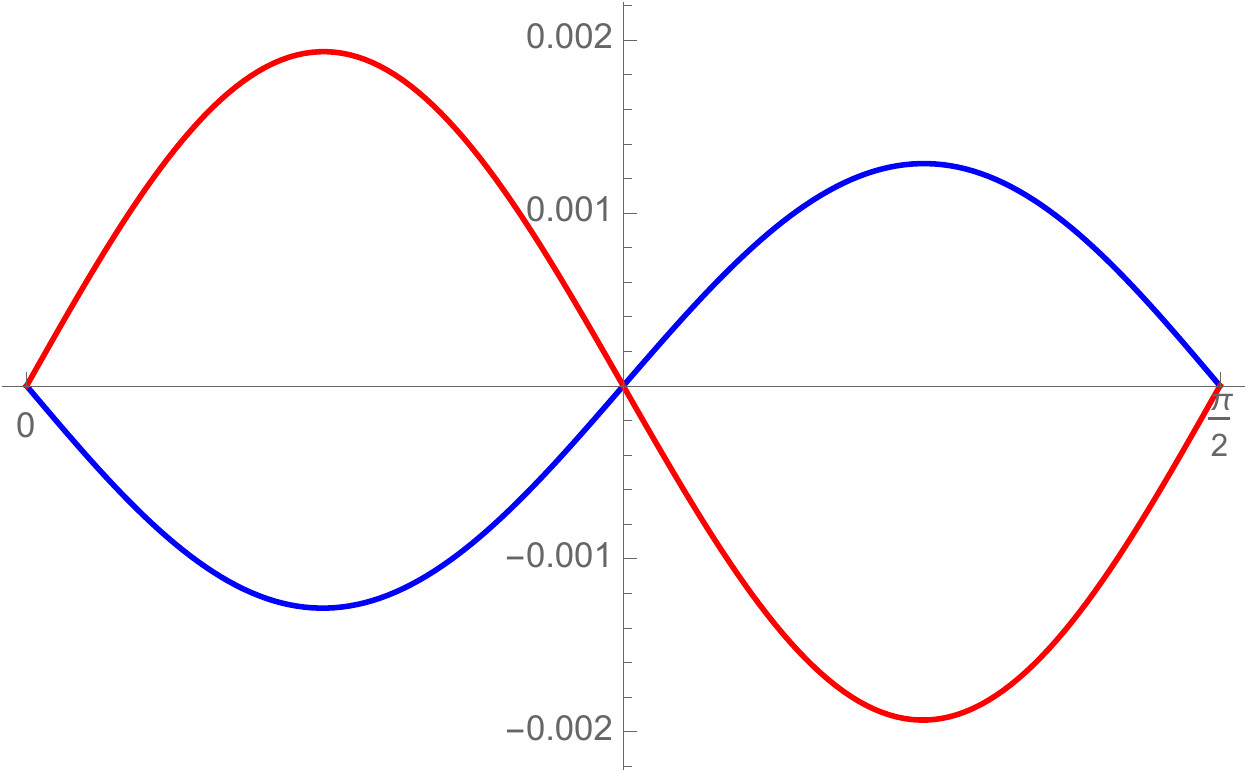}
        \includegraphics[width=0.45\linewidth]{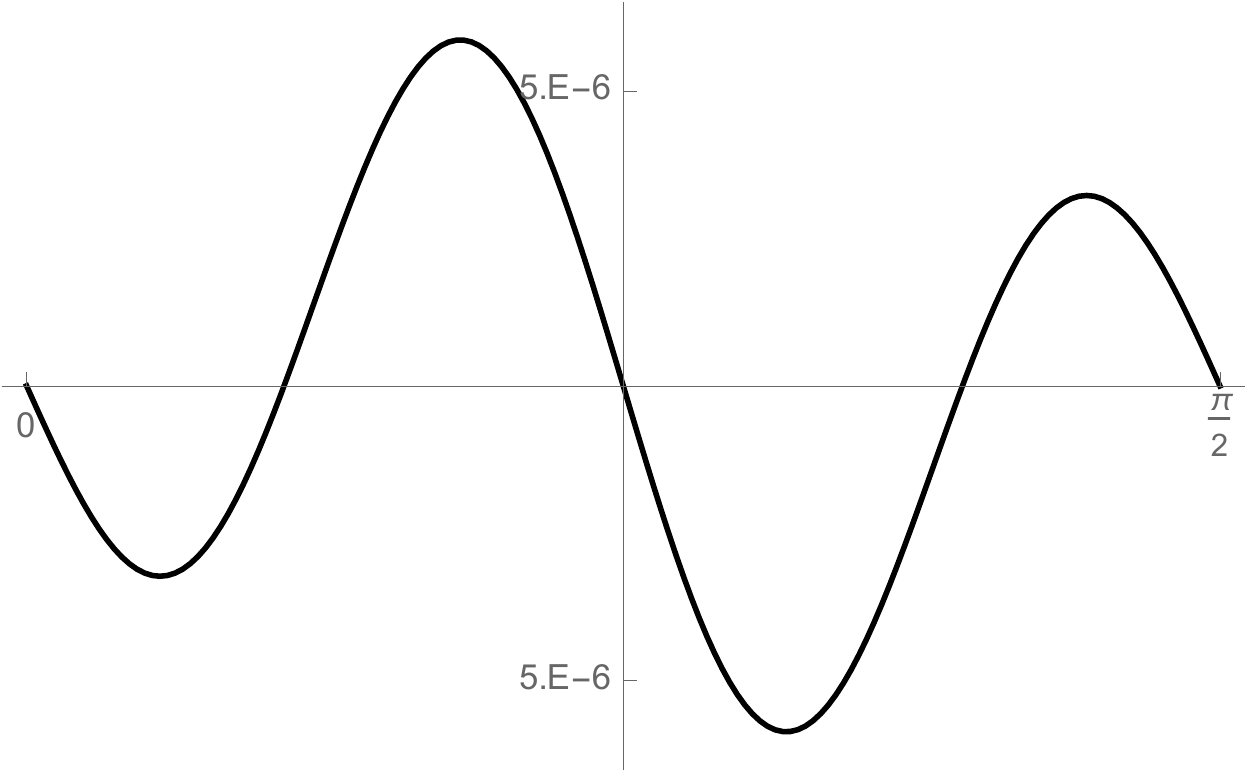}
        \caption{The plots show the difference of the angle $\alpha$ (inclination of the connecting line $p_-$ to $p_+$) to the respective angle $\alpha'$ (of the line orthogonal to the cut of the circle). Left: results for $\rho=0.5$ (blue) and $\rho=0.75$ (red). Right: results for $\rho=0.6015$. Further explanations in the text.}\label{fig:modifiedangle}
    \end{figure}
\end{example}

\section{Gradient flows and convergence to equilibrium}\label{sec:cv}

\subsection{Connections to the Fokker-Planck equation}\label{sec:FP}

In this section, we consider the covariance-modulated Fokker-Planck equation~\eqref{eq:GF-FP-cov-quad-intro}, which we for the sake of convenience repeat here
\begin{equation}
	\label{eq:GF-FP-cov}
	\partial_t\rho_t = \nabla\cdot\bra*{\C(\rho_t)\bra*{\nabla \rho_t + \rho_t \nabla H}}\;,
\end{equation}
with $H(x)= \frac12 |x-x_0|_B^2$ for fixed mean $x_0\in\R^d$ and covariance $B\in\R^{d\times d}$.
Then the mean $m_t=\mean(\rho_t)$ and covariance $C_t=\C(\rho_t)$ evolve along \eqref{eq:GF-FP-cov} according to
\begin{equation}\label{eq:moments-GF}
	\frac{\dd}{\dd t}m_t  = -C_t B^{-1}(m_t-x_0)\;,\qquad\text{and}\qquad 
	\frac{\dd}{\dd t}C_t = 2C_t -2 C_t B^{-1}C_t\;.  
\end{equation}
Since, the equation of $C_t$ is decoupled from $m_t$, we can obtain a solution by integrating first $C_t$ and then $m_t$. However, the system possesses further intrinsic quantities with exponential decay in time.
\begin{lemma}\label{lem:moment-decay}
	Let $A_t$ be an adapted square-root satisfying~\eqref{eq:Adot} for the solution $(C_t)_{t\geq 0}$ of~\eqref{eq:moments-GF}, i.e. $C_t=A_tA_t^\tran$ and $\dot A_t = \tfrac{1}{2} \dot C_t A_t^{-\tran}$, then for all $t\ge0$ the decay estimates hold
	\begin{align}
		\label{eq:m-decay}
		A_t^{-1}(m_t-x_0) &= e^{-t}A_0^{-1}(m_0-x_0), \\
		\label{eq:sol:Ct}
		C_t^{-1} &= ( 1- e^{-2t}) B^{-1}  + e^{-2t} C_0^{-1} .
	\end{align}
\end{lemma}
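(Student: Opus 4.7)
The plan is to establish both identities by direct differentiation, exploiting the symmetry of $A_t^{-1}\dot A_t$ implied by~\eqref{eq:Adot}.

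For the covariance identity~\eqref{eq:sol:Ct}, I will differentiate $C_t^{-1}$ using $\frac{\dd}{\dd t} C_t^{-1} = -C_t^{-1}\dot C_t C_t^{-1}$. Substituting the ODE $\dot C_t = 2C_t - 2C_t B^{-1}C_t$ from~\eqref{eq:moments-GF} causes the outer $C_t^{-1}$ factors to cancel against the $C_t$ factors, yielding the \emph{linear} matrix ODE
\begin{equation*}
    \frac{\dd}{\dd t} C_t^{-1} = -2 C_t^{-1} + 2 B^{-1}\,.
\end{equation*}
Since $B^{-1}$ is time-independent, integrating this explicitly gives exactly~\eqref{eq:sol:Ct}.

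For~\eqref{eq:m-decay}, the first step is to derive a closed expression for $A_t^{-1}\dot A_t$. Plugging $\dot C_t = 2C_t - 2C_t B^{-1}C_t$ into~\eqref{eq:Adot} and using $C_t=A_tA_t^\tran$ yields $\dot A_t = A_t - A_t A_t^\tran B^{-1}A_t$, so that $A_t^{-1}\dot A_t = \Id - A_t^\tran B^{-1}A_t$, which is indeed symmetric as required. Setting $u_t:= A_t^{-1}(m_t-x_0)$, the product rule together with $\frac{\dd}{\dd t}A_t^{-1} = -A_t^{-1}\dot A_t A_t^{-1}$ and $\dot m_t = -C_t B^{-1}(m_t-x_0)$ from~\eqref{eq:moments-GF} gives
\begin{equation*}
    \dot u_t = -A_t^{-1}\dot A_t\,u_t + A_t^{-1}\dot m_t
            = -\bigl(\Id - A_t^\tran B^{-1}A_t\bigr)u_t - A_t^\tran B^{-1}A_t\,u_t
            = -u_t,
\end{equation*}
where in the middle step we used $A_t^{-1}C_t = A_t^\tran$. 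Integrating $\dot u_t = -u_t$ with initial value $u_0 = A_0^{-1}(m_0-x_0)$ gives~\eqref{eq:m-decay}.

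Neither step is especially delicate; the main observation to watch for is the cancellation $A_t^\tran B^{-1}A_t\,u_t - A_t^\tran B^{-1}A_t\,u_t = 0$ in the ODE for $u_t$, which is what turns a potentially complicated matrix-valued evolution into the scalar exponential decay. This cancellation is a direct consequence of the compatibility between the choice of left square root in~\eqref{eq:Adot} and the structure of the mean equation, reflecting once more the usefulness of the symmetric normalization $A_t$ already exploited in Proposition~\ref{prop:split:curves}.
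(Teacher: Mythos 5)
Your proof is correct and follows essentially the same route as the paper: the covariance identity comes from the same linearization $\frac{\dd}{\dd t}C_t^{-1}=-2(C_t^{-1}-B^{-1})$, and the mean identity rests on the same cancellation via $A_t^{-1}C_t=A_t^\tran$, the only cosmetic difference being that you package the computation through $A_t^{-1}\dot A_t=\Id-A_t^\tran B^{-1}A_t$ whereas the paper differentiates $A_t^{-1}$ directly.
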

\begin{proof}
	Using \eqref{eq:moments-GF} and the evolution equation~\eqref{eq:Adot} for $A_t$, we get that
	\begin{align*}
		\pderiv{}{t} (A_t^{-1}) = - \frac{1}{2} A_t^{-1} \dot C_t A_t^{-\tran} A_t^{-1} = -A_t^{-1} \bra*{A_tA_t^\tran - A_tA_t^\tran B A_t A_t^\tran} A_t^{-\tran} A_t^{-1}=  -A_t^{-1} + A_t^\tran  B^{-1}
	\end{align*}
	and so
	\begin{align*}
		\pderiv{}{t} \bra*{ A_t^{-1}(m_t-x_0)} &= -A_t^{-1}(m_t-x_0) + A_t^\tran B^{-1}(m_t-x_0) -A_t^{-1}C_tB^{-1}(m_t-x_0),
	\end{align*}    
	which proves \eqref{eq:m-decay}, after using $C_t=A_tA_t^\tran $ at all times $t\ge0$.
	
	For showing~\eqref{eq:sol:Ct}, we observe using once more~\eqref{eq:moments-GF}
	\begin{align*}
		\pderiv{}{t} \bra*{C^{-1}_t-B^{-1}} = - C_t^{-1} \dot C_t C_t^{-1} = - 2 C_t^{-1} \bra*{ C_t - C_t B^{-1}C_t} C_t^{-1}  = - 2 (C_t^{-1} - B^{-1})
	\end{align*}
	and we immediately obtain the explicit solution \eqref{eq:sol:Ct}.
\end{proof}
We now normalize the flow along the generalized Fokker-Planck equation~\eqref{eq:GF-FP-cov} according to Definition~\ref{def:normalization}, by setting for $t\geq 0$
\begin{align}\label{e:def:eta}
	\eta_t:=(T_{m_t,A_t})_\#\rho_t\;,\quad\text{where}\quad T_{m,A}x= A^{-1}(x-m)\;,
\end{align}
with $m_t=\mean(\rho_t)$ and choosing $A_t$ such that
\begin{align}\label{e:def:A}
	A_tA_t^\tran =C_t\;,\quad\text{and}\quad A^{-1}_t \dot A_t \text{ is symmetric}\;.
\end{align}
This is achieved by picking $A_t$ a solution to \eqref{eq:Adot}.

We now claim that the normalized flow satisfies the Ornstein-Uhlenbeck flow, that is the Fokker-Planck equation with standard quadratic potential.
\begin{lemma}\label{lem:FP-normalized}
	If $\rho_t$ solves the generalized Fokker-Planck equation~\eqref{eq:GF-FP-cov} with quadratic potential $H:\R^d\to\R$ given by \eqref{eq:potential}, namely
	\begin{equation}
		\label{eq:GF-FP-cov-quad}
		\partial_t\rho_t = \nabla\cdot\bra*{\C(\rho_t)\bra*{\nabla \rho_t + \rho B^{-1}(x-x_0)}}\;,
	\end{equation}
	then the normalized solution $\eta_t=(T_{m_t,A_t})_\#\rho_t$ with $m_t=\mean(\rho_t)$ and $A_t$ solving for $C_t=\C(\rho_t)$ 
    \begin{equation}
        	A_t^{-1}\dot A_t = \frac12 A_t^{-1}\dot C_t A_t^{-\tran}= \Id-A_t^\tran B^{-1}A_t \;,\label{eq:AdotB}
    \end{equation}
    satisfies the Ornstein-Uhlenbeck evolution
	\begin{equation}\label{eq:FP-normalized-quad}
		\partial_t \eta_t = \Delta \eta_t + \nabla \cdot (x\eta_t )\,.
	\end{equation}
\end{lemma}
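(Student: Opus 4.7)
The plan is to realize the covariance-modulated Fokker--Planck equation as a continuity equation and then apply the normalization transformation from Proposition~\ref{prop:split:curves} to compute the transformed velocity field explicitly, observing that every remaining drift either cancels through the moment ODEs~\eqref{eq:moments-GF} or collapses via the choice~\eqref{eq:AdotB} into the Ornstein--Uhlenbeck drift.

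Concretely, I would first write \eqref{eq:GF-FP-cov-quad} as $\partial_t\rho_t+\nabla\cdot(\rho_t V_t)=0$ with
\[
V_t(x)=-C_t\nabla\log\rho_t(x)-C_t B^{-1}(x-x_0),
\]
where $C_t=\C(\rho_t)$. Since $(\rho,V)\in\CE(\rho_0,\rho_T)$, Proposition~\ref{prop:split:curves} (applied with $m_t=\mean(\rho_t)$ and $A_t$ the left square root of $C_t$ solving \eqref{eq:Adot}, which is consistent with~\eqref{eq:AdotB} by \eqref{eq:moments-GF}) tells us that the normalized curve $\eta_t=(T_{m_t,A_t})_\#\rho_t$ lies in $\CE_{0,\Id}(\eta_0,\eta_T)$ with associated velocity
\[
\hat V_t(x)=A_t^{-1}\bigl[V_t(A_tx+m_t)-\dot m_t-\dot A_t x\bigr].
\]
The proof then reduces to identifying $\hat V_t(x)=-\nabla\log\eta_t(x)-x$, since that is exactly the drift of the Ornstein--Uhlenbeck continuity equation \eqref{eq:FP-normalized-quad}.

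To verify this identity I would substitute the explicit $V_t$, use the change of variables formula $\nabla\log\rho_t(A_tx+m_t)=A_t^{-\tran}\nabla\log\eta_t(x)$ (standard from $\eta_t(x)=\rho_t(A_tx+m_t)\det A_t$), and invoke $C_t=A_tA_t^\tran$ so that $A_t^{-1}C_tA_t^{-\tran}=\Id$. This collapses the diffusion term into $-\nabla\log\eta_t$ and leaves
\[
\hat V_t(x)=-\nabla\log\eta_t(x)-A_t^\tran B^{-1}(A_tx+m_t-x_0)-A_t^{-1}\dot m_t-A_t^{-1}\dot A_t x.
\]
Next, the first moment ODE in~\eqref{eq:moments-GF} gives $A_t^{-1}\dot m_t=-A_t^\tran B^{-1}(m_t-x_0)$, which cancels the $m_t-x_0$ term exactly. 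Finally, the identity~\eqref{eq:AdotB} yields $A_t^{-1}\dot A_t+A_t^\tran B^{-1}A_t=\Id$, so the remaining $x$-linear drifts combine to $-x$, giving $\hat V_t(x)=-\nabla\log\eta_t(x)-x$ as required.

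The computation is essentially bookkeeping; the only conceptual step is the recognition that the special choice~\eqref{eq:AdotB} is precisely what is needed so that, together with~\eqref{eq:moments-GF}, the affine drift of the transformed equation becomes $-x$. In particular, the non-uniqueness of left square roots discussed in Remark~\ref{rem:choiceR} is not an obstacle here: any other admissible $A_t$ would produce an additional skew-symmetric rotation in the drift, whereas~\eqref{eq:AdotB} is the symmetric choice that matches the symmetric Ornstein--Uhlenbeck target. Since both $\rho_t$ and $\eta_t$ are connected to their respective velocities through the continuity equation, the computed identity for $\hat V_t$ directly gives \eqref{eq:FP-normalized-quad} in distributional sense, concluding the proof.
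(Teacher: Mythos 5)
Your argument is correct, and it reaches the conclusion by a slightly different route than the paper. The paper proves Lemma~\ref{lem:FP-normalized} by a direct density-level computation: starting from $\eta_t(x)=\rho_t(A_tx+m_t)\det A_t$, it differentiates in $t$ and $x$, substitutes the PDE \eqref{eq:GF-FP-cov-quad}, and uses \eqref{eq:moments-GF} together with \eqref{eq:AdotB} (including the trace identity $\tfrac{\dd}{\dd t}\log\det A_t=d-\tr[C_tB^{-1}]$) to assemble the Ornstein--Uhlenbeck right-hand side. You instead phrase \eqref{eq:GF-FP-cov-quad} as a continuity equation with velocity $V_t=-C_t\nabla\log\rho_t-C_tB^{-1}(x-x_0)$ and invoke the normalized-velocity formula of Proposition~\ref{prop:split:curves}, so that all that remains is the algebraic identification $\hat V_t(x)=-\nabla\log\eta_t(x)-x$, for which your cancellations via $A_t^{-1}C_tA_t^{-\tran}=\Id$, $A_t^{-1}C_t=A_t^\tran$, the mean equation in \eqref{eq:moments-GF}, and \eqref{eq:AdotB} are exactly right (and, as you note, \eqref{eq:Adot} combined with the covariance equation in \eqref{eq:moments-GF} is equivalent to \eqref{eq:AdotB}, for any admissible initial square root). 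This buys a shorter, more structural proof that reuses existing machinery and makes transparent why the drift becomes $-x$, whereas the paper's computation is self-contained and does not rely on the velocity-field transformation. Two small points to keep in mind: Proposition~\ref{prop:split:curves} is stated under a finite-action hypothesis, but the only part you use --- that the normalized pair solves the continuity equation with the stated $\hat V_t$ --- is a direct chain-rule computation independent of that hypothesis, so either cite that computation or note the finiteness of $\int_0^1\int|V_t|^2_{C_t}\dd\rho_t\dd t$ on the relevant time interval; and the formula $V_t=-C_t\nabla\log\rho_t-\dots$ presupposes smooth positive densities, which is the same level of formality as the paper's own proof.
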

\begin{proof}
	For convenience of notation, we write $\abs{A}=\det A$ and by the definition of the push-forward, the explicit relation
	\begin{equation}\label{e:eta:rho}
		\eta_t(x)= \rho_t(A_tx+m_t)\cdot |A_t|\;.
	\end{equation}
	Thus we get, writing $\dot\eta, \dot m, \dot A, \dot C$, etc.~for the derivatives w.r.t.~$t$ and neglecting the explicit time-dependence
	\begin{align*}
		\dot\eta(x) &= \abs{A}\, \pra*{\dot\rho(Ax+m)+\ip{\nabla\rho(Ax+m)}{(\dot Ax+\dot m)}+\rho(Ax+m)\tfrac{\dd}{\dd t}\log| A|}\;.
	\end{align*}
	We further note the identities
	\begin{align*}
		\nabla \eta(x)&=A^\tran \nabla \rho(Ax+m)\, \abs{A}\;,\\
		\Delta\eta(x) &= (AA^\tran )_{ij}\partial_{ij}\rho(Ax+m)\, \abs{A}= (\nabla\cdot C\nabla\rho)(Ax+m)\, \abs{A}\;,\\
		\nabla\cdot\pra*{\eta(x)\nabla \bra*{\tfrac{1}{2}\abs{x}^2}}&=\ip{\nabla \eta(x)}{x} +d \eta(x)\;. \\
	\end{align*}
	Moreover, the evolution~\eqref{eq:GF-FP-cov-quad} becomes
	\begin{align*}
		\abs{A}\; \dot \rho (Ax+m)&=\abs{A}\,(\nabla\cdot C\nabla \rho)(Ax+m) + \abs{A}\,\big(\nabla\cdot(\rho C \nabla H\big)(Ax+m)\\
		&= \abs{A}\,(\nabla\cdot C\nabla \rho)(Ax+m) +\abs{A}\,\ip{\nabla\rho(Ax+m)}{ CB^{-1}(Ax+m-x_0)}\\
		&\quad+ \abs{A}\,\rho(Ax+m)\tr[CB^{-1}]\\
		&=\Delta\eta(x)+ \skp*{\nabla \eta(x),A^\tran B^{-1}(Ax+m-x_0)} + \eta(x)\tr[CB^{-1}]\;.
	\end{align*}
	By \eqref{eq:moments-GF} and \eqref{eq:AdotB} we obtain
	\begin{align*}A
		A^{-1}\dot m & = -A^\tran B^{-1}(m-x_0)\;,\\
		\frac{\dd}{\dd t}\log\abs{A}&= \tr[A^{-1}\dot A]=d-\tr[A^\tran B^{-1}A]=d-\tr[CB^{-1}]\;.
	\end{align*}
	The above equations imply
	\begin{align*}
		\abs{A}\skp[\big]{\nabla\rho(Ax+m),\dot Ax+\dot m} &= \skp*{\nabla\eta(x),A^{-1}\dot A x+A^{-1}\dot m}\\
		&= \skp{\nabla\eta(x),x-A^\tran B^{-1}(Ax+m-x_0)}.
	\end{align*}
	Thus we finally obtain
	\begin{equation*}
		\dot\eta(x)=\Delta\eta(x)+\ip{\nabla\eta(x)}{x} +d\eta(x) =\Delta\eta(x) +\nabla\cdot(\eta(x) x) \;. \qedhere
	\end{equation*}
\end{proof}
This result allows us to translate all the well-known properties for the classical Fokker-Planck equation (see for instance~\cite{BakryEmery,MarkowichVillani,AMTU,Bakry2014}) into the framework of covariance-modulated flows, such as exponential
convergence of the relative entropy and Fisher information
\begin{align}
	\label{eq:OU-decay}
	\cE(\eta_t | \eta_\infty)  \leq  e^{-2\lambda t} \cE(\eta_0 | \eta_\infty) 
	\qquad\text{and}\qquad
	\cI(\eta_t | \eta_\infty)  \leq  e^{-2\lambda t} \cI(\eta_0 | \eta_\infty) \,,
\end{align}
where $\eta_\infty= \normal_{0,\Id}$.
But also the evolution variational inequality (EVI) for the \emph{usual}
$L^2$-Wasserstein distance $W_2$~\cite{AGS}, implying exponential contraction in $W_2$ of rate $1$
\begin{equation}\label{eq:OU-W2-decay}
	W_2(\eta_t,\eta_\infty) \leq e^{-t} W_2(\eta_0,\normal_{0,\Id}).
\end{equation}

\subsection{Convergence in entropy}\label{sec:Ent:cv}

In this section we prove the convergence of the evolution~\eqref{eq:GF-FP-cov} to equilibrium in relative entropy and Fisher information. For this we will apply the observation that the normalized density $\eta$ evolves along a standard Ornstein-Uhlenbeck evolution as shown in Lemma~\ref{lem:FP-normalized} and make use of the fact that the mean and covariance are given by the ODE system \eqref{eq:moments-GF}.
Recall the definition \eqref{eq:defnormal} of $\normal_{m,C}$ a Gaussian with mean $m$ and covariance $C$.
With this, we decompose solutions $(\rho_t)_{t\geq}$ to the generalized Fokker-Planck equation \eqref{eq:GF-FP-cov} into a Gaussian approximations $\normal_{m_t,C_t}$ of $\rho_t$ where the moments $(m_t, C_t)$ satisfy \eqref{eq:moments-GF} and the remainder $(\eta_t)_{t\geq 0}$.
Based on this decomposition, we have the following splitting for the relative entropy and Fisher information.
\begin{lemma}[Entropic decomposition]\label{lem:ent-split}
	Given $\rho\in\cP_2(\R^d)$, let $\normal_{m,C}$ be the Gaussian with the same mean $m:=\mean(\rho)$ and the same covariance $C=\C(\rho)$ as $\rho$, and let $\eta=(T_{m,A})_\#\rho$ be the normalization of $\rho$ according to Definition \ref{def:normalization}.
	Then, for any $x_0\in\R^d$ and $B\in\S_{\succ 0}^d$, the splitting formula holds
	\begin{align}
		\label{eq:splitE}
		\cE(\rho\,|\,\normal_{x_0,B}) &= \cE(\eta\,|\,\normal_{0,\Id}) + \cE(\normal_{m,C}\,|\,\normal_{x_0,B})\:, \\
		\label{eq:splitI}
		\covI(\rho\,|\,\normal_{x_0,B}) &= \cI(\eta\,|\,\normal_{0,\Id}) + \covI(\normal_{m,C}\,|\,\normal_{x_0,B})\:.
	\end{align}
	Moreover, the latter terms in~\eqref{eq:splitE} and~\eqref{eq:splitI}, respectively, have the explicit representations
	\begin{align}
		\label{eq:residualE}
		\cE(\normal_{m,C}\,|\,\normal_{x_0,B}) &= -\frac12\bra[\Big]{\log\det(B^{-1}C)+\tr[\Id-B^{-1}C]-\abs[\big]{B^{-\frac{1}{2}}(m-x_0)}^2} \;, \\
		\label{eq:residualI}
		\covI(\normal_{m,C}\,|\,\normal_{x_0,B}) &= \norm[\big]{\Id-B^{-1}C}_\HS^2+\abs[\big]{C^{\frac{1}{2}}B^{-1}(m-x_0)}^2.
	\end{align}
\end{lemma}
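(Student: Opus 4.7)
My plan is to establish the four assertions in the order \eqref{eq:residualE}, \eqref{eq:residualI}, \eqref{eq:splitE}, \eqref{eq:splitI}. For \eqref{eq:residualE} I would write
\[
\log\frac{\normal_{m,C}(x)}{\normal_{x_0,B}(x)} = \tfrac{1}{2}\log\det(BC^{-1}) - \tfrac{1}{2}|x-m|_C^2 + \tfrac{1}{2}|x-x_0|_B^2
\]
and integrate against $\normal_{m,C}$, using the Gaussian identities $\int |x-m|_C^2\, d\normal_{m,C} = d$ and $\int |x-x_0|_B^2\, d\normal_{m,C} = \tr(B^{-1}C) + |m-x_0|_B^2$; rearranging with $\log\det(BC^{-1}) = -\log\det(B^{-1}C)$ yields \eqref{eq:residualE}. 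For \eqref{eq:residualI} compute
\[
\nabla\log\frac{\normal_{m,C}}{\normal_{x_0,B}}(x) = (B^{-1}-C^{-1})(x-m) + B^{-1}(m-x_0),
\]
insert into $\covI$, and expand the square. The mixed term vanishes by zero mean, the purely constant term yields $|C^{1/2}B^{-1}(m-x_0)|^2$, and the quadratic-in-$(x-m)$ term equals $\tr\bigl[(B^{-1}-C^{-1})C(B^{-1}-C^{-1})C\bigr] = \tr\bigl[(B^{-1}C-\Id)^2\bigr]$, which is the squared Hilbert--Schmidt norm appearing in \eqref{eq:residualI}.

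The entropy splitting \eqref{eq:splitE} rests on a single observation: $\log(\normal_{m,C}/\normal_{x_0,B})$ is a quadratic polynomial, and $\rho$, $\normal_{m,C}$ share their first two moments, so
\[
\int \log\frac{\normal_{m,C}}{\normal_{x_0,B}}\,d\rho = \int \log\frac{\normal_{m,C}}{\normal_{x_0,B}}\,d\normal_{m,C} = \cE(\normal_{m,C}\,|\,\normal_{x_0,B}).
\]
Decomposing $\log(\rho/\normal_{x_0,B}) = \log(\rho/\normal_{m,C}) + \log(\normal_{m,C}/\normal_{x_0,B})$ and integrating reduces \eqref{eq:splitE} to the identity $\cE(\rho\,|\,\normal_{m,C}) = \cE(\eta\,|\,\normal_{0,\Id})$. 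The latter is invariance of KL divergence under the bijective affine map $T_{m,A}$: by construction $T_{m,A}$ sends $\rho$ to $\eta$, and it sends the Gaussian $\normal_{m,C}$ to the Gaussian with mean $0$ and covariance $A^{-1}CA^{-\tran} = \Id$, namely $\normal_{0,\Id}$.

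For the Fisher splitting \eqref{eq:splitI}, I expand
\[
|C^{1/2}\nabla\log(\rho/\normal_{x_0,B})|^2 = |C^{1/2}\nabla\log(\rho/\normal_{m,C})|^2 + |C^{1/2}\nabla\log(\normal_{m,C}/\normal_{x_0,B})|^2 + 2\,(\mathrm{cross})
\]
and treat each piece. The second squared term integrated against $\rho$ gives $\covI(\normal_{m,C}\,|\,\normal_{x_0,B})$ by the same quadratic-integrand argument used above. For the first, the affine chain rule yields $\nabla\log\rho(x) = A^{-\tran}(\nabla\log\eta)(y)$ with $y := A^{-1}(x-m)$, and analogously for $\normal_{m,C}\mapsto\normal_{0,\Id}$, so $C^{1/2}\nabla\log(\rho/\normal_{m,C})(x) = (C^{1/2}A^{-\tran})\nabla\log(\eta/\normal_{0,\Id})(y)$; since $AA^{\tran}=C$ forces $(C^{1/2}A^{-\tran})(C^{1/2}A^{-\tran})^{\tran} = C^{1/2}C^{-1}C^{1/2}= \Id$, this matrix is orthogonal and pushing forward gives $\cI(\eta\,|\,\normal_{0,\Id})$. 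The main obstacle is showing the cross term vanishes. Setting $w(x) := C\nabla\log(\normal_{m,C}/\normal_{x_0,B})(x) = (CB^{-1}-\Id)(x-m) + CB^{-1}(m-x_0)$, integration by parts gives
\[
\int \nabla\log\frac{\rho}{\normal_{m,C}}\cdot w\,d\rho = -\int \rho\,\nabla\cdot w\,dx + \int \rho\,C^{-1}(x-m)\cdot w\,dx.
\]
Here $\nabla\cdot w = \tr(CB^{-1}) - d$ is constant, producing $d-\tr(CB^{-1})$ in the first integral; in the second, the constant part of $w$ integrates to zero by $\int \rho(x-m)\,dx = 0$, and the remaining quadratic piece contributes $\tr\bigl[C^{-1}(CB^{-1}-\Id)\,C\bigr] = \tr(CB^{-1}) - d$, exactly cancelling the first. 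The integration by parts is legitimated by finiteness of $\covI(\rho\,|\,\normal_{x_0,B})$ and a standard truncation/approximation argument.
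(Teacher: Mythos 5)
Your proof is correct and follows essentially the same route as the paper's: the same decomposition $\log(\rho/\normal_{x_0,B})=\log(\rho/\normal_{m,C})+\log(\normal_{m,C}/\normal_{x_0,B})$, the same matching-of-moments argument for integrating the quadratic polynomial against $\rho$ versus $\normal_{m,C}$, the same affine change of variables for the shape terms (your orthogonality observation for $C^{1/2}A^{-\tran}$ is just a slicker phrasing of $A^{-1}CA^{-\tran}=\Id$), and the same integration-by-parts cancellation of the cross term. One small caveat: your final step for \eqref{eq:residualI} identifies $\tr[(\Id-B^{-1}C)^2]$ with $\norm{\Id-B^{-1}C}_\HS^2$, which for the non-symmetric matrix $\Id-B^{-1}C$ is valid only when $B$ and $C$ commute; the quantity you actually computed, $\tr[(\Id-B^{-1}C)^2]=\norm[\big]{\Id-B^{-\frac12}CB^{-\frac12}}_\HS^2$, is the correct value of $\covI(\normal_{m,C}\,|\,\normal_{x_0,B})$ (and is the form used later in the paper's decay estimates), so this is an imprecision inherited from the statement rather than a gap in your argument.
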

\begin{proof}
	For reference throughout the proof, note that in view of \eqref{eq:defnormal}, that
	\begin{equation}
		\label{eq:log}
		\begin{split}
			\log\bra*{\frac{\normal_{m,C}}{\normal_{x_0,B}}} 
			&= - \frac12 \skp*{x-m,(C^{-1}-B^{-1})(x-m)} + \skp*{x-m, B^{-1}(m-x_0)} \\
			&\qquad + \frac12\skp*{m-x_0, B^{-1}(m-x_0)} -\frac12\log\det (B^{-1}C) .
		\end{split}
	\end{equation}
	We split the expression for the relative entropy as follows
	\begin{align*}
		\cE(\rho\,|\,\normal_{x_0,B}) 
		= \int \log\left(\frac{\rho}{\normal_{x_0,B}}\right)\dd\rho
		= \int \log\left(\frac{\rho}{\normal_{m,C}}\right)\dd\rho
		+ \int \log\left(\frac{\normal_{m,C}}{\normal_{x_0,B}}\right)\dd\rho.
	\end{align*}
	We recall that $T_{m,A}(x)=A^{-1}(x-m)$, where $A\in\R^{d\times d}$ is an invertible matrix such that $ AA^\tran  = C$.
	On the one hand, since $\eta=(T_{m,A})_\#\rho$ and $\normal_{0,\Id}=(T_{m,A})_\#\normal_{m,C}$, we have
	\begin{align}
		\label{eq:etarho}
		\log\left(\frac{\rho}{\normal_{m,C}}\right) = \log\left(\frac{\eta}{\normal_{0,\Id}}\right)\circ T_{m,A}\,,
	\end{align}    
	and thus a change of variables inside the first integral yields
	\begin{align*}
		\int \log\left(\frac{\rho}{\normal_{m,C}}\right)\dd\rho
		= \int \log\left(\left(\frac{\eta}{\normal_{0,\Id}}\right)\circ T_{m,A}\right)\dd\rho
		= \int \log\left(\frac{\eta}{\normal_{0,\Id}}\right)\dd\eta
		= \cE(\eta\,|\,\normal_{0,\Id})\,.
	\end{align*}
	And on the other hand, the expression $\log(\normal_{m,C}/\normal_{x_0,B})$ is a second order polynomial in $x$, see \eqref{eq:log} above, and since $\rho$ and $\normal_{m,C}$ have the same first moment and covariance,
	we conclude that
	\begin{align}
		\label{eq:rhonormal}
		\int \log\left(\frac{\normal_{m,C}}{\normal_{x_0,B}}\right)\dd\rho 
		= \int \log\left(\frac{\normal_{m,C}}{\normal_{x_0,B}}\right)\dd\normal_{m,C}
		= \cE(\normal_{m,C}\,|\,\normal_{x_0,B})\,.
	\end{align}
	This shows \eqref{eq:splitE}.
	For the information functional, we proceed in a similar way
	\begin{align*}
		\covI(\rho\,|\,\normal_{x_0,B})
		&= \int \left|A^\tran \nabla\log\left(\frac{\rho}{\normal_{x_0,B}}\right)\right|^2 \dd\rho \\ 
		&= \int \left|A^\tran \nabla\log\left(\frac{\rho}{\normal_{m,C}}\right)+A^\tran \nabla\log\left(\frac{\normal_{m,C}}{\normal_{x_0,B}}\right)\right|^2 \dd\rho \\ 
		& = \int \left|A^\tran \nabla\log\left(\frac{\rho}{\normal_{m,C}}\right)\right|^2 \dd\rho
		+ \int \left|A^\tran \nabla\log\left(\frac{\normal_{m,C}}{\normal_{x_0,B}}\right)\right|^2 \dd\rho\\
		&\qquad + 2\int \skp*{ \nabla\log\left(\frac{\rho}{\normal_{m,C}}\right)\,,\, C\nabla\log\left(\frac{\normal_{m,C}}{\normal_{x_0,B}}\right)} \,\dd\rho
		=: I_1+I_2+I_3\,.
	\end{align*}
	From \eqref{eq:etarho}, we conclude via differentiation that
	\begin{align*}
		\nabla\log\left(\frac{\eta}{\normal_{0,\Id}}\right) 
		=  \left(A^\tran\,\nabla\log\left(\frac{\rho}{\normal_{m,C}}\right)\right)\circ T_{m,A}^{-1}\,.
	\end{align*}
	Recalling from Definition \ref{def:normalization} that $A^{-1}CA^{-\tran}=\Id$, a change of variables in $I_1$ thus leads to
	\begin{align*}
		I_1 = \int \left|A^\tran \nabla\log\left(\frac{\rho}{\normal_{m,C}}\right)\right|^2\circ T_{m,A}^{-1} \dd\eta
		= \int \left|\nabla\log\left(\frac{\eta}{\normal_{0,\Id}}\right)\right|^2 \dd\eta
		= \covI(\eta\,|\,\normal_{0,\Id})\,,
	\end{align*}
	where we have used that $\C(\normal_{0,\Id})=\Id$.
	Next, we observe that $|A^\tran \nabla\log(\normal_{m,C}/\normal_{x_0,B})|^2$ is a quadratic polynomial in $x$, see \eqref{eq:log}. Since $\rho$ and $\normal_{m,C}$ have identical mean and covariance, we conclude --- similarly as in \eqref{eq:rhonormal} above --- that $I_2=\covI(\normal_{m,C}\,|\,\normal_{x_0,B})$.
	Finally, we split $I_3$ as follows
	\begin{align*}
		I_3 = 2\int \skp{\nabla\rho, v} \dd x - 2 \int \skp*{\nabla\log\normal_{m,C}, v}\dd\rho\,, 
	\end{align*}
	where we introduced --- see \eqref{eq:log} above ---
	\begin{align*}
		v := C\,\nabla\log\left(\frac{\normal_{m,C}}{\normal_{x_0,B}}\right) 
		=  -\big(\Id-CB^{-1}\big)(x-m) + CB^{-1}(m-x_0)\,.
	\end{align*}
	From integrating by parts, we get
	\begin{align*}
		\int \skp{\nabla\rho , v} \dd x 
		= -\int \skp{\rho \nabla, v}\dd x
		= \tr[\Id-CB^{-1}]\,.
	\end{align*}
	This is matched with
	\begin{align*}
		\int \skp{\nabla\log\normal_{m,C}, v}\dd\rho 
		= -\int \skp{C^{-1}(x-m), v}\dd\rho 
		= \tr[\Id-CB^{-1}]\,,
	\end{align*}
	which yields $I_3=0$.
	In conclusion,
	\begin{align*}
		I_1+I_2+I_3 = \covI(\eta\,|\,\normal_{0,\Id}) + \covI(\normal_{m,C}\,|\,\normal_{x_0,B}) + 0,
	\end{align*}
	which is \eqref{eq:splitI}.
	Finally, for the proof of \eqref{eq:residualE}\&\eqref{eq:residualI}, we first integrate the expression in \eqref{eq:log} against $\normal_{m,C}$ --- using that $\mean(\normal_{m,C})=m$ and $\C(\normal_{m,C})=C$ --- which yields \eqref{eq:residualE}. Next, we differentiate the expression in \eqref{eq:log} to obtain
	\begin{align*}
		\left|C^{1/2}\nabla\log\left(\frac{\normal_{m,C}}{\normal_{x_0,B}}\right)\right|^2 
		&= |C^{-1/2}(\Id-B^{-1}C)(x-m)|^2 + |C^{1/2}B^{-1}(m-x_0)|^2 \\ 
		& \qquad -\skp*{2B^{-1}(m-x_0),(\Id-C B^{-1})(x-m)}\,.
	\end{align*}
	As before, integration against $\normal_{m,C}$ provides the desired expression in \eqref{eq:residualI}.
\end{proof}

In other words, by writing for short $\normal_t:=\normal_{m_t,C_t}$ and $\normal_*:=\rho_\infty =\normal_{x_0,B}$, 
the solutions~$\rho_t$ to~\eqref{eq:GF-FP-cov} satisfy
\begin{align*}
	\cE(\rho_t \,|\, \rho_\infty) &= \cE(\eta_t \,|\, \normal_{0,\Id} ) + \cE(\normal_t \,|\, \normal_*)\, ,\\
	\covI(\rho_t \,|\, \rho_\infty) &= \cI(\eta_t \,|\, \normal_{0,\Id} ) + \covI(\normal_t \,|\, \normal_*)\, ,
\end{align*}	
with $\eta_t$ given in~\eqref{e:def:eta}. 
The respective first terms $\cE(\eta_t\,|\,\normal_{0,\Id})$ and $\cI(\eta_t\,|\,\normal_{0,\Id})$ in this decomposition 
can simply be controlled by the entropy--dissipation bounds \eqref{eq:OU-decay} for the Ornstein-Uhlenbeck semigroup. Therefore, it remains to obtain a rate for the relaxation of $\cE(\normal_t \,|\, \normal_*)$ and $\covI(\normal_t \,|\, \normal_*)$.
We recall, that $\|Q\|_2$ denotes the largest singular value of $Q$, i.e., the largest eigenvalue of~$(Q^\tran Q)^{\frac{1}{2}}$.
\begin{lemma}[Relaxation for Gaussians]\label{lem:Gauss-decay}
Any $(m_t,C_t)$ solving the moment equations~\eqref{eq:moments-GF} satisfies
\begin{align}
	\label{eq:ent-decay-cov}
	\cE(\normal_t \,|\, \normal_*) 
	&\leq \max\set*{ 1,  \|B^{\frac{1}{2}}C_0^{-1} B^{\frac{1}{2}}\|_2}\max\set*{ 1,\|B^{-\frac12}C_0B^{-\frac12}\|_2}  e^{-2t}  \cE(\normal_0 | \normal_*) \, ,\\
	\label{eq:fish-decay-cov}
	\covI(\normal_t \,|\, \normal_*) 
	&\leq \max\set*{ 1,  \|B^{\frac{1}{2}}C_0^{-1} B^{\frac{1}{2}}\|_2^2}\max\set*{ 1,\|B^{-\frac12}C_0B^{-\frac12}\|_2^2} e^{-2t}  \covI(\normal_0 \,|\, \normal_*) \, ,
\end{align}
If $m_0=x_0$, then	
\begin{align}
	\label{eq:ent-decay-fixedmean}
	\cE(\normal_t \,|\, \normal_*) 
	&\leq  \max\set*{ 1,  \|B^{\frac{1}{2}}C_0^{-1} B^{\frac{1}{2}}\|_2} e^{-2t}  \cE(\normal_0 \,|\, \normal_*) \, , \\
	\label{eq:fish-decay-fixedmean}
	\covI(\normal_t \,|\, \normal_*) 
	&\leq  \max\set*{ 1,  \|B^{\frac{1}{2}}C_0^{-1} B^{\frac{1}{2}}\|_2^2} e^{-4t}  \covI(\normal_0 \,|\, \normal_*) \,.
\end{align}
\end{lemma}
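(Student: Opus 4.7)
The plan is to reduce both inequalities to scalar computations along the eigenvalues of a single auxiliary symmetric matrix. Let $P_0 := B^{\frac{1}{2}}C_0^{-1}B^{\frac{1}{2}}$ with eigenpairs $(\mu_i^0, e_i)_{i=1}^d$. Lemma~\ref{lem:moment-decay} gives
\[
	B^{\frac{1}{2}}C_t^{-1}B^{\frac{1}{2}} = (1-e^{-2t})\Id + e^{-2t} P_0,
\]
which is polynomial in $P_0$ and hence diagonal in the same basis $(e_i)$, with eigenvalues $\mu_i(t) := 1 + e^{-2t}(\mu_i^0 - 1)$. Consequently $N_t := B^{-\frac{1}{2}}C_tB^{-\frac{1}{2}}$ is diagonal in $(e_i)$ with eigenvalues $\nu_i(t) := 1/\mu_i(t)$, and inserting these into~\eqref{eq:residualE}--\eqref{eq:residualI} yields the scalar splittings
\begin{align*}
	\cE(\normal_t\,|\,\normal_*) &= \tfrac{1}{2}\sum_{i=1}^d \psi(\nu_i(t)) + \tfrac{1}{2}|y_t|^2,\\
	\covI(\normal_t\,|\,\normal_*) &= \sum_{i=1}^d (1-\nu_i(t))^2 + |C_t^{\frac{1}{2}}B^{-1}(m_t-x_0)|^2,
\end{align*}
with $\psi(\nu) := \nu-1-\log\nu$ and $y_t := B^{-\frac{1}{2}}(m_t-x_0)$. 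The strategy is then to bound each of these four contributions separately by $(\text{const})\cdot e^{-2t}$ times its initial value.

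For the covariance-type pieces the Fisher contribution is immediate: the identity $(1-\nu_i(t))^2 = e^{-4t}(\mu_i^0/\mu_i(t))^2(1-\nu_i^0)^2 \leq e^{-4t}\max(1,\mu_i^0)^2(1-\nu_i^0)^2$ holds since $\mu_i(t)$ lies between $\min(1,\mu_i^0)$ and $\max(1,\mu_i^0)$, contributing the factor $\max(1,\|P_0\|_2)^2$. The entropic contribution requires the scalar inequality $\tilde\psi(1+\tau(\mu_i^0-1)) \leq \max(1,\mu_i^0)\,\tau\,\tilde\psi(\mu_i^0)$ for $\tau = e^{-2t} \in (0,1]$, where $\tilde\psi(\mu) := 1/\mu - 1 + \log\mu$. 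A direct second-derivative computation shows the map $f_i(\tau):=\tilde\psi(1+\tau(\mu_i^0-1))$ satisfies $f_i''(\tau) = (\mu_i^0-1)^2(1-\tau(\mu_i^0-1))/(1+\tau(\mu_i^0-1))^3 \geq 0$ on $[0,1]$ whenever $\mu_i^0 \leq 2$ (including all $\mu_i^0 < 1$), so convexity together with $f_i(0)=0$ gives the inequality with constant $1$. The main technical point is the remaining range $\mu_i^0 > 2$, where $f_i$ fails to be convex on $[0,1]$: there I would locate the maximum of $\tau \mapsto f_i(\tau)/\tau$ at the interior critical point $\tau^* = 1/(\mu_i^0-1)$ (where $\mu_i(\tau^*) = 2$), so that this maximum equals $(\mu_i^0-1)(\log 2 - \tfrac{1}{2})$, and close the estimate via the elementary inequality $(\mu_i^0 - 1)(\log 2 + \tfrac{1}{2}) \leq \mu_i^0 \log\mu_i^0$ for $\mu_i^0 \geq 2$, which follows from monotonicity of $\mu \mapsto \mu\log\mu/(\mu-1)$ on $(1,\infty)$ and the numerical check $2\log 2 > \log 2 + 1/2$ at $\mu = 2$.

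For the mean-type pieces I diagonalize in the same eigenbasis $(e_i)$. Since $N_t$ is diagonal there, the linear ODE $\dot y_t = -N_t y_t$ implied by~\eqref{eq:moments-GF} decouples into scalar equations, and the explicit integral $\int_0^t \nu_i(s)\,ds = t + \tfrac{1}{2}\log(\mu_i(t)/\mu_i^0)$ yields $y_i(t)^2 = y_i(0)^2\, e^{-2t}\,\mu_i^0/\mu_i(t) \leq y_i(0)^2\,e^{-2t}\max(1,\mu_i^0)$, whence $|y_t|^2 \leq e^{-2t}\max(1,\|P_0\|_2)|y_0|^2$. For the mean Fisher term, choosing $A_0 = C_0^{\frac{1}{2}}$ and letting $A_t$ evolve by~\eqref{eq:Adot}, I would write $m_t - x_0 = e^{-t}A_t A_0^{-1}(m_0 - x_0)$ and note that $A_t^\tran B^{-1}A_t$ is similar to $N_t$, so its operator norm is bounded by $\max(1, \|P_0^{-1}\|_2)$; combining with the matching lower bound $\lambda_{\min}(C_0^{\frac{1}{2}}B^{-1}C_0^{\frac{1}{2}}) = 1/\|P_0\|_2$ on the initial mean-Fisher value supplies the factor $\max(1,\|P_0\|_2)^2\max(1,\|P_0^{-1}\|_2)^2$. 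Adding all four contributions gives~\eqref{eq:ent-decay-cov} and~\eqref{eq:fish-decay-cov}; under $m_0 = x_0$ the two mean terms vanish identically and the second $\max$ factor is absent, producing~\eqref{eq:ent-decay-fixedmean} and~\eqref{eq:fish-decay-fixedmean} (with the improved rate $e^{-4t}$ for the Fisher information following directly from the computation $(1-\nu_i(t))^2 = e^{-4t}(\mu_i^0/\mu_i(t))^2(1-\nu_i^0)^2$).
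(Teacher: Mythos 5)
Most of your reduction is sound and runs parallel to the paper's own proof: the diagonalization via the explicit formula $B^{\frac12}C_t^{-1}B^{\frac12}=(1-e^{-2t})\Id+e^{-2t}P_0$, the identity $(1-\nu_i(t))^2=e^{-4t}\bigl(\mu_i^0/\mu_i(t)\bigr)^2(1-\nu_i^0)^2$ for the Fisher part, and the decoupled scalar ODE for $y_t=B^{-\frac12}(m_t-x_0)$ (which even yields a slightly sharper constant for the mean part of the entropy than the paper's estimate) are all correct. The genuine gap sits exactly at the step you call the main technical point, namely the entropy--covariance term for eigenvalues $\mu_i^0>2$. The point $\tau^*=1/(\mu_i^0-1)$ is the inflection point of $f_i(\tau)=\tilde\psi(1+\tau(\mu_i^0-1))$, not a critical point of $\tau\mapsto f_i(\tau)/\tau$: since $\tilde\psi'(\mu)=(\mu-1)/\mu^2$, one has $f_i'(\tau^*)\tau^*-f_i(\tau^*)=\tfrac14-(\log 2-\tfrac12)=\tfrac34-\log 2>0$, so $f_i(\tau)/\tau$ is still strictly increasing at $\tau^*$. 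Its true supremum on $(0,1]$ is attained either at $\tau=1$ or at the chord-tangency point, i.e.\ at $u=\tau(\mu_i^0-1)$ solving $u^2/(1+u)^2=\log(1+u)-u/(1+u)$ (numerically $u^*\approx 2.16$), where the value is $(\mu_i^0-1)\,u^*/(1+u^*)^2\approx 0.216\,(\mu_i^0-1)$, strictly larger than your claimed maximum $(\mu_i^0-1)(\log 2-\tfrac12)\approx 0.193\,(\mu_i^0-1)$. Consequently your closing inequality $(\mu_i^0-1)(\log 2+\tfrac12)\le\mu_i^0\log\mu_i^0$ bounds a quantity smaller than the actual supremum, and the argument as written does not close.

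The scalar bound you need is nevertheless true and can be repaired in two ways. Staying on your route: at any interior maximizer of $f_i(\tau)/\tau$ the tangency condition gives $f_i(\tau)/\tau=f_i'(\tau)=(\mu_i^0-1)\,u/(1+u)^2\le(\mu_i^0-1)/4$, and the elementary inequality $\tfrac54(\mu_i^0-1)\le\mu_i^0\log\mu_i^0$ for $\mu_i^0\ge 2$ (equality-checked at $\mu_i^0=2$ and monotone thereafter) then yields $f_i(\tau)/\tau\le 1-\mu_i^0+\mu_i^0\log\mu_i^0=\mu_i^0\,\tilde\psi(\mu_i^0)$, the endpoint $\tau=1$ being trivial. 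Simpler, and with no case distinction at all, is the paper's device: apply the below-secant estimate to the everywhere-convex function $\psi(\nu)=\nu-1-\log\nu$ at the point $\nu_i(t)$, which lies between $\nu_i^0$ and $1$, to get $\psi(\nu_i(t))\le\frac{\nu_i(t)-1}{\nu_i^0-1}\,\psi(\nu_i^0)$, and observe that the ratio equals $e^{-2t}\mu_i^0/\mu_i(t)\le e^{-2t}\max\{1,\mu_i^0\}$ --- precisely the quantity you already computed for the Fisher term. The difficulty in your version comes from composing the non-convex $\tilde\psi$ with the affine path $\tau\mapsto\mu_i(\tau)$; convexity in the spectral variable, rather than in $\tau$, is the right tool here.
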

\begin{remark}
There is seemingly a discrepancy between the exponential rates of decay of two and four in \eqref{eq:ent-decay-fixedmean} and in \eqref{eq:fish-decay-fixedmean}, respectively: since $\covI$ is the dissipation of $\cE$, one would expect the rates to agree. Indeed, a more detailed analysis of $\cE$'s asymptotics --- see formulas \eqref{eq:sigma-sol}\&\eqref{eq:losing4} in the proof below --- reveals exponential decay at rate four eventually (i.e., once $\cE$ is sufficiently small, thanks to the quadratic behaviour of $s\mapsto s-1-\log s$ near $s=1$), but only decay at rate two initially (i.e., for large values of $\cE$, due to the linear growth of $s\mapsto s-1-\log s$ for large $s$). That is, the exponential rate in \eqref{eq:ent-decay-fixedmean} could be improved from two to four at the price of enlarging the constant on the right-hand side.
\end{remark}
\begin{proof}
We prove the estimates for the entropy first. From \eqref{eq:residualE}, we obtain
\begin{equation}\label{eq:estimate0}
	\cE(\normal_t \,|\, \normal_*) = S(t) + \frac{1}{2} |m_t-x_0|_B^2 \ \ {with}\ \  S(t):=\frac{1}{2} \pra*{ \tr \bra[\big]{B^{-\frac12} C_t B^{-\frac12}} \! - d - \log \det\bra[\big]{B^{-\frac12} C_tB^{-\frac12}}}  
\end{equation} 
Concerning the norm of $m_t-x_0$, note that thanks to \eqref{eq:m-decay} above, we have $\abs*{m_t - x_0}^2_{C_t} \leq e^{-2t} \abs*{m_0 - x_0}^2_{C_0}$. Using further that, for any vector $v$,
\begin{align*}
	|v|_B^2 = \skp[\big]{v , B^{-1}v} = \skp[\big]{C_t^{-\frac{1}{2}}v, (C_t^{\frac{1}{2}}B^{-1}C_t^{\frac{1}{2}})(C_t^{-\frac{1}{2}}v} \le \norm[\big]{C_t^{\frac{1}{2}}B^{-1}C_t^{\frac{1}{2}}}_2 \; |v|_{C_t}^2, 
\end{align*}
and similarly that $|v|_{C_t}^2\le\|B^{\frac{1}{2}}C_t^{-1}B^{\frac{1}{2}}\|_2|v|_B^2$,
we conclude that
\begin{align*}
	\abs*{m_t - x_0}^2_{B} 
	&\leq \norm[\big]{C_t^{\frac{1}{2}}B^{-1}C_t^{\frac{1}{2}}}_2 \abs*{m_t - x_0}^2_{C_t} \leq  \norm[\big]{C_t^\frac{1}{2}B^{-1}C_t^\frac{1}{2}}_2 e^{-2t} \abs*{m_0 - x_0}^2_{C_0}\\
	&\leq \norm[\big]{C_t^\frac{1}{2}B^{-1}C_t^\frac{1}{2}}_2 \norm[\big]{B^{\frac{1}{2}}C_0^{-1} B^{\frac{1}{2}}}_2 e^{-2t} \abs*{m_0 - x_0}^2_{B}.
\end{align*}
Next, we note that a similarity transformation with $B^{\frac{1}{2}} C_t^{-\frac{1}{2}}$ shows that the eigenvalues of $C_t^\frac{1}{2}B^{-1}C_t^\frac{1}{2}$ and $B^{-\frac{1}{2}}C_{t}B^{-\frac12}$ agree, and by positivity and symmetry also the singular values. In combination with the solution formula~\eqref{eq:sol:Ct}, it follows that
\begin{align*}
	\norm[\big]{C_t^\frac{1}{2}B^{-1}C_t^\frac{1}{2}}_2
	&=\norm[\big]{B^{-\frac{1}{2}}C_{t}B^{-\frac12}}_2
	=\norm[\big]{\big(B^\frac{1}{2}C_t^{-1}B^\frac{1}{2}\big)^{-1}}_2 \\
	&= \norm[\big]{\bra[\big]{(1-e^{-2t})\Id+ e^{-2t}  B^{\frac12}C_0^{-1}B^{\frac12}}^{-1}}_2
	\leq \max\set[\big]{ 1, \norm[\big]{B^{-\frac12}C_0B^{-\frac12}}_2}.
\end{align*} 
In combination, this concludes the estimate for the last term in \eqref{eq:estimate0}
\begin{align}\label{eq:estimate1}
	|m_t-x_0|_B^2  
	&\leq  \max\set[\big]{ 1, \norm[\big]{B^{-\frac12}C_0B^{-\frac12}}_2} \norm[\big]{B^{\frac{1}{2}}C_0^{-1} B^{\frac{1}{2}}}_2 e^{-2t} |m_0-x_0|_B^2 .
\end{align}
Next, to estimate $S(t)$ from~\eqref{eq:estimate0}, we write it in terms of the eigenvalues $\sigma_i(t) = \sigma_i(B^{-\frac12}C_tB^{-\frac12})$, $i=1,\ldots,d$. The latter are real and positive (and agree with their singular values), and are given by
\begin{equation}\label{eq:sigma-sol}
	\sigma_i(t) = \frac{1}{1-e^{-2t} + e^{-2t} \sigma_i(0)^{-1} }  
\end{equation}
thanks to the explicit solution representation~\eqref{eq:sol:Ct}.
We thus have
\begin{equation}
	\label{eq:losing4}
	S(t) = \sum_{i=1}^d \bra*{ \sigma_i(t) - 1 - \log \sigma_i(t)}.
\end{equation}
The goal is to bound $S(t)$ by a multiple of $e^{-2t}S(0)$ from above, uniformly in $t\ge0$. To this end, we control the terms for the eigenvalues separately. From the solution formula \eqref{eq:sigma-sol}, it follows immediately that each $\sigma_i(t)$ converges to $1$ monotonically. Next, notice that $s\mapsto s-1-\log s$ is strictly convex with minimum zero at $s=1$. Thus the ``below-secant-formula'' for convex functions implies
\begin{align*}
	\sigma_i(t)-1-\log \sigma_i(t) &\le \frac{\sigma_i(t)-1}{\sigma_i(0)-1} (\sigma_i(0)-1-\log\sigma_i(0)) 
	= \frac{e^{-2t}(\sigma_i(0)-1-\log\sigma_i(0))}{\sigma_i(0)(1-e^{-2t})+e^{-2t}},
\end{align*}
where the last equality directly follows from \eqref{eq:sigma-sol}.
Estimating the pre-factor
\begin{align*}
	\frac1{\sigma_i(0)(1-e^{-2t})+e^{-2t}} \le \max\big(1,\sigma_i(0)^{-1}\big),
\end{align*}
and recalling that $\max(\sigma_1(0)^{-1},\ldots,\sigma_d(0)^{-1})=\|B^\frac12C_t^{-1}B^\frac12\|_2$, we conclude in the same spirit as above that 
\begin{align*}
	S(t) \le \max\set*{1,\norm[\big]{B^\frac12C_0^{-1}B^\frac12}_2} S_0 e^{-2t}.        
\end{align*}
This directly yields \eqref{eq:ent-decay-fixedmean}, and in combination with \eqref{eq:estimate1} also \eqref{eq:ent-decay-cov}.

We now turn to the modified Fisher information. By \eqref{eq:residualI}, we have
\begin{align}
	\covI(\normal_t \,|\, \normal_*) = \norm[\big]{\Id-B^{-\frac12}C_tB^{-\frac12}}^2_\HS + \abs[\big]{C_t^\frac12B^{-1}(m_t-x_0)}^2. 
\end{align}
The estimates are carried out in analogy to the ones above. On the one hand, for the difference $m_t-x_0$, we obtain
\begin{align}
		|C_t^\frac12B^{-1}(m_t-x_0)|^2 \nonumber
		&\le\norm[\big]{C_t^\frac12B^{-1}C_tB^{-1}C_t^\frac12}_2|m_t-x_0|_{C_t}^2 \nonumber\\
		&\le\norm[\big]{(B^{-\frac12}C_tB^{-\frac12})^2}_2\; e^{-2t}|m_0-x_0|_{C_0}^2 \nonumber\\
		&\le\norm[\big]{B^{-\frac12}C_tB^{-\frac12}}_2^2 \; \norm[\big]{C_0^{-\frac12}BC_0^{-1}BC_0^{-\frac12}}_2 \; e^{-2t} \abs[\big]{C_0^\frac12B^{-1}(m_0-x_0)}^2 \nonumber\\
		&\le\max\set[\big]{1,\norm[\big]{B^{-\frac12}C_0B^{-\frac12}}_2^2}\norm[\big]{B^\frac12C_0^{-1}B^\frac12}_2^2\; e^{-2t}\abs[\big]{C_0^\frac12B^{-1}(m_0-x_0)}^2. \label{eq:estimate2}
\end{align}
And on the other hand, for the Hilbert-Schmidt norm, we have, again with $\sigma_j(t)$ being the real and positive (singular and) eigenvalues of $B^{-\frac12}C_tB^{-\frac12}$:
\begin{align*}
	\norm[\big]{\Id-B^{-\frac12}C_tB^{-\frac12}}^2 
	&= \sum_j(1-\sigma_j(t))^2   
	= \sum_j\left(\frac{e^{-2t}(\sigma_j(0)-1)}{(1-e^{-2t})\sigma_j(0)+e^{-2t}}\right)^2 \notag \\
	&\le e^{-4t}\max_j\max\left(1,\frac1{\sigma_j(0)^2}\right)\sum_j(1-\sigma_j(0))^2 \notag \\
	&\le \max\set[\big]{1,\norm[\big]{B^{-\frac12}C_0B^{-\frac12}}_2^2} \; e^{-4t} \norm[\big]{\Id-B^{-\frac12}C_0B^{-\frac12}}^2. %
\end{align*}
This directly yields \eqref{eq:fish-decay-fixedmean}, and in combination with \eqref{eq:estimate2} also \eqref{eq:fish-decay-cov}.
\end{proof}
\begin{proof}[Proof of Theorem~\ref{thm:decay}]
Combine the resulting decay estimates for the shape $\eta_t$ from~\eqref{eq:OU-decay} with the decay estimates for moments from Lemma~\ref{lem:Gauss-decay} above, the result follows immediately from the decomposition of $\cE(\rho_t\,|\,\rho_*)$ and $\covI(\rho_t\,|\,\rho_*)$ given in Lemma~\ref{lem:ent-split}.
\end{proof}

\subsection{Convergence in Wasserstein distance}\label{sec:W2:cv}

\begin{lemma}[Splitting estimate for $W_2$]\label{lem:W2:split}
Let $\rho_t$ be a solution to~\eqref{eq:GF-FP-cov-quad} starting from $\rho_0$. Let $m_t = \mean(\rho_t)$ and $A_t$ solve~\eqref{eq:Adot} given $C_t=\C(\rho_t)$.
Then, the Wasserstein distance satisfies the splitting estimate
\begin{equation}\label{eq:W2:split}
	W_2(\rho_t, \normal_{x_0,B}) \leq \norm{\C(\rho_t)}_2^{\frac{1}{2}} W_2\bra*{\eta_t,\normal_{0,\Id}} + W_2\bra*{ \normal_{m_t,C_t}, \normal_{x_0,B}},
\end{equation}
with the normalization $\eta_t = (T_{m_t,A_t})_\# \rho_t$.
\end{lemma}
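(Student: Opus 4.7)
The plan is to apply the triangle inequality for $W_2$ with the Gaussian $\normal_{m_t,C_t}$ as an intermediate point, so that
\[
W_2(\rho_t,\normal_{x_0,B}) \leq W_2(\rho_t,\normal_{m_t,C_t}) + W_2(\normal_{m_t,C_t},\normal_{x_0,B}),
\]
and the second term is already the moment contribution appearing in the target estimate. The remaining task is to bound the first term by $\|\C(\rho_t)\|_2^{1/2}\,W_2(\eta_t,\normal_{0,\Id})$.

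For this, I would exploit the affine map $T_{m_t,A_t}\colon x\mapsto A_t^{-1}(x-m_t)$ and its inverse $T^{-1}_{m_t,A_t}(u)=A_tu+m_t$. By construction $\eta_t=(T_{m_t,A_t})_\#\rho_t$, and since $A_tA_t^{\tran}=C_t$, a direct change of variables in the density shows $(T_{m_t,A_t})_\#\normal_{m_t,C_t}=\normal_{0,\Id}$. Given any coupling $\pi\in\Pi(\eta_t,\normal_{0,\Id})$, the pullback $\tilde\pi:=(T^{-1}_{m_t,A_t}\otimes T^{-1}_{m_t,A_t})_\#\pi$ is a coupling of $\rho_t$ and $\normal_{m_t,C_t}$, with quadratic cost
\[
\int|x-y|^2\dd\tilde\pi(x,y)=\int|A_t(u-v)|^2\dd\pi(u,v)\leq \|A_t\|_2^2 \int|u-v|^2\dd\pi(u,v).
\]
Using $\|A_t\|_2^2=\|A_tA_t^{\tran}\|_2=\|C_t\|_2=\|\C(\rho_t)\|_2$ and taking the infimum over $\pi$ yields
\[
W_2(\rho_t,\normal_{m_t,C_t})\leq \|\C(\rho_t)\|_2^{1/2}\, W_2(\eta_t,\normal_{0,\Id}),
\]
which, combined with the triangle inequality above, gives the claim.

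There is essentially no hard step here: the content is entirely the transport-map identity $(T_{m_t,A_t})_\#\normal_{m_t,C_t}=\normal_{0,\Id}$, which only requires $A_tA_t^{\tran}=C_t$ (and not the symmetry condition from \eqref{eq:Adot}), together with the Lipschitz bound for the affine reconstruction $u\mapsto A_tu+m_t$. The only point worth emphasising is that no particular choice of left square root $A_t$ is needed for the estimate, reflecting rotational invariance of $W_2$.
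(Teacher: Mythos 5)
Your proof is correct and follows essentially the same route as the paper: the triangle inequality through the intermediate Gaussian $\normal_{m_t,C_t}=\bra[\big]{T_{m_t,A_t}^{-1}}_\#\normal_{0,\Id}$, followed by the affine push-forward estimate with $\norm{A_t}_2^2=\norm{C_t}_2$. The only difference is cosmetic — the paper cites \cite[Lemma 3.1]{CarrilloVaes} for the Lipschitz bound, whereas you reprove it directly via the pulled-back coupling, and your remark that the symmetry condition in \eqref{eq:Adot} is not needed here is accurate.
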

\begin{proof}
We apply the triangle inequality
\begin{equation}
	W_2(\rho_t, \normal_{x_0,B}) \leq  W_2\bra*{\bra[\big]{T_{m_t,A_t}^{-1}}_\# \eta_t, \bra[\big]{T_{m_t,A_t}^{-1}}_\# \normal_{0,\Id}}
	+ W_2\bra*{\bra[\big]{T_{m_t,A_t}^{-1}}_\# \normal_{0,\Id}, \normal_{x_0,B} }
\end{equation}
To the first term, we can apply~\cite[Lemma 3.1]{CarrilloVaes}, where we note that in the push-forward the same mean cancels out and that $\norm*{A_t^i}_2^2 = \norm*{A_t^i (A_t^i)^\tran }_2 = \norm*{C_t^i}_2$ by construction of $A_t^i$ in~\eqref{eq:Adot}.
As for the second term, we observe that the coupling measure is $\bra[\big]{T_{m_t,A_t}^{-1}}_\# \normal_{0,\Id} = \normal_{m_t,C_t}$, which follows from the fact that $\abs{A_t^{-1} ( x- m_t)}^2 = \abs{ x- m_t }_{C_t}^2$ by construction of $A_t$ as square-root of $C_t$ in~\eqref{eq:Adot}. This proves the claim~\eqref{eq:W2:split}.
\end{proof}
It remains to bound the term involving $\C(\rho_t)$ and $W_2\bra*{ \normal_{m_t,C_t}, \normal_{x_0,B}}^2$, which we do in the next two Lemmas.
\begin{lemma}\label{lem:Cov:moment:est}
In the setting of Lemma~\ref{lem:W2:split}, for all $t\geq 0$ the covariance matrix satisfies
\begin{align}\label{eq:Cov:moment:est}
	\norm*{ \C(\rho_t)}_2 \leq \kappa(B, C_0) := \norm{B}_2 \max\set[\big]{ 1, \norm[\big]{B^{-\frac{1}{2}}C_0 B^{-\frac{1}{2}} }_2} .
\end{align}
\end{lemma}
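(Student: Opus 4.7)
The plan is to exploit the explicit solution formula for $C_t^{-1}$ provided by Lemma~\ref{lem:moment-decay}, namely
\[
  C_t^{-1} = (1-e^{-2t}) B^{-1} + e^{-2t} C_0^{-1},
\]
which reduces the claim to an elementary spectral estimate on a one-parameter family of symmetric positive matrices.

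The key step is a similarity transformation with $B^{1/2}$ that diagonalises the problem relative to $B$. Conjugating the inverse relation above yields
\[
  B^{-1/2} C_t B^{-1/2} = \bigl[(1-e^{-2t})\Id + e^{-2t} M\bigr]^{-1}, \qquad M := B^{1/2} C_0^{-1} B^{1/2}.
\]
Since $M$ is symmetric positive definite, its eigenvalues $\mu_1,\ldots,\mu_d>0$ control the spectrum of the right-hand side: each eigenvalue of $B^{-1/2} C_t B^{-1/2}$ is of the form $\bigl[(1-e^{-2t}) + e^{-2t}\mu_j\bigr]^{-1}$. I then distinguish two cases: if $\mu_j\geq 1$ the denominator is $\geq 1$, while if $\mu_j<1$ the denominator is $\geq \mu_j$. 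Taking the worst case over $j$ gives
\[
  \bigl\|B^{-1/2} C_t B^{-1/2}\bigr\|_2 \leq \max\bigl\{1, \|M^{-1}\|_2\bigr\} = \max\bigl\{1, \|B^{-1/2} C_0 B^{-1/2}\|_2\bigr\},
\]
uniformly in $t\geq 0$, where I used $M^{-1} = B^{-1/2} C_0 B^{-1/2}$.

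Finally, I undo the similarity transform by writing $C_t = B^{1/2} \bigl(B^{-1/2} C_t B^{-1/2}\bigr) B^{1/2}$ and applying the submultiplicativity of the spectral norm together with $\|B^{1/2}\|_2^2 = \|B\|_2$. This gives
\[
  \|C_t\|_2 \leq \|B\|_2 \cdot \bigl\|B^{-1/2} C_t B^{-1/2}\bigr\|_2 \leq \|B\|_2 \max\bigl\{1, \|B^{-1/2} C_0 B^{-1/2}\|_2\bigr\} = \kappa(B,C_0),
\]
which is precisely~\eqref{eq:Cov:moment:est}. There is no significant obstacle; the entire argument is a direct computation once Lemma~\ref{lem:moment-decay} is invoked, and the bookkeeping is kept clean by working throughout with the $B$-normalised covariance $B^{-1/2} C_t B^{-1/2}$ rather than with $C_t$ itself.
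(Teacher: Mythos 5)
Your proof is correct and follows essentially the same route as the paper: both invoke the explicit solution formula \eqref{eq:sol:Ct} from Lemma~\ref{lem:moment-decay}, pass to the $B$-normalised covariance $B^{-\frac{1}{2}}C_tB^{-\frac{1}{2}}$, bound its eigenvalues $\bigl[(1-e^{-2t})+e^{-2t}\mu_j\bigr]^{-1}$ by $\max\{1,\mu_j^{-1}\}$, and finish with submultiplicativity of the spectral norm. The only cosmetic difference is that the paper parametrises by the eigenvalues $\sigma_i(0)$ of $B^{-\frac{1}{2}}C_0B^{-\frac{1}{2}}$ rather than by those of its inverse $M$, which amounts to the same estimate.
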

\begin{proof}
The prefactor is estimated by the explicit representation of the solution in~\eqref{eq:sol:Ct} obtained in Lemma~\ref{lem:moment-decay}. Indeed, the submultiplicativity of the norm implies
\begin{equation}
	\norm*{ \C(\rho_t)}_2 \leq \norm{B}_2 \norm[\big]{ B^{-\frac{1}{2}} \C(\rho_t)B^{-\frac{1}{2}}}_2
\end{equation}
By setting $D_t=  B^{-\frac{1}{2}} \C(\rho_t)B^{-\frac{1}{2}}$, we proceed similarly as in the proof of Lemma~\ref{lem:Gauss-decay}, we introduce the eigenvalues $\sigma_i(t) = \sigma_i(D_t) = \sigma_i\bra[\big]{B^{-\frac{1}{2}}C_tB^{-\frac{1}{2}}}$, which are real and positive, and are given by
\[
\sigma_i(t) = \frac{1}{1-e^{-2t} + e^{-2t} \sigma_i(0)^{-1}} .
\]
Hence, $\norm{D_t}_2 = \max_{i=1,\dots,d} \set*{ \sigma_i(t)} \leq \max_{i=1,\dots,d}\set*{\max\set*{1,\sigma_i(0)}} = \max\set*{1,\norm{D_0}_2}$.
\end{proof}
\begin{lemma}\label{lem:W2:est:moments}
Let $(m_t,C_t)$ be a solution to~\eqref{eq:moments-GF}, then
\begin{equation}\label{eq:W2:est:moments}
	W_2\bra*{ \normal_{m_t,C_t}, \normal_{x_0,B}}^2\leq e^{-2t} \kappa(B, C_0) \bra[\Big]{ \abs*{m_0-x_0}_{C_0}^2 + \norm[\big]{\Id - \bra[\big]{B^{\frac{1}{2}}C_0^{-1}B^{\frac{1}{2}}}^{\frac{1}{2}}}_\HS^2 }
\end{equation}
with $\kappa(B, C_0)$ given by~\eqref{eq:Cov:moment:est}.
\end{lemma}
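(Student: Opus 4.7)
The plan is to transfer the estimate through the change of variables $x\mapsto B^{-1/2}x$, reducing the comparison against $\normal_{x_0,B}$ to a comparison against the standard Gaussian $\normal_{B^{-1/2}x_0,\Id}$ at the cost of a factor $\norm{B}_2$. Indeed, for any coupling of $\normal_{m_t,C_t}$ and $\normal_{x_0,B}$, the pointwise bound $\abs{x-y}^2\leq \norm{B}_2\abs{B^{-1/2}(x-y)}^2$ together with the push-forward identities $(B^{-1/2})_\#\normal_{m_t,C_t}=\normal_{B^{-1/2}m_t,\Sigma_t}$ (with $\Sigma_t:=B^{-1/2}C_tB^{-1/2}$) and $(B^{-1/2})_\#\normal_{x_0,B}=\normal_{B^{-1/2}x_0,\Id}$ yield
\[
W_2(\normal_{m_t,C_t},\normal_{x_0,B})^2\leq \norm{B}_2\,W_2\bigl(\normal_{B^{-1/2}m_t,\Sigma_t},\normal_{B^{-1/2}x_0,\Id}\bigr)^2.
\]
Using the closed-form Bures--Wasserstein distance between Gaussians and the fact that $\Sigma_t$ commutes with $\Id$ (so $\tr(\Sigma_t+\Id-2\Sigma_t^{1/2})=\norm{\Sigma_t^{1/2}-\Id}_\HS^2$), the right-hand side equals $\norm{B}_2\bigl(\abs{m_t-x_0}_B^2+\norm{\Sigma_t^{1/2}-\Id}_\HS^2\bigr)$, and it then suffices to estimate the mean and shape terms separately.

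For the mean term, I would combine the relation~\eqref{eq:m-decay} from Lemma~\ref{lem:moment-decay}---which gives $\abs{m_t-x_0}_{C_t}^2=e^{-2t}\abs{m_0-x_0}_{C_0}^2$---with the norm comparison $\abs{v}_B^2\leq\norm{\Sigma_t}_2\abs{v}_{C_t}^2$, obtained by writing $v^\tran B^{-1}v=(C_t^{-1/2}v)^\tran(C_t^{1/2}B^{-1}C_t^{1/2})(C_t^{-1/2}v)$ and noting that $C_t^{1/2}B^{-1}C_t^{1/2}$ has the same spectrum as $\Sigma_t$. This yields $\abs{m_t-x_0}_B^2\leq\norm{\Sigma_t}_2 e^{-2t}\abs{m_0-x_0}_{C_0}^2$. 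The explicit representation $\Sigma_t^{-1}=(1-e^{-2t})\Id+e^{-2t}\Sigma_0^{-1}$ from~\eqref{eq:sol:Ct} gives $\lambda_i(\Sigma_t)\leq\max(1,\lambda_i(\Sigma_0))$ eigenvalue by eigenvalue, so $\norm{\Sigma_t}_2\leq\max\set{1,\norm{\Sigma_0}_2}$, producing the desired prefactor $\kappa(B,C_0)=\norm{B}_2\max\set{1,\norm{\Sigma_0}_2}$ on the mean term.

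The main obstacle lies in the shape term $\norm{\Sigma_t^{1/2}-\Id}_\HS^2$. The key observation is that $\Sigma_t$ and $\Sigma_0$ are \emph{simultaneously} diagonalizable, since by~\eqref{eq:sol:Ct} $\Sigma_t^{-1}$ is an affine function of $\Sigma_0^{-1}$. In a common eigenbasis, setting $a_i:=\lambda_i(\Sigma_0)$ and $r:=e^{-2t}$, one has $\lambda_i(\Sigma_t)=a_i/[(1-r)a_i+r]$, and the target estimate reduces to the scalar inequality, for $a>0$ and $r\in[0,1]$,
\[
\Bigl(\sqrt{\tfrac{a}{(1-r)a+r}}-1\Bigr)^{\!2}\leq \max(1,a)\,r\,\Bigl(1-\tfrac{1}{\sqrt a}\Bigr)^{\!2}.
\]
Introducing $s:=\sqrt{(1-r)a+r}$ and using the algebraic identity $\sqrt a-s=r(a-1)/(\sqrt a+s)$, this rearranges to $a\,r\,(1+\sqrt a)^2\leq \max(1,a)\,s^2(\sqrt a+s)^2$, which I would verify by a case split: for $a\geq1$, the inequalities $s^2\geq1$ and $\sqrt a+s\geq\sqrt a+1$ close the bound with prefactor $\max(1,a)=a$; for $a\leq1$, one uses $s\geq\sqrt a\geq a$, so that $\sqrt a(1+\sqrt a)=\sqrt a+a\leq\sqrt a+s$, together with $s^2\geq r$, yielding the bound with prefactor $1$. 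Summing the per-eigenvalue estimate, noting $\max_i\max(1,a_i)=\max\set{1,\norm{\Sigma_0}_2}$, and recognizing $\Sigma_0^{-1/2}=(B^{1/2}C_0^{-1}B^{1/2})^{1/2}$ delivers the shape bound; combining with the mean estimate completes the proof.
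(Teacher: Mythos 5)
Your proof is correct, and it reaches exactly the stated bound with the constant $\kappa(B,C_0)$, but it takes a somewhat different route than the paper. The paper evaluates the Gaussian Wasserstein formula in the original coordinates, which produces the non-commuting cross term $\tr[(B^{1/2}C_tB^{1/2})^{1/2}]$; it then invokes the Araki--Lieb--Thirring inequality to replace this by $\tr[B^{1/2}(B^{-1/2}C_tB^{-1/2})^{1/2}B^{1/2}]$, extracts the factor $\norm{B}_2$, and controls the resulting eigenvalue expression by showing that $t\mapsto e^{2t}\,\bigl|\sqrt{1-e^{-2t}+e^{-2t}\sigma_i(0)^{-1}}-1\bigr|^2$ is monotone decreasing (a step it describes as a longish elementary calculation). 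You instead push forward by $B^{-1/2}$ at the level of $W_2$ first, paying the factor $\norm{B}_2$ once through the coupling estimate (the same device as \cite[Lemma 3.1]{CarrilloVaes}, already used elsewhere in the paper); after this change of variables the two covariances $\Sigma_t=B^{-1/2}C_tB^{-1/2}$ and $\Id$ commute, so no Araki--Lieb--Thirring is needed and the shape term is exactly $\norm{\Sigma_t^{1/2}-\Id}_\HS^2$. Both arguments then reduce, via the explicit solution \eqref{eq:sol:Ct} and the decay relation \eqref{eq:m-decay}, to the same per-eigenvalue scalar inequality; you verify it by the algebraic substitution $s=\sqrt{(1-r)a+r}$ and a two-case analysis (which I checked: both cases close correctly, including the simultaneous diagonalizability of $\Sigma_t$ and $\Sigma_0$ needed to sum over a common eigenbasis), whereas the paper argues by monotonicity in $t$. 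What your approach buys is the avoidance of a matrix trace inequality and a fully explicit, checkable scalar estimate; what the paper's buys is that it works directly with the standard Bures--Wasserstein formula without an intermediate metric comparison.
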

\begin{proof}
By~\cite{GivensShortt1984}, see also~\cite[Lemma 3.3]{CarrilloVaes}, we have
\begin{equation}\label{eq:W2:Gauss}
	W_2\bra*{ \normal_{m_t,C_t}, \normal_{x_0,B}}^2 = \abs*{m_t-x_0}^2 + \tr \pra[\Big]{ C_t + B - 2 \bra[\big]{ B^{\frac{1}{2}}  C_t  B^{\frac{1}{2}}}^{\frac{1}{2}}} . 
\end{equation}
To the first term, we apply~\eqref{eq:m-decay} and get
\begin{align*}
	\abs*{m_t-x_0}^2 \leq \norm{C_t}_2^2 \abs*{ A_t^{-1}(m_t-x_0)}^2 \leq  \kappa(B, C_0) e^{-2t} \abs{m_0 - x_0}_{C_0}^2 ,
\end{align*}
where we also used~\eqref{eq:Cov:moment:est}. 

By the explicit representation~\eqref{eq:sol:Ct}, we can write
\[
C_t^{-1} = B^{-\frac{1}{2}} \bra[\big]{ (1-e^{-2t}) \Id + e^{-2t} B^{\frac{1}{2}} C_0^{-1}  B^{\frac{1}{2}}} B^{-\frac{1}{2}} =: B^{-\frac{1}{2}} D_t^{-1} B^{-\frac{1}{2}}.
\]
Hence, we can write and estimate via the Araki-Lieb-Thirring inequality~\cite{Araki1990,LiebThierring1991}
\[
\tr\pra[\Big]{\bra[\big]{ B^{\frac{1}{2}}  C_t  B^{\frac{1}{2}}}^{\frac{1}{2}}} = \tr \pra[\Big]{\bra[\big]{ B D_t B}^{\frac{1}{2}}} \geq \tr\pra[\Big]{ B^{\frac{1}{2}} D_t^{\frac{1}{2}} B ^{\frac{1}{2}}} .
\]
With this, we arrive for the second term in~\eqref{eq:W2:Gauss} at the bound
\begin{align*}
	\MoveEqLeft
	\tr \pra[\Big]{ C_t + B - 2 \bra[\big]{ B^{\frac{1}{2}}  C_t  B^{\frac{1}{2}}}^{\frac{1}{2}}}
	\leq \tr \pra[\Big]{  B^{\frac{1}{2}} \bra[\big]{  D_t + \Id - 2 D_t^{\frac{1}{2}}}B^{\frac{1}{2}}} \\
	&= \tr \pra[\big]{ B \bra{ D_t^{\frac{1}{2}} - \Id}^2 } 
	= \norm[\big]{ B^{\frac{1}{2}} \bra[\big]{D_t^{\frac{1}{2}} - \Id}}_\HS^2
	\leq \norm{B}_2 \norm[\big]{D_t^{\frac{1}{2}} - \Id}_\HS^2
\end{align*}
We proceed similarly as in the proof of Lemma~\ref{lem:Gauss-decay}, we introduce the eigenvalues $\sigma_i(t) = \sigma_i(D_t) = \sigma_i\bra[\big]{B^{-\frac{1}{2}}C_tB^{-\frac{1}{2}}}$, %
which are given by
$\sigma_i(t) = \bra*{1-e^{-2t} + e^{-2t} \sigma_i(0)^{-1}}^{-1}$.
Hence, it is left to estimate
\begin{align*}
	\norm[\big]{D_t^{\frac{1}{2}} - \Id}_\HS^2 = \sum_{i=1}^d \abs[\bigg]{ \frac{1}{\sqrt{1-e^{-2t} + e^{-2t} \sigma_i(0)^{-1}}} - 1}^2 
	=\sum_{i=1}^d \frac{\abs[\big]{\sqrt{1-e^{-2t} + e^{-2t} \sigma_i(0)^{-1}} - 1}^2}{1 + e^{-2t}\bra*{\sigma_i(0)^{-1}-1}} .
\end{align*}
The denominator is bounded by
$\bra{1 + e^{-2t}\bra*{\sigma_i(0)^{-1}-1}}^{-1} \leq \max\set{ 1, \sigma_i(0)}$.
For the nominator we note, that after multiplying with $e^{2t}$ that the function 
\[
t \mapsto e^{2t} \abs[\big]{\sqrt{1-e^{-2t} + e^{-2t} \sigma_i(0)^{-1}} - 1}^2
\]
is non-negative and monotone decreasing (a longish elementary calculation) and hence bounded by its values for $t=0$ given by $\abs{\sqrt{\sigma_i(0)^{-1}}-1}^2$. Hence, by recalling that $\sigma_i(0)= \sigma_i(B^{-\frac{1}{2}}C_0 B^{-\frac{1}{2}})$, we obtain the bound
\begin{align*}
	\norm[\big]{D_t^{\frac{1}{2}} - \Id}_\HS^2 &\leq e^{-2t} \sum_{i=1}^d \max\set{1,\sigma_i(0)} \abs[\big]{\sqrt{\sigma_i(0)^{-1}}-1}^2 \\
	&\leq e^{-2t} \max\set[\big]{ 1, \norm[\big]{B^{-\frac{1}{2}}C_0B^{-\frac{1}{2}}}_2} \norm[\big]{\bra[\big]{B^{\frac{1}{2}}C_0^{-1}B^{\frac{1}{2}}}^{\frac{1}{2}}-\Id}_\HS^2 . \qedhere
\end{align*}
\end{proof}
\begin{remark}\label{rem:W2:WC}
	The term in brackets on the right-hand side of~\eqref{eq:W2:est:moments} can be identified as the Wasserstein distance with respect to the weighted norm $\abs*{\cdot}_{C_0}$, that is
	\begin{equation*}
		W_{2,C_0}\bra*{\normal_{m_0,C_0}, \normal_{x_0,B}}^2 = \abs*{m_0-x_0}_{C_0}^2 + \tr\pra[\Big]{ \Id + C_0^{-\frac{1}{2}} B C_0^{-\frac{1}{2}}  - 2 \bra[\big]{ C_0^{-\frac{1}{2}} B C_0^{-\frac{1}{2}}}^{\frac{1}{2}}} ,
	\end{equation*}
	where for $C\in \S_{\succ0}^d$ 
	\begin{equation}\label{eq:def:W2:C}
		W_{2,C}(\mu_0,\mu_1)^2 =  \inf\set*{\int_0^1 \int \abs{V_t}^2_C \dx{\mu_t}(x) \dx{t}~:~\partial_t\mu_t+\nabla\cdot(\mu_t V_t)=0}\,.
	\end{equation}
	Indeed, by inspection of the proof in~\cite[Lemma 3.3]{CarrilloVaes} and using the effecitve covariances $\tilde\varSigma_i = C^{-\frac{1}{2}} \varSigma_i C^{-\frac{1}{2}}$ for $i=0,1$, one verifies for any $C,\varSigma_0,\varSigma_1$ and $m_0,m_1\in \R^d$ the general identity
	\begin{equation*}
		W_{2,C}\bra*{\normal_{m_0,\varSigma_0}, \normal_{m_1,\varSigma_1}}^2 = \abs*{m_0-m_1}_C^2 + \tr\pra[\Big]{ \tilde\varSigma_0+ \tilde\varSigma_1 - 2 \bra[\big]{ \tilde\varSigma_0^{\frac{1}{2}} \tilde\varSigma_1 \tilde\varSigma_0^{\frac{1}{2}}}^{\frac{1}{2}}} .
	\end{equation*}
\end{remark}
\begin{proof}[Proof of Theorem~\ref{thm:W2:convergence}]
	We use the splitting estimate~\eqref{eq:W2:split} from Lemma~\ref{lem:W2:split} together with the uniform bound~\eqref{eq:Cov:moment:est} from Lemma~\ref{lem:Cov:moment:est}. 
	The Wasserstein distance $W_2(\eta_t,\normal_{0,\Id})$ converges exponentially with rate $1$ thanks to~\eqref{eq:OU-W2-decay}, since $\eta_t$ solves the Ornstein-Uhlenbeck equation~\eqref{eq:FP-normalized-quad}.
	We estimate the Wasserstein distance between the Gaussians $W_2\bra*{ \normal_{m_t,C_t}, \normal_{x_0,B}}$ using~\eqref{eq:W2:est:moments} from Lemma~\ref{lem:W2:est:moments}.
	The conclusion follows by noting that $\eta_0 = (T_{m_0,A_0})_\# \rho_0$ and the choice of the square root for $C_0=\C(\rho_0)$ was arbitrary. Hence, we can choose $A_0= \C(\rho_0)^{\frac{1}{2}} R$ for any $R\in \SO(d)$ and arrive at the bound in the statement of Theorem~\ref{thm:W2:convergence}.
\end{proof}

\section{Geodesic convexity and functional inequalities}

\subsection{Formal duality for the constraint transport problem}
\label{sec:optimal-constraint}

In this section, we formally derive the geodesic equations for the constrained distance $\cW_{0,\Id}$ as the optimality conditions of a saddle point problem. 
\begin{proof}[Proof of Formal Theorem~\ref{thm:infsup}]
	Introducing Langrange multipliers $\psi:[0,1]\times\R^d\to\R$ for the continuity equation constraint, $\alpha:[0,1]\to\R^d$ for the mean constraint, and $\Lambda:[0,1]\to\R^d$ for the covariance-constraint, we can rewrite $\cW_{0,\Id}(\mu_0,\mu_1)$ as an unconstrained saddle point problem
	\begin{align*}
		\cW_{0,\Id}(\mu_0,\mu_1)^2 &=
		\inf_{\rho,V}\sup_{\psi,\alpha,\Lambda}
		\biggl\{\int_0^1\int \frac12|V_t|^2\dd\mu_t\dd t +  \int_0^1\int\psi_t[\partial_t\mu+\nabla\cdot(\rho V_t)]\dd t\\
		&\qquad\qquad\quad+\int_0^1\int \ip{\alpha}{ x}\dd\mu_t\dd t +\int_0^1\int \tr[\Lambda(\Id-x\otimes x)] \dd\mu_t\dd t \biggr\}\\
		&= \sup_{\psi,\alpha,\Lambda}\inf_{\rho,V}
		\biggl\{\int_0^1\int \frac12|V_t|^2\dd\mu_t\dd t -  \int_0^1\int[\partial_t\psi+\nabla\psi_t\cdot V_t]\dd\mu_t\dd t\\
		&\qquad\qquad\quad+\int \psi_1\dd\mu_1-\psi_0\dd\mu_0\\
		&\qquad\qquad\quad+\int_0^1\int \ip{\alpha}{ x}\dd\mu_t\dd t +
		\int_0^1\int \tr[\Lambda(\Id-x\otimes x)] \dd\mu_t\dd t \biggr\}\;,\\
		&= \sup_{\psi,\alpha,\Lambda}\inf_{\rho,V}
		\biggl\{\int_0^1\int \frac12|V_t-\nabla\psi_t|^2\dd\mu_t\dd t +\int \psi_1\dd\mu_1-\psi_0\dd\mu_0 +\int_0^1\tr[\Lambda_t]\dd t\\
		&\qquad\qquad\quad -  \int_0^1\int\big[\partial_t\psi+\frac12|\nabla\psi_t|^2+\tr[\Lambda_t(x\otimes x)]-\ip{\alpha}{x}\big]\dd\mu_t\dd t\biggr\} \:,
	\end{align*}
where we have integrated by parts and interchanged $\inf$ and $\sup$ in the second step. The infimum over $V$ is attained at $V=\nabla \psi$. The infimum over $\mu$ yields $-\infty$ unless
\[
\partial_t\psi+\frac12|\nabla\psi_t|^2+\tr[\Lambda_t(x\otimes x)]-\ip{\alpha}{x} \leq 0\;.
\]
Thus we arrive at \eqref{eq:dualW}. We further obtain formally the following optimality conditions in the saddle point problem above by considering variations in $V, \psi, \rho, \alpha, \Lambda$ respectively:
	\begin{align*}
		V\mu &=\nabla \psi \mu\;,\quad
		\partial_t\mu +\nabla\cdot(\mu\nabla\psi)=0\;,\\
		\partial_t\psi & + \frac12|\nabla\psi|^2 + \tr[\Lambda (x\otimes x)] -\ip{\alpha}{x}=0\;,\\
		\int x_i\dd\mu&=0\;,\quad  \int x_ix_j\dd\mu = \delta_{ij}\quad \text{ for all } i,j\in\{1,\dots, d\}\;.
	\end{align*}
To obtain some information on the multipliers, we differentiate the constraints. First, the mean constraint gives
	\begin{align*}
		0 &= \frac{\dd}{\dd t} \int x_i\dd\mu_t= \int \nabla(x_i)\cdot\nabla\psi \dd\mu_t = \int\partial_i\psi\dd\mu_t\\
		0 &= \frac{\dd^2}{\dd t^2} \int x_i\dd\mu_t = \int\nabla \partial_i\psi\cdot\nabla\psi \dd\mu_t + \int \partial_i\partial_t\psi\dd\mu_t\\
		&= \int \pra*{\frac12\partial_i|\nabla\psi|^2 - \frac12\partial_i|\nabla\psi|^2+\partial_i\Big[\ip{\alpha}{x} - \tr[\Lambda(x\otimes x)]\Big]}\dd\mu_t\\
		&= \alpha_i - \int \sum_l(\Lambda_{il}+\Lambda_{li})x_l\dd\mu_t = \alpha_i\;,
	\end{align*}
	where we used the mean constraint in the last step. Hence $\alpha=0$. 
	
	The covariance-constraint gives
	\begin{align*}
		0 &= \frac{\dd}{\dd t} \int x_ix_j\dd\mu_t= \int \nabla(x_ix_j)\cdot\nabla\psi \dd\mu_t\\
		0 &= \frac{\dd^2}{\dd t^2} \int x_ix_j\dd\mu_t = \int\nabla\big(\nabla(x_ix_j)\cdot\nabla\psi\big)\cdot\nabla\psi\dd\mu_t 
		+\int \nabla(x_ix_j)\cdot\nabla\partial_t\psi \dd\mu_t\\
		&= \int\nabla\big(\nabla(x_ix_j)\cdot\nabla\psi)\big)\cdot\nabla\psi\dd\mu_t
		-\int \nabla(x_ix_j)\cdot\frac12\nabla|\nabla\psi|^2\dd\mu_t\\
		&\quad +\sum_{kl}\int \nabla(x_ix_j)\cdot\big(\alpha_k\nabla(x_k)-\Lambda_{kl}\nabla(x_k x_l)\big)\dd\mu_t\\
		&= \int \nabla\psi\cdot D^2(x_ix_j)\cdot\nabla \psi\dd\mu_t
		-\sum_{kl}\Lambda_{kl}\int \nabla(x_ix_j)\cdot\nabla(x_k x_l)\dd\mu_t\\
		&= 2 \int\partial_i\psi\partial_j\psi\dd\mu_t -  4 \Lambda_{ij}\;,
	\end{align*}
	where we have used $\alpha=0$, and in the last line the covariance-constraint together with the fact that $\Lambda^\tran =\Lambda$. In particular, we see  $\tr[\Lambda_t]=\frac12\int |\nabla\psi_t|^2\dd\mu_t$. Note that the latter expression is constant in time, equal to $\cW_{0,\Id}(\mu_0,\mu_1)^2$, since the minimiser of $\int_0^1\int |V_t|^2\dd\mu_t\dd t$ is necessarily parametrised in such a way that  $\int |V_t|^2\dd\mu_t$ is constant, equal to the infimum value $\cW_{0,\Id}(\mu_0,\mu_1)^2$. This concludes the proof.
\end{proof}

\subsection{Formal geodesic convexity}\label{sec:convexity}

In this section, we formally investigate the convexity properties of the following free energy, including an internal energy $\cU$ and a potential energy $\cH$:
\begin{align}\label{eq:energy}
    \cF[\mu] &= \cU[\mu]+\cH[\mu]
    = \int U(\rho) + \int H\rho 
\end{align}
Here, for an absolutely continuous density $\mu$, we write $\mu(\dd x)=\rho(x)\dd x$. This energy is intrinsically related to the partial differential equation
\begin{align}\label{eq:GF-UHK}
    \partial_t \rho = \nabla \cdot \left(\rho \C(\rho) \nabla\left[ U'(\rho)+H\right]\right)\,.
\end{align}
Indeed, equation~\eqref{eq:GF-UHK} is the gradient flow of $\cF$ w.r.t.~the distance $\cW$ as discussed in Section~\ref{sec:GFintro}, see equation~\eqref{eq:GF}.

We are interested in geodesic convexity both w.r.t.~the distance $\cW_{0,\Id}$ and w.r.t.~$\cW$ under suitable conditions on the functions $U$ and $H$. The former case will be investigated by calculating the Hessians for the two contributions of $\cF$ by looking at the second derivative along geodesics. Then we will see how geodesic convexity transfers from the covariance-constraint to the modulated situation.
We make the by now classical assumptions on the function $U$ for the internal energy~\cite{CarrilloMcCannVillani}.

\begin{assumption}[Diffusion]\label{ass:U}
Consider a density of internal energy $U:\R_{>0}\to \R$ satisfying
\begin{enumerate}
\item[(a)] (Convexity) $U(s)=0$ (no diffusion), or $U(s)=\sigma s\log s$ for some $\sigma>0$ (linear diffusion), or $U$ is strictly convex for $s>0$.
\item[(b)] (Dilation condition) $\lambda\mapsto \lambda^d U (\lambda^{-d})$ is convex and non-increasing on $\R_{>0}$
\end{enumerate}
\end{assumption}
For non-linear diffusion, the PDE~\eqref{eq:GF-UHK} can also be written as
\begin{align*}
    \partial_t \rho = \nabla \cdot \left( \C(\rho) \nabla P(\rho)\right) + \nabla\cdot \left(\rho \C(\rho) \nabla H\right) 
\end{align*}
where the pressure $P:\R_+\to\R$ is non-negative and given by
\begin{align}\label{eq:pressure}
    P(s):=\int_0^s\tau U''(\tau)\,\dd \tau =
s U'(s)-U(s)\,.
\end{align}
\begin{remark}\label{rmk:pressure}
Note that strict convexity of $U$ as stated in Assumption~\ref{ass:U}(a) corresponds to the statement that the pressure $P$ is increasing since $P'(s)=sU''(s)$. Further, the dilation condition Assumption~\ref{ass:U}(b), which was first introduced by McCann in \cite{McCann97}, corresponds to the statement that 
\[
s\mapsto \frac{P(s)}{s^{1-1/d}} \quad \text{ is non-negative and non-decreasing; }
\]
in other words, $\rho P'(\rho) \ge (1-1/d)P(\rho)\ge 0$. Also see \cite[Chapter 9]{AGS}, \cite[p.26]{CJMTU}, \cite[Theorem 1.3]{Sturm05}, \cite[Chapter 17]{Villani09}.
\end{remark}

\begin{remark}
 The functional $\cU$ can be extended to the full set of Borel probability measures on $\R^d$ by setting $\cU(\mu)=\infty$ for measures $\mu\in\mathcal{P}(\R^d)$ that are not absolutely continuous with respect to the Lebesgue measure.
\end{remark}

\begin{example}\label{ex:nonlindiff}
A typical choice for the diffusion term satisfying Assumption~\ref{ass:U} is
\[
U(s)= \frac{s(s^{m-1}-1)}{m-1}
\]
for $m>0$. Then $P(s)=s^m$, hence condition (a) is automatically satisfied, and condition (b) corresponds to requiring $m\ge 1-1/d$. In the limit  $m\to 1$ one recovers the Boltzmann-Shannon entropy corresponding to the choice $U(s)=s\log s$. The Boltzmann-Shannon entropy will be denoted by $\mathcal E$.
\end{example}

\begin{ftheorem}\label{prop:convex-U}
Under Assumption~\ref{ass:U}, the internal energy $\cU$ satisfies along any $\cW_{0,\Id}$-geodesics $(\mu_t)_{t\in[0,1]}$ with densities $\mu_t(\dd x)=\rho_t(x)\dd x$ the estimate
    \begin{align*}
    \frac{\dd^2}{\dd t^2} \cU(\mu_t)
    \ge  \cW_{0,\Id}(\mu_0,\mu_1)^2 \int P(\rho_t)\,\dd x\;.
    \end{align*}
	 In particular, the Boltzmann-Shannon entropy $\mathcal E$ is geodesically $1$-convex, i.e.
    \begin{align*}
    \cE(\mu_t)\leq (1-t)\cE(\mu_0) + t\cE(\mu_1) -\frac12 t(1-t) \cW_{0,\Id}(\mu_0,\mu_1)^2\;.
    \end{align*}
\end{ftheorem}
For the Boltzmann-Shannon entropy we will rigorously prove this statement in the next section.

\begin{proof}
    For the internal energy $\cU$, we have, using the optimality conditions \eqref{eq:geodesic-eqs},
    \begin{align*}
    \frac{\dd}{\dd t} \cU(\mu) &= 
    - \int U'(\rho) \nabla\cdot(\rho\nabla\psi)=
    \int \nabla U'(\rho)\cdot \nabla\psi \rho\;,\\
    \frac{\dd^2}{\dd t^2} \cU(\mu) &=
    \int U''(\rho) \left|\nabla\cdot(\rho\nabla\psi)\right|^2
    + \int \nabla U'(\rho)\cdot \nabla \partial_t\psi\, \rho + \int \nabla U'(\rho)\cdot \nabla \psi\, \partial_t\rho\\
    &=
    \int U''(\rho) \left|\nabla\cdot(\rho\nabla\psi)\right|^2
    -\frac12 \int \nabla U'(\rho)\cdot \nabla |\nabla\psi|^2\, \rho 
    -\sum_{k,l} \Lambda_{kl} \int  \nabla U'(\rho)\cdot \nabla (x_k x_l)\rho\\
    &\quad+\int  \nabla\left(\nabla U'(\rho)\cdot \nabla\psi\right)\cdot \nabla \psi\,\rho\\
    &= 
    \int U''(\rho) \left|\nabla\cdot(\rho\nabla\psi)\right|^2
    +\int  \langle \nabla\psi, D^2 U'(\rho) \nabla\psi\rangle \,\rho
    -\sum_{k,l} \Lambda_{kl} \int \partial_kU'(\rho) x_l\,\rho\;.
\end{align*}
After a simple, but long and tedious calculation, this expression can be written as 
    \begin{align*}
    \frac{\dd^2}{\dd t^2} \cU(\mu) &=
    \int \left[P'(\rho)\rho-P(\rho)\right]|\Delta \psi|^2 + \int P(\rho) \|D^2\psi\|_{\HS}^2 +   \cW_{0,\Id}(\mu_0,\mu_1)^2\int P(\rho)\;,
\end{align*}
where $P$ is given in~\eqref{eq:pressure}. Under Assumption~\ref{ass:U}, see also Remark~\ref{rmk:pressure}, we are able to make use of the inequality $\|D^2\psi\|_{\HS}\ge d^{-1/2} |\Delta \psi|$ to conclude
    \begin{align*}
    \frac{\dd^2}{\dd t^2} \cU(\rho) &\ge
   \int P(\rho) \left[ -\frac{1}{d}|\Delta \psi|^2+ \|D^2\psi\|_{\HS}^2\right] +  \cW_{0,\Id}(\mu_0,\mu_1)^2\int P(\rho) \\
   &\ge   \cW_{0,\Id}(\mu_0,\mu_1)^2 \int P(\rho)\;. \qedhere
\end{align*}
\end{proof}

\begin{remark}
 If $H:\R^d\to\R$ is a quadratic form $H(x)=\ip{Ax}{x}+\ip{b}{x}+c$, then the potential energy $\cH(\mu)=\int H\dd\mu$ is constant on $\cP_{0,\Id}(\R^d)$, in particular along $\cW_{0,\Id}$-geodesics. Indeed, due to the mean and variance constraint, we have for any $\mu\in\cP_{0,\Id}(\R^d)$  that $\cH(\mu) = \tr[A] + c$.
\end{remark}

Next, we will focus on the Boltzmann-Shannon $\mathcal E$ corresponding to the choice $U(s)=s\log s$ and investigate its convexity properties along geodesics of the covariance-modulate transport distance $\cW$.

\begin{ftheorem}\label{fthm:convex-W}
For any $\cW$-geodesic $(\mu_t)_{t\in[0,1]}$ we have that 
\begin{equation*}%
 \cE(\mu_t)\leq (1-t)\cE(\mu_0) + t\cE(\mu_1) -\frac12 t(1-t) \cW_{0,\Id}(R_{\#}\widebar\mu_0,\widebar\mu_1)^2\;,
\end{equation*}
where $R_{\#}\widebar\mu_0$ and $\widebar\mu_1$ are the normalisations of $\mu_0,\mu_1$ appearing in the splitting result in Theorem~\ref{thm:main-cov}.
\end{ftheorem}
\begin{proof}
Let $(\mu_t)_{t\in[-1/2,1/2]}$ be a $\cW$-geodesic, i.e.~an
optimal curve for \eqref{eq:OT-cov-main}, with marginals of finite entropy and let $(m_t,C_t)$ be the mean and covariance of $\mu_t$. According to Theorem \ref{thm:main-cov}, we can write
\begin{align*}
  \mu_t &= (T_{m_{t},A_{t}}^{-1})_\#\tilde\mu_{t}\;,
\end{align*}
where $\tilde\mu_t$ a solution to the covariance-constraint transport problem between $R_{\#}\widebar\mu_0$ and $\widebar\mu_1$ and $A_t$ is given by the solution of the constrained moment problem. Denoting $\tilde\mu_t(\dd y)=\tilde\rho_t(y)\dd y$ and $\mu_t(\dd y)=\rho_t(y)\dd y$, we deduce $\rho_t(T_{m_t,A_t}^{-1}(x))=\tilde\rho_t(x)/\det A_t$ and readily compute that
\begin{align}\label{eq:entropy-split}
  \mathcal E(\mu_{t}) = \int \log(\rho_t(T_{m_t,A_t}^{-1}(x)))\tilde\mu_t(\dd x)
  = \mathcal E(\tilde\mu_t) -\log \det A_{t}\;. 
\end{align}
Setting $l(t) :=-\log \det A_t$,  we obtain the derivatives of $l(t)$ from \eqref{eq:EL-cov2R} in Theorem~\ref{thm:ex-Ropt-moments} for some $\alpha \in \R^d$ as
\begin{align*}
  \dot l(t) &= -\tr\big(A^{-1}\dot A\big)\;,\\
  \ddot l(t) &= - \tr\big(A^{-1}\ddot A\big)+\tr\big(A^{-1}\dot A A^{-1} \dot A\big)
            = \tr\pra*{A^\tran(\alpha\otimes\alpha)A} = \ip{\alpha}{C\alpha}\;.
\end{align*}
Hence, $\ddot l(t)\geq 0$. Now the claim follows directly from \eqref{eq:entropy-split} and the convexity of $\cE$ along geodesics for distance $\cW_{0,\Id}$. 
\end{proof}

Finally, let us investigate convexity along geodesics of the modulated transport problem of potential energies $\cH$ with quadratic potential of the form 
\[
H(x)= \frac12 \ip{x-x_0}{B^{-1}(x-x_0)}\;.
\]
In this case, we readily compute that 
\[
\cH(\mu) = \frac12\tr\big[CB^{-1}\big] + \frac12\ip{m-x_0}{B^{-1}(m-x_0)}\;,
\]
with $m,C$ the mean and covariance of $\mu$. Now, consider a $\cW$-geodesic $(\mu_t)_{t\in[0,1]}$ with mean $\mean(\mu_t)=m_t$ and covariance $\C(\mu_t)=C_t$. From the splitting result in Theorem \ref{thm:main-cov} and the optimality conditions for the moment part \eqref{eq:EL-cov2R} we compute with $A_tA_t^\tran=C_t$, $\dot A_t = \frac12\dot C_tA^{-\tran}$ that
\begin{align*}
\dot m_t = C_t \alpha\;,\qquad
\ddot C_t = \dot C C^{-1}\dot C - C_t(\alpha \otimes \alpha) C_t\;. 
\end{align*}
Hence, we obtain
\begin{align}\nonumber
\frac{\dd}{\dd t}\cH(\mu_t) &= \frac12 \tr\big[\dot C_tB^{-1}\big]+\ip{C\alpha}{B^{-1}(m-x_0)}\;,\\\label{eq:d2-H}
\frac{\dd^2}{\dd t^2}\cH(\mu_t) &= \frac12 \tr\big[\dot C_t C_t^{-1}\dot C_t B^{-1}\big] + \ip{\dot C\alpha}{B^{-1}(m_t-x_0)}\;.
\end{align}

In general, this expression for the second derivative of the potential energy is hard to control. However, we have the following result for the relative entropy w.r.t.~$\normal_{x_0,B}$ defined for $\mu=\rho\normal_{x_0,B}$ by
\[\cE(\mu|\normal_{x_0,B})= \int\log\rho \dd\mu = \cE(\mu) + \cH(\mu)\;.\]

\begin{proposition}\label{prop:moment-convex-special}
Let $(\mu_t)_{t\in[0,1]}$ be a $\cW$-geodesic such that $\mean(\mu_t)=x_0$ and $\C(\mu_t)\succcurlyeq \frac12B$ for all $t\in[0,1]$. Then the relative entropy $\cE(\cdot|\normal_{x_0,B})$ is $1$-convex along $(\mu_t)$, i.e.
\[\cE(\mu_t|\normal_{x_0,B})\leq (1-t) \cE(\mu_0|\normal_{x_0,B}) + t \cE(\mu_1|\normal_{x_0,B})- \frac12 t(1-t) \cW(\mu_0,\mu_1)^2\;.\]
\end{proposition}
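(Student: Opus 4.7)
The plan is to decompose the relative entropy as $\cE(\cdot|\normal_{x_0,B}) = \cE(\cdot) + \cH(\cdot)$ and control both contributions separately along the $\cW$-geodesic $(\mu_t)$. For the Boltzmann--Shannon part, Formal Theorem~\ref{fthm:convex-W} already provides convexity with modulus $\cW_{0,\Id}(R_\#\widebar\mu_0,\widebar\mu_1)^2$, where $R$ denotes the rotation induced by the geodesic via Remark~\ref{rem:rel-opt}. For the quadratic part I will establish a separate convexity estimate with modulus at least $\MOP_R(\mu_0,\mu_1)^2$. Since the splitting identity from Theorem~\ref{thm:main-cov} applied at the induced $R$ reads
\begin{equation*}
\cW(\mu_0,\mu_1)^2 = \cW_{0,\Id}(R_\#\widebar\mu_0,\widebar\mu_1)^2 + \MOP_R(\mu_0,\mu_1)^2,
\end{equation*}
summing the two bounds then yields the claim.

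To handle $\cH$, the assumption $\mean(\mu_t)\equiv x_0$ enters decisively. Combined with the optimality condition $\dot m_t = C_t\alpha$ from Theorem~\ref{thm:ex-Ropt-moments} (applied to the moment geodesic extracted from $\mu_t$) and the invertibility of $C_t$ guaranteed by $C_t \succcurlyeq \tfrac12 B$, this forces the Lagrange multiplier $\alpha$ to vanish. Inserting $\dot m_t = 0$ into identity~\eqref{eq:d2-H} reduces the second derivative of $\cH$ along the geodesic to $\tfrac12 \tr[\dot C_t C_t^{-1} \dot C_t B^{-1}]$. Writing $X := C_t^{-1/2}\dot C_t C_t^{-1/2}$ and using cyclicity of the trace, this rewrites as $\tfrac12 \tr[X (C_t^{1/2} B^{-1} C_t^{1/2}) X]$; the hypothesis $C_t \succcurlyeq \tfrac12 B$ is equivalent to $C_t^{1/2}B^{-1}C_t^{1/2} \succcurlyeq \tfrac12 \Id$, so that
\begin{equation*}
\tfrac{\dd^2}{\dd t^2}\cH(\mu_t) \geq \tfrac14\tr[X^2] = \tfrac14 \tr[\dot C_t C_t^{-1} \dot C_t C_t^{-1}].
\end{equation*}
Proposition~\ref{prop:optimal} guarantees that along the moment geodesic the density $\skp{\dot m_t, C_t^{-1}\dot m_t} + \tfrac14\tr[\dot C_t C_t^{-1}\dot C_t C_t^{-1}]$ is constant in time; with $\dot m_t = 0$ this gives $\tfrac18 \tr[\dot C_t C_t^{-1}\dot C_t C_t^{-1}] \equiv \MOP_R(\mu_0,\mu_1)^2$, and the previous display becomes $\tfrac{\dd^2}{\dd t^2}\cH(\mu_t) \geq 2\MOP_R(\mu_0,\mu_1)^2$.

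Combining with the entropy estimate---and noting that inspection of the proof of Formal Theorem~\ref{fthm:convex-W} shows the correction term $l(t) = -\log\det A_t$ satisfies $\ddot l(t) = \skp{\alpha, C_t\alpha} = 0$ under our hypothesis, so $\cE(\mu_t)$ inherits the full $\cW_{0,\Id}(R_\#\widebar\mu_0,\widebar\mu_1)^2$-convexity from the constrained setting---yields
\begin{equation*}
\tfrac{\dd^2}{\dd t^2} \cE(\mu_t|\normal_{x_0,B}) \geq \cW_{0,\Id}(R_\#\widebar\mu_0,\widebar\mu_1)^2 + 2\MOP_R(\mu_0,\mu_1)^2 \geq \cW(\mu_0,\mu_1)^2,
\end{equation*}
which is equivalent to the stated $1$-convexity inequality by the standard second-derivative criterion. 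The main obstacle is conceptual rather than computational: the argument is conditional on the formal calculus with the Euler--Lagrange system~\eqref{eq:EL2R}, and requires enough regularity of $t\mapsto(\mu_t, m_t, C_t)$ to differentiate twice. Consistency of the standing hypothesis $m_t \equiv x_0$ along the geodesic with the optimality system is automatic---the critical equations admit $(m_t,\alpha)\equiv(x_0,0)$ whenever $m_0=m_1=x_0$---but identifying this critical curve as the actual minimiser, as well as propagating the spectral bound $C_t \succcurlyeq \tfrac12 B$ from the endpoints to all intermediate times, remain the delicate points in a rigorous treatment.
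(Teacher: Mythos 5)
Your proposal follows essentially the same route as the paper's proof: split $\cE(\cdot|\normal_{x_0,B})=\cE+\cH$, invoke the constrained geodesic convexity of $\cE$ from Formal Theorem~\ref{fthm:convex-W}/\eqref{eq:E-convexW}, lower-bound $\frac{\dd^2}{\dd t^2}\cH(\mu_t)$ via \eqref{eq:d2-H} using $m_t\equiv x_0$ and $C_t\succcurlyeq\frac12 B$ in terms of the constant moment-action density, and recombine through the splitting $\cW(\mu_0,\mu_1)^2=\cW_{0,\Id}(R_\#\widebar\mu_0,\widebar\mu_1)^2+\MOP_R(\mu_0,\mu_1)^2$. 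The only differences are cosmetic: your modulus $2\MOP_R(\mu_0,\mu_1)^2$ for $\cH$ (versus the paper's $\MOP_R(\mu_0,\mu_1)^2$) only strengthens the estimate, and your concern about propagating $m_t\equiv x_0$ and $C_t\succcurlyeq\frac12 B$ from the endpoints is moot since the proposition assumes both for all $t\in[0,1]$.
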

\begin{proof}
Using Theorem \ref{thm:main-cov}, we write again
  $\mu_t = (T_{m_{t},A_{t}}^{-1})_\#\tilde\mu_{t}$, where $\tilde\mu_t$ a solution to the covariance-constrained transport problem between $R_{\#}\widebar\mu_0$ and $\widebar\mu_1$ and $A_t$ is given by the solution of the constrained moment problem. Under the assumtions on $\mu_t$ that $\mean(\mu_t)=x_0$ and $\C(\mu_t)\succcurlyeq \frac12 B$, we have from \eqref{eq:d2-H}
\[
\frac{\dd^2}{\dd t^2}\cH(\mu_t) \geq \frac14\tr\big[\dot C_tC_t^{-1}\dot C_tC_t^{-1}\big]=\tr\big[\dot A_tA_t^{-1}\dot A_tA_t^{-1}\big] = \mathcal D_R(\mu_0,\mu_1)^2\;.
\]
Hence,
\[
\cH(\mu_t)\leq (1-t)\cH(\mu_0) + t\cH(\mu_1) -\frac12 t(1-t) \mathcal D_R(\mu_0,\mu_1)^2\;.
\]
Combining this with \eqref{eq:E-convexW} and the fact that $\cW(\mu_0,\mu_1)^2= \cW_{0,\Id}(\mu_0,\mu_1)^2 + \mathcal D_R(\mu_0,\mu_1)^2$, we obtain the claim.
\end{proof}

\begin{remark}
The set 
 \begin{equation}\label{eq:setG}
 \mathcal G := \Bigl\{\mu\in\cP_2(\R^d) ~:~ \mean(\mu)=x_0\;,~ \C(\mu)\succcurlyeq \tfrac12 B\Bigr\}
 \end{equation}
 is probably not geodesically convex w.r.t.~$\cW$. Note however that by Theorem \ref{thm:main-cov} and the form of $\mathcal D_R$, any $\cW$-geodesic $(\mu_t)_t$ with $\mean(\mu_0)=\mean(\mu_1)=x_0$ satisfies $\mean(\mu_t)=x_0$ for all $t\in[0,1]$. Moreover,  the bound \eqref{eq:mc-apriori} from Lemma \ref{lem:mA-finiteAction} readily implies that for $\C(\mu_0),\C(\mu_1)\succcurlyeq \frac12 B+\eps\Id$ and $\cW(\mu_0,\mu_1)$ sufficiently small, we also have $\C(\mu_t)\succcurlyeq \frac12 B$. Hence, the interior of $\mathcal G$ is locally geodesically convex, in the sense that it can be covered by $\cW$-balls such that any $\cW$ geodesic connecting points in the same ball stays inside $\mathcal G$.
 \end{remark}

\subsection{Functional Inequalities}
\label{sec:FuncIneq}

In this section, we will provide the proofs for the results stated in Section \ref{sss:intro:convexity}. We will first prove the Evolution Variational Inequality Theorem~\ref{thm:func-ineq}. As corollaries we obtain rigorously the geodesic convexity of the Boltzmann-Shannon entropy stated in Theorem~\ref{thm:main-entroBoltzmann entpy-cov} and contractivity for the gradient flow in the constrained distance from Corollary~\ref{cor:stability-intro}. Finally, we prove the HWI inequality in Proposition~\ref{prop:HWI-intro}. 

Recall that the Fokker-Planck equation~\eqref{eq:GF-FP-cov-quad-intro} can be rewritten as gradient  flow~\eqref{eq:GF} with respect to the covariance-modulated transport distance $\cW$ of the relative entropy $\cE(\mu | \mu_\infty)$ for $\mu_\infty=\normal_{x_0,B}$. Let us write $\eta_\infty=\normal_{0,\Id}$. For convenience, we recall the statements of the results below.

\begin{theorem}[EVI for Shape]\label{thm:EVI}
 Let $\eta,\nu\in \cP_{0,\Id}(\R^d)$ and let $\eta_t=P_t\eta$ with $(P_t)_{t\geq 0}$ being the Ornstein-Uhlenbeck semigroup. Then we have the following Evolution Variational Inequality (EVI):
  \begin{align}\label{eq:EVI-constraint}
    \frac{\dd^+}{\dd t} \cW_{0,\Id}(\eta_t,\nu)^2 + \cW_{0,\Id}(\eta_t,\nu)^2\leq \cE(\nu|\eta_\infty)-\cE(\eta_t|\eta_\infty)\;,
  \end{align}
  The same estimate holds with $\cE(\cdot|\eta_\infty)$ replaced by $\cE(\cdot)$.
\end{theorem}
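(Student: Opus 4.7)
The plan is to establish the EVI by combining three ingredients: recognizing $(P_t)$ as the $\cW_{0,\Id}$-metric gradient flow of the relative entropy on $\cP_{0,\Id}(\R^d)$, computing the first-time derivative $\tfrac{d^+}{dt}\cW_{0,\Id}(P_t\eta,\nu)^2$ via the dual formulation of Formal Theorem~\ref{thm:infsup}, and closing the estimate with a geodesic-convexity inequality for $\cE(\cdot|\eta_\infty)$. First I would verify that $(P_t)$ preserves $\cP_{0,\Id}$: specialising the moment equations \eqref{eq:moments-GF} to $x_0=0$, $B=\Id$ and $\C(\eta_t)=\Id$ gives $\dot m_t=-m_t$ and $\dot C_t=2(\Id-C_t)$, which fixes the point $(0,\Id)$. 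Rewriting the OU equation as a continuity equation $\partial_t\eta_t+\nabla\cdot(\eta_t V_t)=0$ with $V_t=-\nabla\log(\eta_t/\eta_\infty)$, two integrations by parts using $\mean(\eta_t)=0$ and $\C(\eta_t)=\Id$ yield $\int V_t\,d\eta_t=0$ and $\int(V_t\otimes x + x\otimes V_t)\,d\eta_t=0$, so $V_t$ lies in the $\cW_{0,\Id}$-tangent cone at $\eta_t$ and represents the constrained gradient of the relative entropy. Since $\cE(\cdot|\eta_\infty)$ and $\cE(\cdot)$ differ by a constant on $\cP_{0,\Id}$ (their difference is the fixed value $d/2$ plus a normalisation), both versions yield the same EVI.

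Next, for each $t$, select an optimal $\cW_{0,\Id}$-geodesic $(\sigma^t_s)_{s\in[0,1]}$ from $P_t\eta$ to $\nu$ with initial velocity potential $\psi^t_0$; existence at small distance is provided by Theorem~\ref{thm:existence-smalldist}, and under reflection symmetry by Theorem~\ref{thm:existence-symmetry}, with an approximation argument handling the general case. Applying the envelope principle to the duality of Formal Theorem~\ref{thm:infsup} and substituting the OU equation for $\partial_t(P_t\eta)$ produces the first-variation identity
\[
\frac{d^+}{dt}\cW_{0,\Id}(P_t\eta,\nu)^2 \;=\; \int \nabla\psi^t_0\cdot\nabla\log(P_t\eta/\eta_\infty)\,d(P_t\eta),
\]
and the chain rule along the continuity equation identifies the right-hand side with $\frac{d}{ds}\big|_{s=0^+}\cE(\sigma^t_s|\eta_\infty)$. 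Coupled with a displacement-convexity estimate of the form
\[
\cE(\sigma^t_s|\eta_\infty)\;\le\;(1-s)\cE(P_t\eta|\eta_\infty) + s\,\cE(\nu|\eta_\infty) - \tfrac{1}{2}s(1-s)\,\cW_{0,\Id}(P_t\eta,\nu)^2,
\]
dividing by $s$ and letting $s\to 0^+$ closes the inequality and yields \eqref{eq:EVI-constraint} (up to the factor-of-two convention between metric and Benamou-Brenier action).

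The main obstacle is making this convexity step rigorous \emph{without} circularly invoking Theorem~\ref{thm:main-entroBoltzmann entpy-cov}, which the paper derives as a consequence of the present EVI. The formal second-variation calculation behind Formal Theorem~\ref{prop:convex-U}, specialised to the Boltzmann-Shannon entropy so that the pressure $P(\rho)=\rho$, yields the identity
\[
\frac{d^2}{ds^2}\cE(\sigma_s|\eta_\infty)\;=\;\int \|D^2\psi_s\|_{\HS}^2\,d\sigma_s + \cW_{0,\Id}(\mu_0,\mu_1)^2,
\]
which uses the geodesic equations \eqref{eq:geodesic-eqs}, the constancy of $\tr\Lambda_s=\cW_{0,\Id}^2$ in $s$, and the integration-by-parts identity $\sum_{kl}\Lambda_{kl}\int \partial_k U'(\rho)\,x_l\,d\sigma = -\cW_{0,\Id}^2\int P(\rho)$ that produces the $\cW_{0,\Id}^2$-term from the Lagrange multiplier. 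My strategy would be to approximate $\eta,\nu\in\cP_{0,\Id}$ by smooth, strictly log-concave, $d$-fold reflection-symmetric densities, for which Theorem~\ref{thm:existence-symmetry} provides an explicit fixed-point representation of the geodesic $\sigma_s$, the potential $\psi_s$, and the multiplier $\Lambda_s$ with sufficient regularity to carry out the integrations by parts rigorously. One then verifies the second-variation inequality along the approximating sequence and passes to the limit by lower semicontinuity of the $\cW_{0,\Id}$-action combined with continuity of the entropy along suitable mollifications of the data. Tracking the convergence of $\Lambda_s$ through this limiting procedure, and upgrading the pointwise second-variation bound to the correct coefficient in \eqref{eq:EVI-constraint}, is the delicate technical point.
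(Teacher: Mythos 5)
Your plan inverts the logical order the paper relies on, and in doing so it rests on three ingredients that are not available. First, you differentiate $t\mapsto\cW_{0,\Id}(P_t\eta,\nu)^2$ by picking, for each $t$, an optimal constrained geodesic and applying an envelope argument to the duality of Formal Theorem~\ref{thm:infsup}. But existence of $\cW_{0,\Id}$-geodesics is only known at small distance (Theorem~\ref{thm:existence-smalldist}) or under $d$-fold reflection symmetry (Theorem~\ref{thm:existence-symmetry}); the paper states explicitly that the general case is open, and your ``approximation argument handling the general case'' would itself need a stability theory for constrained geodesics and their potentials $(\psi^t_0,\Lambda^t)$ that nobody has. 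Moreover the dual formulation is only \emph{formally} derived in the paper, so the envelope/first-variation identity you write down has no rigorous basis, and in the constrained setting the Hamilton--Jacobi relation carries the multiplier term $\tr[\Lambda(x\otimes x)]$, which you drop when pairing the geodesic velocity with the flow velocity. Second, the displacement-convexity inequality you invoke to close the estimate is exactly Theorem~\ref{thm:main-entroBoltzmann entpy-cov}, which the paper obtains \emph{as a consequence} of the EVI (via Corollary~\ref{cor:approx_cvx} and the general result of Daneri--Savar\'e); your proposed rescue, making Formal Theorem~\ref{prop:convex-U} rigorous by approximating with symmetric log-concave data and passing to the limit in $\Lambda_s$, is precisely the part the authors could not do directly and is not a routine technicality. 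Third, even granting $1$-convexity along geodesics, convexity alone does not yield an EVI in a general metric setting; one needs either generalized geodesics/semiconcavity-type structure or a direct construction, which is why the action-based argument is used.

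The paper's proof avoids all of this: it repeats the Daneri--Savar\'e perturbation argument, using only (i) that $\cW_{0,\Id}$ is defined by the same Benamou--Brenier action as $\tfrac12 W_2$ but over constrained curves, (ii) that the Ornstein--Uhlenbeck semigroup preserves $\cP_{0,\Id}(\R^d)$, so the interpolated family $\mu^t_s:=P_{st}\mu_s$ built from a merely \emph{near-optimal} curve $(\mu_s,\nabla\phi_s)\in\CE_{0,\Id}(\nu,\eta)$ stays admissible, and (iii) the Bakry--\'Emery inequality $\mathcal B(\mu,\phi)\geq 2\mathcal A(\mu,\phi)$ for the OU generator, leading to the differential inequality \eqref{eq:preEVI} which is then integrated in $s$ and optimized over curves. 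No geodesics, no duality, and no prior convexity statement are needed. Your preliminary observations (invariance of $\cP_{0,\Id}$ under $P_t$, and that $\cE(\cdot|\eta_\infty)$ and $\cE(\cdot)$ differ by a constant on $\cP_{0,\Id}$) are correct and coincide with the paper's, but the core of your argument as proposed cannot be completed with the tools established in the paper.
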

 As a corollary of the above EVI we obtain the statement of Theorem \ref{thm:main-entroBoltzmann entpy-cov} on convexity $\cE(\cdot|\eta_\infty)$ and $\cE$ along $\cW_{0,\Id}$-geodesics. More generally, the entropy is almost $1$-convex along almost shortest curves.
 
\begin{corollary}\label{cor:approx_cvx}
Let $(\mu_s)_{s\in[0,1]}$ be a Lipschitz curve in $\big(\cP_{0,\Id}(\R^d), \cW_{0,\Id}\big)$ satisfying 
\begin{equation*}
\cW_{0,\Id}(\mu_s,\mu_r)\leq L|r-s|\;,\quad L^2\leq \cW_{0,\Id}(\mu_0,\mu_1)^2 +\eps^2 \qquad \forall s,r\in [0,1]\;, 
\end{equation*}
for some $\eps>0$. Then for every $t>0$ and $s\in [0,1]$
\begin{equation*}
    \cE(P_t\mu_s) \leq (1-s)\cE(\mu_0) + s\cE(\mu_1) -\frac12 s(1-s)\Big(\cW_{0,\Id}(\mu_0,\mu_1)^2 +\frac{\eps^2}{e^{2t}-1}\Big)\;.
\end{equation*}
In particular, if $(\mu_s)_s$ is a geodesic, we have for all $s\in [0,1]$
\begin{equation*}
    \cE(\mu_s) \leq (1-s)\cE(\mu_0) + s\cE(\mu_1) -\frac12 s(1-s)\cW_{0,\Id}(\mu_0,\mu_1)^2\;.
\end{equation*}
The same estimates hold for $\cE(\cdot|\eta_\infty)$ instead of $\cE(\cdot)$. 
Moreover, for any $\cW$-geodesic $(\mu_s)_{s\in[0,1]}$ we have that 
\begin{equation*}
 \cE(\mu_s)\leq (1-s)\cE(\mu_0) + t\cE(\mu_1) -\frac12 s(1-s) \cW_{0,\Id}(R_{\#}\widebar\mu_0,\widebar\mu_1)^2\;,
\end{equation*}
where $R_{\#}\widebar\mu_0$ and $\widebar\mu_1$ are the normalisations of $\mu_0,\mu_1$ appearing in the splitting result in Theorem~\ref{thm:main-cov}.
\end{corollary}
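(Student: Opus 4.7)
The plan is to derive the estimate from the Evolution Variational Inequality~\eqref{eq:EVI-constraint} of Theorem~\ref{thm:EVI} by a Daneri--Savar\'e type argument. Fix an interior parameter $s\in(0,1)$ and apply the EVI twice with $\eta=\mu_s$, once testing against $\nu=\mu_0$ and once against $\nu=\mu_1$, then form the convex combination weighted by $(1-s)$ and $s$. Setting
$$\phi(t):=(1-s)\,\cW_{0,\Id}(P_t\mu_s,\mu_0)^{2}+s\,\cW_{0,\Id}(P_t\mu_s,\mu_1)^{2},$$
this produces the scalar differential inequality
$$\frac{\dd^{+}}{\dd t}\phi(t)+\phi(t)\le(1-s)\cE(\mu_0)+s\cE(\mu_1)-\cE(P_t\mu_s).$$
Using monotonicity of $t\mapsto\cE(P_t\mu_s)$ along the Ornstein--Uhlenbeck semigroup to bound the right-hand side by its value at the final time, multiplying by the appropriate exponential integrating factor and integrating on $[0,t]$ yields an explicit bound on $\phi(t)$ in terms of $\phi(0)$ and $\cE(P_t\mu_s)$.

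The almost-convexity estimate is then extracted from two complementary bounds on $\phi$. The Lipschitz hypothesis $\cW_{0,\Id}(\mu_s,\mu_r)\le L|s-r|$ gives the upper bound
$$\phi(0)\le(1-s)(sL)^{2}+s((1-s)L)^{2}=s(1-s)L^{2}\le s(1-s)\bigl(\cW_{0,\Id}(\mu_0,\mu_1)^{2}+\eps^{2}\bigr),$$
while for the lower bound on $\phi(t)$ I combine the elementary identity $(1-s)a^{2}+sb^{2}=s(1-s)(a+b)^{2}+((1-s)a-sb)^{2}$ with the triangle inequality for $\cW_{0,\Id}$, obtaining $\phi(t)\ge s(1-s)\,\cW_{0,\Id}(\mu_0,\mu_1)^{2}$. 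Substituting both bounds into the integrated form of the differential inequality and rearranging produces the desired upper bound on $\cE(P_t\mu_s)$, in which the $\eps^{2}$-correction appears naturally as the Lipschitz excess $L^{2}-\cW_{0,\Id}(\mu_0,\mu_1)^{2}$ divided by the exponential denominator generated by the integrating factor.

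The geodesic case follows by specialising $\eps=0$, so that the $\eps$-correction term vanishes, and sending $t\to 0^{+}$; here I would invoke narrow continuity $P_t\mu_s\to\mu_s$ together with lower semicontinuity of the Boltzmann--Shannon entropy with respect to narrow convergence. The analogous statement for $\cE(\cdot\,|\,\eta_\infty)$ is immediate, since on $\cP_{0,\Id}(\R^{d})$ the quadratic potential contribution $\int\frac12|x|^{2}\dd\mu$ is constant equal to $d/2$, so $\cE(\cdot)$ and $\cE(\cdot\,|\,\eta_\infty)$ differ only by an additive constant on the constrained manifold and the EVI applies verbatim. For the final statement concerning $\cW$-geodesics, I would invoke the splitting Theorem~\ref{thm:main-cov}, writing $\mu_s=(T_{m_s,A_s}^{-1})_{\#}\tilde\mu_s$ with $\tilde\mu_s$ a $\cW_{0,\Id}$-geodesic between $R_{\#}\widebar\mu_0$ and $\widebar\mu_1$, and combining the change-of-variables identity $\cE(\mu_s)=\cE(\tilde\mu_s)-\log\det A_s$ from the proof of Formal Theorem~\ref{fthm:convex-W} with the constrained convexity of $\cE(\tilde\mu_s)$ along the shape geodesic and with the convexity of $s\mapsto -\log\det A_s$, which follows from $\frac{\dd^{2}}{\dd s^{2}}(-\log\det A_s)=\langle\alpha,C_s\alpha\rangle\ge 0$ derived via the Euler--Lagrange equations~\eqref{eq:EL-cov2R} for the optimal moment trajectory.

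The main technical point is the limit $t\to 0^{+}$ that translates the semigroup bound into a statement along the curve itself: lower semicontinuity of $\cE$ is classical in the narrow topology, but one must justify convergence of $P_t\mu_s$ to $\mu_s$ in a topology strong enough to also pass $\cW_{0,\Id}(P_t\mu_s,\mu_i)^{2}\to\cW_{0,\Id}(\mu_s,\mu_i)^{2}$ on the right-hand side. A secondary bookkeeping point is the careful tracking of the factor $\tfrac12$ inherent in the normalisation $\cW_{0,\Id}^{2}=\inf\int_{0}^{1}\int\frac12|V|^{2}\dd\mu_{t}\dd t$ and the matching choice of integrating factor, so that the precise constants appearing in the stated estimate are reproduced.
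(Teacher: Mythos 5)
Your argument is correct and is essentially the paper's route: the paper's proof simply invokes the general result of Daneri--Savar\'e \cite[Thm.~3.2]{DaneriSavare2008} to pass from the EVI of Theorem~\ref{thm:EVI} to approximate convexity, and you have inlined exactly that mechanism (EVI applied at both endpoints, weighted combination, Gronwall with integrating factor, Lipschitz upper bound on $\phi(0)$ and triangle-inequality lower bound on $\phi(t)$), while your treatment of the $\cE(\cdot\,|\,\eta_\infty)$ variant and of the $\cW$-geodesic case via the splitting and the convexity of $s\mapsto-\log\det A_s$ coincides with the paper's proof through Formal Theorem~\ref{fthm:convex-W}. One caveat on your ``bookkeeping'' point: carried out from the EVI as stated, your computation yields $\cE(P_t\mu_s)\le(1-s)\cE(\mu_0)+s\cE(\mu_1)-s(1-s)\cW_{0,\Id}(\mu_0,\mu_1)^2+\tfrac{s(1-s)}{e^{t}-1}\,\eps^2$, i.e.\ the $\eps^2$-correction appears with a plus sign and denominator $e^{t}-1$; this implies the geodesic-case and subsequent claims but does not literally reproduce the printed constants, whose minus sign in front of the $\eps^2$-term cannot be correct as stated (it would fail for a constant curve with $\eps$ large), so the discrepancy reflects the statement rather than a flaw in your argument. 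Finally, the limit $t\to0^{+}$ requires only lower semicontinuity of $\cE$ under narrow convergence together with the fixed second moments on $\cP_{0,\Id}(\R^d)$, since the distances $\cW_{0,\Id}(P_t\mu_s,\mu_i)$ enter the final bound only through the $t$-uniform triangle-inequality lower bound, so your concern about passing them to the limit is moot.
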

 
\begin{proof}
The statements concerning (almost) $\cW_{0,\Id}$ geodesics follow from the EVI by the general result \cite[Thm.~3.2]{DaneriSavare2008}. The last statement follows from the $\cW_{0,\Id}$-geodesic convexity of $\cE$ as shown in the proof of Formal Theorem \ref{fthm:convex-W}. In fact this argument was already rigorous conditional on the $\cW_{0,\Id}$-geodesic convexity of $\cE$.
\end{proof}

\begin{proof}[Proof of Theorem~\ref{thm:EVI}]
This statement is well known when the distance $\cW_{0,\Id}$ is replaced by (half of) the $L^2$-Wasserstein distance $W_2$, see \cite{DaneriSavare2008}. Since $\frac12 W_2$ and $\cW_{0,\Id}$  are defined through the same action functional, one can repeat the argument of Daneri and Savar\'e \cite[Thm.~5.1]{DaneriSavare2008} to obtain \eqref{eq:EVI-constraint}, as we shall briefly indicate.
Recall that $(\eta_t)_{t\geq0}$ solves the classical Fokker-Planck equation~\eqref{eq:FP-normalized-quad}, i.e. $\partial_t \eta_t=L^*\eta_t$.
It is sufficient to establish the claim for a dense set of measures $\nu,\eta$. %
So assume $\eta,\nu$ are smooth and let $(\mu_s,\nabla \phi_s)_{s\in[0,1]}\in CE_{0,\Id}(
\nu,\eta)$ be a smooth curve with $\frac12\int_0^1|\nabla\phi_s|^2\dd\mu_s\dd s \leq W_{0,\Id}(\nu, \eta)^2+\eps^2$ and $W_2(\mu_s,\mu_r)\leq L|r-s|$ with $L^2=W_{0,\Id}(\nu,\eta)^2+\eps^2$ (the latter can be achieved by reparametrisation). 
We set $\mu^t_s:=P_{st}\mu_s$ with $(P_{r})_r$ the
Ornstein-Uhlenbeck semigroup. Note that $\mean(P_r\mu)=0$ and $\C(P_r\mu)=\Id$ for all $r>0$ provided $\mean(\mu)=0$ and $\C(\mu)=\Id$. Hence $\mu^t_s\in \cP_{0,\Id}(\R^d)$ and it solves
\begin{equation}\label{eq:L*}
 \partial_t \mu^t_s = sL^*\mu^t_s\,.   
\end{equation}
Note that $\nu=\eta^t_0$ and $\eta_t=\mu^t_1$. We can find smooth functions $\phi_s^t$  satisfying
\begin{equation}\label{eq:interp}
  \partial_s\mu^t_s +\nabla\cdot(\mu^t_s\nabla \phi^t_s)=0\;.
\end{equation}
Differentiating the relative entropy along the interpolation, one obtains
\begin{align}\label{eq:ent-est}
\partial_s   \cE(\mu^t_s|\eta_\infty)
= - \int L\phi^t_s \mu^t_s\,.
\end{align}
Following the calculations in \cite{DaneriSavare2008}, we compute the derivative of the action along the semigroup.
\begin{align*}
\partial_t \mathcal A(\mu_s^t,\phi^t_s)
&= \int\nabla\partial_t\phi^t_s\cdot \nabla\phi^t_s\,\mu^t_s
+ \frac12\int|\nabla\phi^t_s|^2\,\partial_t\mu^t_s\\
&= \int\nabla\partial_t\phi^t_s\cdot \nabla\phi^t_s\,\mu^t_s
+ \frac{s}{2}\int L|\nabla\phi^t_s|^2\,\mu^t_s\,.
\end{align*}
In order to compute the first term on the right-hand side, we compare
\begin{align*}
\partial_t\partial_s\mu^t_s 
&= - \partial_t \nabla\cdot(\mu^t_s\nabla \phi^t_s)
= -\nabla\cdot( \partial_t \mu^t_s\nabla \phi^t_s) - \nabla\cdot(\mu^t_s\nabla  \partial_t\phi^t_s)\\
&= -s\nabla\cdot( L^*\mu^t_s\nabla \phi^t_s) - \nabla\cdot(\mu^t_s\nabla  \partial_t\phi^t_s)
\end{align*}
with
\begin{align*}
\partial_s\partial_t\mu^t_s    
&=\partial_s(sL^*\mu^t_s)
= L^*\mu^t_s -sL^*(\nabla\cdot(\mu^t_s\nabla\phi^t_s))
\end{align*}
to conclude
\begin{align*}
   & \int\nabla\partial_t\phi^t_s\cdot \nabla\phi^t_s\,\mu^t_s
    = - \int \phi^t_s\nabla\cdot\left(\mu^t_s\nabla\partial_t\phi^t_s\right)\\
    &\qquad=s\int\phi^t_s\nabla\cdot( L^*\mu^t_s\nabla \phi^t_s)
    + \int \phi^t_s L^*\mu^t_s -s \int \phi^t_sL^*(\nabla\cdot(\mu^t_s\nabla\phi^t_s))\\
 &\qquad=-s\int L|\nabla\phi^t_s|^2\mu^t_s
    + \int L\phi^t_s \mu^t_s +s \int \nabla L \phi^t_s\cdot\nabla\phi^t_s \mu^t_s\,.
\end{align*}
Together with \eqref{eq:ent-est}, we obtain the estimate
\begin{align}
  \partial_t \mathcal A(\mu_s^t,\phi^t_s) +\partial_s\mathcal \cE(\mu^t_s|\eta_\infty) &= -s\mathcal B(\mu^t_s,\phi^t_s)\label{eq:preEVI}
  \leq -2s\mathcal A(\mu_s^t,\phi^t_s)\;.
\end{align}
where
\begin{align*}
  \mathcal B(\mu,\phi):= \int \biggl[\frac12 L|\nabla\phi|^2-\ip{\nabla\phi}{\nabla L\phi}\biggr] \dx\mu \geq \int |\nabla\phi|^2 \dx\mu\;.
\end{align*}
From here on one completes the proof as in \cite[Thm.~5.1]{DaneriSavare2008} roughly by integrating in $s$ and optimizing over the curve $(\mu_s)_s$.
The analoguous claim, for the entropy $\cE(\cdot)$ follows immediately, since $\cE(\mu)=\cE(\mu|\eta_\infty)-\frac12\int|x|^2\dd\mu(x)+{\sf const}$ and $\eta_t, \nu$ have the same second moment.
\end{proof}

As another consequence of the above EVI, we obtain the stability estimates for the normalized gradient flow from the general result \cite[Prop.~3.1]{DaneriSavare2008}.

\begin{corollary}[Stability]\label{cor:stability}
For any two solutions $\eta_t^1,\eta^2_t$
  of \eqref{eq:FP-normalized-quad},
  \begin{equation*}
    \cW_{0,\Id}(\eta^1_t,\eta^2_t)\leq e^{-t}\cW_{0,\Id}(\eta^1_0,\eta^2_0)\quad \forall t\geq 0\;.
  \end{equation*}
 \end{corollary}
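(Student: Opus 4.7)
The plan is to derive the contraction directly from the Evolution Variational Inequality of Theorem~\ref{thm:EVI} by the standard doubling-of-variables trick from the theory of $\mathrm{EVI}_\lambda$-gradient flows, as carried out in \cite{DaneriSavare2008}.

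Fix $s\ge 0$ and set $D(t):=\cW_{0,\Id}(\eta^1_t,\eta^2_t)^2$. First I would apply Theorem~\ref{thm:EVI} to the curve $t\mapsto \eta^1_t=P_t\eta^1_0$ with the frozen test measure $\nu=\eta^2_s$, and then apply it to $t\mapsto \eta^2_t=P_t\eta^2_0$ with $\nu=\eta^1_s$. Evaluating both at $t=s$ and adding the two inequalities cancels the entropy terms and yields
\begin{equation}\label{eq:EVI-sym}
\frac{\dd^+}{\dd t}\bigg|_{t=s}\!\cW_{0,\Id}(\eta^1_t,\eta^2_s)^2 \!+\! \frac{\dd^+}{\dd t}\bigg|_{t=s}\!\cW_{0,\Id}(\eta^1_s,\eta^2_t)^2 + 2\cW_{0,\Id}(\eta^1_s,\eta^2_s)^2 \le 0.
\end{equation}

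Next, I would invoke the standard trace/chain-rule lemma for the upper right Dini derivative along a two-parameter family, which gives
\begin{equation*}
\frac{\dd^+}{\dd t}\bigg|_{t=s}\!\! D(t)\ \le\ \frac{\dd^+}{\dd t}\bigg|_{t=s}\!\!\cW_{0,\Id}(\eta^1_t,\eta^2_s)^2 + \frac{\dd^+}{\dd t}\bigg|_{t=s}\!\!\cW_{0,\Id}(\eta^1_s,\eta^2_t)^2.
\end{equation*}
This is proved in \cite[Lemma 4.3.4]{DaneriSavare2008} under the mild regularity assumption that $t\mapsto \eta^i_t$ are locally absolutely continuous curves in $(\cP_{0,\Id}(\R^d),\cW_{0,\Id})$, which is ensured by the fact that they are gradient flow trajectories of $\cE(\cdot|\eta_\infty)$ and hence have finite metric derivative. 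Combining with \eqref{eq:EVI-sym} yields the differential inequality
\begin{equation*}
\frac{\dd^+}{\dd t} D(t) + 2 D(t) \le 0 \qquad\text{for a.e. } t\ge 0.
\end{equation*}

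Finally, a standard Gronwall argument for upper Dini derivatives gives $D(t)\le e^{-2t}D(0)$, and taking square roots produces the claimed estimate
\begin{equation*}
\cW_{0,\Id}(\eta^1_t,\eta^2_t) \le e^{-t}\cW_{0,\Id}(\eta^1_0,\eta^2_0).
\end{equation*}
The main technical subtlety in the argument is the chain-rule lemma for the upper right Dini derivative in the metric space $(\cP_{0,\Id}(\R^d),\cW_{0,\Id})$; this relies on the triangle inequality (already established in Theorem~\ref{thm:W:metric}, restricted to the covariance-constrained setting) together with an absolute continuity estimate, and is exactly the abstract content of \cite[Prop.~3.1]{DaneriSavare2008}, which can be cited without modification since the only input it requires is the EVI together with the metric structure of $\cW_{0,\Id}$.
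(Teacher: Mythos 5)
Your proposal is correct and follows essentially the same route as the paper: the paper deduces the contraction directly from the EVI of Theorem~\ref{thm:EVI} by citing the abstract result \cite[Prop.~3.1]{DaneriSavare2008}, whose internal doubling-of-variables mechanics you have simply spelled out. The only blemish is the reference ``Lemma 4.3.4'' for the Dini-derivative chain rule, which belongs to the Ambrosio--Gigli--Savar\'e monograph rather than to \cite{DaneriSavare2008}, but this does not affect the argument since the abstract contraction result you cite at the end already covers it.
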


\begin{remark}\label{rem:Wstability}
Under more restrictive assumptions, we also obtain at least formally a stability result for the covariance-modulated gradient flow:

For any two solutions $\mu_t^1,\mu_t^2$ of \eqref{eq:GF-FP-cov-quad-intro} such that $\mean(\mu_0^1)=\mean(\mu_0^2)=x_0$ and $\C(\mu_0^1),\C(\mu_0^2)\succcurlyeq \frac12B$ we have
\[\cW(\mu_t^1,\mu_t^2)\leq e^{-t}\cW(\mu_0^1,\mu_0^2)\quad \forall t\geq 0\;.\]
Indeed, from the explicit evolution of mean and covariance along the gradient flow \eqref{eq:moments-GF} we infer that the set $\mathcal G$ from \eqref{eq:setG} is invariant under the flow. Now the desired stability estimate is formally equivalent to $1$-convexity of $\mathcal E(\cdot|\normal_{x_0,B})$, see e.g.~the discussion in \cite{OW05}. The latter is granted (locally) on $\mathcal G$ by Proposition \ref{prop:moment-convex-special}. With some more work, this result could be made rigorous by proving an EVI inside $\mathcal G$ along the lines of Theorem \ref{thm:EVI}.
\end{remark}

 Finally, we discuss the HWI inequality as a consequence of the strict convexity of the entropy, along $W_{0,\Id}$-geodesics.

\begin{proposition}[HWI Inequality]\label{prop:HWI-new}
Assume $\eta_0,\eta_1\in \mathcal{P}_{0,\Id}(\R^d)$ are connected by a $\cW_{0,\Id}$-geodesic. Then we have
 \begin{align}\label{eq:HWI_1}
 \cE(\eta_0) &\le 
\cE(\eta_1) + 2\sqrt{\cI(\eta_0)}\cW_{0,\Id}(\eta_0,\eta_1)
    - \frac12\cW_{0,\Id}(\mu_0,\eta_1)^2\,,\\\label{eq:HWI_2}
 \cE(\eta_0|\eta_\infty) &\le 
\cE(\eta_1|\eta_\infty)+2\sqrt{\cI(\eta_0|\eta_\infty)}\cW_{0,\Id}(\eta_0,\eta_1)
    - \frac12\cW_{0,\Id}(\eta_0,\eta_1)^2\,.
\end{align}
\end{proposition}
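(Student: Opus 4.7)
The plan is to derive both inequalities as quantitative refinements of the $1$-geo\-desic convexity of the entropy along $\cW_{0,\Id}$-geodesics (Theorem~\ref{thm:main-entroBoltzmann entpy-cov}, made rigorous in Corollary~\ref{cor:approx_cvx}), following the Otto--Villani strategy adapted to the covariance-constrained action functional. The key observation is that the factor $\tfrac12$ appearing in the Benamou--Brenier-type definition~\eqref{eq:OT-cov-constraint-main} of $\cW_{0,\Id}$ produces a $\sqrt{2}$ in the slope estimate, which is then absorbed into the weaker constant $2$ appearing in the statement.

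Given $\eta_0,\eta_1\in\cP_{0,\Id}(\R^d)$, assume $\cI(\eta_0)<\infty$ (otherwise the claim is trivial) and let $(\mu_s,V_s)_{s\in[0,1]}\in\CE_{0,\Id}(\eta_0,\eta_1)$ be a constant-speed $\cW_{0,\Id}$-geodesic, provided by assumption. Constant-speed parametrisation gives $\int\tfrac12|V_s|^2\dd\mu_s=\cW_{0,\Id}(\eta_0,\eta_1)^2$ for a.e.\ $s\in[0,1]$. The $1$-convexity from Corollary~\ref{cor:approx_cvx}, after division by $s>0$, reads
\begin{equation*}
  \cE(\eta_0)-\cE(\eta_1)\le\frac{\cE(\eta_0)-\cE(\mu_s)}{s}-\frac{1-s}{2}\cW_{0,\Id}(\eta_0,\eta_1)^2,
\end{equation*}
so taking $\liminf_{s\to 0^+}$ reduces the problem to bounding the upper right derivative $\limsup_{s\to 0^+}\tfrac{\cE(\eta_0)-\cE(\mu_s)}{s}$ by $2\sqrt{\cI(\eta_0)}\cW_{0,\Id}(\eta_0,\eta_1)$. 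Differentiating the entropy along the continuity equation and applying Cauchy--Schwarz yields, formally,
\begin{equation*}
    \Big|\tfrac{\dd}{\dd s}\cE(\mu_s)\Big|=\Big|\int\nabla\log\mu_s\cdot V_s\dd\mu_s\Big|\le\sqrt{\cI(\mu_s)}\,\Big(\int|V_s|^2\dd\mu_s\Big)^{1/2}=\sqrt{2\cI(\mu_s)}\,\cW_{0,\Id}(\eta_0,\eta_1),
\end{equation*}
and since $\sqrt{2}\le 2$, evaluation at $s=0$ gives the desired bound.

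The main technical obstacle is the justification of the chain rule $\tfrac{\dd}{\dd s}\cE(\mu_s)=\int\nabla\log\mu_s\cdot V_s\dd\mu_s$ at $s=0$ along the $\cW_{0,\Id}$-geodesic, since a priori neither the densities $\mu_s$ nor the potentials $\psi_s$ with $V_s=\nabla\psi_s$ are smooth. I would bypass this by regularising both marginals through the Ornstein--Uhlenbeck semigroup $P_\tau$ (noting that $P_\tau$ preserves $\cP_{0,\Id}(\R^d)$ since the moment constraints are invariant), writing the inequality for the smooth couple $(P_\tau\eta_0,P_\tau\eta_1)$ for which the chain rule is classical, and then passing to the limit $\tau\to 0$, using the lower semicontinuity of $\cI$, the lower semicontinuity of $\cW_{0,\Id}$, and the approximate convexity on near-geodesics already invoked in Corollary~\ref{cor:approx_cvx}. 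An equivalent and perhaps cleaner route is to work directly within the absolutely continuous curves framework of~\cite{AGS}: the pointwise Cauchy--Schwarz estimate above shows that $\sqrt{2\cI(\cdot)}$ is a strong upper gradient of $\cE$ in the $\cW_{0,\Id}$-metric, which combined with $1$-convexity immediately yields the HWI inequality through the general metric argument.

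For the relative version~\eqref{eq:HWI_2} the proof is identical with $\cE$ replaced by $\cE(\cdot|\eta_\infty)$: on $\cP_{0,\Id}(\R^d)$ one has $\int\tfrac12|x|^2\dd\mu=\tfrac{d}{2}$, hence $\cE(\mu|\eta_\infty)=\cE(\mu)+\tfrac{d}{2}\log(2\pi)+\tfrac{d}{2}$ differs from $\cE(\mu)$ only by an additive constant and thus is also $1$-geodesically convex along $\cW_{0,\Id}$-geodesics; Cauchy--Schwarz applied to $\int\nabla\log(\mu_s/\eta_\infty)\cdot V_s\dd\mu_s$ then produces $\sqrt{2\cI(\mu_s|\eta_\infty)}\,\cW_{0,\Id}(\eta_0,\eta_1)$ instead of $\sqrt{2\cI(\mu_s)}\,\cW_{0,\Id}(\eta_0,\eta_1)$, which yields \eqref{eq:HWI_2} after estimating $\sqrt 2\le 2$.
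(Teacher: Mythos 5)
Your overall skeleton (start from the $1$-convexity of Corollary~\ref{cor:approx_cvx}, divide by $s$, and control the right derivative of the entropy at $s=0$ by a Fisher-information term) matches the paper's, but the way you control that derivative has a genuine gap. Your Cauchy--Schwarz bound gives $\bigl|\tfrac{\dd}{\dd s}\cE(\mu_s)\bigr|\le\sqrt{2\cI(\mu_s)}\,\cW_{0,\Id}(\eta_0,\eta_1)$ with the Fisher information evaluated at the \emph{interior} point $\mu_s$, and "evaluation at $s=0$" is not legitimate: $\cI$ is only lower semicontinuous along weakly converging sequences, so $\limsup_{s\to0}\cI(\mu_s)\le\cI(\eta_0)$ has no justification, and the constrained geodesic is not known to be regular enough for the chain rule in the first place. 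Your proposed repairs do not close this. Mollifying \emph{both} endpoints by the Ornstein--Uhlenbeck semigroup replaces $(\eta_0,\eta_1)$ by $(P_\tau\eta_0,P_\tau\eta_1)$, for which the standing hypothesis of the proposition --- the existence of a connecting $\cW_{0,\Id}$-geodesic --- is not available (in this paper existence is only proved at small distance or under reflection symmetry, Theorems~\ref{thm:existence-smalldist} and~\ref{thm:ex-opt-shape}), and stability of $\cW_{0,\Id}$-geodesics and of the inequality as $\tau\to0$ is unclear. Likewise, asserting that $\sqrt{2\cI}$ is a strong upper gradient of $\cE$ for the metric $\cW_{0,\Id}$ is not a shortcut: proving the upper-gradient property requires exactly the chain-rule/slope estimate along arbitrary absolutely continuous curves that is in question.

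The paper avoids differentiating along the constrained geodesic altogether. It uses the classical Wasserstein subdifferential inequality for the entropy at $\eta_0$ (\cite[Thm.~10.4.6]{AGS}): writing $\nabla\sigma=\sigma w$ with $w\in L^2(\eta_0;\R^d)$ and $\cI(\eta_0)=\int|w|^2\dd\eta_0$, one has $\cE(\nu)-\cE(\eta_0)\ge\int\langle w,T_{\eta_0}^{\nu}-\Id\rangle\dd\eta_0\ge-\sqrt{\cI(\eta_0)}\,W_2(\eta_0,\nu)$ for every $\nu$, in particular $\nu=\eta_s$. This bounds the difference quotient $\tfrac{\cE(\eta_s)-\cE(\eta_0)}{s}$ from below with the Fisher information already at the endpoint $\eta_0$, so no regularisation or upper semicontinuity is needed; the remaining step is the near-diagonal comparison \eqref{eq:W_2comp} of Proposition~\ref{prop:comp:W-W2}, which converts $\lim_{s\to0}\tfrac1sW_2(\eta_0,\eta_s)$ into (at most) $2\,\cW_{0,\Id}(\eta_0,\eta_1)$ along the constant-speed geodesic. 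This is also where the constant $2$ in the statement really comes from --- the interplay between $W_2$ and $\cW_{0,\Id}$ at small distances --- not from the estimate $\sqrt2\le2$ applied to your formal derivative bound. If you want to rescue your strategy, replace the "chain rule at $s=0$" step by this subdifferential inequality (the rest of your argument, including the reduction via $1$-convexity and the treatment of $\cE(\cdot|\eta_\infty)$ as $\cE$ plus a constant on $\cP_{0,\Id}(\R^d)$, is fine and agrees with the paper).
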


\begin{proof}
Let $\eta_0,\eta_1\in \mathcal{P}_{0,\Id}(\R^d)$ and assume without restriction that $\mathcal{I}(\eta_0),\cE(\eta_0),\cE(\eta_1)<\infty$. Hence $\eta_0=\sigma\mathcal{L}^d$ for a suitable density $\sigma$. We will use the fact that $\nabla\log\sigma$ is in the subdifferential of the relative entropy. More precisely, by \cite[Thm.~10.4.6]{AGS} we have $\sigma\in W^{1,1}_{\sf loc}$ with $\nabla\sigma=\sigma w$ for some $w\in L^2(\eta_0;\R^d)$ and $\mathcal I(\eta_0)=\int |w|^2\dd \eta$. Moreover, $w$ belongs to the subdifferential of $\mathcal{E}$ at $\eta_0$, i.e. taking into account that $\mathcal{E}$ is convex along Wasserstein geodesics and \cite[Sec.~10.1.1 B]{AGS} we have 
\begin{equation}\label{eq:subdiff}
\mathcal{E}(\nu)-\mathcal{E}(\eta_0) \geq \int \ip{w}{T_{\eta_0}^\nu-\Id}\dd\eta_0 \qquad\forall \nu\in \cP_2(\R^d)\;,
\end{equation}
where $T_{\eta_0}^\nu$ is the optimal transport map from $\eta_0$ to $\nu$.

Now, let $(\eta_s)_{s\in[0,1]}$ be a $\cW_{0,\Id}$-geodesic connecting $\eta_0,\eta_1$.
Corollary \ref{cor:approx_cvx} yields after rearranging and dividing by $s$
\begin{equation*}
     \frac{\mathcal{E}(\eta_s)-\mathcal{E}(\eta_0)}{s} \leq \mathcal{E}(\eta_1)-\mathcal{E}(\eta_0) -\frac12(1-s) \cW_{0,\Id}(\eta_0,\eta_1)^2
     \;.
\end{equation*}
Using \eqref{eq:subdiff} and Cauchy-Schwartz inequality yields
\begin{align*}
 \frac{\mathcal{E}(\eta_s)-\mathcal{E}(\eta_0)}{s}
 \geq -\frac{1}{s}\|T_{\eta_0}^{\eta_s}-\Id\|_{L^2(\eta_0)}\|w\|_{L^2(\eta_0)} = -\frac{1}{s}W_2(\eta_0,\eta_s)\sqrt{\cI(\eta_0)}\;.
\end{align*}
Finally, the bound~\eqref{eq:W_2comp} gives that 
\begin{equation*}
    \lim_{s\to 0} \frac{1}{s}W_2(\eta_0,\eta_s)= \lim_{s\to 0} \frac{2}{s}\cW_{0,\Id}(\eta_0,\eta_s) = 2\cW_{0,\Id}(\eta_0,\eta_1)\;.
\end{equation*}
Combining the last three observations yields after letting $s\to0$ that 
\begin{equation*}
    -2\cW_{0,\Id}(\eta_0,\eta_1)\sqrt{\cI(\eta_0)} \leq \cE(\eta_1)-\cE(\eta_0) -\frac12 \cW_{0,\Id}(\eta_0,\eta_1)^2\;.
\end{equation*}
The second claim \eqref{eq:HWI_2} follows by the same argument using that also $\cE(\cdot|\gamma)$ is $1$-convex along $\cW_{0,\Id}$-geodesics.
\end{proof}

\appendix

\section{Scalar modularity: variance-modulated optimal transport}\label{sec:scalar}

Similarly to Lemma~\ref{lem:apriori-cov}, we begin by demonstrating that non-degeneracy is preserved along finite action curves.

\begin{lemma}\label{lem:apriori}
  Let $(\mu,V)\in\CE(\mu_0,\mu_1)$ with $\mu_0,\mu_1\in \cP_2(\R^d)$ and $\var \mu_0>0$ be of finite action, i.e. 
  \begin{align*}
    A:= \int_0^1\frac{1}{2\var(\mu_t)}\int|V_t|^2\dd\mu_t\dd t < \infty\;.
  \end{align*}
  Then the curves $t\mapsto m_t:=m(\mu_t)$ and $\sigma_t:=\sqrt{\var(\mu_t)}$ are absolutely continuous and 
  \begin{align}\label{e:var:apriori}
	\sigma_0\, e^{-\sqrt{2A}} \leq \sigma_t\leq \sigma_0\, e^{\sqrt{2A}}\;.
  \end{align}
\end{lemma}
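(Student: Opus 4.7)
The plan is to mirror the strategy used for the matrix-valued Lemma~\ref{lem:apriori-cov}: establish absolute continuity of the moments by testing the continuity equation against suitably truncated quadratic functions, derive a differential inequality of Cauchy--Schwarz type for $\sigma_t^2$, and conclude via a Gr\"onwall argument applied to $\log\sigma_t$.

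First, I would use the weak formulation of the continuity equation with the truncated test functions $\varphi^R(x)=\chi_R(x)\,x$ and $\psi^R(x)=\chi_R(x)|x|^2$, where $\chi_R$ is a smooth cut-off with $\chi_R\equiv 1$ on $B_R$, $\mathrm{supp}\,\chi_R\subset B_{2R}$ and $\|\nabla \chi_R\|_\infty$ bounded independently of $R$. Finiteness of the second moments of all $\mu_t$ follows from a preliminary Gr\"onwall-type bound (applied to $R\mapsto\int \psi^R\dd\mu_t$ combined with Cauchy--Schwarz on the flux), so one can pass to the limit $R\to\infty$ and conclude that $t\mapsto m_t$ and $t\mapsto \sigma_t^2=\int|x-m_t|^2\dd\mu_t$ are absolutely continuous with
\begin{equation*}
\dot m_t = \int V_t\dd\mu_t,\qquad
\frac{\dd}{\dd t}\sigma_t^2 = 2\int (x-m_t)\cdot V_t\dd\mu_t.
\end{equation*}

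Next, by Cauchy--Schwarz applied to the identity for $\dd\sigma_t^2/\dd t$,
\begin{equation*}
\abs[\big]{\tfrac{\dd}{\dd t}\sigma_t^2}
\le 2\,\sigma_t\Bigl(\int |V_t|^2\dd\mu_t\Bigr)^{1/2},
\end{equation*}
so whenever $\sigma_t>0$ one has $|\dot\sigma_t|\le (\int |V_t|^2\dd\mu_t)^{1/2}$, and therefore
\begin{equation*}
\Bigl|\frac{\dd}{\dd t}\log\sigma_t\Bigr|^2
=\frac{|\dot\sigma_t|^2}{\sigma_t^2}
\le \frac{1}{\sigma_t^2}\int |V_t|^2\dd\mu_t.
\end{equation*}
Integrating and using the Cauchy--Schwarz inequality in time together with the finite-action hypothesis gives
\begin{equation*}
|\log\sigma_t-\log\sigma_0|
\le \int_0^t\Bigl|\tfrac{\dd}{\dd s}\log\sigma_s\Bigr|\dd s
\le \sqrt{t}\,\Bigl(\int_0^1\tfrac{1}{\sigma_s^2}\!\int|V_s|^2\dd\mu_s\dd s\Bigr)^{1/2}
\le \sqrt{2A},
\end{equation*}
which is exactly the two-sided estimate \eqref{e:var:apriori}. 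To justify the step ``whenever $\sigma_t>0$'' one notes that $\sigma_0>0$ by assumption, defines $t_\ast:=\sup\{t\in[0,1]:\sigma_s>0\text{ on }[0,t]\}$, applies the above Gr\"onwall bound on $[0,t_\ast)$, and uses the resulting lower bound $\sigma_t\ge\sigma_0 e^{-\sqrt{2A}}>0$ to conclude $t_\ast=1$ by continuity.

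The main obstacle is really the very first step: the test functions $x\mapsto x$ and $x\mapsto |x|^2$ are not admissible in the continuity equation, so one has to do the truncation argument carefully to obtain both the absolute continuity of $m_t,\sigma_t^2$ and the above pointwise-in-time formulas for their derivatives. Once this is in place the remainder is a clean Cauchy--Schwarz plus Gr\"onwall estimate, exactly parallel to Lemma~\ref{lem:apriori-cov}, with the scalar $\sigma_t$ playing the role of $\sqrt{h_\xi(t)}$.
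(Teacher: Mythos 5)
Your proof is correct and follows essentially the same route as the paper: a truncation argument to test the continuity equation with $|x-m_t|^2$, the Cauchy--Schwarz estimate $|\dot\sigma_t|\le(\int|V_t|^2\dd\mu_t)^{1/2}$, and an integration of $\frac{\dd}{\dd t}\log\sigma_t$ combined with Cauchy--Schwarz in time to obtain the two-sided bound. You merely spell out the positivity-continuation and truncation details that the paper leaves implicit, which is fine.
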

\begin{proof}
  By a suitable truncation argument, we can use the function $x\mapsto \abs{x-m_t}^2$ as a test function in the weak formulation of the continuity equation. Let us consider the case $\sigma_0>0$.
  Hence, we can differentiate and get by an application of the Cauchy-Schwarz inequality
  \begin{equation*}
  	\abs*{\pderiv{\sigma_t}{t}} = \frac{1}{\sigma_t} \abs*{\int \bra*{x-m_t}\cdot V_t \dx{\mu_t}  } \leq \sqrt{2} \sigma_t   \sqrt{\frac{1}{2\sigma_t^2}\int \abs{V_t}^2\dx{\mu_t}} .\qedhere
  \end{equation*}
\end{proof}

\subsection{Separation of Optimization Problems: Proof of Theorem~\ref{thm:main-var}}

Lemma~\ref{lem:apriori} allows us to separate the optimization over the evolution of mean and variance. We have  that
\begin{equation}
  \label{eq:doubleinf}
  \cWv(\mu_0,\mu_1)^2=\inf\set*{\cWv_{m,\sigma}(\mu_0,\mu_1)^2~:~(m,\sigma)\in\MV^{\var}(\mu_0,\mu_1)}\;.
\end{equation}
Here $\MV^{\var}(\mu_0,\mu_1)$ denotes the set of all absolutely continuous
functions $m:[0,1]\to\R^d$ and $\sigma:[0,1]\to[0,\infty)$ such that
$m_i=m(\mu_i)$ and $\sigma_i^2=\var(\mu_i)$ for $i=0,1$. For given
functions $(m,\sigma)\in\MV^{\var}$, the term $\cWv_{m,\sigma}$ is defined via
the variance-constrained optimal transport problem
\begin{equation}\label{eq:OT-var-constraint}
   \cWv_{m,\sigma}(\mu_0,\mu_1)^2 = \inf\set*{\int_0^1\frac{1}{2\sigma_t^2}\int|V_t|^2\dd\mu_t\dd t~:~(\mu,V)\in \CE^{\var}_{m,\sigma}(\mu_0,\mu_1)}\;,  
\end{equation}
where $\CE^{\var}_{m,\sigma}(\mu_0,\mu_1)$ is the set of pairs $(\mu,V)\in\CE(\mu_0,\mu_1)$ such that
 $m(\mu_t)=m_t$ and $\var(\mu_t)=\sigma_t^2$ for all $t\in[0,1]$.
 
We will now show that problem \eqref{eq:doubleinf} can be equivalently
written as a minimization problem for the evolution of mean and
variance \eqref{eq:mean-var-min} plus an independent variance-constrained transport problem where the mean and variance are fixed to~$0$ and~$1$, respectively, given by \eqref{eq:OT-var-constraint-main}.
\begin{proof}[Proof of Theorem~\ref{thm:main-var}]~\\
  \textbf{Step 1.} Assume that $\var(\mu_0),\var(\mu_1)>0$. The
  Wasserstein geodesic connecting $\mu_0$ and $\mu_1$ is thanks to a priori bound on the variance~\eqref{e:var_along_W2} a feasible
  candidate in the optimization problem~\eqref{eq:OT-var} and the
  variance is by Lemma~\ref{lem:apriori} bounded away from zero along this curve. This shows that
  $\cWv(\mu_0,\mu_1)<\infty$. 
  
  \smallskip\noindent
  \textbf{Step 2.} Fix $(m,\sigma)\in\MV^{\var}(\mu_0,\mu_1)$ with $\sigma_t>0$
  for all $t\in[0,1]$ and let $(\mu,V)\in\CE^{\var}_{m,\sigma}(\mu_0,\mu_1)$
  with
  \begin{equation*}
    \int_0^1\frac{1}{2\sigma_t^2}\int |V_t|^2\dd\mu_t\dd t <\infty\;.
  \end{equation*}
  Consider the normalizations $\widebar\mu_t=(T_t)_\#\mu_t$ with
  $T_t=T_{m_t,\sigma_t}$. Then, we have that
  $(\widebar\mu,\widebar V)\in \CE_{0,1}(\widebar\mu_0,\widebar\mu_1)$ with
  \begin{align*}
    \widebar V_t(x)&=  \frac1{\sigma_t}V_t(T_t^{-1}x)-\nabla \phi_{m,\sigma}(t,T_t^{-1}x)\;,\\
    \phi_{m,\sigma}(t,x)&=\frac{\dot m_t\cdot x}{\sigma_t} +\frac{\dot\sigma_t}{2\sigma_t^2}|x-m_t|^2\;.
  \end{align*}
  Moreover, we have
  \begin{align}\label{eq:action-trans-var}
    \frac{1}{2\sigma^2_t}\int|V_t|^2\dd\mu_t = \frac{|\dot m_t|^2 +\dot \sigma^2_t}{2\sigma^2_t} + \frac{1}{2}\int|\widebar V_t|^2\dd\widebar\mu_t\;.
  \end{align}
Indeed, for a test function $\psi\in C^\infty_c(\R^d)$, we have
\begin{align*}
  \frac{\dd}{\dd t}\int\psi\dd\widebar\mu_t &=   \frac{\dd}{\dd t}\int\psi\circ T_t\dd\mu_t
                                       = \int \nabla \psi\bigl(T_t(x)\bigr)\cdot \Big[DT_t(x)V_t(x) + \partial_tT_t(x)\Big]\dd\mu_t(x)\\
                                     &=\int \nabla \psi\bigl(T_t(x)\bigr)\cdot \Big[\frac1{\sigma_t}V_t(x) - \nabla\phi_{m,\sigma}(t,x)\Big]\dd\mu_t(x)\\
  &=\int \! \nabla \psi(x)\cdot \Big[\frac1{\sigma_t}V_t\bigl(T_t^{-1}(x)\bigr) - \nabla\phi_{m,\sigma}\big(t,T_t^{-1}(x)\big)\Big]\dd (T_t)_\#\mu_t(x) = \int \! \nabla\psi\cdot \widebar V_t \dd\widebar\mu_t .
\end{align*}
This yields the first claim. For the action we obtain
\begin{align*}
  \frac{1}{2}\int |\widebar V_t|^2\dd\widebar\mu_t &= \frac{1}{2}\int \abs*{\frac{1}{\sigma_t}V_t - \nabla\phi_{m,\sigma}(t,\cdot)}^2\dd\mu_t\\
                                     & = \frac1{2\sigma_t^2}\int|V^2_t|\dd\mu_t + \frac{1}{2}\int |\nabla\phi_{m,\sigma}(t,\cdot)|^2\dd\mu_t -\frac{1}{\sigma_t}\int V_t\cdot\nabla\phi_{m,\sigma}(t,\cdot)\dd\mu_t\\
  &=\I_1+\I_2+\I_3\;.
\end{align*}
We easily compute
$\I_2 = \bra*{|\dot m_t|^2+\dot\sigma_t^2}/(2\sigma_t^2)$.
To compute $\I_3$, note the following. Fix $\alpha\in \R^d$ and $\beta\in(0,\infty)$ and let $\eta(x):=\alpha\cdot x + \frac{\beta}{2}|x|^2$. Then, we have
\begin{align*}
  \int V_t\cdot\nabla\eta\dd\mu_t = \frac{\dd}{\dd t}\int \eta\dd\mu_t =  \frac{\dd}{\dd t} \Big(\alpha\cdot m_t+\frac{\beta}{2}\big(\sigma^2_t+|m_t|^2\big)\Big)=  \alpha\cdot\dot m_t + \beta\big(\sigma_t\dot\sigma_t+m_t\cdot\dot m_t\big)\;.
\end{align*}
Putting $\alpha=\dot m_t/\sigma_t-m_t\dot\sigma_t/\sigma_t^2$ and $\beta=\dot\sigma_t/\sigma_t^2$, we have $\nabla\phi_{m,\sigma}(t,\cdot)=\nabla\eta$ and hence
\begin{align*}
  \I_3= -\frac{|\dot m_t|^2+\sigma_t^2}{\sigma_t^2} = - 2 \I_2\;.
\end{align*}
Combining $\I_1, \I_2,\ I_3$, we obtain \eqref{eq:action-trans-var}.

\smallskip\noindent
\textbf{Step 3.} If $\var(\mu_0),\var(\mu_1)>0$, we see from Lemma
\ref{lem:apriori} that the infimum in \eqref{eq:doubleinf} can be
restricted to $(m,\sigma)\in\MV^{\var}(\mu_0,\mu_1)$ with $\sigma_t>0$ for
all $t$. Then, the sets of admissible curves $\mu$ and $\widebar \mu$ are
in bijection via the transformation of normalization. Moreover, for fixed~$\mu$ the vector field $V$ is optimal, i.e.~achieving minimal action, if it is of gradient form. Thus, if $V$ is optimal then so is $\widebar V$. From the
previous step we conclude for fixed such $(m,\sigma)$ that
\begin{equation}\label{eq:constraint-rewrite}
  \cWv_{m,\sigma}(\mu_0,\mu_1)^2 = \cWv_{0,1}(\widebar\mu_0,\widebar\mu_1)^2
    + \int_0^1\frac{|\dot m_t|^2 +\dot \sigma_t^2}{2\sigma_t^2}\dd t\;.
  \end{equation}
   Moreover, the optimal curve for
  $\cWv_{m,\sigma}$ is $\mu_t=(T_t^{-1})_\#\widebar\mu_t$, where
  $(\widebar\mu_t)$ is the optimal curve for $\cWv_{0,1}$.
  Taking the infimum over $(m,\sigma)\in\MV^{\var}(\mu_0,\mu_1)$ yields
  \eqref{eq:rewrite-var-main}.
\end{proof}

\subsection{The Mean-Variance Optimization Problem: Proof of Theorem~\ref{thm:moments-sol}}\label{sec:sol-m-var}

\begin{proof}[Proof of Theorem~\ref{thm:moments-sol}]
We can integrate the first equation \eqref{eq:mean-var:mean} of the Euler-Lagrange conditions and get that $\dot m(t) = \alpha \sigma^2(t)$ for some $\alpha \in \R^d$ and all $t\in [0,1]$, which satisfies 
\begin{equation}\label{eq:var:alpha}
	\alpha = \frac{m_1- m_0}{\int_0^1 \sigma(t)^2 \dx{t}}\,.
\end{equation}
In particular, we arrive at
\begin{equation}\label{eq:sigma}
	\frac{\ddot\sigma}{\sigma} - \frac{\bra*{\dot\sigma}^2}{\sigma^2} = - \abs{\alpha}^2 \sigma^2\,.
\end{equation}
For $m_0=m_1$, we have $\alpha=0$ and hence we get in this case the solution
\begin{equation}\label{e:sol:m0EQm1}
	\sigma(t) = \sigma_0^{1-t} \sigma_1^{t}\, . 
\end{equation}
The general solution for $m_0\ne m_1$ and hence $\alpha\ne 0$ to the equation~\eqref{eq:sigma} is for some $\beta\in \R$ and $t_0\geq 0$ 
\begin{equation}\label{eq:sol-alphanot0}
	\sigma(t) = \frac{\beta}{\abs{\alpha} \cosh(\beta t+t_0)}\, . 
\end{equation}
Before resolving the boundary values in terms of $\beta$ and $t_0$, we first look for the value of $\alpha$ in terms of the solution in the form~\eqref{eq:sol-alphanot0}, where we note that $\int \frac{\dx{t}}{\cosh(t)^2}= \tanh(t)$ and hence for any $t\in [0,1]$ 
\begin{equation*}
    \int_0^t \sigma(\tau)^2 \dx{\tau} = \frac{\beta}{\alpha^2} \int_{t_0}^{\beta t+t_0} \frac{\dx{s}}{\cosh(s)^2} = \frac{\beta}{\abs*{\alpha}^2}\bra*{\tanh\bra*{\beta t+t_0}-\tanh\bra*{t_0}} \,. 
\end{equation*}
Hence, we obtain from~\eqref{eq:mean-var:mean} and~\eqref{eq:var:alpha} provided that $\beta\ne 0$ and using $n=\abs{m_1-m_0}>0$, that
\begin{equation}\label{eq:mean-var:alpha:exp}
	\abs{\alpha} = \frac{\beta}{n} \bra*{\tanh\bra*{\beta+t_0}-\tanh\bra*{t_0}}\, .
\end{equation}
Therewith, we get for the mean from~\eqref{eq:var:alpha} the explicit expression~\eqref{eq:mean-var:mean-explicit}
and from~\eqref{eq:sol-alphanot0} also~\eqref{eq:mean-var:sigma-explicit}.
Next, we aim to evaluate the optimal cost depending on the parameters $\beta$ and $t_0$. 
Recalling that $\dot m(t) = \alpha \sigma(t)^2$, and noting that $\int \tanh(t)^2 \dx{t} = t - \tanh(t)$, we have by~\eqref{eq:mean-var:alpha:exp} the identity
\begin{align}
\MoveEqLeft{\int_0^1 \frac{\abs*{ \dot m(t)}^2 + \abs*{\dot \sigma(t)}^2}{\sigma(t)^2}  \dx{t}= \abs*{\alpha}^2 \int_0^1 \sigma(t)^2 \dx{t} + \beta^2 \int_0^1 \tanh\bra*{\beta t+t_0}^2 \dx{t} }\nonumber\\
	&= \beta  \bra*{\tanh\bra*{\beta+t_0}-\tanh\bra*{t_0}} + \beta\bra*{\beta + \bra*{\tanh\bra*{t_0}-\tanh\bra*{\beta+t_0}}} = \beta^2\, .  \label{e:meanvar:dist:beta}
\end{align}
Hence, we have to solve for the boundary values $\sigma(0)=\sigma_0$ and $\sigma(1)=\sigma_1$ in terms of $\beta$ and $t_0$. For this we recall the addition theorem for the hyperbolic trigonometric functions and can write
the system $\sigma(0)=\sigma_0$, $\sigma(1)=\sigma_1$ as
\begin{align*}
	\frac{\sigma_0}{n} &= \frac{\cosh(\beta+t_0)}{\sinh(\beta+t_0)\cosh(t_0)-\sinh(t_0)\cosh(\beta+t_0)} = \frac{\cosh(\beta+t_0)}{\sinh(\beta)}\,, \\
	\frac{\sigma_1}{n} &= \frac{\cosh(t_0)}{\sinh(\beta)}\,.
\end{align*}
We set $\eta_0 = \frac{\sigma_0}{n}>0$ and $\eta_1=\frac{\sigma_1}{n}>0$. Moreover, we make the substitutions 
\begin{equation}\label{eq:mean-var:gamma-delta-subst}
\beta=\log \delta \quad\text{for some $\delta >1$}\qquad\text{ and }\qquad t_0=\log \gamma \quad\text{for some $\gamma>0$.}
\end{equation}
By noting that $2\cosh(\log r) = r + \frac{1}{r}$ and $2\sinh(\log r)= r - \frac{1}{r}$ for $r>0$, we arrive at the simplified system
\begin{align*}
	\eta_0 &= \frac{\delta \gamma + \frac{1}{\delta \gamma}}{\delta - \frac{1}{\delta}} = \frac{\delta^2 \gamma^2 + 1}{\gamma(\delta^2  - 1)} \,,\qquad
	\eta_1 = \frac{\gamma + \frac{1}{\gamma}}{\delta - \frac{1}{\delta}} = \frac{\delta(\gamma^2 +1)}{\gamma(\delta^2-1)}\,.
\end{align*}
We solve the first equation for $\delta$ leading to 
\begin{equation}\label{e:meanvar:delta}
	\delta = \frac{\sqrt{(1+\eta_0 \gamma)}}{\sqrt{(\eta_0 - \gamma)\gamma}}\,.
\end{equation}
Plugging this into the second equation, we obtain
\begin{equation*}
	\eta_1 \gamma =  \sqrt{(\eta_0 - \gamma)\gamma(1+\eta_0 \gamma)}\,.
\end{equation*}
Since $\gamma>0$, we get another quadratic equation after dividing by $\sqrt{\gamma}$ and squaring. Its positive solution is given by
\begin{equation}\label{eq:mean-var:gamma}
	\gamma = \frac{1}{2\eta_0} \bra*{ \eta_0^2 - \eta_1^2 -1 + \sqrt{4\eta_0^2 + \bra*{\eta_0^2 - \eta_1^2 -1}^2}}\, ,
\end{equation}
which immediately gives $\gamma \geq 1$. Similarly, we can evaluate $\delta$ from~\eqref{e:meanvar:delta} and using the identity
\[
  4\eta_0^2 + \bra*{\eta_0^2 - \eta_1^2 -1}^2 = \bra*{\eta_0^2 + \eta_1^2 + 1}^2 - 4 \eta_0^2 \eta_1^2\,,
\]
we arrive at
\begin{equation*}
	\delta = \frac{1}{2\eta_0 \eta_1} \bra*{  \eta_0^2 + \eta_1^2  + 1 - \sqrt{\bra*{\eta_0^2 + \eta_1^2 + 1}^2 - 4 \eta_0^2 \eta_1^2}}\, ,
\end{equation*}
which again entails that $\delta \geq 1$.
Using the relation $\beta=\log \delta$ and~\eqref{e:meanvar:dist:beta}, we obtain the right-hand side of~\eqref{e:mean-var:opt}.
Similarly, using $t_0 = \log \gamma$, we obtain~\eqref{eq:mean-var:t0-explicit} from~\eqref{eq:mean-var:gamma} and also the non-negativity of $\beta$ and $t_0$.
\end{proof}

\subsection{Convergence rates for gradient flows: Proof of Proposition~\ref{prop:ent-decay-var}}\label{appendix:Proof:prop:ent-decay-var}
		
\begin{proof}[Proof of Proposition~\ref{prop:ent-decay-var}]
	The LSI follows from classical arguments (e.g.~\cite{Gross1975,BakryEmery,Bakry2014}), noting that the optimal constant is given by $C_{\textup{LSI}} = \frac{1}{2\lambda_{\min}(\operatorname{Hess} H) }=\|B\|_2/2$ with the confining potential $H$ as given in \eqref{eq:potential}. 
	
	The scalar nature of the variance allows us to arrive at the time-homogeneous problem after introducing the new time scale~\eqref{eq:def:tau-scale}, that is $\pderiv{t}{\tau} = \var(\rho_t)$. 
	Therewith, time-rescaled solution $\tilde C_\tau  = C_{t(\tau)}$ then satisfies $ \dot{\tilde  C}_t = 2 (\Id - B^{-1} \tilde C_t)$ and is explicitly given by 
	\begin{equation*}
		\tilde C_\tau = (\Id-  e^{-2B^{-1}\tau}) B + e^{-2B^{-1} \tau} C_0\, .
	\end{equation*}
	In particular, we get a uniform lower and upper bound for all $\tau >0$ by
	\begin{equation}\label{e:varbound}
		d \min\set*{ \|B^{-1}\|_2^{-1},  \|C_0^{-1}\|_2^{-1}}  \leq  \var(\rho_\tau)  =\tr \tilde C_\tau \leq d \max\set*{\|B\|_2,  \|C_0\|_2} \,.
	\end{equation}
	To conclude, we combine this bound with the usual relative entropy method
	\begin{align*}
		\pderiv{}{t} \cE(\rho_t|\rho_\infty ) 
		&= - \var(\rho_t) \int \abs*{ \nabla \log \rho_t + B^{-1}(x-x_0)}^2 \dx{\rho_t} \\
		&\leq - \frac{2d  \min\set*{ \|B^{-1}\|_2^{-1},  \|C_0^{-1}\|_2^{-1}}  }{\|B\|_2}  \cE(\rho_t| \rho_\infty) \,,
	\end{align*}
	where we used the variance bound~\eqref{e:varbound} and the LSI \eqref{eq:LSI-var}.
\end{proof}

\bibliographystyle{abbrv}
\bibliography{bib}

\begin{thebibliography}{10}

\bibitem{AgrachevLee2009}
A.~Agrachev and P.~Lee.
\newblock Optimal transportation under nonholonomic constraints.
\newblock {\em Trans. Amer. Math. Soc.}, 361(11):6019--6047, 2009.

\bibitem{Aleksandrov1939}
A.~D. Aleksandrov.
\newblock Almost everywhere existence of the second differential of a convex
  function and some properties of convex surfaces connected with it.
\newblock {\em Leningrad State Univ. Annals [Uchenye Zapiski] Math. Ser.},
  6:3--35, 1939.

\bibitem{AmariMatsuda2024}
S.-i. Amari and T.~Matsuda.
\newblock Information geometry of wasserstein statistics on shapes and affine
  deformations.
\newblock {\em Info. Geo.}, July 2024.

\bibitem{AGS}
L.~Ambrosio, N.~Gigli, and G.~Savar\'{e}.
\newblock {\em Gradient flows in metric spaces and in the space of probability
  measures}.
\newblock Lectures in Mathematics ETH Z\"{u}rich. Birkh\"{a}user Verlag, Basel,
  second edition, 2008.

\bibitem{Araki1990}
H.~Araki.
\newblock On an inequality of {L}ieb and {T}hirring.
\newblock {\em Lett. Math. Phys.}, 19(2):167--170, 1990.

\bibitem{AMTU}
A.~Arnold, P.~Markowich, G.~Toscani, and A.~Unterreiter.
\newblock On convex {S}obolev inequalities and the rate of convergence to
  equilibrium for {F}okker-{P}lanck type equations.
\newblock {\em Comm. Partial Differential Equations}, 26(1-2):43--100, 2001.

\bibitem{ArnoldSignorello2022}
A.~Arnold and B.~Signorello.
\newblock Optimal non-symmetric {F}okker-{P}lanck equation for the convergence
  to a given equilibrium.
\newblock {\em Kinet. Relat. Models}, 15(5):753--773, 2022.

\bibitem{IG2}
N.~Ay, J.~Jost, H.~V. L\^{e}, and L.~Schwachh\"{o}fer.
\newblock {\em Information geometry}, volume~64 of {\em Results in Mathematics
  and Related Areas. 3rd Series. A Series of Modern Surveys in Mathematics}.
\newblock Springer, Cham, 2017.

\bibitem{BakryEmery}
D.~Bakry and M.~\'{E}mery.
\newblock Diffusions hypercontractives.
\newblock In {\em S\'{e}minaire de probabilit\'{e}s, {XIX}, 1983/84}, volume
  1123 of {\em Lecture Notes in Math.}, pages 177--206. Springer, Berlin, 1985.

\bibitem{Bakry2014}
D.~Bakry, I.~Gentil, and M.~Ledoux.
\newblock {\em Analysis and {{Geometry}} of {{Markov Diffusion Operators}}},
  volume 348.
\newblock {Springer International Publishing}, {Cham}, 2014.

\bibitem{BenamouBrenier2000}
J.-D. Benamou and Y.~Brenier.
\newblock A computational fluid mechanics solution to the {M}onge-{K}antorovich
  mass transfer problem.
\newblock {\em Numer. Math.}, 84(3):375--393, 2000.

\bibitem{bergemann2010localization}
K.~Bergemann and S.~Reich.
\newblock A localization technique for ensemble kalman filters.
\newblock {\em Quarterly Journal of the Royal Meteorological Society},
  136(648):701--707, 2010.

\bibitem{bhatia2009positive}
R.~Bhatia.
\newblock Positive definite matrices.
\newblock In {\em Positive Definite Matrices}. Princeton University Press,
  2009.

\bibitem{Bhatia2006}
R.~Bhatia and J.~Holbrook.
\newblock Riemannian geometry and matrix geometric means.
\newblock {\em Linear Algebra and its Applications}, 413(2-3):594--618, Mar.
  2006.

\bibitem{Bhatia2019}
R.~Bhatia, T.~Jain, and Y.~Lim.
\newblock On the bures{\textendash}wasserstein distance between positive
  definite matrices.
\newblock {\em Expositiones Mathematicae}, 37(2):165--191, June 2019.

\bibitem{Bures1969}
D.~Bures.
\newblock An extension of {K}akutani's theorem on infinite product measures to
  the tensor product of semifinite {$w^{\ast} $}-algebras.
\newblock {\em Trans. Amer. Math. Soc.}, 135:199--212, 1969.

\bibitem{CagliotiPulvirentiRousset2009}
E.~Caglioti, M.~Pulvirenti, and F.~Rousset.
\newblock On a constrained 2-{D} {N}avier-{S}tokes equation.
\newblock {\em Comm. Math. Phys.}, 290(2):651--677, 2009.

\bibitem{CarlenGangbo2003}
E.~A. Carlen and W.~Gangbo.
\newblock Constrained steepest descent in the 2-{W}asserstein metric.
\newblock {\em Ann. of Math. (2)}, 157(3):807--846, 2003.

\bibitem{carrassi2018data}
A.~Carrassi, M.~Bocquet, L.~Bertino, and G.~Evensen.
\newblock Data assimilation in the geosciences: An overview of methods, issues,
  and perspectives.
\newblock {\em Wiley Interdisciplinary Reviews: Climate Change}, 9(5):e535,
  2018.

\bibitem{CarrilloDiFrancescoToscani}
J.~A. Carrillo, M.~Di~Francesco, and G.~Toscani.
\newblock Strict contractivity of the 2-{W}asserstein distance for the porous
  medium equation by mass-centering.
\newblock {\em Proc. Amer. Math. Soc.}, 135(2):353--363, 2007.

\bibitem{MR4385602}
J.~A. Carrillo, D.~G\'{o}mez-Castro, and J.~L. V\'{a}zquez.
\newblock Vortex formation for a non-local interaction model with {N}ewtonian
  repulsion and superlinear mobility.
\newblock {\em Adv. Nonlinear Anal.}, 11(1):937--967, 2022.

\bibitem{CJMTU}
J.~A. Carrillo, A.~J\"{u}ngel, P.~A. Markowich, G.~Toscani, and A.~Unterreiter.
\newblock Entropy dissipation methods for degenerate parabolic problems and
  generalized {S}obolev inequalities.
\newblock {\em Monatsh. Math.}, 133(1):1--82, 2001.

\bibitem{MR4028472}
J.~A. Carrillo, A.~J\"{u}ngel, and M.~C. Santos.
\newblock Displacement convexity for the entropy in semi-discrete non-linear
  {F}okker-{P}lanck equations.
\newblock {\em European J. Appl. Math.}, 30(6):1103--1122, 2019.

\bibitem{CLSS2010}
J.~A. Carrillo, S.~Lisini, G.~Savar\'{e}, and D.~Slep\v{c}ev.
\newblock Nonlinear mobility continuity equations and generalized displacement
  convexity.
\newblock {\em J. Funct. Anal.}, 258(4):1273--1309, 2010.

\bibitem{CarrilloMcCannVillani}
J.~A. Carrillo, R.~J. McCann, and C.~Villani.
\newblock Kinetic equilibration rates for granular media and related equations:
  entropy dissipation and mass transportation estimates.
\newblock {\em Rev. Mat. Iberoamericana}, 19(3):971--1018, 2003.

\bibitem{CarrilloVaes}
J.~A. Carrillo and U.~Vaes.
\newblock Wasserstein stability estimates for covariance-preconditioned
  {F}okker-{P}lanck equations.
\newblock {\em Nonlinearity}, 34(4):2275--2295, 2021.

\bibitem{chada2019tikhonov}
N.~K. Chada, A.~M. Stuart, and X.~T. Tong.
\newblock Tikhonov regularization within ensemble {K}alman inversion.
\newblock {\em SIAM J. Numer. Anal.}, 58(2):1263--1294, 2020.

\bibitem{DaneriSavare2008}
S.~Daneri and G.~Savar\'{e}.
\newblock Eulerian calculus for the displacement convexity in the {W}asserstein
  distance.
\newblock {\em SIAM J. Math. Anal.}, 40(3):1104--1122, 2008.

\bibitem{DingLi}
Z.~Ding and Q.~Li.
\newblock Ensemble {K}alman sampler: mean-field limit and convergence analysis.
\newblock {\em SIAM J. Math. Anal.}, 53(2):1546--1578, 2021.

\bibitem{Dolbeault-Nazaret-Savare}
J.~Dolbeault, B.~Nazaret, and G.~Savar\'{e}.
\newblock A new class of transport distances between measures.
\newblock {\em Calc. Var. Partial Differential Equations}, 34(2):193--231,
  2009.

\bibitem{duncan2019geometry}
A.~Duncan, N.~Nüsken, and L.~Szpruch.
\newblock On the geometry of {S}tein variational gradient descent.
\newblock {\em Journal of Machine Learning Research}, 24(56):1--39, 2023.

\bibitem{EberleNiethammerSchlichting2017}
S.~Eberle, B.~Niethammer, and A.~Schlichting.
\newblock Gradient flow formulation and longtime behaviour of a constrained
  {F}okker-{P}lanck equation.
\newblock {\em Nonlinear Anal.}, 158:142--167, 2017.

\bibitem{EspositoGvalaniSchlichtingSchmidtchen2024}
A.~Esposito, R.~S. Gvalani, A.~Schlichting, and M.~Schmidtchen.
\newblock On a novel gradient flow structure for the aggregation equation.
\newblock {\em Calc. Var. \& PDE}, 63(5), 2024.

\bibitem{EvansGariepy2015}
L.~C. Evans and R.~F. Gariepy.
\newblock {\em Measure theory and fine properties of functions}.
\newblock Textbooks in Mathematics. CRC Press, Boca Raton, FL, revised edition,
  2015.

\bibitem{MR4410035}
S.~Fagioli and O.~Tse.
\newblock On gradient flow and entropy solutions for nonlocal transport
  equations with nonlinear mobility.
\newblock {\em Nonlinear Anal.}, 221:Paper No. 112904, 35, 2022.

\bibitem{FigalliRifford2010}
A.~Figalli and L.~Rifford.
\newblock Mass transportation on sub-{R}iemannian manifolds.
\newblock {\em Geom. Funct. Anal.}, 20(1):124--159, 2010.

\bibitem{GHLS}
A.~Garbuno-Inigo, F.~Hoffmann, W.~Li, and A.~M. Stuart.
\newblock Interacting {L}angevin diffusions: gradient structure and ensemble
  {K}alman sampler.
\newblock {\em SIAM J. Appl. Dyn. Syst.}, 19(1):412--441, 2020.

\bibitem{GivensShortt1984}
C.~R. Givens and R.~M. Shortt.
\newblock A class of {W}asserstein metrics for probability distributions.
\newblock {\em Michigan Math. J.}, 31(2):231--240, 1984.

\bibitem{Gross1975}
L.~Gross.
\newblock Logarithmic {{Sobolev Inequalities}}.
\newblock {\em American Journal of Mathematics}, 97(4):1061, Jan. 1975.

\bibitem{GuillinMonmarche2016}
A.~Guillin and P.~Monmarch\'{e}.
\newblock Optimal linear drift for the speed of convergence of an hypoelliptic
  diffusion.
\newblock {\em Electron. Commun. Probab.}, 21:Paper No. 74, 14, 2016.

\bibitem{HalderGeorgiou2018}
A.~Halder and T.~T. Georgiou.
\newblock Gradient flows in filtering and fisher-rao geometry.
\newblock In {\em 2018 Annual American Control Conference ({ACC})}. {IEEE},
  June 2018.

\bibitem{herty2018kinetic}
M.~Herty and G.~Visconti.
\newblock Kinetic methods for inverse problems.
\newblock {\em Kinet. Relat. Models}, 12(5):1109--1130, 2019.

\bibitem{kaipio2006statistical}
J.~Kaipio and E.~Somersalo.
\newblock {\em Statistical and computational inverse problems}, volume 160.
\newblock Springer Science \& Business Media, 2006.

\bibitem{kalman1960new}
R.~E. Kalman.
\newblock A new approach to linear filtering and prediction problems.
\newblock {\em Journal of basic Engineering}, 82(1):35--45, 1960.

\bibitem{KB}
R.~E. Kalman and R.~S. Bucy.
\newblock New {{Results}} in {{Linear Filtering}} and {{Prediction Theory}}.
\newblock {\em Journal of Basic Engineering}, 83(1):95, 1961.

\bibitem{KessyLewinStrimmer2018}
A.~Kessy, A.~Lewin, and K.~Strimmer.
\newblock Optimal whitening and decorrelation.
\newblock {\em The American Statistician}, 72(4):309--314, 2018.

\bibitem{KovachkiStuart2018_ensemble}
N.~B. Kovachki and A.~M. Stuart.
\newblock Ensemble {K}alman inversion: a derivative-free technique for machine
  learning tasks.
\newblock {\em Inverse Problems}, 35(9):095005, 35, 2019.

\bibitem{LambertChewiBachBonnabelRigollet2022}
M.~Lambert, S.~Chewi, F.~Bach, S.~Bonnabel, and P.~Rigollet.
\newblock Variational inference via {W}asserstein gradient flows.
\newblock In S.~Koyejo, S.~Mohamed, A.~Agarwal, D.~Belgrave, K.~Cho, and A.~Oh,
  editors, {\em Advances in Neural Information Processing Systems}, volume~35,
  pages 14434--14447. Curran Associates, Inc., 2022.

\bibitem{LMMR}
R.~S. Laugesen, P.~G. Mehta, S.~P. Meyn, and M.~Raginsky.
\newblock Poisson's {{Equation}} in {{Nonlinear Filtering}}.
\newblock {\em SIAM Journal on Control and Optimization}, 53(1):501--525, 2015.

\bibitem{leimkuhler-matthews-weare}
B.~Leimkuhler, C.~Matthews, and J.~Weare.
\newblock Ensemble preconditioning for {M}arkov chain {M}onte {C}arlo
  simulation.
\newblock {\em Stat. Comput.}, 28(2):277--290, 2018.

\bibitem{LelievreNierPavliotis2013}
T.~Leli\`evre, F.~Nier, and G.~A. Pavliotis.
\newblock Optimal non-reversible linear drift for the convergence to
  equilibrium of a diffusion.
\newblock {\em J. Stat. Phys.}, 152(2):237--274, 2013.

\bibitem{MR4032229}
W.~Li and L.~Ying.
\newblock Hessian transport gradient flows.
\newblock {\em Res. Math. Sci.}, 6(4):Paper No. 34, 20, 2019.

\bibitem{LiebThierring1991}
E.~H. Lieb and W.~E. Thirring.
\newblock {\em {Inequalities for the Moments of the Eigenvalues of the
  Schrodinger Hamiltonian and Their Relation to Sobolev Inequalities}}, pages
  135--169.
\newblock Springer Berlin Heidelberg, Berlin, Heidelberg, 1991.

\bibitem{Lisini}
S.~Lisini.
\newblock Characterization of absolutely continuous curves in {W}asserstein
  spaces.
\newblock {\em Calc. Var. Partial Differential Equations}, 28(1):85--120, 2007.

\bibitem{Lisini2009}
S.~Lisini.
\newblock Nonlinear diffusion equations with variable coefficients as gradient
  flows in {W}asserstein spaces.
\newblock {\em ESAIM Control Optim. Calc. Var.}, 15(3):712--740, 2009.

\bibitem{MR2672546}
S.~Lisini and A.~Marigonda.
\newblock On a class of modified {W}asserstein distances induced by concave
  mobility functions defined on bounded intervals.
\newblock {\em Manuscripta Math.}, 133(1-2):197--224, 2010.

\bibitem{Loeper2006}
G.~Loeper.
\newblock The reconstruction problem for the {E}uler-{P}oisson system in
  cosmology.
\newblock {\em Arch. Ration. Mech. Anal.}, 179(2):153--216, 2006.

\bibitem{LuLuNolen2019}
J.~Lu, Y.~Lu, and J.~Nolen.
\newblock Scaling limit of the {S}tein variational gradient descent: the mean
  field regime.
\newblock {\em SIAM J. Math. Anal.}, 51(2):648--671, 2019.

\bibitem{mahalanobis1936generalized}
P.~C. Mahalanobis.
\newblock On the generalized distance in statistics.
\newblock {\em Proceedings of the National Institute of Sciences (Calcutta)},
  2:49--55, 1936.

\bibitem{Malag2018}
L.~Malag{\`{o}}, L.~Montrucchio, and G.~Pistone.
\newblock Wasserstein riemannian geometry of gaussian densities.
\newblock {\em Information Geometry}, 1(2):137--179, Nov. 2018.

\bibitem{malago2015information}
L.~Malag{\`o} and G.~Pistone.
\newblock Information geometry of the gaussian distribution in view of
  stochastic optimization.
\newblock In {\em Proceedings of the 2015 ACM Conference on Foundations of
  Genetic Algorithms XIII}, pages 150--162, 2015.

\bibitem{MarkowichVillani}
P.~A. Markowich and C.~Villani.
\newblock On the trend to equilibrium for the fokker-planck equation: An
  interplay between physics and functional analysis.
\newblock In {\em Physics and Functional Analysis, Matematica Contemporanea
  (SBM) 19}, pages 1--29, 1999.

\bibitem{MWBG}
J.~Martin, L.~C. Wilcox, C.~Burstedde, and O.~Ghattas.
\newblock A stochastic {N}ewton {MCMC} method for large-scale statistical
  inverse problems with application to seismic inversion.
\newblock {\em SIAM J. Sci. Comput.}, 34(3):A1460--A1487, 2012.

\bibitem{Masarotto2018}
V.~Masarotto, V.~M. Panaretos, and Y.~Zemel.
\newblock Procrustes metrics on covariance operators and optimal transportation
  of gaussian processes.
\newblock {\em Sankhya A}, 81(1):172--213, May 2018.

\bibitem{McCann97}
R.~J. McCann.
\newblock A convexity principle for interacting gases.
\newblock {\em Adv. Math.}, 128(1):153--179, 1997.

\bibitem{McCann2020}
R.~J. McCann.
\newblock Displacement convexity of {B}oltzmann's entropy characterizes the
  strong energy condition from general relativity.
\newblock {\em Camb. J. Math.}, 8(3):609--681, 2020.

\bibitem{Moakher2005}
M.~Moakher.
\newblock A differential geometric approach to the geometric mean of symmetric
  positive-definite matrices.
\newblock {\em {SIAM} Journal on Matrix Analysis and Applications},
  26(3):735--747, Jan. 2005.

\bibitem{NuskenRenger2023}
N.~Nüsken and D.~R.~M. Renger.
\newblock Stein variational gradient descent: {M}any-particle and long-time
  asymptotics.
\newblock {\em Foundations of Data Science}, 5(3):286--320, 2023.

\bibitem{OharaSudaAmari1996}
A.~Ohara, N.~Suda, and S.~ichi Amari.
\newblock Dualistic differential geometry of positive definite matrices and its
  applications to related problems.
\newblock {\em Linear Algebra and its Applications}, 247:31--53, Nov. 1996.

\bibitem{Ollivier2017_online}
Y.~Ollivier.
\newblock Online natural gradient as a {K}alman filter.
\newblock {\em Electron. J. Stat.}, 12(2):2930--2961, 2018.

\bibitem{Otto-PME}
F.~Otto.
\newblock The geometry of dissipative evolution equations: the porous medium
  equation.
\newblock {\em Comm. Partial Differential Equations}, 26(1-2):101--174, 2001.

\bibitem{OW05}
F.~Otto and M.~Westdickenberg.
\newblock Eulerian calculus for the contraction in the {W}asserstein distance.
\newblock {\em SIAM J. Math. Anal.}, 37(4):1227--1255, 2005.

\bibitem{reich2013}
S.~Reich.
\newblock A nonparametric ensemble transform method for bayesian inference.
\newblock {\em SIAM Journal on Scientific Computing}, 35(4):A2013--A2024, 2013.

\bibitem{reich2015probabilistic}
S.~Reich and C.~Cotter.
\newblock {\em Probabilistic forecasting and Bayesian data assimilation}.
\newblock Cambridge University Press, 2015.

\bibitem{Rifford2014}
L.~Rifford.
\newblock {\em Sub-{R}iemannian geometry and optimal transport}.
\newblock SpringerBriefs in Mathematics. Springer, Cham, 2014.

\bibitem{schillings2017analysis}
C.~Schillings and A.~M. Stuart.
\newblock Analysis of the ensemble kalman filter for inverse problems.
\newblock {\em SIAM Journal on Numerical Analysis}, 55(3):1264--1290, 2017.

\bibitem{schillings2018convergence}
C.~Schillings and A.~M. Stuart.
\newblock {Convergence analysis of ensemble Kalman inversion: the linear, noisy
  case}.
\newblock {\em Applicable Analysis}, 97(1):107--123, 2018.

\bibitem{Skovgaard1984}
L.~T. Skovgaard.
\newblock {A Riemannian Geometry of the Multivariate Normal Model}.
\newblock {\em Scandinavian Journal of Statistics}, 11(4):211--223, 1984.

\bibitem{Sturm05}
K.-T. Sturm.
\newblock Convex functionals of probability measures and nonlinear diffusions
  on manifolds.
\newblock {\em J. Math. Pures Appl. (9)}, 84(2):149--168, 2005.

\bibitem{ThanwerdasPennec2019}
Y.~Thanwerdas and X.~Pennec.
\newblock {Is Affine-Invariance Well Defined on SPD Matrices? A Principled
  Continuum of Metrics}.
\newblock In F.~Nielsen and F.~Barbaresco, editors, {\em Geometric Science of
  Information}, pages 502--510, Cham, 2019. Springer International Publishing.

\bibitem{tschiderer2023diffusion}
B.~Tschiderer.
\newblock Diffusion processes as {W}asserstein gradient flows via stochastic
  control of the volatility matrix.
\newblock {\em Preprint arXiv:2310.18678}, 2023.

\bibitem{TudorascuWunsch2011}
A.~Tudorascu and M.~Wunsch.
\newblock On a nonlinear, nonlocal parabolic problem with conservation of mass,
  mean and variance.
\newblock {\em Comm. Partial Differential Equations}, 36(8):1426--1454, 2011.

\bibitem{CK-ineq}
A.~Unterreiter, A.~Arnold, P.~Markowich, and G.~Toscani.
\newblock On generalized {C}sisz\'{a}r-{K}ullback inequalities.
\newblock {\em Monatsh. Math.}, 131(3):235--253, 2000.

\bibitem{Villani2003}
C.~Villani.
\newblock {\em Topics in optimal transportation}, volume~58 of {\em Graduate
  Studies in Mathematics}.
\newblock American Mathematical Society, Providence, RI, 2003.

\bibitem{Villani09}
C.~Villani.
\newblock {\em Optimal transport}, volume 338 of {\em Grundlehren der
  Mathematischen Wissenschaften [Fundamental Principles of Mathematical
  Sciences]}.
\newblock Springer-Verlag, Berlin, 2009.
\newblock Old and new.

\bibitem{Weissmann2022}
S.~Weissmann.
\newblock Gradient flow structure and convergence analysis of the ensemble
  {K}alman inversion for nonlinear forward models.
\newblock {\em Inverse Problems}, 38(10):105011, sep 2022.

\bibitem{MR3640545}
J.~Zinsl.
\newblock The gradient flow of a generalized {F}isher information functional
  with respect to modified {W}asserstein distances.
\newblock {\em Discrete Contin. Dyn. Syst. Ser. S}, 10(4):919--933, 2017.

\bibitem{MR4015181}
J.~Zinsl.
\newblock Well-posedness of evolution equations with time-dependent nonlinear
  mobility: a modified minimizing movement scheme.
\newblock {\em Adv. Calc. Var.}, 12(4):423--446, 2019.

\bibitem{zinsl2015transport}
J.~Zinsl and D.~Matthes.
\newblock Transport distances and geodesic convexity for systems of degenerate
  diffusion equations.
\newblock {\em Calculus of Variations and Partial Differential Equations},
  54(4):3397--3438, 2015.

\end{thebibliography}

\end{document}